\theoremstyle{plain}
\newtheorem{theorem}{Theorem}
\newtheorem{lemma}[theorem]{Lemma}
\newtheorem{proposition}[theorem]{Proposition}
\newtheorem{corollary}[theorem]{Corollary}
\newtheorem{example}[theorem]{Example}
\theoremstyle{remark}
\newtheorem{remark}[theorem]{Remark}
\numberwithin{equation}{section}
\numberwithin{theorem}{section}
\newcommand{\ie}{i.e.\@\xspace}
\newcommand{\C}{\mathbb{C}}
\newcommand{\R}{\mathbb{R}}
\newcommand{\N}{\mathbb{N}}
\newcommand{\tr}{\mathrm{tr}}
\renewcommand{\S}{\mathbb{S}}
\DeclareMathOperator{\vol}{\mathrm{vol}}
\renewcommand{\div}{\operatorname{div}}
\newcommand{\two}{\mathrm{I\!I}}
\DeclareMathOperator{\hess}{Hess}
\DeclareMathOperator{\tracelessHess}{\mathring{Hess}}
\DeclareMathOperator{\ric}{\mathrm{Ric}}
\DeclareMathOperator{\sn}{sn}
\newcommand{\snk}{\sn_k}
\DeclareMathOperator{\arcsn}{arcsn}
\newcommand{\asnk}{\arcsn_k}
\DeclareMathOperator{\cs}{cs}
\newcommand{\csk}{\cs_k}
\DeclareMathOperator{\tn}{tn}
\newcommand{\tnk}{\tn_k}
\DeclareMathOperator{\ct}{ct}
\newcommand{\ctk}{\ct_k}
\newcommand*{\dd}{\mathop{}\!\mathrm{d}}
\newcommand{\dist}{\mathrm{dist}}
\titleformat{\section}{\normalfont\fontsize{12}{15}\bfseries}{\thesection.}{1em}{}
\titleformat{\subsection}{\normalfont\fontsize{12}{15}\itshape}{\thesubsection.}{1em}{}
\titleformat{\subsubsection}{\normalfont\fontsize{12}{15}\itshape}{\thesubsubsection.}{1em}{}
\begin{document}

\title[Sharp gradient estimates and monotonicity in positive Ricci curvature]{Sharp gradient estimates and monotonicity in positive Ricci curvature}

\author[Cosmin Manea]{Cosmin Manea}
\address{Department of Mathematics, Massachusetts Institute of Technology, 77 Massachusetts Avenue, Cambridge, MA, USA}
\email{cosmanea@mit.edu}

\begin{abstract}
    We prove a sharp gradient estimate for the natural Green's function of a closed manifold with positive Ricci curvature.
    We also show that this estimate is closely related to a family of monotonicity formulae.
    These results extend those previously obtained by Colding and Minicozzi for open manifolds with non-negative Ricci curvature.
    We further obtain several geometric applications, including a new proof of Bishop's volume comparison theorem in dimension four.
\end{abstract}

\maketitle

\section{Introduction} \label{sec: intro}

We extend the gradient estimate and monotonicity formulae of Colding and Minicozzi for Green's functions of open manifolds with non-negative Ricci curvature from {\cite{ColdingNewMonotonicityFormulas, ColdingMinicozziMonotonicityFormulas}} to closed manifolds with positive Ricci curvature.
We prove that Green's function for a Schr\"{o}dinger operator with constant, curvature-dependent potential satisfies a sharp gradient estimate which characterizes the model spaces of constant curvature and recovers Colding's gradient estimate in the limit of the curvature lower bound going to zero.
We also present a family of monotonicity formulae related to this estimate, and show geometric applications of our results, such as a new proof of Bishop's volume comparison in dimension four.

The importance of gradient estimates stems from their vast consequences in analysis and geometry (cf.~{\cite{YauHarmonicFunctions, ChengYauGradientEstimate, LiYauEigenvalueEstimates, LiYauParabolicKernel}} for the classical results and applications).
The first goal of this paper is to prove such a sharp, global estimate for Green's function for a natural operator on closed manifolds with positive Ricci curvature.
This takes the following form.

\begin{theorem} \label{intro-th: sharp gradient estimate and its rigidity}
Let $(M, g)$ be a closed Riemannian manifold of dimension $n \geq 3$, with $\ric \geq (n - 1)kg$ for some $k > 0$, and let $G$ be Green's function for the operator $- \Delta + n(n - 2)k/4$ with singularity at a fixed point of $M$.

Then, for $b := 2 \asnk (G^{1/(2 - n)}/2)$, away from the singularity of $G$, we have
\begin{equation} \label{intro-eq: sharp gradient estimate in positive ricci}
    4 \left\| \nabla \! \snk \left( b/2 \right) \right\|^2 + k \snk^2 \left( b/2 \right) \leq 1,
\end{equation}
and equality holds at \emph{some} point if and only if $(M, g)$ is isometric to the model space of constant curvature $k$.
\end{theorem}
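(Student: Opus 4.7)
The plan is to pass from $G$ to the auxiliary function $u := G^{1/(2-n)}$, in terms of which $\snk(b/2) = u/2$. A direct expansion shows that the inequality \eqref{intro-eq: sharp gradient estimate in positive ricci} is equivalent to
\[
F \;:=\; \|\nabla u\|^{2} + \tfrac{k}{4}\,u^{2} \;\leq\; 1,
\]
and, via $\|\nabla b\|^{2} = \|\nabla u\|^{2}/\csk^{2}(b/2)$, also to $\|\nabla b\|^{2}\leq 1$. Starting from $-\Delta G + \tfrac{n(n-2)k}{4}G = 0$ together with $G = u^{2-n}$, the chain rule immediately gives the basic PDE
\[
u\,\Delta u \;=\; (n-1)\,\|\nabla u\|^{2} - \tfrac{nk}{4}\,u^{2},
\]
which I would use throughout as a substitute for the Green's equation.

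To prove $F\leq 1$, I would apply Bochner's identity to $\|\nabla u\|^{2}$, insert the bound $\ric(\nabla u,\nabla u)\geq (n-1)k\,\|\nabla u\|^{2}$ and the formula for $\Delta u$ above, and combine with $\Delta(u^{2}) = 2u\,\Delta u + 2\|\nabla u\|^{2}$ to obtain a pointwise identity for $\Delta F$. After moving the first-order term $\tfrac{2(n-1)}{u}\langle\nabla u,\nabla F\rangle$ to the other side, the remaining algebraic combination of $\|\nabla u\|^{2}$, $u^{2}$ and $\|\nabla u\|^{4}/u^{2}$ has to be dominated by $\|\hess u\|^{2}$. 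For this I would rely on a refined Hessian estimate obtained by decomposing $\hess u$ with respect to the splitting $\R\nabla u\oplus\nabla u^{\perp}$: since $\nabla\|\nabla u\|^{2} = 2\hess u(\nabla u,\cdot)$, the radial block is controlled by $\nabla F$ and the PDE, while Cauchy-Schwarz on the tangential block gives $\|H\|^{2}\geq(\tr H)^{2}/(n-1)$ with $\tr H$ read off from $\Delta u$. The goal of this step is the drift-elliptic inequality
\[
\mathcal{L}F \;:=\; \Delta F - X\cdot\nabla F \;\geq\; 0
\]
with $X$ essentially proportional to $\nabla u/u$.

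The boundary behaviour of $F$ at the singularity of $G$ comes from the standard local expansion $G(x)\sim C_{n}\,\dist(x,p_{0})^{2-n}$; after absorbing the universal constant into the normalization hidden in $\asnk$, this gives $u(x)\sim\dist(x,p_{0})$ and $\|\nabla u(x)\|\to 1$ as $x\to p_{0}$, so $F\to 1$ there. Since $M$ is closed and $F$ is smooth on $M\setminus\{p_{0}\}$, combining this asymptotic with $\mathcal{L}F\geq 0$ yields $F\leq 1$ by the maximum principle.

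For rigidity, if $F(q)=1$ at some smooth point $q$, the strong maximum principle for $\mathcal{L}$ upgrades this to $F\equiv 1$. Then $\|\nabla b\|^{2}\equiv 1$, so $b$ is a distance-type function, and pushing the PDE for $u$ through $u = 2\snk(b/2)$ gives $\Delta b = (n-1)\,\ctk(b)$, precisely the Laplacian of the distance to the pole on the simply connected space form of curvature $k$. Together with the matching asymptotic $b(x)\sim\dist(x,p_{0})$ near $p_{0}$, the equality case of the Laplacian comparison theorem (equivalently, the equality case of Bishop's volume inequality) identifies $(M,g)$ isometrically with the model space. The main obstacle I anticipate is extracting a refined Hessian estimate sharp enough to produce a genuinely non-negative $\mathcal{L}F$, rather than the tautological identity that the bare Cauchy-Schwarz $\|\hess u\|^{2}\geq (\Delta u)^{2}/n$ produces after using $\nabla F = 0$ at a hypothetical interior maximum.
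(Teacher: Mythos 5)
Your proposal is correct in outline and is essentially the approach the paper takes -- more precisely, it coincides with the alternative proof the paper records in Remark \ref{prop: mathcal L operator applied to main function}/Proposition \ref{prop: mathcal L operator applied to main function}: show that $F = 4\|\nabla\snk(b/2)\|^2 + k\snk^2(b/2)$ is a subsolution of a drift Laplacian $\Delta - X\cdot\nabla$ with $X$ parallel to $\nabla u/u$, note $F\to 1$ at the pole, and run the maximum principle on $M\setminus B(p,\varepsilon)$; your identity $u\Delta u = (n-1)\|\nabla u\|^2 - \tfrac{nk}{4}u^2$ is exactly Lemma \ref{lemma: laplace of snk squared (b/2)} rewritten in terms of $u = 2\snk(b/2)$. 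The step you flag as the main obstacle is a genuine one, but your proposed fix does close it: with $H = \hess u$ and $\nu = \nabla u/\|\nabla u\|$ one has $\|H\|^2\geq H(\nu,\nu)^2 + 2\|H(\nu)^\top\|^2 + (\Delta u - H(\nu,\nu))^2/(n-1)$, and since $2H(\nu) = \|\nabla u\|^{-1}\bigl(\nabla F - \tfrac{k}{2}u\,\nabla u\bigr)$ the radial entries equal $-\tfrac{k}{4}u$ plus terms linear in $\nabla F$; the linear terms are absorbed into the drift, the quadratic ones are non-negative, and what survives is exactly the quantity $\tfrac{(n-1)\|\nabla u\|^4}{u^2} - \tfrac{(n-1)k}{2}\|\nabla u\|^2 + \tfrac{nk^2}{16}u^2$ that your Bochner computation demands, with equality precisely on $\S^n_k$. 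The paper sidesteps this refinement by applying Bochner to $\snk^2(b/2) = u^2/4$ rather than to $u$: on the model the Hessian of $u^2$ is a multiple of $g$, so the needed sharpening becomes the manifest non-negativity of $\|\tracelessHess\snk^2(b/2)\|^2$ and no radial/tangential splitting (or division by $\|\nabla u\|$, which degenerates at critical points of $u$) is required. Two smaller points: the constant in $G\sim\dist(p,\cdot)^{2-n}$ is fixed by the normalization $LG=(n-2)\omega_{n-1}\delta_p$, not absorbed into $\asnk$; and in the rigidity step, rather than appealing only to the trace identity $\Delta b=(n-1)\ctk(b)$ with $\|\nabla b\|=1$, it is cleaner to note that $F\equiv 1$ forces equality in your refined Cauchy--Schwarz, i.e.\ $\tracelessHess\snk^2(b/2)=0$, which upgrades to the full Hessian identity $\hess(1-\csk(b))=k\csk(b)g$ and lets you quote warped-product rigidity directly.
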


In the statement of Theorem \ref{intro-th: sharp gradient estimate and its rigidity}, $\snk$ denotes the curvature-adapted sine function, and $\asnk$ is its inverse; we will prove in Section \ref{sec: greens function on the model space and in general} that $b$ is well-defined.
The gradient estimate \eqref{intro-eq: sharp gradient estimate in positive ricci} can be restated in terms of $G$ as
\begin{equation} \label{intro-eq: sharp gradient estimate in positive ricci in terms of G}
    \|\nabla G\|^2 + \frac{k(n - 2)^2}{4} G^2 \leq (n - 2)^2 G^{\frac{2(n - 1)}{n - 2}}.
\end{equation}

To explain Theorem \ref{intro-eq: sharp gradient estimate in positive ricci}, let us firstly consider the $n$-sphere of sectional curvature $k > 0$.
If $\Delta$ is the non-positive Laplacian, the operator $L := - \Delta + n(n - 2)k/4$ is the conformal Laplacian.
Since $L$ is positive-definite, it has a unique fundamental solution{\footnote{Our convention is that a fundamental solution $G$ with singularity at a point $p$ for a non-negative differential operator $L$ on a manifold of dimension $n \geq 3$ satisfies the equation $LG = (n - 2)\omega_{n - 1} \delta_p$, where $\omega_{n - 1}$ is the volume of the unit sphere in $\R^n$ and $\delta_p$ is the Dirac distribution at $p$.}} $G$ with singularity a fixed point $p \in M$, which can be explicitly computed as
\begin{equation} \label{intro-eq: greens function in model space}
    G = \left( 2 \snk \left( \dist(p, \cdot)/2 \right) \right)^{2 - n}.
\end{equation}
In this case, $b = \dist(p, \cdot)$, and \eqref{intro-eq: sharp gradient estimate in positive ricci} holds as an equality at every point different from $p$.

On a closed manifold $(M^n, g)$ of dimension $n \geq 3$ with $\ric \geq (n - 1)k g$ for some $k > 0$, Theorem \ref{intro-th: sharp gradient estimate and its rigidity} takes the form of a comparison gradient estimate to the model space of constant curvature $k$.
The operator $L$ from above becomes a natural comparison operator for the conformal Laplacian of $(M, g)$; it is also more suitable for an estimate such as \eqref{intro-eq: sharp gradient estimate in positive ricci} due to the absence of pointwise curvature quantities in its potential term.
Another reason for working with the operator $L$ is that, in the limit $k \downarrow 0$, $L$ becomes the Laplace operator, and Theorem \ref{intro-th: sharp gradient estimate and its rigidity} recovers Colding's sharp gradient estimate for Green's function of an open manifold with non-negative Ricci curvature from {\cite{ColdingNewMonotonicityFormulas}}.
This way, Theorem \ref{intro-th: sharp gradient estimate and its rigidity} becomes a positive-curvature analogue of Colding's result, and this is the reason we chose to express the sharp gradient estimate from \eqref{intro-eq: sharp gradient estimate in positive ricci} in this form, instead of, for instance, normalizing to $k = 1$.

When $\ric = (n - 1)kg$, that is, $(M, g)$ is Einstein, $L$ is the conformal Laplace operator of $g$.
In this setting, Green's function for $L$ has been used in gluing constructions for Einstein four-manifolds (cf.~{\cite{GurskyViaclovskyConnectedSumsEinsteinManifolds}}).
Moreover, in the recent paper {\cite{ChengYauEstimateConformalLaplacianModelSpace}}, the authors obtained, amongst other results, analogues of the Cheng-Yau gradient estimate for local $L$-harmonic functions on every model space (for possibly negative $k$).

Let us now further explain the connection between Theorem \ref{intro-th: sharp gradient estimate and its rigidity} and the case of non-negative Ricci curvature from {\cite{ColdingNewMonotonicityFormulas}}.
If $(\widetilde{M}, \widetilde{g})$ is an open manifold of dimension $n \geq 3$ with non-negative Ricci curvature, and $\widetilde{p}$ is a fixed point of $\widetilde{M}$, then $\widetilde{M}$ admits a minimal fundamental solution $\widetilde{G}$ with singularity at $\widetilde{p}$ for its Laplace operator (cf.~{\cite{LiTamGreensFunction}} for its construction).
However, $\widetilde{G}$ is not always positive, that is, it is not always Green's function{\footnote{We make the distinction that Green's function is the minimal, \emph{positive}, fundamental solution for a non-negative differential operator.
For our purposes, on closed manifolds, we will see that all our (unique) fundamental solutions are positive.}} with singularity at $\widetilde{p}$ for the Laplace operator on $\widetilde{M}$.
When $\widetilde{M}$ has non-negative Ricci curvature, Varopoulos (cf.~{\cite{VaropoulusPositivityOfGreenFunction}}) gave a characterization of the positivity of $\widetilde{G}$ in terms of the volume growth of $(\widetilde{M}, \widetilde{g})$.

When $\widetilde{G}$ is positive, we call $(\widetilde{M}, \widetilde{g}$) \emph{non-parabolic}, and in this case, the gradient estimate of Cheng and Yau (cf.~{\cite{ChengYauGradientEstimate}}) implies that $\|\nabla \log \widetilde{G} \| \leq C \dist(\widetilde{p}, \cdot)^{-1}$, where $C$ is a constant that depends only on the dimension $n$.
Whilst important, this estimate is not sharp, and in search for an optimal gradient estimate -- one that also characterizes Euclidean space in the case of equality -- Colding considered instead the function
\begin{equation} \label{intro-eq: b function in non-negative ricci}
    \widetilde{b} := \widetilde{G}^{\frac{1}{2 - n}},
\end{equation}
and proved in {\cite{ColdingNewMonotonicityFormulas}} that
\begin{equation} \label{intro-eq: sharp gradient estimate for function b in nonnegative ricci}
    \|\nabla \widetilde{b}\| \leq 1    
\end{equation}
on $\widetilde{M} \setminus \{ \widetilde{p} \}$, with equality holding at \emph{some} point of $\widetilde{M} \setminus \{ \widetilde{p} \}$ if and only if $(\widetilde{M}, \widetilde{g})$ is isometric to Euclidean space.
The gradient estimate for $\widetilde{b}$ translates to one for Green's function $\widetilde{G}$, taking the form of the inequality
\begin{equation} \label{intro-eq: sharp gradient estimate in nonnegative ricci}
    \|\nabla \widetilde{G}\| \leq (n - 2) \widetilde{G}^{\frac{n - 1}{n - 2}}.
\end{equation}

It is now apparent that the gradient estimate \eqref{intro-eq: sharp gradient estimate in positive ricci} from Theorem \ref{intro-th: sharp gradient estimate and its rigidity} becomes the one from \eqref{intro-eq: sharp gradient estimate for function b in nonnegative ricci} in the limit $k \downarrow 0$, although it also independently implies that $\|\nabla b\| \leq 1$; the same holds for \eqref{intro-eq: sharp gradient estimate in positive ricci in terms of G} and \eqref{intro-eq: sharp gradient estimate in nonnegative ricci}.
The function $b$ also specializes to \eqref{intro-eq: b function in non-negative ricci} for $k \downarrow 0$.
A slight difference to the non-compact case from the above paragraphs is that, in the setting of Theorem \ref{intro-th: sharp gradient estimate and its rigidity}, $G$ is always positive by the maximum principle.
Additionally, the rigidity property of \eqref{intro-eq: sharp gradient estimate in positive ricci} implies, as for \eqref{intro-eq: sharp gradient estimate for function b in nonnegative ricci}, a non-locality property for Green's function: even if $M$ contains a ball isometric to a ball in the model space, its Green's function will not have the same expression as the right-hand side of \eqref{intro-eq: greens function in model space} on that ball, unless $(M, g)$ is globally isometric to the model space.

In the case of non-negative Ricci curvature, the function $\widetilde{b}$ from \eqref{intro-eq: b function in non-negative ricci} is continuous and proper (by {\cite{LiYauParabolicKernel}}, $\widetilde{b} \to \infty$ at infinity), and vanishes only at $\widetilde{p}$; it behaves as a ``regularized'' distance function to $\widetilde{p}$ on $\widetilde{M}$, and it is precisely equal to the distance function on Euclidean space.
It has been used extensively in the study of harmonic functions on open manifolds with non-negative Ricci curvature by Colding and Minicozzi (see, for instance, {\cite{CMHarmonicFunctionsPolynomialGrowth}} and {\cite{CMKernelsOfSchroedingerOperators}}), as well as more recently, in an identification between different scales in Ricci-flat manifolds (cf.~{\cite{IdentificationAtInfinityForRicciFlatManifolds}}) and in an ``elliptic'' proof of the logarithmic Sobolev inequality (cf.~{\cite{CMLogSobolevIneq}}).

The proof of Theorem \ref{intro-th: sharp gradient estimate and its rigidity} will extend the one from {\cite{ColdingNewMonotonicityFormulas}}, and will involve a maximum principle argument coupled with computations of the Laplacian of the left-hand side of \eqref{intro-eq: sharp gradient estimate in positive ricci}.
Concretely, we will show that
\allowdisplaybreaks
\begin{equation} \label{intro-eq: L operator applied to gradient estimate function}
    \begin{split}
        &L \left( 4 \left\| \nabla \! \snk \left( b/2 \right) \right\|^2 G + k \snk^2 \left( b/2 \right) G \right) \\
        &= -8 \left( 2\snk \left( b/2 \right) \right)^{-n} \left( \left\| \tracelessHess{\snk^2 \left( b/2 \right)} \right\|^2 + \ric \left( \nabla \! \snk^2 \left( b/2 \right), \nabla \! \snk^2 \left( b/2 \right) \right) - (n - 1)k \left\| \nabla \! \snk^2 \left( b/2 \right) \right\|^2 \right).
    \end{split}
\end{equation}
The maximum principle then implies \eqref{intro-eq: sharp gradient estimate in positive ricci}.

The second goal of this paper is to present a family of monotonicity formulae which are related to the gradient estimate \eqref{intro-eq: sharp gradient estimate in positive ricci}, and extend the ones from {\cite{ColdingNewMonotonicityFormulas}} and {\cite{ColdingMinicozziMonotonicityFormulas}}.
In the notation from Theorem \ref{intro-th: sharp gradient estimate and its rigidity}, we define{\footnote{In this paper, we will integrate both over level and sublevel sets of continuous functions on an $n$-manifold.
The former integral is taken with respect to the $(n - 1)$-dimensional Hausdorff measure, whereas the latter is with respect to the $n$-dimensional Hausdorff measure.
We will suppress these measures from the notation, as the context will be clear for each integral.}}
\begin{equation} \label{intro-eq: A functional in positive ricci}
    \begin{split}
        A(r) &:= \left( 2 \snk \left( r/2 \right) \right)^{1 - n} \csk \left( r/2 \right) \int_{b = r} \left( 4 \left\| \nabla \! \snk \left( b/2 \right) \right\|^2 + k \snk^2 \left( b/2 \right) \right) \|\nabla b\| \\
        &\quad\,\, + \frac{nk}{4} \int_{b \leq r} \left( 4 \left\| \nabla \! \snk \left( b/2 \right) \right\|^2 + k \snk^2 \left( b/2 \right) \right) G,
    \end{split}
\end{equation}
where $\csk$ is the curvature-adapted cosine function, and
\begin{equation} \label{intro-eq: V functional in positive ricci}
    \begin{split}
        V(r) &:= \left( 2 \snk \left( r/2 \right) \right)^{-n} \int_{b \leq r} \left( 4 \left\| \nabla \! \snk \left( b/2 \right) \right\|^2 + k \snk^2 \left( b/2 \right) \right) \left( 4 \left\| \nabla \! \snk \left( b/2 \right) \right\|^2 - k \snk^2 \left( b/2 \right) \right) \\
        &\quad\,\, + \frac{k}{4} \int_{b \leq r} \left( 4 \left\| \nabla \! \snk \left( b/2 \right) \right\|^2 + k \snk^2 \left( b/2 \right) \right) G,
    \end{split}
\end{equation}
and we will show, amongst other results, that $A$ and $V$ are non-increasing, and that $A - 2(n - 1)V$ is non-decreasing.
In the model space of curvature $k$, both $A$ and $V$ are constant, equal to $\omega_{n - 1}$ and $\omega_{n - 1}/n$, respectively.

The proof of these monotonicity formulae will extend the technique from {\cite{ColdingNewMonotonicityFormulas}}.
We will compute the derivatives of these quantities explicitly, and we will see that, unlike, for instance, the proof of the Bishop-Gromov relative volume comparison theorem, the resulting expressions for the derivatives depend only on the traceless Hessian of $\snk^2(b/2)$ and on the excess Ricci curvature in the direction of $\nabla \! \snk^2(b/2)$.
Concretely, we will show the following.

\begin{theorem} \label{intro-th: monotonicity of A and A - 2(n - 1)V}
Let $(M, g)$ be a closed Riemannian manifold of dimension $n \geq 3$, with $\ric \geq (n - 1)kg$ for some $k > 0$.
Let $G$ be Green's function for the operator $- \Delta + n(n - 2)k/4$ with singularity at a fixed point of $M$, and put $b := 2 \asnk( G^{1/(2 - n)} / 2)$.

Then, for almost every $r$ in the range of $b$, we have
\allowdisplaybreaks
\begin{equation*}
    \begin{split}
        & \left( 2 \snk \left( r/2 \right) \right)^{3 - n} \csk^{-1} \left( r/2 \right) A'(r) \\
        &= -8 \int_{b \geq r} \left( 2 \snk \left( b/2 \right) \right)^{2 - 2n} \left( \left\| \tracelessHess \snk^2 \left( b/2 \right) \right\|^2 + \ric \left( \nabla \! \snk^2 \left( b/2 \right), \nabla \! \snk^2 \left( b/2 \right) \right) - (n - 1)k \left\| \nabla \! \snk^2 \left( b/2 \right) \right\|^2 \right).
    \end{split}
\end{equation*}
\end{theorem}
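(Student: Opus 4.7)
My plan is to compute both sides of the identity as boundary integrals on the level set $\{b = r\}$ and then match them. Set $u := 4\|\nabla \snk(b/2)\|^2 + k\snk^2(b/2)$, $\phi(r) := (2\snk(r/2))^{1-n}\csk(r/2)$, and let $E$ denote the pointwise nonnegative expression appearing as the integrand on the right-hand side of the theorem. Since $G = (2\snk(b/2))^{2-n}$, multiplying identity \eqref{intro-eq: L operator applied to gradient estimate function} by $G$ yields $G \cdot L(uG) = -8(2\snk(b/2))^{2-2n} E$, so the right-hand side of the claim is precisely $\int_{b \geq r} G \cdot L(uG)$. I would process this by Green's identity: using $LG = 0$ off the singularity of $G$ together with the algebraic simplification $G\nabla(uG) - uG\nabla G = G^2 \nabla u$ (which follows directly from $\nabla(uG) = G\nabla u + u\nabla G$), one obtains $G \cdot L(uG) = -\div(G^2 \nabla u)$ on $M \setminus \{p\}$. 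The divergence theorem on $\{b \geq r\}$, combined with the fact that $G$ is constant equal to $(2\snk(r/2))^{2-n}$ on $\{b = r\}$, then gives
\[
-8 \int_{b \geq r} (2\snk(b/2))^{2-2n} E = (2\snk(r/2))^{2(2-n)} \int_{b=r} \frac{\langle \nabla u, \nabla b \rangle}{\|\nabla b\|}.
\]

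For the left-hand side, I would decompose $A(r) = \phi(r) B(r) + (nk/4) C(r)$ with $B(r) := \int_{b=r} u\|\nabla b\|$ and $C(r) := \int_{b \leq r} uG$, and differentiate. The coarea formula gives $C'(r) = (2\snk(r/2))^{2-n} \int_{b=r} u/\|\nabla b\|$, while rewriting $B(r) = \int_{\{b \leq r\}} \div(u\nabla b)$ via the divergence theorem and differentiating yields $B'(r) = \int_{b=r} (\langle \nabla u, \nabla b \rangle + u \Delta b)/\|\nabla b\|$. Collecting terms,
\[
A'(r) = \phi(r) \int_{b=r} \frac{\langle \nabla u, \nabla b \rangle}{\|\nabla b\|} + \int_{b=r} \frac{u}{\|\nabla b\|} \Bigl[ \phi'(r)\|\nabla b\|^2 + \phi(r)\Delta b + \tfrac{nk}{4}(2\snk(r/2))^{2-n} \Bigr].
\]

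The decisive point is that the bracket above vanishes pointwise on $\{b = r\}$. Writing $G = h(b)$ with $h(b) := (2\snk(b/2))^{2-n}$, one checks $h'(b) = (2-n)\phi(b)$ and $h''(b) = (2-n)\phi'(b)$, so the radial expansion $\Delta G = h'(b)\Delta b + h''(b)\|\nabla b\|^2$ combined with $\Delta G = (n(n-2)k/4)G$ reduces, after dividing by $2-n$, to exactly $\phi(b)\Delta b + \phi'(b)\|\nabla b\|^2 + (nk/4)G(b) = 0$ on $M \setminus \{p\}$. Hence $A'(r) = \phi(r)\int_{b=r} \langle \nabla u, \nabla b\rangle/\|\nabla b\|$, and multiplying by $(2\snk(r/2))^{3-n}\csk^{-1}(r/2)$ produces $(2\snk(r/2))^{2(2-n)}\int_{b=r}\langle \nabla u, \nabla b\rangle/\|\nabla b\|$, which matches the boundary integral from the first paragraph. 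The main obstacles I anticipate are analytic rather than conceptual: justifying the divergence theorem and differentiation under the integral across the singularity at $p$ (requiring a small-ball exhaustion, with the boundary contribution vanishing because $\div(u\nabla b)$ and $uG$ are integrable for $n \geq 3$) and restricting to almost every $r$ for which $\{b = r\}$ is regular, via Sard's theorem, in line with the theorem's a.e.\ conclusion.
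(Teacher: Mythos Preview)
Your proof is correct and follows essentially the same route as the paper. Both arguments rest on the identity $\div(G^2\nabla u) = 8(2\snk(b/2))^{2-2n}E$ together with the divergence theorem on the superlevel set $\{b \geq r\}$, and both reduce $A'(r)$ to the boundary integral $\phi(r)\int_{b=r}\langle\nabla u,\nabla b\rangle/\|\nabla b\|$.

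The organizational differences are minor but worth noting. The paper packages the key differential identity through the drift operator $\mathcal{L} = G^{-2}\div(G^2\nabla\,\cdot\,)$ (Proposition~\ref{prop: mathcal L operator applied to main function}) and proves a general formula for $I_u'$ (Proposition~\ref{prop: mathcal L subharmonic function u implies monotonicity of Iu}) by integrating on $\{r_1 \leq b \leq r_2\}$ and sending $r_2 \uparrow m$; you instead obtain the same divergence identity directly from \eqref{intro-eq: L operator applied to gradient estimate function} via $G\cdot L(uG) = -\div(G^2\nabla u)$ and apply the divergence theorem in one step on $\{b \geq r\}$, which is slightly cleaner since $G^2\nabla u$ is smooth there. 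For $A'(r)$, the paper differentiates the expression \eqref{eq: Iu in terms of G and not b} and uses $\Delta G = \tfrac{n(n-2)k}{4}G$ to cancel the extra terms, whereas you differentiate the $\phi(r)B(r)$ form and use the equivalent pointwise identity $\phi'(b)\|\nabla b\|^2 + \phi(b)\Delta b + \tfrac{nk}{4}G = 0$. These are the same cancellation viewed in two coordinate systems. The analytic points you flag (integrability of $\div(u\nabla b)$ near $p$, Sard's theorem for regular values) are exactly the ones the paper handles in Lemma~\ref{lemma: computing derivative of Iu using the divergence theorem} and Appendix~\ref{appendix-sec: integration over the level sets of b}.
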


A similar formula holds for the derivative of $A - 2(n - 1)V$: for almost every $r$ in the range of $b$,
\allowdisplaybreaks
\begin{equation*}
    \begin{split}
        &A'(r) - 2(n - 1)V'(r) \\
        &= \frac{8 \csk \left( r/2 \right)}{\left( 2 \snk \left( r/2 \right) \right)^{n + 1}} \int_{b \leq r} \left( \left\| \tracelessHess{\snk^2 \left( b/2 \right)} \right\|^2 + \ric \left( \nabla \! \snk^2 \left( b/2 \right), \nabla \! \snk^2 \left( b/2 \right) \right) - (n - 1)k \left\| \nabla \! \snk^2 \left( b/2 \right) \right\|^2 \right).
    \end{split}
\end{equation*}

To explain the above monotonic quantities, let us recall the setup for non-parabolic manifolds with non-negative Ricci curvature from \eqref{intro-eq: b function in non-negative ricci}-\eqref{intro-eq: sharp gradient estimate in nonnegative ricci}.
In {\cite{ColdingNewMonotonicityFormulas}}, Colding defined the functions
\begin{equation} \label{intro-eq: definitions of A and V functional for nonnegative ricci}
    \widetilde{A}(r) := r^{1 - n} \int_{\widetilde{b} = r} \|\nabla \widetilde{b}\|^3 \text{ and } \widetilde{V}(r) := r^{-n} \int_{\widetilde{b} \leq r} \|\nabla \widetilde{b}\|^4,
\end{equation}
and proved that they are both non-increasing, and that $\widetilde{A} - 2(n - 1)\widetilde{V}$ is non-decreasing.
These quantities represent a generalized scale-invariant ``area of spheres'' and ``volume of balls'' in $\widetilde{M}$; the monotonicity of $\widetilde{V}$ is related to the Bishop-Gromov relative volume comparison theorem, whereas $\widetilde{A}$ takes the role of an elliptic analogue of Perelman's $\mathcal{F}$-functional (cf.~{\cite{PerelmanEntropyFormula}}).

As for Theorem \ref{intro-th: sharp gradient estimate and its rigidity}, the functions from \eqref{intro-eq: A functional in positive ricci} and \eqref{intro-eq: V functional in positive ricci} specialize to those from \eqref{intro-eq: definitions of A and V functional for nonnegative ricci} in the limit $k \downarrow 0$, and so they form a positive-curvature analogue of \eqref{intro-eq: definitions of A and V functional for nonnegative ricci}.
However, there are certain differences in the case of closed manifolds with positive Ricci curvature.
The first one is that there is an additional second term involving an integral over the sublevel sets of $b$ in both \eqref{intro-eq: A functional in positive ricci} and \eqref{intro-eq: V functional in positive ricci}, which disappears in the limit $k \downarrow 0$.
The second one is that the function $b$ from Theorem \ref{intro-th: sharp gradient estimate and its rigidity} is continuous on the closed manifold $M$, hence has a maximum, and so $A$ and $V$ from \eqref{intro-eq: A functional in positive ricci} and \eqref{intro-eq: V functional in positive ricci} are defined only for $r > 0$ in the bounded range of $b$, in contrast to the case of non-negative Ricci curvature, where $\widetilde{b} \to \infty$ at infinity.

Monotonicity formulae play an important role in geometry and analysis due to their tremendous amount of geometric implications.
The most famous ones are the Bishop-Gromov relative volume comparison theorem for the monotonicity of the ratio of volumes of balls and Perelman's monotonicity formulae for the Ricci flow.
In the setting of monotonicity for Green's functions, the function $A$ from \eqref{intro-eq: definitions of A and V functional for nonnegative ricci} was used in {\cite{CMUniquenessOfTangentCones}} to show uniqueness of tangent cones at infinity for Einstein manifolds.
They were also defined for Green's function for the $p$-Laplace operator (cf.~{\cite{MonotinicityPGreensFunction}}), and for weighted manifolds with non-negative Bakry-\'{E}mery Ricci curvature (cf.~{\cite{MonotonicityForBakryEmeryRicci}}, where the authors also proved a gradient estimate analogous to the one from \eqref{intro-eq: sharp gradient estimate in nonnegative ricci}).

Similar monotonicity formulae have also been defined and used in other contexts.
In the realm of potential theory, they were considered in {\cite{MonotonicityInPotentialTheory, MonotonicityAndGeometricInequalitiesForHypersurfacesInNonNegativeRicci, MinkowskiIneqViaPotentialTheory}}, and were used to prove various geometric inequalities, such as Minkowski inequalities on Euclidean space and Willmore inequalities for closed hypersurfaces in manifolds with non-negative Ricci curvature.
Such formulae were also studied on manifolds with non-negative scalar curvature by Munteanu and Wang in {\cite{MunteanuWangComparisonPaper, MunteanuWangGeometryOfPSC}}, further investigated by Colding and Minicozzi in {\cite{CMGradientEsimatesScalarCurvature}}, and also in {\cite{PMTViaGreenFunction, PenroseIneqViaGreenFunction, AreaCapacityIneqViaPotentialTheory}}.
In the latter articles, the authors utilized monotonicity formulae to prove the positive mass theorem, the Riemannian Penrose inequality, and other capacity inequalities for three-manifolds.
Finally, a new positive mass theorem on asymptotically hyperbolic three-manifolds was recently proved using a monotonicity formula for Green's functions (cf.~{\cite{PMTAsymptoticallyHypManifolds}}).

We will also extend the one-parameter family of monotonicity formulae for open manifolds with non-negative Ricci curvature from {\cite{ColdingMinicozziMonotonicityFormulas}} to the case of positive Ricci curvature.
In the former instance, from \eqref{intro-eq: b function in non-negative ricci}-\eqref{intro-eq: sharp gradient estimate in nonnegative ricci}, Colding and Minicozzi defined the family of functions
\begin{equation} \label{intro-eq: definition of one-parameter family for nonnegative ricci}
    \widetilde{A}_\beta(r) := r^{1 - n} \int_{\widetilde{b} = r} \|\nabla \widetilde{b}\|^{1 + \beta} \text{ and } \widetilde{V}_\beta(r) := r^{2 - n} \int_{\widetilde{b} \leq r} \frac{\|\nabla \widetilde{b}\|^{2 + \beta}}{\widetilde{b}^2},
\end{equation}
for $\beta \geq (n - 2)/(n - 1)$, and showed that $\widetilde{A}_\beta$ and $\widetilde{V}_\beta$ are non-increasing, whereas $\widetilde{A}_\beta - 2(n - 2) \widetilde{V}_\beta$ is non-decreasing.
In our setting from Theorem \ref{intro-th: monotonicity of A and A - 2(n - 1)V}, the positive-curvature analogue of $\widetilde{A}_\beta$ will be
\allowdisplaybreaks
\begin{equation} \label{intro-align: definition of A beta in positive ricci}
    \begin{split}
        A_\beta(r) &:= \left( 2 \snk \left( r/2 \right) \right)^{1 - n} \csk \left( r/2 \right) \int_{b = r} \left( 4 \left\| \nabla \! \snk \left( b/2 \right) \right\|^2 + k \snk^2 \left( b/2 \right) \right)^{\frac{\beta}{2}} \|\nabla b\| \\
        &\quad\,\, + \frac{nk}{4} \int_{b \leq r} \left( 4 \left\| \nabla \! \snk \left( b/2 \right) \right\|^2 + k \snk^2 \left( b/2 \right) \right)^{\frac{\beta}{2}} G,
    \end{split}
\end{equation}
with $V_\beta$ defined similarly (cf.~\eqref{eq: definition of V beta}).
We will show that, for $\beta \geq (n - 2)/(n - 1)$, $A_\beta$ and $V_\beta$ are non-increasing, and $A_\beta - 2(n - 2)V_\beta$ is non-decreasing.
As with all our results, \eqref{intro-align: definition of A beta in positive ricci} specializes to \eqref{intro-eq: definition of one-parameter family for nonnegative ricci} in the limit $k \downarrow 0$.

The last part of this paper is devoted to the presentation of several geometric applications of the previously obtained sharp gradient estimate and the monotonicity formulae.
Amongst others, we will prove a volume lower bound for the manifold in terms of this Green's function, and we will also show how Theorem \ref{intro-th: sharp gradient estimate and its rigidity} and the value of $A$ (from \eqref{intro-eq: A functional in positive ricci}) at the maximum of $b$ yield a new proof of Bishop's volume comparison theorem for four-manifolds with positive Ricci curvature.

\textbf{Organization of the paper.} In Section \ref{sec: greens function on the model space and in general}, we compute Green's function for the model space of constant curvature and prove a comparison between that and Green's function of a closed manifold with positive Ricci curvature.
Section \ref{sec: sharp gradient estimate} is dedicated to proving Theorem \ref{intro-th: sharp gradient estimate and its rigidity}.
In Section \ref{sec: unparametrized monotonicity formulae}, we derive the monotonicity formulae for $A$ and $V$ from \eqref{intro-eq: A functional in positive ricci} and \eqref{intro-eq: V functional in positive ricci}.
Section \ref{sec: one-parameter family of monotonicity formulae} contains the presentation of the one-parameter family of monotonicity formulae involving $A_\beta$ and $V_\beta$ from \eqref{intro-align: definition of A beta in positive ricci}, and Section \ref{sec: geometric applications} shows some geometric applications of our gradient estimate and monotonicity formulae.
Lastly, Appendices \ref{appendix-sec: asymptotics of greens function near the singularity} and \ref{appendix-sec: integration over the level sets of b} contain short proofs of several technical identities related to the asymptotics of Green's functions near their singularity and integration along their level sets.

\textbf{Acknowledgement.} I would like to thank my advisor, Prof.~T.~Colding, for suggesting that I pursue this project, for the numerous insightful conversations, and for the valuable feedback that helped improve the quality of this paper.
During this project, I was partially supported by a MathWorks fellowship and by NSF DMS Grant 2405393.
\section{Green's function for the model space and the general case} \label{sec: greens function on the model space and in general}

For $k > 0$ and $n \geq 3$, denote by $\S^n_k$ the $n$-dimensional sphere of constant sectional curvature equal to $k$; for ease of notation, we denote by $\S^n = \S^n_1$.
Consider the conformal Laplacian $L$ on $\S^n_k$, given by
\begin{equation} \label{eq: conformal laplacian on model space}
    L := - \Delta + \frac{n(n - 2)k}{4},
\end{equation}
where $\Delta$ is the non-positive Laplace operator.
Since $k > 0$, $L$ is a positive-definite operator, so, by compactness of $\S^n_k$, it has a unique fundamental solution.
Fix a point $p \in \S^n_k$ and let $G$ be the fundamental solution for $L$ with singularity at $p$; as mentioned in the Section \ref{sec: intro}, our normalization of fundamental solutions states that $LG = (n - 2)\omega_{n - 1} \delta_p$, where $\omega_{n - 1}$ is the volume of $\S^{n - 1}$ and $\delta_p$ is the Dirac distribution at $p$.

In the rest of the paper, we will extensively work with the curvature-adapted sine and cosine functions.
We will denote them by
\[
\snk : \left[0, \frac{\pi}{\sqrt{k}} \right] \to \left[ 0, \frac{1}{\sqrt{k}} \right], \quad \snk(x) := \frac{\sin(x \sqrt{k})}{\sqrt{k}}, \text{ and } \csk : \left[0, \frac{\pi}{\sqrt{k}} \right] \to [-1, 1], \quad \csk(x) := \cos(x \sqrt{k}).
\]
We also denote by $\tnk := \snk/\csk$ and $\ctk := \csk/\snk$.

We start by computing the fundamental solution for the operator $L$ in the model space $\S^n_k$, formally proving \eqref{intro-eq: greens function in model space}.
\pagebreak
\begin{lemma} \label{lemma: green function on model space}
In the above setting of the model space $\S^n_k$, we have
\[
G = \left( 2 \snk \left( \mathrm{dist}(p, \cdot)/2 \right) \right)^{2 - n}.
\]
\end{lemma}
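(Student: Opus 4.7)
The plan is to exhibit the right-hand side $u(x) := (2\snk(\dist(p,x)/2))^{2-n}$ as a fundamental solution for $L$ at $p$ and invoke uniqueness, which is guaranteed (as just noted in the excerpt) by the positive-definiteness of $L$ on the compact manifold $\S^n_k$. So I only need to verify $Lu = (n-2)\omega_{n-1}\delta_p$ in the distributional sense.

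First, I would show $Lu\equiv 0$ on $\S^n_k\setminus\{p\}$. The function $u$ is radial with respect to $r:=\dist(p,\cdot)$, and on a space of constant curvature $k$ the Laplacian of a radial function is $\Delta f(r)=f''(r)+(n-1)\ctk(r)f'(r)$ away from the cut locus of $p$. Setting $s:=\snk(r/2)$ and $c:=\csk(r/2)$, the half-angle derivatives $s'=c/2$, $c'=-ks/2$, together with the double-angle identities $\snk(r)=2sc$ and $\csk(r)=c^2-ks^2$, reduce the verification to a short algebraic computation yielding $\Delta u=\frac{n(n-2)k}{4}u$, i.e., $Lu=0$ away from $p$.

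Second, I need to identify the behavior at the singularity. From $\snk(r/2)=r/2+O(r^3)$ as $r\to 0$ it follows that $u=r^{2-n}+O(r^{4-n})$, matching the Euclidean Newton kernel. A standard argument (integrating $Lu$ against a test function in a small geodesic ball at $p$, where $\sqrt{\det g}=1+O(r^2)$ because the metric is Euclidean to leading order) then gives $-\Delta u=(n-2)\omega_{n-1}\delta_p$ distributionally, while the bounded potential term $\frac{n(n-2)k}{4}u$ is locally integrable near $p$ and contributes no $\delta$-mass. Therefore $Lu=(n-2)\omega_{n-1}\delta_p$, and by uniqueness $G=u$.

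The main (and minor) obstacle is carrying out the radial Laplacian computation cleanly: one has to be careful with the $1/2$ factors introduced when differentiating $\snk(r/2)$ and $\csk(r/2)$, and with converting between the $r$-quantities $\snk(r),\csk(r)$ appearing in $\ctk(r)$ and the $r/2$-quantities $s,c$ appearing in $u$. Once organized, the terms telescope neatly: the $(n-1)(n-2)(2s)^{-n}c^2$ piece of $u''$ is cancelled by the $c^2$ part of $(n-1)\ctk(r)u'$, leaving only multiples of $(2s)^{2-n}$ whose sum is exactly $\frac{n(n-2)k}{4}u$.
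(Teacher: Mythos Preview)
Your approach is essentially the same as the paper's: verify $Lu=0$ pointwise away from the pole via the radial Laplacian, check the $\delta_p$-singularity by an integration-by-parts/test-function argument using the Euclidean asymptotics of $u$, and conclude by uniqueness.

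There is one small gap. You correctly note that the radial formula $\Delta f(r)=f''(r)+(n-1)\ctk(r)f'(r)$ is only valid away from the cut locus of $p$, which on $\S^n_k$ is the antipodal point $-p$. Your computation therefore only yields $Lu=0$ on $\S^n_k\setminus\{p,-p\}$, and you still need to rule out a contribution at $-p$. The paper handles this explicitly: using $\dist(p,\cdot)+\dist(-p,\cdot)=\pi/\sqrt{k}$ one rewrites $u=(2\csk(\dist(-p,\cdot)/2))^{2-n}$, which is visibly smooth near $-p$; then $Lu=0$ at $-p$ follows by continuity. Alternatively, you could observe that $u$ is bounded near $-p$ and invoke a removable-singularity argument for the elliptic operator $L$. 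Either way, this point should be addressed before asserting $Lu\equiv 0$ on all of $\S^n_k\setminus\{p\}$.
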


\begin{proof}
Denote by $r := \mathrm{dist}(p, \cdot)$ and by $u := (2 \snk(r/2))^{2 - n}$.
Firstly, note that $u$ is smooth on $\S^n_k \setminus \{p\}$; if $-p$ is the antipodal point of $p$, then $u$ is clearly smooth on $\S^n_k \setminus \{p, -p\}$ (as $r$ is smooth in this region), and since
\[
\mathrm{dist}(x, p) + \mathrm{dist}(x, -p) = \frac{\pi}{\sqrt{k}}
\]
for all $x \in \S^n_k$, we can equivalently write
\[
u = \left( 2 \snk \left( \frac{r}{2} \right) \right)^{2 - n} = \left( 2 \snk \left( \frac{\pi}{2 \sqrt{k}} - \frac{\mathrm{dist}(-p, \cdot)}{2} \right) \right)^{2 - n} = \left( 2 \csk \left( \frac{\mathrm{dist}(-p, \cdot)}{2} \right) \right)^{2 - n},
\]
which is smooth near $-p$.

Secondly, we know that $\S^n_k$ is the completion of the warped product $(0, \pi/\sqrt{k}) \times_{\snk} \S^{n - 1}$; consequently, the Laplace operator of $\S^n_k$ takes the form
\[
\Delta = \partial_{r}^2 + (n - 1) \ctk(r)\partial_r + \frac{1}{\snk^2(r)} \Delta_{\S^{n - 1}}
\]
on $\S^n_k \setminus \{p, -p\}$, where $\partial_r := \langle \nabla r, \nabla \cdot \rangle$.
As $u$ depends only on $r$, a direct computation (recalling \eqref{eq: conformal laplacian on model space}) then immediately shows that $Lu = 0$ on $\S^n_k \setminus \{p, -p\}$.
By smoothness, $Lu = 0$ must hold at $-p$ as well.

For the desired conclusion, we need to prove that for every $f \in C^\infty(\S^n_k)$, we have
\[
\int_{\S^n_k} u \cdot Lf = (n - 2)\omega_{n - 1}f(p).
\]
This would show that $Lu = (n - 2)\omega_{n - 1} \delta_p$, and so $G = u$ by uniqueness of fundamental solutions for $L$.
Let $f \in C^\infty(\S^n_k)$ and let $\varepsilon > 0$ be small.
As $u$ is smooth on $\{r \geq \varepsilon \}$, integration by parts allows us to write
\allowdisplaybreaks
\begin{align} \label{align: greens function on model space part 2}
    \int_{r \geq \varepsilon} u \cdot Lf &= \int_{r \geq \varepsilon} Lu \cdot f + \int_{r = \varepsilon} (- f \cdot \partial_r u + u \cdot \partial_r f) \nonumber \\
    &= \int_{r = \varepsilon} (- f \cdot \partial_r u + u \cdot \partial_r f) & \text{as $Lu = 0$ on $\{r \geq \varepsilon\}$}.
\end{align}
Since $\partial_r u = (2 - n) \left( 2 \snk \left( r/2 \right) \right)^{1 - n} \csk \left( r/2 \right)$, it follows that
\[
\int_{r = \varepsilon} (- f \cdot \partial_r u + u \cdot \partial_r f) = (n - 2) \left( 2 \snk \left( \varepsilon/2 \right) \right)^{1 - n} \csk \left( \varepsilon/2 \right) \int_{r = \varepsilon} f + \left( 2 \snk \left( \varepsilon/2 \right) \right)^{2 - n} \int_{r = \varepsilon} \partial_r f.
\]
Now, we write $(r, \theta)$ as the coordinates on $(0, \pi/\sqrt{k}) \times_{\snk} \S^{n - 1}$, and we denote by $\dd \theta$ the volume form on $\S^{n - 1}$.
By the warped product structure on $\S^n_k$, the volume form of $\{r = \varepsilon\}$ is equal to $\snk^{n - 1}(\varepsilon) \dd \theta$, and so
\[
(n - 2) \left( 2 \snk \left( \varepsilon/2 \right) \right)^{1 - n} \csk \left( \varepsilon/2 \right) \int_{r = \varepsilon} f = (n - 2) \csk^n \left( \varepsilon/2 \right) \int_{\S^{n - 1}} f(\varepsilon, \theta) \dd \theta,
\]
and
\[
\left( 2 \snk \left( \varepsilon/2 \right) \right)^{2 - n} \int_{r = \varepsilon} \partial_r f = \left( 2 \snk \left( \varepsilon/2 \right) \right) \csk^{n - 1} \left( \varepsilon/2 \right) \int_{\S^{n - 1}} \partial_r f (\varepsilon, \theta) \dd \theta.
\]
As $f$ is smooth on $\S^n_k, \partial_r f(\varepsilon, \theta)$ is bounded uniformly in $\theta$ as $\varepsilon \downarrow 0$, so
\[
\lim_{\varepsilon \downarrow 0} \left( \left( 2 \snk \left( \varepsilon/2 \right) \right) \csk^{n - 1} \left( \varepsilon/2 \right) \int_{\S^{n - 1}} \partial_r f (\varepsilon, \theta) \dd \theta \right) = 0,
\]
and by the definition of warped product coordinates and continuity,
\[
\lim_{\varepsilon \downarrow 0} \left( (n - 2) \csk^n \left( \varepsilon/2 \right) \int_{\S^{n - 1}} f(\varepsilon, \theta) \dd \theta \right) = (n - 2) \int_{\S^{n - 1}} f(p) = (n - 2) \omega_{n - 1} f(p).
\]
Letting $\varepsilon \downarrow 0$ in \eqref{align: greens function on model space part 2} and using the above, the desired conclusion follows.
\end{proof}

Thus, on the model space $\S^n_k$, Lemma \ref{lemma: green function on model space} tells us that the fundamental solution singularity at $p$ for the conformal Laplacian of $\S^n_k$ is given by $G = (2 \snk (\mathrm{dist}(p, \cdot)/2))^{2 - n}$, as we mentioned in \eqref{intro-eq: greens function in model space}.
Remark also that, in the limit $k \downarrow 0$, this expression converges to Green's function on Euclidean space.
Let us note the following two properties of $G$; we will expand on these later in this section.
\begin{enumerate}[label={(\roman*)}] \label{enum: properties of greens function in model space}
    \item $G \geq (2/\sqrt{k})^{2 - n}$, and equality holds only at $-p$. \label{enum-item: positive lower bound for greens function}

    \item we have \label{enum-item: distance function in terms of greens function on model space} $\mathrm{dist}(p, \cdot) = 2 \asnk ( G^{1/(2 - n)}/2 )$, where $\asnk : [0, 1/\sqrt{k}] \to [0, \pi/\sqrt{k}]$ is the inverse function of $\snk$.
\end{enumerate}

Now, consider a closed, connected Riemannian manifold $(M, g)$ of dimension $n \geq 3$, with $\ric \geq (n - 1)kg$ for some $k > 0$ and fix a point $p \in M$.
As explained in the introduction, instead of the conformal Laplace operator, we will instead work with the operator $L := -\Delta + n(n - 2)k/4$, which behaves as a comparison operator and is better suited for sharp gradient estimates; when $g$ is an Einstein metric, $L$ is also just the conformal Laplacian.

As $L$ is positive-definite and $M$ is compact, $L$ has a unique fundamental solution with singularity at $p$; we denote it by $G$.
By compactness and since $G(x) \to \infty$ as $x \to p$, $G$ attains its minimum on $M \setminus \{ p \}$.
Since $\Delta G = n(n - 2)k G/4$ away from $p$, at a minimum point, $G$ must be positive by the (strong) maximum principle (as it is non-constant).
Under our convention, we may thus call this fundamental solution \emph{Green's function} for $L$ in the rest of this paper.

With our positive Ricci curvature lower bound, we can, as in the non-negatively curved case from {\cite{ColdingNewMonotonicityFormulas}}, prove a comparison between the model space solution and this Green's function.
The diameter comparison theorem will then give us a curvature-dimensional positive lower bound for $G$ (the same one from item \ref{enum-item: positive lower bound for greens function} above).
Before we prove this inequality, let us remark that the construction of the fundamental solution for a Schr\"{o}dinger operator with positive potential implies sharp asymptotic properties for Green's function for the operator $-\Delta + n(n - 2)k/4$ near its singularity.
Specifically, due to our normalization, we have that $G$ has the same asymptotics as $\mathrm{dist}(p, \cdot)^{2 - n}$ near $p$; that is,
\begin{equation} \label{eq: asymptotics of G near its pole}
    G(x) = \mathrm{dist}(p, x)^{2 - n} + o(\mathrm{dist}(p, x)^{2 - n})
\end{equation}
as $x \to p$, and a similar property holds also for $\nabla G$ and $\hess{G}$.
We provide a proof for these identities in Theorem \ref{appendix-th: sharp asymptotics of greens function near the singularity} in Appendix \ref{appendix-sec: asymptotics of greens function near the singularity} (cf.~also {\cite[Lemma 2.1]{ColdingNewMonotonicityFormulas}} for the case of open manifolds with non-negative Ricci curvature).

\begin{proposition} \label{prop: comparison between greens functions}
Let $(M^n, g)$ be a closed, connected Riemannian manifold of dimension $n \geq 3$, with $\ric \geq (n - 1)kg$ for some $k > 0$.
Fix a point $p \in M$ and let $G$ be Green's function for the operator $- \Delta + n(n - 2)k/4$ with singularity at $p \in M$.

Then, on $M \setminus \{p\}$, we have
\[
G \geq \left( 2 \snk \left( \mathrm{dist}(p, \cdot)/2 \right) \right)^{2 - n}.
\]
\end{proposition}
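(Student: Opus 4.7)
\emph{Plan.} I would prove the comparison by a maximum principle argument applied to the ratio $\phi := \bar G / G$, where $\bar G := (2\snk(r/2))^{2-n}$ with $r := \dist(p,\cdot)$ is the model formula from Lemma~\ref{lemma: green function on model space}. By Bonnet--Myers, $\mathrm{diam}(M) \leq \pi/\sqrt{k}$, so $r < \pi/\sqrt{k}$ on $M \setminus \{p\}$; hence $\snk(r/2) > 0$ there and $\bar G$ is positive and continuous on $M \setminus \{p\}$ (smooth off $\mathrm{Cut}(p)$). The sharp asymptotics $G = r^{2-n} + o(r^{2-n})$ of Theorem~\ref{appendix-th: sharp asymptotics of greens function near the singularity}, combined with the elementary expansion $2\snk(r/2) = r + O(r^3)$, give $\bar G = r^{2-n} + o(r^{2-n})$, so $\phi$ extends continuously to all of $M$ with $\phi(p) = 1$.

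The core step is to show that $\bar G$ is a distributional $L$-subsolution on $M \setminus \{p\}$. Writing $\bar G = f(r)$ with $f(t) := (2\snk(t/2))^{2-n}$, I have $\Delta \bar G = f''(r) + f'(r)\,\Delta r$. The Laplacian comparison theorem gives $\Delta r \leq (n-1)\ctk(r)$ distributionally on $M \setminus \{p\}$, the defect being a non-positive Radon measure supported on $\mathrm{Cut}(p)$. Since $f'(r) = (2-n)(2\snk(r/2))^{1-n}\csk(r/2) < 0$ on $M \setminus \{p\}$ (using $\csk(r/2) > 0$ because $r/2 < \pi/(2\sqrt{k})$), multiplication by $f'(r)$ reverses the inequality:
\[
\Delta \bar G \;\geq\; f''(r) + (n-1)\ctk(r)\,f'(r) \;=\; \tfrac{n(n-2)k}{4}\,\bar G,
\]
where the last equality is the radial identity underlying Lemma~\ref{lemma: green function on model space}. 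Hence $L\bar G \leq 0$ distributionally on $M \setminus \{p\}$. Combined with $LG = 0$ and the algebraic identity $L(\phi G) = \phi\,LG - G\,\Delta\phi - 2\,\nabla\phi \cdot \nabla G$, this becomes the first-order elliptic inequality
\[
\mathcal L \phi \;:=\; \Delta\phi + \tfrac{2}{G}\,\nabla G \cdot \nabla\phi \;\geq\; 0
\]
on $M \setminus \{p\}$, a uniformly elliptic linear inequality with no zeroth-order term. Since $\phi$ is continuous on the compact manifold $M$ with $\phi(p) = 1$, it attains its maximum: either at $p$, in which case $\phi \leq 1$, or at some $x_0 \in M \setminus \{p\}$, in which case the strong maximum principle for $\mathcal L$ forces $\phi$ to be locally constant near $x_0$, hence constant on $M \setminus \{p\}$ by connectedness, and thus identically $1$ by continuity at $p$. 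Either way $\phi \leq 1$ on $M$, i.e., $G \geq \bar G$.

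The main subtle point is the cut locus of $p$, where $\bar G$ is merely Lipschitz. I would handle it through the distributional form of the Laplacian comparison, noting that the cut-locus contribution to $\Delta r$ is a non-positive measure whose sign flips correctly when multiplied by the negative factor $f'(r)$; equivalently, a Calabi-type argument at a hypothetical cut-locus maximizer of $\phi$ works, using the smooth upper barrier for $\bar G$ built by shifting the base-point slightly along a minimizing geodesic from $p$ and invoking the Laplacian comparison at the shifted point.
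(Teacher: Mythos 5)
Your proposal is correct and follows essentially the same route as the paper: the key step in both is the Laplacian comparison $\Delta r \leq (n-1)\ctk(r)$ combined with the negativity of $f'(r)$ to show $\Delta \bar G \geq \tfrac{n(n-2)k}{4}\bar G$ off the singularity, followed by a maximum principle using the matching asymptotics at $p$. The only differences are technical packaging: you work with the quotient $\phi = \bar G/G$ and the drift operator $G^{-2}\div(G^{2}\nabla\cdot)$ (eliminating the zeroth-order term) and treat the cut locus distributionally, whereas the paper compares $G - \bar G$ directly via the barrier-sense inequality and the Hopf--Calabi maximum principle; both are valid.
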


\begin{proof}
Put $r := \mathrm{dist}(p, \cdot)$.
At the points where $r$ is smooth, we may directly compute 
\allowdisplaybreaks
\begin{align} \label{align: comparison between greens funtions}
    \Delta \left( 2 \snk \left( r/2 \right) \right)^{2 - n} &= \div \! \left( \nabla \left( 2 \snk \left( r/2 \right) \right)^{2 - n} \right) \nonumber \\
    &= - (n - 2) \left( 2 \snk \left( r/2 \right) \right)^{1 - n} \csk \left( r/2 \right) \Delta r + \nonumber \\
    &\quad + \frac{(n - 2)k}{4} \left( 2 \snk \left( r/2 \right) \right)^{2 - n} + (n - 1)(n - 2) \left( 2 \snk \left( r/2 \right) \right)^{- n} \csk^2 \left( r/2 \right).
\end{align}
As $\ric \geq (n - 1)kg$, the Laplace comparison theorem tells us that
\begin{equation} \label{eq: laplace comparison for distance function}
    \Delta r \leq (n - 1) \ctk(r) = \frac{n - 1}{2} \left( \frac{\csk \left( r/2 \right)}{\snk \left( r/2 \right)} - \frac{k \snk \left( r/2 \right)}{\csk \left( r/2 \right)} \right),
\end{equation}
and so
\allowdisplaybreaks
\begin{align*}
    -(n - 2) \left( 2 \snk \left( r/2 \right) \right)^{1 - n} \csk \left( r/2 \right) \Delta r &\geq - (n - 1)(n - 2) \left( 2 \snk \left( r/2 \right) \right)^{-n} \csk^2 \left( r/2 \right) \\
    &\quad + \frac{(n - 1)(n - 2)k}{4} \left( 2 \snk \left( r/2 \right) \right)^{2 - n}.
\end{align*}
Combining this inequality with \eqref{align: comparison between greens funtions}, we obtain that
\[
\Delta \left( 2 \snk \left( r/2 \right) \right)^{2 - n} \geq \frac{n(n - 2)k}{4} \left( 2 \snk \left( r/2 \right) \right)^{2 - n}
\]
at all points of $M$ where $r$ is smooth.
Since \eqref{eq: laplace comparison for distance function} and the inequality $\|\nabla r\| \leq 1$ actually hold in the barrier sense at every point of $M$, the above differential inequality must also hold in the barrier sense on $M$.
As $(-\Delta + n(n - 2)k/4)G = (n - 2) \omega_{n - 1} \delta_p$, it follows that
\[
\left( -\Delta + \frac{n(n - 2)k}{4} \right) \left( G - \left( 2 \snk \left( r/2 \right) \right)^{2 - n}  \right) \geq 0
\]
in the barrier sense on $M \setminus \{ p \}$.
By \eqref{eq: asymptotics of G near its pole}, $G$ and $\left( 2 \snk \left( r/2 \right) \right)^{2 - n}$ have the same asymptotics near $p$, so the (Hopf-Calabi) maximum principle implies that $G \geq \left( 2 \snk \left( r/2 \right) \right)^{2 - n}$ on $M \setminus \{ p \}$.
\end{proof}

By diameter comparison, we know that $\mathrm{diam}(M) \leq \pi/\sqrt{k}$ whenever $M$ has $\ric \geq (n - 1)kg$.
The above proposition implies the previously discussed lower bound for $G$.

\begin{corollary} \label{cllr: G is positive, so it is well-posed to define b}
Under the assumptions and notation from Proposition \ref{prop: comparison between greens functions}, we have $G \geq (2/\sqrt{k})^{2 - n}$.
\end{corollary}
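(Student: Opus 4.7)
The plan is to combine the pointwise comparison from Proposition \ref{prop: comparison between greens functions} with the Bonnet-Myers diameter bound invoked just before the statement, together with a monotonicity property of $\snk$. The author actually did most of the work already, so the corollary is essentially a two-line computation; the only subtlety is keeping track of the sign of the exponent $2 - n$.

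Concretely, the steps are as follows. First, I would note that Bonnet-Myers gives $\mathrm{diam}(M) \leq \pi/\sqrt{k}$, so for every $x \in M$,
\[
\frac{\mathrm{dist}(p, x)}{2} \in \left[ 0, \frac{\pi}{2\sqrt{k}} \right].
\]
Second, since $\snk(t) = \sin(t\sqrt{k})/\sqrt{k}$ is monotonically increasing on $[0, \pi/(2\sqrt{k})]$ and attains its maximum $1/\sqrt{k}$ at $t = \pi/(2\sqrt{k})$, we obtain
\[
\snk\!\left( \mathrm{dist}(p, x)/2 \right) \leq \frac{1}{\sqrt{k}}, \qquad \text{equivalently} \qquad 2 \snk\!\left( \mathrm{dist}(p, x)/2 \right) \leq \frac{2}{\sqrt{k}}.
\]

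Third, because $n \geq 3$, the exponent $2 - n$ is strictly negative, so raising both sides to the power $2 - n$ reverses the inequality:
\[
\left( 2 \snk\!\left( \mathrm{dist}(p, x)/2 \right) \right)^{2 - n} \geq \left( \frac{2}{\sqrt{k}} \right)^{2 - n}.
\]
Combining this with Proposition \ref{prop: comparison between greens functions}, which yields $G \geq \left( 2 \snk(\mathrm{dist}(p, \cdot)/2) \right)^{2 - n}$ on $M \setminus \{p\}$, gives the desired lower bound $G \geq (2/\sqrt{k})^{2 - n}$ on $M \setminus \{p\}$, and the bound extends to all of $M$ by interpreting $G(p) = +\infty$ (or simply noting that the infimum of $G$ on $M \setminus \{p\}$ realizes this bound).

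There is no real obstacle here; the only thing to be careful about is not inadvertently flipping an inequality when passing through the negative power $2 - n$, and making sure one invokes the diameter bound to keep $\mathrm{dist}(p, \cdot)/2$ within the monotonicity range of $\snk$ (so the maximum of $\snk$ on the admissible range is indeed $1/\sqrt{k}$, attained at $\pi/(2\sqrt{k})$ rather than somewhere beyond).
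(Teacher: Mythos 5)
Your proposal is correct and matches the paper's (implicit) argument exactly: the paper derives the corollary from Proposition \ref{prop: comparison between greens functions} together with the diameter comparison $\mathrm{diam}(M) \leq \pi/\sqrt{k}$ and the fact that the negative exponent $2-n$ reverses the bound $2\snk(\mathrm{dist}(p,\cdot)/2) \leq 2/\sqrt{k}$. Nothing is missing.
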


Thus, if $G$ is Green's function with singularity at $p \in M$ for the operator $- \Delta + n(n - 2)k/4$ on a closed manifold $(M, g)$ of dimension $n \geq 3$, with $\ric \geq (n - 1)k g$, then $G^{1/(2 - n)}$ takes values in $(0, 2/\sqrt{k}]$, so we may define (in analogy with item \ref{enum-item: distance function in terms of greens function on model space} from before)
\begin{equation} \label{eq: definition of b function}
    b := 2 \asnk \left( \frac{1}{2} G^{\frac{1}{2 - n}} \right),
\end{equation}
on $M \setminus \{ p \}$, and extend $b$ to $M$ by setting $b(p) := 0$.
This way, $b$ is a continuous function on $M$ which vanishes only at $p$, and by Proposition \ref{prop: comparison between greens functions}, we have $b \leq \mathrm{dist}(p, \cdot)$.
If $(M, g)$ is not isometric to $\S^n_k$, then $G > (2/\sqrt{k})^{2 - n}$, and $b$ is smooth on $M \setminus \{ p \}$; in general, $2\snk(b/2)$ is smooth on $M \setminus \{ p \}$, as it is equal to $G^{1/(2 - n)}$.

The asymptotics of $G$ near its singularity imply similar asymptotics for $b$.
We record these in the following lemma, as we will use them in the other sections; the proof is given in Lemma \ref{appendix-lemma: asymptotics of b near the pole} from Appendix \ref{appendix-sec: asymptotics of greens function near the singularity}.

\begin{lemma} \label{lemma: asymptotics of b near the pole}
Under the above notation, we have
\begin{equation} \label{eq: asymptotics of b near its pole}
    \lim_{r \downarrow 0} \sup_{\partial B(p, r)} \left| \frac{b}{r} - 1 \right| = 0,
\end{equation}
\begin{equation} \label{eq: gradient of b asymptotics near its pole}
    \lim_{r \downarrow 0} \sup_{\partial B(p, r)} \| \nabla b - \nabla \dist(p, \cdot) \| = 0,
\end{equation}
and there is some $C > 0$ so that, in a punctured neighbourhood of $p$,
\begin{equation} \label{eq: hessian of b asymptotics near its pole}
    \|\nabla \|\nabla b\| \| \leq \frac{C}{\dist(p, \cdot)}
\end{equation}
\end{lemma}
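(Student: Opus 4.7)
The plan is to reduce all three estimates to the sharp asymptotics of $G$, $\nabla G$, and $\hess{G}$ near $p$ provided by Theorem \ref{appendix-th: sharp asymptotics of greens function near the singularity}, combined with the elementary Taylor expansion $\asnk(x) = x + \tfrac{k}{6} x^3 + O(x^5)$ at the origin. Throughout, write $d := \dist(p, \cdot)$, $u := G^{1/(2-n)}$, and $F(x) := 2\asnk(x/2)$, so that $b = F(u)$ with $F(x) = x + O(x^3)$ and $F'(0) = 1$. The estimate \eqref{eq: asymptotics of b near its pole} then follows from $G = d^{2-n} + o(d^{2-n})$: this gives $u = d(1 + o(1))$ uniformly on $\partial B(p, r)$, and substituting into $F$ yields $b = u + O(u^3) = d(1 + o(1))$.

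For \eqref{eq: gradient of b asymptotics near its pole}, I would differentiate $b = F(u)$ to obtain $\nabla b = F'(u)\nabla u$, where $\nabla u = \tfrac{1}{2-n} G^{(n-1)/(2-n)} \nabla G$. Plugging in $G^{(n-1)/(2-n)} = d^{n-1}(1 + o(1))$ and $\nabla G = -(n-2) d^{1-n} \nabla d + o(d^{1-n})$, the prefactors collapse to give $\nabla u = \nabla d + o(1)$; since $F'(u) = 1 + o(1)$ as $u \to 0$, the estimate follows.

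For \eqref{eq: hessian of b asymptotics near its pole}, the key reduction is that wherever $\nabla b \neq 0$, one has $\|\nabla\|\nabla b\|\| \leq \|\hess{b}\|$, since $\nabla\|\nabla b\|^2 = 2\,\hess{b}(\nabla b,\cdot)$ and $\|\nabla b\| \to 1$ by \eqref{eq: gradient of b asymptotics near its pole}. Differentiating $b = F(u)$ once more gives
\[
\hess{b} = F''(u)\, du \otimes du + F'(u)\hess{u}, \qquad \hess{u} = \tfrac{n-1}{(2-n)^2} G^{\alpha-2}\, \nabla G \otimes \nabla G + \tfrac{1}{2-n} G^{\alpha-1} \hess{G},
\]
with $\alpha := 1/(2-n)$. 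The first piece of $\hess{b}$ is bounded, since $F''$ and $\nabla u$ are bounded near $p$. For the two contributions to $\hess{u}$, inserting the asymptotic estimates $\|\nabla G\|^2 = O(d^{2-2n})$ and $\|\hess{G}\| = O(d^{-n})$ from Theorem \ref{appendix-th: sharp asymptotics of greens function near the singularity}, together with the algebraic identities $G^{\alpha-2} = O(d^{2n-3})$ and $G^{\alpha-1} = O(d^{n-1})$, both products collapse to $O(d^{-1})$, giving $\|\hess{b}\| = O(d^{-1})$.

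The analytic substance sits entirely inside Theorem \ref{appendix-th: sharp asymptotics of greens function near the singularity}: once sharp expansions with uniform $o$-remainders along geodesic spheres are available for $G$, $\nabla G$, and $\hess{G}$, the three items above reduce to elementary algebra and exponent arithmetic. The main obstacle will be ensuring that the asymptotic for $\hess{G}$ supplies a genuinely \emph{pointwise} (not merely integrated) bound $\|\hess{G}\| = O(d^{-n})$ in a punctured neighborhood of $p$, which is what that theorem is designed to provide.
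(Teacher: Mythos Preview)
Your proposal is correct and follows essentially the same route as the paper's own proof (Lemma~\ref{appendix-lemma: asymptotics of b near the pole}): both arguments reduce the three asymptotic statements to the $C^0$, $C^1$, and $C^2$ asymptotics of $G$ from Theorem~\ref{appendix-th: sharp asymptotics of greens function near the singularity} via the chain rule applied to $b = 2\asnk(G^{1/(2-n)}/2)$, with the paper computing $F'(u)$ explicitly as $(1 - kG^{2/(2-n)}/4)^{-1/2}$ while you leave it abstract. Your exponent arithmetic for the Hessian bound is correct, and the reduction $\|\nabla\|\nabla b\|\| \leq \|\hess{b}\|$ (valid since $\|\nabla b\| \to 1$ ensures $\nabla b \neq 0$ near $p$) is a clean way to organize the third estimate.
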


We conclude this section with a discussion related to some examples of Green's functions for the operator $L$.
Sums of expressions as in the statement Lemma \ref{lemma: green function on model space} give the lift of Green's function on a finite quotient of $\S^n_k$.
On $\C \mathbb{P}^2$ with the Fubini-Study metric, Green's function for the conformal Laplacian has been explicitly computed in {\cite[Section 2]{GurskyViaclovskyConnectedSumsEinsteinManifolds}}.

A difference between the case of positive Ricci curvature and that of non-negative Ricci curvature from {\cite{ColdingNewMonotonicityFormulas}} is that the model spaces of constant, positive curvature can be rescaled to yield pairwise non-isometric spaces.
For instance, consider $\S^3$, and let $k \in (0, 1)$ be another positive number.
Clearly, if $g$ is the round metric on $\S^3$, then $\ric \geq 2 k g$.
Fix a point $p \in \S^3$ and let $r := \mathrm{dist}(p, \cdot)$.
Then, with a proof similar to that of Lemma \ref{lemma: green function on model space}, one can show that Green's function $G_k$ with singularity at $p$ for the operator $-\Delta + 3k/4$ on $\S^3$ is given by
\[
G_k = \frac{1}{\sin(r)} \left( \cos \left( r \sqrt{1 - 3k/4} \right) - \cot \left( \pi \sqrt{1 - 3k/4} \right) \sin \left( r \sqrt{1 - 3k/4} \right) \right).
\]
\section{The sharp gradient estimate} \label{sec: sharp gradient estimate}

In this section, we prove Theorem \ref{intro-th: sharp gradient estimate and its rigidity}.
Suppose $(M^n, g), G$, and $b$ are as in the setting from \eqref{eq: definition of b function}.
Since $G$ is positive, as mentioned in Section \ref{sec: intro}, to prove the sharp gradient estimate we claimed in Theorem \ref{intro-th: sharp gradient estimate and its rigidity}, we equivalently need to show that $4 \left\| \nabla \! \snk \left( b/2 \right) \right\|^2 G + k \snk^2 \left( b/2 \right) G \leq G$ on $M \setminus \{ p \}$.
The strategy is to prove \eqref{intro-eq: L operator applied to gradient estimate function}.
Consequently, we require several identities about the gradient and Laplacian of $\snk (b/2)$.
The first is the following simple lemma, which will also be used in the other sections of this paper.

\begin{lemma} \label{lemma: laplace of snk squared (b/2)}
In the the above setting, on $M \setminus \{ p \}$, we have
\[
\Delta \! \snk^2 \left( b/2 \right) = 2n \left\| \nabla \! \snk \left( b/2 \right) \right\|^2 - \frac{nk}{2} \snk^2 \left( b/2 \right).
\]
\end{lemma}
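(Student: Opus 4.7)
The plan is to reduce the identity to a direct computation by introducing the single variable $u := G^{1/(2-n)}$, which satisfies $u = 2\snk(b/2)$ (so $\snk^2(b/2) = u^2/4$) and is smooth on $M \setminus \{p\}$ since $G$ is smooth and positive there.

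First I would translate the equation $LG = 0$, valid on $M \setminus \{p\}$, into a PDE for $u$. Using $\nabla u^{2-n} = (2-n) u^{1-n} \nabla u$ and
\[
\Delta u^{2-n} = (n-2)(n-1) u^{-n} \|\nabla u\|^2 + (2-n) u^{1-n} \Delta u,
\]
the equation $\Delta G = \tfrac{n(n-2)k}{4} G$ becomes, after multiplying by $u^n/(2-n)$, the Bochner-type identity
\[
u \Delta u = (n-1) \|\nabla u\|^2 - \frac{nk}{4} u^2.
\]

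Next I would compute $\Delta \snk^2(b/2) = \Delta(u^2/4) = \tfrac{1}{2}(\|\nabla u\|^2 + u \Delta u)$ and substitute the identity just obtained; the two terms combine to yield
\[
\Delta \! \snk^2(b/2) = \frac{n}{2} \|\nabla u\|^2 - \frac{nk}{8} u^2 = 2n \|\nabla \! \snk(b/2)\|^2 - \frac{nk}{2} \snk^2(b/2),
\]
where the last step uses $\|\nabla u\|^2 = 4\|\nabla \snk(b/2)\|^2$ and $u^2/4 = \snk^2(b/2)$.

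There is essentially no obstacle here: the only subtlety is that $b$ itself need not be smooth (if $G$ attains its minimum value $(2/\sqrt{k})^{2-n}$, then $\snk$ fails to be invertible at $1/\sqrt{k}$), but since the statement is phrased for $\snk^2(b/2) = u^2/4$ directly, we work with $u$ and smoothness is automatic on $M \setminus \{p\}$.
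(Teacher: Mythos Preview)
Your proof is correct and takes essentially the same approach as the paper: both compute $\Delta \snk^2(b/2)$ directly via the chain rule from the equation $\Delta G = \tfrac{n(n-2)k}{4}G$, the only difference being that you first isolate a clean PDE for the auxiliary variable $u = G^{1/(2-n)}$ whereas the paper carries the powers of $G$ through the computation. Your remark about the smoothness of $u$ versus $b$ is well-observed and matches the paper's earlier comment that $2\snk(b/2)$ is smooth on $M\setminus\{p\}$ even when $b$ is not.
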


\begin{proof}
Since $2 \snk (b/2) = G^{1/(2 - n)}$, it follows that, on $M \setminus \{ p \}$,
\begin{equation} \label{eq: gradient norm of snk(b/2)}
    \left\| \nabla \! \snk \left( b/2 \right) \right\|^2 = \frac{1}{4(n - 2)^2} G^{\frac{2n - 2}{2 - n}} \|\nabla G\|^2.
\end{equation}
We may then simply compute
\[
\Delta \! \snk^2 \left( b/2 \right) = \div \! \left( \nabla \left( G^{\frac{2}{2 - n}}/4 \right) \right) = \frac{1}{2(2 - n)} G^{\frac{n}{2 - n}} \Delta G + \frac{n}{2(n - 2)^2} G^{\frac{2n - 2}{2 - n}} \|\nabla G\|^2.
\]
Using that $\Delta G = n(n - 2)k G/4$, \eqref{eq: gradient norm of snk(b/2)}, and the definition of $b$, the above right-hand side may be rewritten as
\[
\Delta \! \snk^2 \left( b/2 \right) = - \frac{nk}{8} G^{\frac{2}{2 - n}} + \frac{n}{2(n - 2)^2} G^{\frac{2n - 2}{2 - n}} \|\nabla G\|^2 = - \frac{nk}{2} \snk^2 \left( b/2 \right) + 2n \left\| \nabla \! \snk \left( b/2 \right) \right\|^2,
\]
which completes the proof.
\end{proof}

The following proposition rewrites the Bochner formula for $\snk^2(b/2)$ in terms of $\nabla \! \snk(b/2)$ (cf.~{\cite[Equation (2.6)]{ColdingNewMonotonicityFormulas}} for the case of non-negative Ricci curvature).
This will allow us to compute the first term in the left-hand side of \eqref{intro-eq: L operator applied to gradient estimate function}.

\begin{proposition} \label{prop: technical identity for sharp gradient estimate part 2}
On $M \setminus \{ p \}$, we have
\allowdisplaybreaks
\begin{align*}
    &\snk^2 \left( b/2 \right) \left( \Delta + \frac{3nk}{2} \right) \left\| \nabla \! \snk \left( b/2 \right) \right\|^2 + (2 - n) \left\langle \nabla \! \snk^2 \left( b/2 \right), \nabla \left\| \nabla \! \snk \left( b/2 \right) \right\|^2 \right\rangle \\
    &= \frac{1}{2} \left( \left\| \tracelessHess \snk^2 \left( b/2 \right) \right\|^2 + \ric \left( \nabla \! \snk^2 \left( b/2 \right), \nabla \! \snk^2 \left( b/2 \right) \right) \right) + \frac{nk^2}{8} \snk^4 \left( b/2 \right),
\end{align*}
where $\tracelessHess \snk^2 (b/2)$ denotes the traceless Hessian of $\snk^2(b/2)$.
\end{proposition}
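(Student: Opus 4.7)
The target identity is a Bochner-type computation applied to the auxiliary function $u := \snk^2(b/2)$. The plan is to apply the standard Bochner formula
\[
\tfrac{1}{2}\Delta \|\nabla u\|^2 = \|\hess u\|^2 + \langle \nabla u, \nabla \Delta u \rangle + \ric(\nabla u, \nabla u),
\]
split $\|\hess u\|^2 = \|\tracelessHess u\|^2 + (\Delta u)^2/n$ to extract the Ricci and traceless Hessian terms on the right, and then substitute the concrete formulas for $\Delta u$, $\nabla u$ and $\|\nabla u\|^2$ that are available on $M\setminus\{p\}$.

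The key inputs from earlier in the paper are Lemma \ref{lemma: laplace of snk squared (b/2)}, which gives
\[
\Delta u = 2n f - \tfrac{nk}{2}\, u,
\]
with $f := \|\nabla\snk(b/2)\|^2$, and the algebraic relation $\|\nabla u\|^2 = 4 u f$ (which follows from $\nabla\snk^2(b/2) = 2\snk(b/2)\nabla\snk(b/2)$). Differentiating $\Delta u$ yields $\nabla \Delta u = 2n\nabla f - (nk/2)\nabla u$, so
\[
\langle\nabla u, \nabla\Delta u\rangle = 2n\langle \nabla u,\nabla f\rangle - \tfrac{nk}{2}\|\nabla u\|^2 = 2n\langle\nabla u,\nabla f\rangle - 2nk u f,
\]
and expanding $(\Delta u)^2/n$ produces the terms $4nf^2$, $-2nkuf$ and $(nk^2/4)u^2$.

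For the left-hand side I would write
\[
\tfrac12\Delta\|\nabla u\|^2 \;=\; \tfrac12\Delta(4uf) \;=\; 2(\Delta u) f + 4\langle \nabla u, \nabla f\rangle + 2u\,\Delta f,
\]
and replace $\Delta u$ on the right using Lemma \ref{lemma: laplace of snk squared (b/2)} again. After these substitutions, every term is polynomial in $u$, $f$, $\langle\nabla u,\nabla f\rangle$, $\Delta f$, together with $\|\tracelessHess u\|^2$ and $\ric(\nabla u,\nabla u)$. Equating the two sides of Bochner, the $4nf^2$ terms cancel, the $\langle \nabla u, \nabla f\rangle$ terms combine into a coefficient $(2-n)$ after dividing by $2$, and the $uf$ terms collapse from $-nkuf$ (on the left) and $-4nkuf$ (on the right) into the expected $-\tfrac{3nk}{2}uf$ after the division by $2$. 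Rearranging so that all derivative terms in $f$ are on the left and curvature/rigidity terms on the right gives exactly the claimed identity.

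The step-by-step computation is entirely mechanical once the substitution $u=\snk^2(b/2)$, $f=\|\nabla\snk(b/2)\|^2$ is fixed; the only item requiring care is keeping track of which applications of Lemma \ref{lemma: laplace of snk squared (b/2)} feed into $(\Delta u)^2$ versus $\nabla\Delta u$ versus the direct $2(\Delta u)f$ arising from the product rule. That bookkeeping is the main obstacle, not any analytic difficulty, since $\snk^2(b/2) = G^{2/(2-n)}/4$ is smooth away from $p$ and all operations are pointwise on $M\setminus\{p\}$.
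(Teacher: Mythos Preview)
Your proposal is correct and follows essentially the same approach as the paper: apply Bochner to $u=\snk^2(b/2)$, split off the traceless Hessian, expand $\tfrac12\Delta\|\nabla u\|^2$ via the product rule on $\|\nabla u\|^2=4uf$, and substitute Lemma~\ref{lemma: laplace of snk squared (b/2)} throughout. (One cosmetic slip: the combined $uf$ term comes out as $+\tfrac{3nk}{2}uf$ on the left, not $-\tfrac{3nk}{2}uf$, consistent with the operator $\Delta+\tfrac{3nk}{2}$ in the statement.)
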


\begin{proof}
Applying the Bochner formula to the function $\snk^2 (b/2)$, we have
\allowdisplaybreaks
\begin{align} \label{align: technical bochner for snk squared (b/2) part 0}
    \frac{1}{2} \Delta \left\| \nabla \! \snk^2 \left( b/2 \right) \right\|^2 &= \left\| \hess \snk^2 \left( b/2 \right) \right\|^2 + \left\langle \nabla \Delta \! \snk^2 \left( b/2 \right), \nabla \! \snk^2 \left( b/2 \right) \right\rangle + \ric \left( \nabla \! \snk^2 \left( b/2 \right), \nabla \! \snk^2 \left( b/2 \right) \right) \nonumber \\
    &= \left\| \tracelessHess \snk^2 \left( b/2 \right) \right\|^2 + \frac{\left| \Delta \! \snk^2 \left( b/2 \right) \right|^2}{n} + \ric \left( \nabla \! \snk^2 \left( b/2 \right), \nabla \! \snk^2 \left( b/2 \right) \right) \nonumber \\
    &\quad + \left\langle \nabla \Delta \! \snk^2 \left( b/2 \right), \nabla \! \snk^2 \left( b/2 \right) \right\rangle.
\end{align}
Using Lemma \ref{lemma: laplace of snk squared (b/2)}, we may compute
\allowdisplaybreaks
\begin{align} \label{align: technical bochner for snk squared (b/2) part 1}
    \left\langle \nabla \Delta \! \snk^2 \left( b/2 \right), \nabla \! \snk^2 \left( b/2 \right) \right\rangle &= \left\langle \nabla \left( 2n \left\| \nabla \! \snk \left( b/2 \right)  \right\|^2 - \frac{nk}{2} \snk^2 \left( b/2 \right) \right), \nabla \! \snk^2 \left( b/2 \right) \right\rangle \nonumber \\
    &= 2n \left\langle \nabla \left\| \nabla \! \snk \left( b/2 \right) \right\|^2, \nabla \! \snk^2 \left( b/2 \right) \right\rangle - 2nk \snk^2 \left( b/2 \right) \left\| \nabla \! \snk \left( b/2 \right) \right\|^2
\end{align}
and
\allowdisplaybreaks
\begin{align} \label{align: technical bochner for snk squared (b/2) part 2}
    \frac{\left| \Delta \! \snk^2 \left( b/2 \right) \right|^2}{n} &= \frac{1}{n} \left( 2n \left\| \nabla \! \snk \left( b/2 \right) \right\|^2 - \frac{nk}{2} \snk^2 \left( b/2 \right) \right)^2 \nonumber \\
    &= 4n \left\| \nabla \! \snk \left( b/2 \right) \right\|^4 - 2nk \snk^2 \left( b/2 \right) \left\| \nabla \! \snk \left( b/2 \right) \right\|^2 + \frac{n k^2}{4} \snk^4 \left( b/2 \right).
\end{align}

Now, using that $\left\| \nabla \! \snk^2 \left( b/2 \right) \right\|^2 = 4 \snk^2 \left( b/2 \right) \left\| \nabla \! \snk \left( b/2 \right) \right\|^2$, we may also compute
\allowdisplaybreaks
\begin{align} \label{align: technical bochner for snk squared (b/2) part 3}
    \Delta \left\| \nabla \! \snk^2 \left( b/2 \right) \right\|^2 &= \Delta \left(4 \snk^2 \left( b/2 \right) \left\| \nabla \! \snk \left( b/2 \right) \right\|^2 \right) \nonumber \\
    &= 4 \Delta \! \snk^2 \left( b/2 \right) \cdot \left\| \nabla \! \snk \left( b/2 \right) \right\|^2 + 4 \snk^2 \left( b/2 \right) \cdot \Delta \left\| \nabla \! \snk \left( b/2 \right) \right\|^2 \nonumber \\
    &\quad + 8 \left\langle \nabla \! \snk^2 \left( b/2 \right), \nabla \left\| \nabla \! \snk \left( b/2 \right) \right\|^2 \right\rangle.
\end{align}
By Lemma \ref{lemma: laplace of snk squared (b/2)} again, it follows that
\allowdisplaybreaks
\begin{equation} \label{eq: technical bochner for snk squared (b/2) part 4}
    4 \Delta \! \snk^2 \left( b/2 \right) \cdot \left\| \nabla \! \snk \left( b/2 \right) \right\|^2 = 4 \left( 2n \left\| \nabla \! \snk \left( b/2 \right) \right\|^2 - \frac{nk}{2} \snk^2 \left( b/2 \right) \right) \cdot \left\| \nabla \! \snk \left( b/2 \right) \right\|^2.
\end{equation}
Combining \eqref{align: technical bochner for snk squared (b/2) part 3} with \eqref{eq: technical bochner for snk squared (b/2) part 4}, we obtain
\allowdisplaybreaks
\begin{equation} \label{eq: technical bochner for snk squared (b/2) part 5}
    \begin{split}
        \Delta \left\| \nabla \! \snk^2 \left( b/2 \right) \right\|^2 &= 8n \left\| \nabla \! \snk \left( b/2 \right) \right\|^4 - 2nk \snk^2 \left( b/2 \right) \left\| \nabla \! \snk \left( b/2 \right) \right\|^2 \\
        &\quad + 4 \snk^2 \left( b/2 \right) \cdot \Delta \left\| \nabla \! \snk \left( b/2 \right) \right\|^2 + 8 \left\langle \nabla \! \snk^2 \left( b/2 \right), \nabla \left\| \nabla \! \snk \left( b/2 \right) \right\|^2 \right\rangle.
    \end{split}
\end{equation}
Plugging \eqref{align: technical bochner for snk squared (b/2) part 1}, \eqref{align: technical bochner for snk squared (b/2) part 2}, and \eqref{eq: technical bochner for snk squared (b/2) part 5} into \eqref{align: technical bochner for snk squared (b/2) part 0}, we finally have that
\allowdisplaybreaks
\begin{align*}
    &\left\| \tracelessHess \snk^2 \left( b/2 \right) \right\|^2 + \ric \left( \nabla \! \snk^2 \left( b/2 \right), \nabla \! \snk^2 \left( b/2 \right) \right) \\
    &= 2 \snk^2 \left( b/2 \right) \cdot \Delta \left\| \nabla \! \snk \left( b/2 \right) \right\|^2 + (4 - 2n) \left\langle \nabla \! \snk^2 \left( b/2 \right), \nabla \left\| \nabla \! \snk \left( b/2 \right) \right\|^2 \right\rangle \\
    &\quad - \frac{nk^2}{4} \snk^4 \left( b/2 \right) + 3nk \snk^2 \left( b/2 \right) \left\| \nabla \! \snk \left( b/2 \right) \right\|^2 \\
    &= 2 \snk^2 \left( b/2 \right) \left( \Delta + \frac{3nk}{2} \right) \left\| \nabla \! \snk \left( b/2 \right) \right\|^2 + (4 - 2n) \left\langle \nabla \! \snk^2 \left( b/2 \right), \nabla \left\| \nabla \! \snk \left( b/2 \right) \right\|^2 \right\rangle - \frac{nk^2}{4} \snk^4 \left( b/2 \right),
\end{align*}
from which the desired conclusion follows.
\end{proof}

Using Proposition \ref{prop: technical identity for sharp gradient estimate part 2}, we have the following corollary computing the first term in the left-hand side of \eqref{intro-eq: L operator applied to gradient estimate function} (cf.~{\cite[Equation (2.7)]{ColdingNewMonotonicityFormulas}} for the non-negatively curved case).

\begin{corollary} \label{cllr: technical identity for sharp gradient estimate part 3}
On $M \setminus \{ p \}$, we have
\allowdisplaybreaks
\begin{align*}
    &\left( \Delta - \frac{n(n - 2)k}{4} \right) \left( \left\| \nabla \! \snk \left( b/2 \right) \right\|^2 G \right) \nonumber \\
    &= 2 \left( 2\snk \left( b/2 \right) \right)^{-n} \left( \left\| \tracelessHess{\snk^2 \left( b/2 \right)} \right\|^2 + \ric \left( \nabla \! \snk^2 \left( b/2 \right), \nabla \! \snk^2 \left( b/2 \right) \right) - (n - 1)k \left\| \nabla \! \snk^2 \left( b/2 \right) \right\|^2 \right) \nonumber \\
    &\quad + \frac{nk^2}{8} G \snk^2 \left( b/2 \right) + \frac{k(n - 4)}{2} G \left\| \nabla \! \snk \left( b/2 \right) \right\|^2.
\end{align*}
\end{corollary}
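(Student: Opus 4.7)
The plan is to reduce the corollary to an algebraic manipulation of the Bochner-type identity from Proposition \ref{prop: technical identity for sharp gradient estimate part 2}, using the fact that $G$ is a specific power of $2\snk(b/2)$ and hence its derivatives can be expressed in terms of derivatives of $\snk(b/2)$.

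Write $f := \|\nabla \snk(b/2)\|^2$ for brevity. The first step is to use the product rule and the equation $LG = 0$ away from $p$, i.e.\ $\Delta G = \tfrac{n(n-2)k}{4} G$, to obtain the cancellation
\begin{equation*}
    \left(\Delta - \tfrac{n(n-2)k}{4}\right)(fG) = G\,\Delta f + 2\langle \nabla f, \nabla G\rangle,
\end{equation*}
so the $f \Delta G$ term is absorbed by the potential.

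Next, since $G = (2\snk(b/2))^{2-n}$ on $M \setminus \{p\}$, I would compute $\nabla G = 2(2-n)(2\snk(b/2))^{1-n}\nabla \snk(b/2)$, and then rewrite the inner product term in terms of $\nabla \snk^2(b/2) = 2\snk(b/2)\nabla \snk(b/2)$, obtaining
\begin{equation*}
    2\langle \nabla f, \nabla G\rangle = 4(2-n)(2\snk(b/2))^{-n}\,\langle \nabla f, \nabla \snk^2(b/2)\rangle.
\end{equation*}
Similarly, using $G = 4\snk^2(b/2)(2\snk(b/2))^{-n}$, I would rewrite $G\Delta f = 4(2\snk(b/2))^{-n}\cdot \snk^2(b/2)\Delta f$.

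At this point, I can substitute $\snk^2(b/2)\Delta f$ using Proposition \ref{prop: technical identity for sharp gradient estimate part 2}, which solves for it as a combination of $\|\tracelessHess \snk^2(b/2)\|^2 + \ric(\nabla \snk^2(b/2),\nabla \snk^2(b/2))$, a lower-order term involving $\snk^4(b/2)$, a term proportional to $\snk^2(b/2) f$, and a multiple of $\langle \nabla \snk^2(b/2), \nabla f\rangle$. The key point is that the coefficient $(2-n)$ in front of $\langle \nabla \snk^2(b/2), \nabla f\rangle$ appearing in the Proposition matches (up to the prefactor $4(2\snk(b/2))^{-n}$) the coefficient $(2-n)$ coming from $2\langle \nabla f, \nabla G\rangle$ above, so these two terms cancel exactly when added together.

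The remaining step is bookkeeping: the $\snk^4(b/2)$ term becomes $\tfrac{nk^2}{8}G\snk^2(b/2)$, and the coefficient of $\snk^2(b/2) f$ combines with the target term $-(n-1)k\|\nabla \snk^2(b/2)\|^2 = -4(n-1)k\snk^2(b/2) f$ (inserted and subtracted) to produce the residual $\tfrac{k(n-4)}{2} G f$ appearing in the statement. I expect no real obstacle here beyond careful coefficient tracking; the elimination of the first-derivative term $\langle \nabla f, \nabla \snk^2(b/2)\rangle$ is the mechanism that makes the identity as clean as stated, and verifying it is the only nontrivial check.
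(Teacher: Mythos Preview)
Your proposal is correct and follows essentially the same route as the paper: product rule plus $\Delta G = \tfrac{n(n-2)k}{4}G$ to reduce to $G\Delta f + 2\langle\nabla f,\nabla G\rangle$, then substitution of Proposition~\ref{prop: technical identity for sharp gradient estimate part 2}, with the key observation being the exact cancellation of the $\langle\nabla\snk^2(b/2),\nabla f\rangle$ terms. The paper organizes the computation slightly differently (first isolating $\Delta(fG)$ and rewriting $(\Delta+\tfrac{n(n-2)k}{4})f$ as $(\Delta+\tfrac{3nk}{2})f+\tfrac{n(n-8)k}{4}f$ before invoking the proposition), but the substance and the mechanism are identical.
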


\begin{proof}
Firstly, we claim that, om $M \setminus \{ p \}$,
\allowdisplaybreaks
\begin{equation} \label{eq: last technical identity for gradiest estimate - claim 1}
    \begin{split}
        \Delta \left( \left\| \nabla \! \snk \left( b/2 \right) \right\|^2 G \right) &= \left( 2 \snk \left( b/2 \right) \right)^{2 - n} \left( \Delta + \frac{n(n - 2)k}{4} \right) \left\| \nabla \! \snk \left( b/2 \right) \right\|^2 \\
        &\quad + 4(2 - n) \left( 2 \snk \left( b/2 \right) \right)^{-n} \left\langle \nabla \! \snk^2 \left( b/2 \right), \nabla \left\| \nabla \! \snk \left( b/2 \right) \right\|^2 \right\rangle
    \end{split}
\end{equation}
Let us prove this claim.
By definition of $b$, we have $G = \left( 2 \snk \left( b/2 \right) \right)^{2 - n}$, so
\begin{equation} \label{eq: last technical identity for gradient estimate part 1}
    \left\langle \nabla G, \nabla \left\| \nabla \! \snk \left( b/2 \right) \right\|^2 \right\rangle = 2(2 - n) \left( 2 \snk \left( b/2 \right) \right)^{-n} \left\langle \nabla \! \snk^2 \left( b/2 \right), \nabla \left\| \nabla \! \snk \left( b/2 \right) \right\|^2 \right\rangle.
\end{equation}
Using \eqref{eq: last technical identity for gradient estimate part 1} and that $\Delta G = n(n - 2)kG/4$ on $M \setminus \{p\}$, we may then compute with the product rule
\allowdisplaybreaks
\begin{align*}
    \Delta \left( \left\| \nabla \! \snk \left( b/2 \right) \right\|^2 G \right) &= \left\| \nabla \! \snk \left( b/2 \right) \right\|^2 \cdot \Delta G + G \cdot \Delta \left\| \nabla \! \snk \left( b/2 \right) \right\|^2 + 2 \left\langle \nabla G, \nabla \left\| \nabla \! \snk \left( b/2 \right) \right\|^2 \right\rangle \nonumber \\
    &= \left( 2 \snk \left( b/2 \right) \right)^{2 - n} \left( \Delta + \frac{n(n - 2)k}{4} \right) \left\| \nabla \! \snk \left( b/2 \right) \right\|^2 \nonumber \\
    &\quad + 4(2 - n) \left( 2 \snk \left( b/2 \right) \right)^{-n} \left\langle \nabla \! \snk^2 \left( b/2 \right), \nabla \left\| \nabla \! \snk \left( b/2 \right) \right\|^2 \right\rangle.
\end{align*}
This finishes the proof of the claim.
Now, observe that we can rewrite
\[
\left( \Delta + \frac{n(n - 2)k}{4} \right) \left\| \nabla \! \snk \left( b/2 \right) \right\|^2 = \left( \Delta + \frac{3nk}{2} \right) \left\| \nabla \! \snk \left( b/2 \right) \right\|^2 + \frac{n(n - 8)k}{4} \left\| \nabla \! \snk \left( b/2 \right) \right\|^2.
\]
Plugging the identity from Proposition \ref{prop: technical identity for sharp gradient estimate part 2} into this equation, it follows that
\allowdisplaybreaks
\begin{equation} \label{eq: last technical identity for gradient estimate part 2}
    \begin{split}
        &\left( 2 \snk \left( b/2 \right) \right)^{2 - n} \left( \Delta + \frac{n(n - 2)k}{4} \right) \left\| \nabla \! \snk \left( b/2 \right) \right\|^2 \\
        &= 2 \left( 2\snk \left( b/2 \right) \right)^{-n} \left( \left\| \tracelessHess{\snk^2 \left( b/2 \right)} \right\|^2 + \ric \left( \nabla \! \snk^2 \left( b/2 \right), \nabla \! \snk^2 \left( b/2 \right) \right) \right) \\
        &\quad + \frac{nk^2}{2} \snk^4 \left( b/2 \right) \cdot \left( 2\snk \left( b/2 \right) \right)^{-n} + \frac{n(n - 8)k}{4} \left( 2\snk \left( b/2 \right) \right)^{2 - n} \left\| \nabla \! \snk \left( b/2 \right) \right\|^2 \\
        &\quad - 4(2 - n) \left( 2 \snk \left( b/2 \right) \right)^{-n} \left\langle \nabla \! \snk^2 \left( b/2 \right), \nabla \left\| \nabla \! \snk \left( b/2 \right) \right\|^2 \right\rangle
    \end{split}
\end{equation}
Combining \eqref{eq: last technical identity for gradiest estimate - claim 1} with \eqref{eq: last technical identity for gradient estimate part 2}, we obtain
\allowdisplaybreaks
\begin{align*}
    \Delta \left( \left\| \nabla \! \snk \left( b/2 \right) \right\|^2 G \right) &= 2 \left( 2\snk \left( b/2 \right) \right)^{-n} \left( \left\| \tracelessHess \, \snk^2 \left( b/2 \right) \right\|^2 + \ric \left( \nabla \! \snk^2 \left( b/2 \right), \nabla \! \snk^2 \left( b/2 \right) \right) \right) \\
    &\quad + \frac{nk^2}{2} \snk^4 \left( b/2 \right) \cdot \left( 2\snk \left( b/2 \right) \right)^{-n} + \frac{n(n - 8)k}{4} \left( 2\snk \left( b/2 \right) \right)^{2 - n} \left\| \nabla \! \snk \left( b/2 \right) \right\|^2,
\end{align*}
and recalling that $G = (2 \snk(b/2))^{2 - n}$, the above may be rewritten as
\allowdisplaybreaks
\begin{align*}
    &\left( \Delta - \frac{n(n - 2)k}{4} \right) \left( \left\| \nabla \! \snk \left( b/2 \right) \right\|^2 G \right) \nonumber \\
    &= 2 \left( 2\snk \left( b/2 \right) \right)^{-n} \left( \left\| \tracelessHess{\snk^2 \left( b/2 \right)} \right\|^2 + \ric \left( \nabla \! \snk^2 \left( b/2 \right), \nabla \! \snk^2 \left( b/2 \right) \right) - (n - 1)k \left\| \nabla \! \snk^2 \left( b/2 \right) \right\|^2 \right) \nonumber \\
    &\quad + \frac{nk^2}{8} G \snk^2 \left( b/2 \right) + \frac{k(n - 4)}{2} G \left\| \nabla \! \snk \left( b/2 \right) \right\|^2.
\end{align*}
\end{proof}

With the above, we are now ready to prove Theorem \ref{intro-th: sharp gradient estimate and its rigidity}; this will require the computation of the second term from the left-hand side of \eqref{intro-eq: L operator applied to gradient estimate function}.
The proof of Theorem \ref{intro-th: sharp gradient estimate and its rigidity} will have two parts: one for the upper bound \eqref{intro-eq: sharp gradient estimate in positive ricci}, and one for the rigidity statement, as the latter requires an additional argument related to the rigidity of warped products.
For convenience, we repeat the statement of Theorem \ref{intro-th: sharp gradient estimate and its rigidity}.

\begin{theorem} \label{th: sharp gradient estimate}
Let $(M^n, g)$ be a closed, connected Riemannian manifold of dimension $n \geq 3$, with $\ric \geq (n - 1)kg$ for some $k > 0$.
Let $G$ be Green's function with singularity at a point $p \in M$ for the operator $-\Delta + n(n - 2)k/4$, and put $b := 2 \asnk \left( G^{1/(2 - n)}/2 \right)$.

Then, on $M \setminus \{ p \}$, we have
\[
4 \left\| \nabla \! \snk \left( b/2 \right) \right\|^2 + k \snk^2 \left( b/2 \right) \leq 1,
\]
and equality holds at \emph{some} point of $M \setminus \{ p \}$ if and only if $(M, g)$ is isometric to $\S^n_k$.
\end{theorem}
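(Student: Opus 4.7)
The plan is to establish the pointwise identity announced in \eqref{intro-eq: L operator applied to gradient estimate function} and then invoke the maximum principle for the Schr\"{o}dinger operator $L = -\Delta + n(n-2)k/4$. The identity follows directly from Corollary \ref{cllr: technical identity for sharp gradient estimate part 3} once one adds to $4$ times it the quantity $k$ times a parallel computation of $L(\snk^2(b/2) G)$. The latter is elementary: using $LG = 0$ away from $p$ together with Lemma \ref{lemma: laplace of snk squared (b/2)} for $\Delta \snk^2(b/2)$, and the identity $\langle \nabla \snk^2(b/2), \nabla G\rangle = 2(2 - n)\|\nabla\snk(b/2)\|^2 G$ coming from $G = (2\snk(b/2))^{2-n}$, the two expressions combine so that all lower-order terms proportional to $k G \|\nabla\snk(b/2)\|^2$ and to $k^2 G \snk^2(b/2)$ cancel exactly, leaving only the quadratic geometric expression on the right-hand side. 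By the Ricci lower bound, that right-hand side is non-positive.

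Set $f := 4\|\nabla\snk(b/2)\|^2 + k\snk^2(b/2)$; a direct computation using $2\nabla\snk(b/2) = \csk(b/2)\nabla b$ gives the useful identity $f - 1 = \csk^2(b/2)(\|\nabla b\|^2 - 1)$. The identity \eqref{intro-eq: L operator applied to gradient estimate function} combined with $LG = 0$ on $M \setminus \{p\}$ means that $w := (f - 1) G$ satisfies $Lw \leq 0$ there. Since the potential of $L$ is strictly positive, the weak maximum principle forbids interior positive maxima of $w$: at any such maximum $q$ we would have $\Delta w(q) \leq 0$, hence $Lw(q) \geq (n(n-2)k/4)\,w(q) > 0$, contradicting $Lw \leq 0$. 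To account for the singularity at $p$, I would work on $M \setminus B(p, \delta)$ and compare $w$ with $\varepsilon G$ for arbitrary $\varepsilon > 0$. Lemma \ref{lemma: asymptotics of b near the pole} yields $\|\nabla b\| \to 1$ and $\snk^2(b/2) \to 0$ uniformly on $\partial B(p, \delta)$ as $\delta \downarrow 0$, so $f - 1 \to 0$ uniformly and therefore $w \leq \varepsilon G$ on $\partial B(p, \delta)$ for $\delta$ sufficiently small. Applying the maximum principle to $w - \varepsilon G$ (which also satisfies $L(w - \varepsilon G) \leq 0$) on $M \setminus B(p, \delta)$ yields $w \leq \varepsilon G$ throughout; sending $\varepsilon \downarrow 0$ gives $w \leq 0$, i.e., $f \leq 1$.

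For rigidity, suppose $f(q) = 1$ for some $q \in M \setminus \{p\}$. Then $w$ attains its maximum value $0$ at the interior point $q$, and the Hopf strong maximum principle applied to $Lw \leq 0$ with non-negative attained maximum forces $w \equiv 0$, so $f \equiv 1$ on $M \setminus \{p\}$. Substituting into \eqref{intro-eq: L operator applied to gradient estimate function}, both $\tracelessHess \snk^2(b/2) \equiv 0$ and $(\ric - (n-1)kg)(\nabla\snk^2(b/2), \nabla\snk^2(b/2)) \equiv 0$ on $M \setminus \{p\}$. The identity $f \equiv 1$ forces $\|\nabla b\| \equiv 1$ wherever $\csk(b/2) \neq 0$; applying the formula $\hess F(b) = F''(b) \dd b \otimes \dd b + F'(b) \hess b$ with $F(x) = \snk^2(x/2)$ to the vanishing of $\tracelessHess \snk^2(b/2)$ and using Lemma \ref{lemma: laplace of snk squared (b/2)} to evaluate the trace yields, after a short algebraic simplification, the Hessian identity $\hess b = \ctk(b)(g - \dd b \otimes \dd b)$, which is exactly the Hessian of the distance function on $\S^n_k$. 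Since $b$ is continuous on the closed manifold $M$, it attains its maximum at some interior point $q_{\max}$; the formula $f - 1 = \csk^2(b/2)(\|\nabla b\|^2 - 1)$ together with $\nabla b(q_{\max}) = 0$ (at any smooth maximum) forces $k\snk^2(b(q_{\max})/2) = 1$, i.e., $b(q_{\max}) = \pi/\sqrt{k}$. Combined with $b \leq \mathrm{dist}(p, \cdot)$ from Proposition \ref{prop: comparison between greens functions}, this gives $\mathrm{diam}(M) \geq \pi/\sqrt{k}$, matching the Bonnet-Myers upper bound; Cheng's maximal diameter theorem then concludes that $(M, g)$ is isometric to $\S^n_k$.

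The principal obstacle is the computation supporting identity \eqref{intro-eq: L operator applied to gradient estimate function}, which depends on a rather delicate cancellation between the Bochner-type expression from Corollary \ref{cllr: technical identity for sharp gradient estimate part 3} and the direct computation of $L(\snk^2(b/2) G)$: without that cancellation, no maximum principle argument would close. A secondary and important technical point is the behavior of $w$ near the singularity $p$, handled via the quantitative asymptotics of Lemma \ref{lemma: asymptotics of b near the pole} and the $\varepsilon G$-barrier argument; once the gradient estimate is in hand, the rigidity step is comparatively mechanical, relying only on recognizing the model-space Hessian identity and invoking Cheng's theorem.
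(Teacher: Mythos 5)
Your derivation of the key identity and the maximum-principle argument for the inequality follow the paper's proof essentially verbatim: adding the direct computation of $(\Delta - n(n-2)k/4)(k\snk^2(b/2)G)$ to four times Corollary \ref{cllr: technical identity for sharp gradient estimate part 3} yields exactly \eqref{eq: main theorem identity}, and your $\varepsilon G$-barrier on $M \setminus B(p,\delta)$ is a spelled-out version of the paper's appeal to the matching asymptotics of $u := fG$ and $G$ from Lemma \ref{lemma: asymptotics of b near the pole}. The rigidity step is where you genuinely diverge. The paper converts $\tracelessHess{\snk^2(b/2)} = 0$ together with Lemma \ref{lemma: laplace of snk squared (b/2)} into the Hessian equation $\hess(1 - \csk(b)) = k\csk(b)g$ and invokes the Cheeger--Colding/Obata-type warped-product rigidity; you instead argue that $f \equiv 1$ forces the maximum $m$ of $b$ to equal $\pi/\sqrt{k}$, hence $\mathrm{diam}(M) = \pi/\sqrt{k}$ via $b \leq \dist(p,\cdot)$, and conclude by Cheng's maximal diameter theorem --- a shorter route that renders the Hessian identity you derive superfluous. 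One phrasing in that step needs care: you cannot assert $\nabla b(q_{\max}) = 0$ outright, because $b = 2\asnk(G^{1/(2-n)}/2)$ fails to be smooth precisely on $\{b = \pi/\sqrt{k}\}$, where $G$ attains the lower bound $(2/\sqrt{k})^{2-n}$ of Corollary \ref{cllr: G is positive, so it is well-posed to define b}. The correct logic is a dichotomy: if $m < \pi/\sqrt{k}$, then $b$ is smooth on all of $M \setminus \{p\}$, so $\nabla b(q_{\max}) = 0$ at the interior maximum and $f(q_{\max}) = k\snk^2(m/2) < 1$, contradicting $f \equiv 1$; hence $m = \pi/\sqrt{k}$ and Cheng's theorem applies. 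With that one-line repair your proof is complete and correct.
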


\begin{proof}
We start with the sharp upper bound.
As $G$ is positive, we equivalently need to show that
\[
4 \left\| \nabla \! \snk \left( b/2 \right) \right\|^2 G + k \snk^2 \left( b/2 \right) G \leq G
\]
on $M \setminus \{ p \}$.
We will use the maximum principle to prove this inequality.

By Corollary \eqref{cllr: technical identity for sharp gradient estimate part 3}, on $M \setminus \{ p \}$, we have
\allowdisplaybreaks
\begin{equation} \label{eq: main theorem part 1}
    \begin{split}
        &\left( \Delta - \frac{n(n - 2)k}{4} \right) \left( 4 \left\| \nabla \! \snk \left( b/2 \right) \right\|^2 G \right) \\
        &= 8 \left( 2\snk \left( b/2 \right) \right)^{-n} \left( \left\| \tracelessHess{\snk^2 \left( b/2 \right)} \right\|^2 + \ric \left( \nabla \! \snk^2 \left( b/2 \right), \nabla \! \snk^2 \left( b/2 \right) \right) - (n - 1)k \left\| \nabla \! \snk^2 \left( b/2 \right) \right\|^2 \right) \\
        &\quad + \frac{nk^2}{2} \snk^2 \left( b/2 \right) \cdot G + 2k(n - 4) G \left\| \nabla \! \snk \left( b/2 \right) \right\|^2,
    \end{split}
\end{equation}
Let us now compute $\left( \Delta - n(n - 2)k/4 \right) \left( k \snk^2 (b/2) G \right)$ on $M \setminus \{ p \}$.
Recall that $G = \left( 2 \snk \left( b/2 \right) \right)^{2 - n}$, so
\begin{equation} \label{align: main theorem part 2}
    \left\langle \nabla G, \nabla \! \snk^2 \left( b/2 \right) \right\rangle = 2(2 - n) G \left\| \nabla \! \snk \left( b/2 \right) \right\|^2.
\end{equation}
Using \eqref{align: main theorem part 2} and Lemma \ref{lemma: laplace of snk squared (b/2)}, it follows that
\allowdisplaybreaks
\begin{align*}
    \left(\Delta + \frac{nk}{2} \right) \left( k \snk^2 \left( b/2 \right) G \right) &= k G \cdot \left( \Delta + \frac{nk}{2} \right) \snk^2 \left( b/2 \right) + k \snk^2 \left( b/2 \right) \Delta G + 2k \left\langle \nabla G, \nabla \! \snk^2 \left( b/2 \right) \right\rangle \\
    &= 2nk G \left\| \nabla \! \snk \left( b/2 \right) \right\|^2 + k \snk^2 \left( b/2 \right) \Delta G + 4k(2 - n) G \left\| \nabla \! \snk \left( b/2 \right) \right\|^2,
\end{align*}
and using that $\Delta G = n(n - 2)kG/4$, we conclude that
\allowdisplaybreaks
\begin{align*}
    \left(\Delta + \frac{nk}{2} \right) \left( k \snk^2 \left( b/2 \right) G \right) &= 2nk G \left\| \nabla \! \snk \left( b/2 \right) \right\|^2 + \frac{n(n - 2)k^2 G}{4} \snk^2 \left( b/2 \right) + 4k(2 - n) G \left\| \nabla \! \snk \left( b/2 \right) \right\|^2 \\
    &= 2k(4 - n) G \left\| \nabla \! \snk \left( b/2 \right) \right\|^2 + \frac{n(n - 2)k^2}{4} G \snk^2 \left( b/2 \right).
\end{align*}
Consequently,
\allowdisplaybreaks
\begin{equation} \label{eq: main theorem part 3}
    \left(\Delta - \frac{n(n - 2)k}{4} \right) \left( k \snk^2 \left( b/2 \right) G \right) = 2k(4 - n) G \left\| \nabla \! \snk \left( b/2 \right) \right\|^2 - \frac{nk^2}{2} G \snk^2 \left( b/2 \right)
\end{equation}
on $M \setminus \{ p \}$.
Combining \eqref{eq: main theorem part 1} with \eqref{eq: main theorem part 3}, we obtain that
\begin{equation} \label{eq: main theorem identity}
    \begin{split}
        &\left(\Delta - \frac{n(n - 2)k}{4} \right) \left( 4 \left\| \nabla \! \snk \left( b/2 \right) \right\|^2 G + k \snk^2 \left( b/2 \right) G \right) \\
        &= 8 \left( 2\snk \left( b/2 \right) \right)^{-n} \left( \left\| \tracelessHess{\snk^2 \left( b/2 \right)} \right\|^2 + \ric \left( \nabla \! \snk^2 \left( b/2 \right), \nabla \! \snk^2 \left( b/2 \right) \right) - (n - 1)k \left\| \nabla \! \snk^2 \left( b/2 \right) \right\|^2 \right),
    \end{split}
\end{equation}
and this is non-negative because, by assumption, $\ric \geq (n - 1)k g$.
Since $(- \Delta + n(n - 2)k/4) G = (n - 2)\omega_{n - 1} \delta_p$, it follows that for $u := 4 \left\| \nabla \! \snk \left( b/2 \right) \right\|^2 G + k \snk^2 \left( b/2 \right) G$, we have
\[
\left(- \Delta + \frac{n(n - 2)k}{4} \right) (u - G) \leq 0
\]
on $M \setminus \{p\}$.
As $u$ and $G$ have the same asymptotics near $p$ (by \eqref{eq: asymptotics of G near its pole} and Lemma \ref{lemma: asymptotics of b near the pole}), the maximum principle implies that $u \leq G$; that is, $4 \left\| \nabla \! \snk \left( b/2 \right) \right\|^2 + k \snk^2 \left( b/2 \right) \leq 1$ on $M \setminus \{ p \}$.

For the rigidity part, suppose that $4 \| \nabla \! \snk(b/2)\|^2 + k \snk^2(b/2) = 1$ at some point $q \in M \setminus \{ p \}$.
This implies that $u(q) = G(q)$, where $u$ is defined above.
Thus, $u - G$ attains its maximum at $q \in M \setminus \{ p \}$, which is equal to zero.
The strong maximum principle, together with the remark that $M \setminus \{ p \}$ is connected, implies that $u - G$ must be constant, equal to zero, and so, as $G$ is positive, it follows that
\begin{equation} \label{eq: rigidity case}
    4 \| \nabla \! \snk(b/2)\|^2 + k \snk^2(b/2) = 1
\end{equation}
on $M \setminus \{ p \}$.
In this case, we obtain that $(- \Delta + n(n - 2)k/4)u = 0$, and so, by \eqref{eq: main theorem identity}, we necessarily have
\begin{equation} \label{eq: rigidity case of gradient estimate}
    \tracelessHess{\snk^2 \left( b/2 \right)} = 0.
\end{equation}

Now, by Lemma \ref{lemma: laplace of snk squared (b/2)}, we know that
\[
\frac{1}{n} \Delta \! \snk^2 \left( b/2 \right) = 2 \left\| \nabla \! \snk \left( b/2 \right) \right\|^2 - \frac{k}{2} \snk^2 \left( b/2 \right).
\]
This equation and \eqref{eq: rigidity case} tell us that
\[
\frac{1}{n} \Delta \! \snk^2 \left( b/2 \right) = \frac{1}{2} - k \snk^2 \left( b/2 \right),
\]
and so, by \eqref{eq: rigidity case of gradient estimate}, we obtain the identity
\[
\hess{\snk^2 \left( b/2 \right)} = \left( \frac{1}{2} - k \snk^2 \left( b/2 \right) \right) g.
\]
Since $\csk(b) = \csk^2 (b/2) - k \snk^2 (b/2)$ and $\csk^2 (b/2) + k \snk^2 (b/2) = 1$, the following equation holds on $M \setminus \{ p \}$:
\[
\hess{\left( 1 - \csk(b) \right)} = k \csk(b) g.
\]
By {\cite[Section 1]{CheegerColdingWarpedProducts}} or {\cite[Corollary 4.3.4]{PetersenRG}}, it follows that $(M, g)$ is isometric to $\S^n_k$, and $b = \mathrm{dist}(p, \cdot)$.
\end{proof}

As we mentioned in the introduction, the rigidity part of the sharp gradient estimate from Theorem \ref{th: sharp gradient estimate} implies a non-locality property of Green's function for the operator $-\Delta + n(n - 2)k/4$.

By the chain rule, we can also write
\[
4 \left\| \nabla \! \snk \left( b/2 \right) \right\|^2 + k \snk^2 \left( b/2 \right) = \csk^2 \left( b/2 \right) \|\nabla b\|^2 + k \snk^2 \left( b/2 \right).
\]
Theorem \ref{th: sharp gradient estimate} tells us that the above right-hand side is bounded above by one, with equality happening only for the model space $\S^n_k$.
Since $\csk^2 + k\snk^2 \equiv 1$, the above also gives us a gradient estimate for $b$.

\begin{corollary} \label{cllr: sharp gradient estimate}
Let $(M^n, g)$ be a closed, connected Riemannian manifold of dimension $n \geq 3$, and with $\ric \geq (n - 1)kg$ for some $k > 0$.
Let $G$ be Green's function with singularity at $p \in M$ for the operator $L := -\Delta + n(n - 2)k/4$, and put $b := 2 \asnk \left( G^{1/(2 - n)}/2 \right)$.

Then, at the points where $b$ is smooth, we have
\[
\| \nabla b \| \leq 1,
\]
and equality holds at \emph{some} point if and only if $(M, g)$ is isometric to $\S^n_k$.
\end{corollary}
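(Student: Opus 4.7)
The corollary is essentially a repackaging of Theorem~\ref{th: sharp gradient estimate} using the chain rule, as hinted in the paragraph preceding it. The plan is to pass from the $\snk(b/2)$ estimate to a $b$ estimate, taking some care at the points where $\csk(b/2)$ vanishes.

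First, I would apply the chain rule to write $\nabla \snk(b/2) = \tfrac{1}{2}\csk(b/2)\,\nabla b$, so that
\[
    4\left\|\nabla \snk(b/2)\right\|^2 + k\snk^2(b/2) = \csk^2(b/2)\,\|\nabla b\|^2 + k\snk^2(b/2).
\]
By Theorem~\ref{th: sharp gradient estimate} the left-hand side is bounded above by $1$, and using the identity $\csk^2 + k\snk^2 \equiv 1$ I can rewrite the resulting inequality as $\csk^2(b/2)\bigl(\|\nabla b\|^2 - 1\bigr) \leq 0$ on $M \setminus \{p\}$.

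Next, I would split into cases at a point $q$ where $b$ is smooth. If $\csk(b(q)/2) \neq 0$, then dividing immediately gives $\|\nabla b(q)\| \leq 1$. If instead $\csk(b(q)/2) = 0$, then $b(q) = \pi/\sqrt{k}$, which is the maximum value of $b$ on $M$ (recall $b \leq \mathrm{dist}(p,\cdot) \leq \pi/\sqrt{k}$ by diameter comparison together with Proposition~\ref{prop: comparison between greens functions}); since $b$ attains a global maximum at $q$ and is smooth there, $\nabla b(q) = 0$, and the estimate $\|\nabla b(q)\| \leq 1$ holds trivially.

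For the rigidity part, suppose $\|\nabla b(q)\| = 1$ at some smooth point $q$. The second case above is ruled out (there $\nabla b$ vanishes), so $\csk(b(q)/2) \neq 0$, and then the computation from the first paragraph gives $4\|\nabla \snk(b/2)\|^2 + k\snk^2(b/2) = 1$ at $q$. By the rigidity clause of Theorem~\ref{th: sharp gradient estimate}, $(M,g)$ is isometric to $\S^n_k$. Conversely, on $\S^n_k$ one has $b = \mathrm{dist}(p,\cdot)$ by Lemma~\ref{lemma: green function on model space}, so $\|\nabla b\| = 1$ on $M \setminus \{p, -p\}$. No serious obstacle is expected; the only subtle point is verifying that the degenerate case $\csk(b/2) = 0$ does not spoil either the estimate or the rigidity, which is handled by the global-maximum observation above.
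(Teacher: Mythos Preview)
Your proof is correct and follows exactly the paper's approach: the chain-rule identity $4\|\nabla\snk(b/2)\|^2 + k\snk^2(b/2) = \csk^2(b/2)\|\nabla b\|^2 + k\snk^2(b/2)$ together with $\csk^2 + k\snk^2 \equiv 1$ reduces the corollary to Theorem~\ref{th: sharp gradient estimate}. Your case split at $\csk(b/2)=0$ is more careful than the paper but in fact vacuous, since (as noted after \eqref{eq: definition of b function}) whenever $(M,g)\not\cong\S^n_k$ one has $G>(2/\sqrt{k})^{2-n}$, hence $b<\pi/\sqrt{k}$ and $\csk(b/2)>0$ on all of $M\setminus\{p\}$, while on $\S^n_k$ the unique point with $b=\pi/\sqrt{k}$ is the antipode, where $b=\dist(p,\cdot)$ is not smooth.
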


We end this section with an additional computation of a different operator applied to $4 \| \nabla \! \snk(b/2)\|^2 + k \snk^2(b/2)$, which will consequently also give an alternative proof of Theorem \ref{intro-th: sharp gradient estimate and its rigidity}.
We present it here, as this identity will be needed in Sections \ref{sec: unparametrized monotonicity formulae} and \ref{sec: one-parameter family of monotonicity formulae}, and it follows immediately from Proposition \ref{prop: technical identity for sharp gradient estimate part 2}.

Consider the operator
\begin{equation} \label{eq: mathcal L operator}
    \mathcal{L} := \Delta + 2 \langle \nabla \log G, \nabla \cdot \rangle = G^{-2} \div (G^2 \nabla \cdot).
\end{equation}
This operator is also used in {\cite[Section 2]{ColdingNewMonotonicityFormulas}}, even though our case is that of positive Ricci curvature.
Since $G = (2 \snk (b/2))^{2 - n}$, we may also write
\begin{equation} \label{eq: alternative form of mathcal L operator}
    \mathcal{L} = \Delta + \frac{2 - n}{\snk^2 \left( b/2 \right)} \left\langle \nabla \! \snk^2 \left( b/2 \right), \nabla \cdot \right\rangle.
\end{equation}

With this operator, we have the following identity.

\begin{proposition} \label{prop: mathcal L operator applied to main function}
On $M \setminus \{ p \}$, we have
\begin{equation} \label{eq: mathcal L operator applied to main function}
    \begin{split}
        &\mathcal{L} \left( 4 \left\| \nabla \! \snk \left( b/2 \right) \right\|^2 + k \snk^2 \left( b/2 \right) \right) \\
        &= \frac{8}{\left(2 \snk \left( b/2 \right) \right)^2} \left( \left\| \tracelessHess \snk^2 \left( b/2 \right) \right\|^2 + \ric \left( \nabla \! \snk^2 \left( b/2 \right), \nabla \! \snk^2 \left( b/2 \right) \right) - (n - 1)k \left\| \nabla \! \snk^2 \left( b/2 \right) \right\|^2 \right),
    \end{split}
\end{equation}
\end{proposition}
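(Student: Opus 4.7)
The plan is to derive the identity directly from the two main ingredients already established in the section: Proposition \ref{prop: technical identity for sharp gradient estimate part 2} (the Bochner-type identity for $\snk^2(b/2)$) and Lemma \ref{lemma: laplace of snk squared (b/2)} (the Laplacian of $\snk^2(b/2)$), together with the alternative form \eqref{eq: alternative form of mathcal L operator} of $\mathcal{L}$. It is convenient to abbreviate $f := \snk^2(b/2)$ and $h := \|\nabla\snk(b/2)\|^2$, so that the left-hand side is $\mathcal{L}(4h + kf)$ and we have the elementary identity $\|\nabla f\|^2 = 4fh$.

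First I would apply $\mathcal{L}$ to $h$. Using \eqref{eq: alternative form of mathcal L operator}, we have $f\,\mathcal{L} h = f\Delta h + (2-n)\langle \nabla f, \nabla h\rangle$. But this is exactly the left-hand side of the identity in Proposition \ref{prop: technical identity for sharp gradient estimate part 2} minus the $\tfrac{3nk}{2} f h$ term, so it yields
\[
f\,\mathcal{L} h \;=\; \tfrac{1}{2}\bigl(\|\tracelessHess f\|^2 + \ric(\nabla f, \nabla f)\bigr) + \tfrac{nk^2}{8} f^2 - \tfrac{3nk}{2} f h.
\]
Dividing by $f$ isolates $\mathcal{L} h$, with the $\tfrac{2}{f}$ prefactor that will eventually become the $\tfrac{8}{(2\snk(b/2))^2}$ from the statement.

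Next I would compute $\mathcal{L} f$. By Lemma \ref{lemma: laplace of snk squared (b/2)}, $\Delta f = 2nh - \tfrac{nk}{2}f$, and by \eqref{eq: alternative form of mathcal L operator} the drift term equals $\tfrac{(2-n)}{f}\|\nabla f\|^2 = 4(2-n) h$, so
\[
\mathcal{L} f \;=\; (8 - 2n)\, h \;-\; \tfrac{nk}{2}\, f.
\]
Now I would form the linear combination $4\mathcal{L} h + k\,\mathcal{L} f$. The $\tfrac{nk^2}{2} f$ terms cancel between the two pieces, and collecting the remaining $h$-coefficients gives $-6nk + (8-2n)k = -8(n-1)k$. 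This produces
\[
\mathcal{L}(4h + kf) \;=\; \tfrac{2}{f}\bigl(\|\tracelessHess f\|^2 + \ric(\nabla f, \nabla f)\bigr) - 8(n-1)k\, h.
\]

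Finally, since $\|\nabla f\|^2 = 4fh$, we have $\tfrac{2}{f}\cdot(n-1)k\|\nabla f\|^2 = 8(n-1)k h$, so the last term can be absorbed to give the stated right-hand side, using $(2\snk(b/2))^2 = 4f$ to convert the $\tfrac{2}{f}$ factor into $\tfrac{8}{(2\snk(b/2))^2}$. The computation is essentially bookkeeping once Proposition \ref{prop: technical identity for sharp gradient estimate part 2} is in hand; the only mild subtlety is the exact cancellation of the $k^2$-terms and the matching of the $kh$-coefficient, which I expect to be the main place to be careful but is not a genuine obstacle.
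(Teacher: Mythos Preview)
Your proof is correct and follows essentially the same approach as the paper: use Proposition~\ref{prop: technical identity for sharp gradient estimate part 2} together with the alternative form \eqref{eq: alternative form of mathcal L operator} to compute $\mathcal{L}\|\nabla\snk(b/2)\|^2$, use Lemma~\ref{lemma: laplace of snk squared (b/2)} and \eqref{eq: alternative form of mathcal L operator} to compute $\mathcal{L}\snk^2(b/2)$, and combine. The only cosmetic difference is that the paper absorbs the $(n-1)k\|\nabla f\|^2$ term into the Bochner expression at the first step (obtaining remainder terms $\tfrac{nk^2}{8}f$ and $\tfrac{(n-4)k}{2}h$ that then visibly cancel against $\mathcal{L}(\tfrac{k}{4}f)$), whereas you keep the raw $-\tfrac{3nk}{2}h$ term and reinterpret the residual $-8(n-1)kh$ via $\|\nabla f\|^2 = 4fh$ at the end; the arithmetic is identical.
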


\begin{proof}
Firstly, Proposition \ref{prop: technical identity for sharp gradient estimate part 2} and \eqref{eq: alternative form of mathcal L operator} tell us that
\allowdisplaybreaks
\begin{align} \label{align: main function is L-subharmonic part 1}
    \mathcal{L} \left\| \nabla \! \snk \left( b/2 \right) \right\|^2 &= \frac{1}{2 \snk^2 \left( b/2 \right)} \left( \left\| \tracelessHess \snk^2 \left( b/2 \right) \right\|^2 + \ric \left( \nabla \! \snk^2 \left( b/2 \right), \nabla \! \snk^2 \left( b/2 \right) \right) \right) \nonumber \\
    &\quad+ \frac{nk^2}{8} \snk^2 \left( b/2 \right) - \frac{3nk}{2} \left\| \nabla \! \snk \left( b/2 \right) \right\|^2 \nonumber \\
    &= \frac{1}{2 \snk^2 \left( b/2 \right)} \left( \left\| \tracelessHess \snk^2 \left( b/2 \right) \right\|^2 + \ric \left( \nabla \! \snk^2 \left( b/2 \right), \nabla \! \snk^2 \left( b/2 \right) \right) - (n - 1) k \left\| \nabla \! \snk^2 \left( b/2 \right) \right\|^2 \right) \nonumber \\
    &\quad + \frac{nk^2}{8} \snk^2 \left( b/2 \right) + \frac{(n - 4)k}{2} \left\| \nabla \! \snk \left( b/2 \right) \right\|^2.
\end{align}
We can also compute
\[
\frac{2 - n}{\snk^2 \left( b/2 \right)} \left\langle \nabla \! \snk^2 \left( b/2 \right),  \nabla \! \snk^2 \left( b/2 \right) \right\rangle = 4(2 - n) \left\| \nabla \! \snk \left( b/2 \right) \right\|^2.
\]
Using this, Lemma \ref{lemma: laplace of snk squared (b/2)}, and \eqref{eq: alternative form of mathcal L operator}, we obtain that
\begin{equation} \label{align: main function is L-subharmonic part 2}
    \mathcal{L} \left( \frac{k}{4} \snk^2 \left( b/2 \right) \right) = -\frac{nk^2}{8} \snk^2 \left( b/2 \right) - \frac{(n - 4)k}{2} \left\| \nabla \! \snk \left( b/2 \right) \right\|^2.
\end{equation}
The desired conclusion follows by combining \eqref{align: main function is L-subharmonic part 1} with \eqref{align: main function is L-subharmonic part 2}.
\end{proof}

\begin{remark}
Proposition \ref{prop: mathcal L operator applied to main function} and the same maximum principle argument as in the proof of Theorem \ref{th: sharp gradient estimate} (with the remark that $\mathcal{L}1 = 0$) give a different proof of Theorem \ref{th: sharp gradient estimate}.
\end{remark}
\section{The unparametrized monotonicity formulae} \label{sec: unparametrized monotonicity formulae}

In this section, we derive the (unparametrized) monotonicity formulae for Green's function (for \eqref{intro-eq: A functional in positive ricci} and \eqref{intro-eq: V functional in positive ricci}).
These should be compared to the ones from {\cite[Section 2]{ColdingNewMonotonicityFormulas}}, since, as we will see, the limit $k \downarrow 0$ specializes to the formulae from {\cite{ColdingNewMonotonicityFormulas}}.
As in the previous section, let $(M^n, g)$ be a closed, connected Riemannian manifold of dimension $n \geq 3$, with $\ric \geq (n - 1)k g$ for some $k > 0$.
Fix a point $p \in M$ and let $G$ be Green's function with singularity at $p$ for the operator $-\Delta + n(n - 2)k/4$; put $b := 2 \asnk \left( G^{1/(2 - n)}/2 \right)$, or equivalently, $G = (2 \snk (b/2))^{2 - n}$.
As remarked in \eqref{eq: definition of b function}, $b$ is a continuous function on $M$ which vanishes only at $p$.
Denote by $m$ the maximum of $b$.
By Proposition \ref{prop: comparison between greens functions}, $b \leq \mathrm{dist}(p, \cdot)$, so $m \in (0, \mathrm{diam}(M)] \subset (0, \pi\sqrt{k}]$, and by Cheng's maximal diameter theorem, $m = \pi/\sqrt{k}$ if and only if $(M, g)$ is isometric to $\S^n_k$, in which case $b = \dist(p, \cdot)$.

The first goal of this section is to introduce the function $A$ from \eqref{intro-eq: A functional in positive ricci} and prove that it is non-increasing.
We will do so by first defining a more general functional $I_u$ below (which will also be used in Section \ref{sec: one-parameter family of monotonicity formulae}), discussing some of its properties, and then specializing to a particular function $u$ which gives us the $A$ function.

For a continuous function $u : M \to \R$, define
\begin{equation} \label{eq: definition of Iu}
    I_u : (0, m] \to \R, \quad I_u(r) := \left( 2 \snk \left( r/2 \right) \right)^{1 - n} \csk \left( r/2 \right) \int_{b = r} u \|\nabla b\| + \frac{nk}{4} \int_{b \leq r} u G.
\end{equation}
Since $G \sim \mathrm{dist}(p, \cdot)^{2 - n}$ near $p$ (cf.~\eqref{eq: asymptotics of G near its pole}) and since $n \geq 3$, it follows that $G$ is integrable on $M$, and thus $I_u$ is well-defined; when $(M, g) \simeq \S^n_k$, even though $b$ is not smooth on $\{b = m\}$, the term $\csk(b/2) \|\nabla b\|$ may be rewritten as $2\|\nabla \snk(b/2)\|$, and $\snk(b/2)$ is smooth on this level set (in fact, the first term in the definition of $I_u(m)$ is equal to zero in this case).

Our definition of $I_u$ above is an extension of the functional with the same name from {\cite[Section 2]{ColdingNewMonotonicityFormulas}}, adapted to positive curvature.
In the limit $k \downarrow 0$, which, as previously discussed, corresponds to open manifolds of non-negative Ricci curvature, the function $I_u$ is a generalization of Almgren's frequency for harmonic functions.
In this form, it has been used by Colding and Minicozzi in the study of harmonic functions of polynomial growth (cf.~{\cite{CMHarmonicFunctionsPolynomialGrowth}}).
Variations of this frequency have also been utilized in other contexts of function theory, such as in obtaining mean value theorems for smooth functions (cf.~{\cite[Section 2]{NiMVT}}).

Since $G = \left( 2 \snk (b/2) \right)^{2 - n}$, we have
\begin{equation} \label{eq: nabla G vs nabla b}
    \nabla G = (2 - n) \left( 2 \snk \left( b/2 \right) \right)^{1 - n} \csk \left( b/2 \right) \nabla b,
\end{equation}
so we may rewrite
\begin{equation} \label{eq: Iu in terms of G and not b}
    I_u(r) = \frac{1}{n - 2} \int_{b = r} u \|\nabla G\| + \frac{nk}{4} \int_{b \leq r} u G.
\end{equation}

Let us now make the following remarks regarding the continuity and differentiability of $I_u$.
By Sard's theorem and because the sublevel sets of $b$ are compact (since $M$ is closed), the set of regular values of $b$ form an open and dense (in fact, full measure) subset of $[0, m]$ (cf.~Lemma \ref{appendix-lemma: regular values of b} from Appendix \ref{appendix-sec: integration over the level sets of b}).
If $u : M \to \R$ is continuous on $M$ and smooth on $M \setminus \{p\}$, the second term in the definition of $I_u$ is absolutely continuous on $[0, m]$, and the first term is continuous on $(0, m]$ and locally absolutely continuous on $(0, m)$ by Propositions  \ref{appendix-prop: local absolute continuity of integral along level sets of b} from Appendix \ref{appendix-sec: integration over the level sets of b}; thus, $I_u$ is locally absolutely continuous on $(0, m)$.
As in this proposition (that is, using the divergence theorem), at every regular value $r \in (0, m)$ of $b$, we may compute
\[
\frac{d}{dr} \left( \frac{1}{n - 2} \int_{b = r} u \|\nabla G\| \right) = \frac{1}{n - 2} \int_{b = r} \div \! \left( u \|\nabla G\| \frac{\nabla b}{\|\nabla b\|} \right) \frac{1}{\|\nabla b\|}.
\]
By \eqref{eq: nabla G vs nabla b}, $\nabla G/\|\nabla G\| = - \nabla b/\|\nabla b\|$, so we may rewrite the above right-hand side as
\pagebreak
\begin{align*}
    \frac{d}{dr} \left( \frac{1}{n - 2} \int_{b = r} u \|\nabla G\| \right) &= \frac{1}{n - 2} \int_{b = r} \div \! \left( - u \nabla G \right) \frac{1}{\|\nabla b\|} \\
    &= \frac{1}{n - 2} \int_{b = r} \left( - u \cdot \frac{n(n - 2)k}{4} G - \langle \nabla u, \nabla G \rangle \right) \frac{1}{\|\nabla b\|} \\
    &= - \frac{nk}{4} \int_{b = r} \frac{uG}{\|\nabla b\|} + \left( 2 \snk \left( r/2 \right) \right)^{1 - n} \csk \left( r/2 \right) \int_{b = r} \left\langle \nabla u, \frac{\nabla b}{\|\nabla b\|} \right\rangle,
\end{align*}
where the second equality follows because $\Delta G = n(n - 2)kG/4$ on $M \setminus \{ p \}$, and the last equality follows by \eqref{eq: nabla G vs nabla b}.
Similarly, by the coarea formula,
\allowdisplaybreaks
\begin{align*}
    \frac{d}{dr} \left( \frac{nk}{4} \int_{b \leq r} uG \right) &= \frac{d}{dr} \left( \frac{nk}{4} \int_0^r \left( \int_{b = t} \frac{uG}{\|\nabla b\|} \right) \dd t \right) = \frac{nk}{4} \int_{b = r} \frac{uG}{\|\nabla b\|}.
\end{align*}
Combining the above two equations, we conclude that
\begin{equation} \label{eq: derivative of Iu}
    I_u'(r) = \left( 2 \snk \left( r/2 \right) \right)^{1 - n} \csk \left( r/2 \right) \int_{b = r} \left\langle \nabla u, \frac{\nabla b}{\|\nabla b\|} \right\rangle
\end{equation}
at every regular value $r \in (0, m)$ of $b$.
In the rest of this paper, since all equations involving derivatives of the function $I_u$ hold for almost every point in $(0, m)$, and since $I_u$ is locally absolutely continuous, we will be able to conclude monotonicity properties on $I_u$ based on the sign of its derivative.

The limit of $I_u(r)$ as $r \to 0$ can be computed using the asymptotics of $b$ near the singularity $p$; more specifically, if $u : M \to \R$ is continuous, \eqref{eq: asymptotics of b near its pole} and \eqref{eq: gradient of b asymptotics near its pole} from Lemma \ref{lemma: asymptotics of b near the pole} and the continuity of $u$ imply that (cf.~Lemma \ref{appendix-lemma: mean value of continuous function along level or sublevel sets of b} in Appendix \ref{appendix-sec: asymptotics of greens function near the singularity})
\begin{equation} \label{eq: computation of Iu(0)}
    \lim_{r \downarrow 0} I_u(r) = \omega_{n - 1} u(p).
\end{equation}

We also make the following final remark regarding the equation for the derivative of $I_u$ and its expression in terms of the divergence theorem.

\begin{lemma} \label{lemma: computing derivative of Iu using the divergence theorem}
With the same notation as in \eqref{eq: definition of Iu} and \eqref{eq: derivative of Iu}, if $u : M \to \R$ is continuous on $M$ and smooth on $M \setminus \{ p \}$, then for almost every $r \in (0, m)$, we have
\[
I_u'(r) = \left( 2 \snk \left( r/2 \right) \right)^{1 - n} \csk \left( r/2 \right) \int_{b \leq r} \Delta u.
\]
\end{lemma}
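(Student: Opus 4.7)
My plan is to reduce the claim to the divergence theorem applied to the vector field $\nabla u$ on the sublevel set $\Omega_r := \{b \leq r\}$. For almost every $r \in (0, m)$, $r$ is a regular value of $b$ (Lemma \ref{appendix-lemma: regular values of b}), so $\Omega_r$ is a domain with smooth boundary $\partial \Omega_r = \{b = r\}$ along which $\nabla b/\|\nabla b\|$ is precisely the outward unit normal, since $b$ increases in the direction of its gradient. Hence the integrand $\langle \nabla u, \nabla b/\|\nabla b\|\rangle$ appearing in \eqref{eq: derivative of Iu} is exactly the flux of $\nabla u$ through $\partial \Omega_r$.

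Because $u$ is smooth only on $M \setminus \{p\}$ and $p \in \Omega_r$, one cannot immediately apply the divergence theorem on $\Omega_r$. My approach is instead to excise a small geodesic ball $B(p, \varepsilon)$, apply the theorem on the annular region $\Omega_r \setminus \overline{B(p, \varepsilon)}$, and then send $\varepsilon \downarrow 0$. The divergence theorem on this region gives
\begin{equation*}
    \int_{\Omega_r \setminus \overline{B(p, \varepsilon)}} \Delta u = \int_{b = r} \left\langle \nabla u, \frac{\nabla b}{\|\nabla b\|} \right\rangle - \int_{\partial B(p, \varepsilon)} \left\langle \nabla u, \nabla \dist(p, \cdot) \right\rangle,
\end{equation*}
the minus sign arising because the outward unit normal to $\Omega_r \setminus \overline{B(p, \varepsilon)}$ along $\partial B(p, \varepsilon)$ is $-\nabla \dist(p, \cdot)$. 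Once the $\varepsilon \downarrow 0$ limit eliminates the inner boundary term and converts the left-hand side into $\int_{b \leq r} \Delta u$, substituting into \eqref{eq: derivative of Iu} yields the claimed identity.

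The main obstacle is verifying that the inner boundary contribution $\int_{\partial B(p, \varepsilon)} \langle \nabla u, \nabla \dist(p, \cdot)\rangle$ vanishes as $\varepsilon \downarrow 0$. Since $\partial B(p, \varepsilon)$ has area $\omega_{n - 1} \varepsilon^{n - 1} + o(\varepsilon^{n - 1})$, it suffices that the radial derivative $|\langle \nabla u, \nabla \dist(p, \cdot)\rangle|$ grow strictly slower than $\varepsilon^{1 - n}$ uniformly on these small spheres. For the functions $u$ that will be plugged into this lemma in the sequel, all built smoothly from $\snk(b/2)$ and $\nabla \snk(b/2)$, this bound will follow from the asymptotics of $b$ near $p$ recorded in Lemma \ref{lemma: asymptotics of b near the pole}; one interprets the right-hand side of the claim, when necessary, as the improper integral $\lim_{\varepsilon \downarrow 0} \int_{\Omega_r \setminus B(p, \varepsilon)} \Delta u$, which agrees with the ordinary Lebesgue integral whenever $\Delta u$ is integrable on $\Omega_r$.
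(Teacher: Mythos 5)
There is a genuine gap. Your strategy of excising a small geodesic ball and applying the divergence theorem on the annulus is sound in outline, but the crucial step — showing that the inner boundary term $\int_{\partial B(p, \varepsilon)} \langle \nabla u, \nabla \dist(p, \cdot) \rangle$ vanishes as $\varepsilon \downarrow 0$ — is not proved under the lemma's actual hypotheses. The lemma assumes only that $u$ is continuous on $M$ and smooth on $M \setminus \{p\}$; it imposes no bound on $\|\nabla u\|$ near $p$, and a function that is merely continuous at $p$ can have a gradient blowing up faster than $\dist(p, \cdot)^{1-n}$. You acknowledge this by deferring to ``the functions $u$ that will be plugged into this lemma in the sequel,'' but that proves a weaker statement than the one asserted, and it inverts the logical order: the lemma is meant to be a general tool, not a case-by-case observation.

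The paper's proof closes exactly this gap with a different excision. Instead of removing a geodesic ball, remove the sublevel set $\{b < r_i\}$ and apply the divergence theorem on $\{r_i \leq b \leq r\}$. The inner boundary integral is then $\int_{b = r_i} \langle \nabla u, \nabla b / \|\nabla b\| \rangle$, which by \eqref{eq: derivative of Iu} equals $\left( 2 \snk(r_i/2) \right)^{n-1} \csk^{-1}(r_i/2)\, I_u'(r_i)$ — i.e., it is expressible purely in terms of the derivative of $I_u$ itself. Since $I_u$ extends continuously to $[0, m]$ (by \eqref{eq: computation of Iu(0)}) and is hence bounded, an elementary real-analysis fact produces a sequence of regular values $r_i \downarrow 0$ with $2\snk(r_i/2)\, I_u'(r_i) \to 0$; the remaining factor $\left( 2\snk(r_i/2) \right)^{n-2}$ tends to $0$ because $n \geq 3$, so the inner boundary term vanishes along this sequence with no gradient estimate on $u$ whatsoever. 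This also settles the interpretation of $\int_{b \leq r} \Delta u$ as the limit of the integrals over $\{r_i \leq b \leq r\}$. If you want to keep your geodesic-ball excision, you must either add the hypothesis $\|\nabla u\| \leq C \dist(p, \cdot)^{-1}$ to the lemma (which would then be insufficient for the generality in which the paper states and reuses it) or replace the inner boundary by a level set of $b$ and run the paper's soft argument.
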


\begin{proof}
Since $u$ is continuous on $M$ and smooth on $M \setminus \{ p \}, I_u$ is continuous on $[0, m]$, hence bounded.
By the asymptotics of Green's function near its singularity (cf.~\eqref{eq: gradient of b asymptotics near its pole} from Lemma \ref{lemma: asymptotics of b near the pole}), $\|\nabla b(x)\| \to 1$ as $x \to p$ and so all values of $r$ close enough to zero are regular values of $b$.
It follows that $I_u$ is differentiable for $r$ near $0$, and since it is bounded on $[0, m]$, there must exist a decreasing sequence $(r_i)_{i \in \N}$ of positive numbers with $r_i \to 0$ as $i \to \infty$ which are regular values of $b$ and for which $2 \snk(r_i/2) I_u'(r_i) \to 0$ as $i \to \infty$\footnote{If, for $\varepsilon > 0$, $f : [0, \varepsilon] \to \R$ is a continuous function which is differentiable on $(0, \varepsilon)$, then a contradiction argument shows that there is a decreasing sequence of positive numbers $(r_i)_{i \in \N}$ for which $r_i \to 0$ and $r_i f'(r_i) \to 0$ as $i \to \infty$ (cf.~{\cite[End of page $1053$]{ColdingMinicozziMonotonicityFormulas}}).
Since $2\snk(r/2) \sim r$ as $r \downarrow 0$, we may replace $r_i$ with $2 \snk(r_i/2)$ in this previous limit.}.
Then, for regular values $r \in (0, m)$, we may compute
\allowdisplaybreaks
\begin{align*}
    \left( 2 \snk \left( r/2 \right) \right)^{1 - n} \csk \left( r/2 \right) \int_{r_i \leq b \leq r} \Delta u &= \left( 2 \snk \left( r/2 \right) \right)^{1 - n} \csk \left( r/2 \right) \int_{b = r} \left\langle \nabla u, \frac{\nabla b}{\|\nabla b\|} \right\rangle \\
    &\quad - \left( 2 \snk \left( r/2 \right) \right)^{1 - n} \csk \left( r/2 \right) \int_{b = r_i} \left\langle \nabla u, \frac{\nabla b}{\|\nabla b\|} \right\rangle \\
    &= I_u'(r) - \left( 2 \snk \left( r/2 \right) \right)^{1 - n} \csk \left( r/2 \right) \left( 2 \snk \left( r_i/2 \right) \right)^{n - 1} \csk^{-1} \left( r_i/2 \right) I_u'(r_i),
\end{align*}
where the first equality follows from the divergence theorem and the second by \eqref{eq: derivative of Iu}.
Letting $r_i \to 0$ and recalling that $n \geq 3$, the desired conclusion follows.
\end{proof}

In view of Lemma \ref{lemma: computing derivative of Iu using the divergence theorem}, the constancy of $I_u$ can be seen as a ``mean-value property'' for $u$.
We now return to computations regarding the function $I_u$.
By \eqref{eq: derivative of Iu}, it follows that $I_1$ is constant on $(0, m]$, and by \eqref{eq: computation of Iu(0)}, we must have $I_1 \equiv \omega_{n - 1}$, that is, for every $r \in (0, m]$,
\begin{equation} \label{eq: I1 is constant} 
    \left( 2 \snk \left( r/2 \right) \right)^{1 - n} \csk \left( r/2 \right) \int_{b = r} \|\nabla b\| + \frac{nk}{4} \int_{b \leq r} G = \omega_{n - 1}.
\end{equation}
We will use this identity later in this section, as well as in Section \ref{sec: geometric applications}.

The first monotonicity formula we derive depends on the following identity, which we record now as it will also be needed in Section \ref{sec: one-parameter family of monotonicity formulae}.

\begin{lemma} \label{lemma: derivative of 2snk(r/2) times Iu}
If $u : M \to \R$ is continuous on $M$ and smooth on $M \setminus \{ p \}$, then for almost every $r \in (0, m)$, we have
\[
\frac{d}{dr} \left( \left( 2 \snk \left( r/2 \right) \right)^{2 - n} I_u(r) \right) = \left( 2 \snk \left( r/2 \right) \right)^{1 - n} \csk \left( r/2 \right) \left( \int_{b = r} \left\langle \nabla (uG), \frac{\nabla b}{\|\nabla b\|} \right\rangle - \frac{n(n - 2)k}{4} \int_{b \leq r} uG \right).
\]
\end{lemma}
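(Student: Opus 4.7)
The plan is to expand the derivative on the left-hand side via the product rule, substitute the expression for $I_u'$ from \eqref{eq: derivative of Iu}, and then use the product rule for $\nabla(uG) = G \nabla u + u \nabla G$ together with the identities $G = (2\snk(b/2))^{2-n}$ and \eqref{eq: nabla G vs nabla b} to recognize both terms in the claimed right-hand side. The content is entirely bookkeeping.

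First, since $\frac{d}{dr} 2\snk(r/2) = \csk(r/2)$, the product rule yields
\[
\frac{d}{dr}\!\left(\left(2\snk(r/2)\right)^{2-n} I_u(r)\right) = (2-n)\left(2\snk(r/2)\right)^{1-n}\csk(r/2)\, I_u(r) + \left(2\snk(r/2)\right)^{2-n} I_u'(r).
\]
Substituting $I_u'(r) = (2\snk(r/2))^{1-n}\csk(r/2) \int_{b=r} \langle \nabla u, \nabla b/\|\nabla b\|\rangle$ from \eqref{eq: derivative of Iu} and factoring out $(2\snk(r/2))^{1-n}\csk(r/2)$, I am left with showing
\[
\left(2\snk(r/2)\right)^{2-n}\! \int_{b=r}\! \left\langle \nabla u, \tfrac{\nabla b}{\|\nabla b\|}\right\rangle + (2-n) I_u(r) = \int_{b=r}\!\left\langle \nabla(uG), \tfrac{\nabla b}{\|\nabla b\|}\right\rangle - \frac{n(n-2)k}{4}\int_{b\leq r} uG.
\]

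Next, on the level set $\{b = r\}$ one has $G = (2\snk(r/2))^{2-n}$ (a constant there), so the first term on the left becomes $\int_{b=r} G \langle \nabla u, \nabla b/\|\nabla b\|\rangle$. Splitting $\nabla(uG) = G\nabla u + u\nabla G$ and using \eqref{eq: nabla G vs nabla b}, which gives $\langle \nabla G, \nabla b/\|\nabla b\|\rangle = -(n-2)(2\snk(b/2))^{1-n}\csk(b/2)\|\nabla b\|$, yields
\[
\int_{b=r}\! u \left\langle \nabla G, \tfrac{\nabla b}{\|\nabla b\|}\right\rangle = -(n-2)\left(2\snk(r/2)\right)^{1-n}\csk(r/2) \int_{b=r} u\|\nabla b\|.
\]
Recognizing the right-hand side via the definition of $I_u$ in \eqref{eq: definition of Iu}, this equals $-(n-2)\bigl[I_u(r) - \tfrac{nk}{4}\int_{b\leq r} uG\bigr]$. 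Substituting this back into $\int_{b=r}\langle \nabla(uG), \nabla b/\|\nabla b\|\rangle = \int_{b=r} G\langle \nabla u, \nabla b/\|\nabla b\|\rangle + \int_{b=r} u\langle \nabla G, \nabla b/\|\nabla b\|\rangle$ and rearranging produces exactly the desired identity.

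There is no real obstacle here: the only subtlety is that one needs to consistently track the factor $(2\snk(r/2))^{1-n}\csk(r/2)$ coming from the derivative of $(2\snk(r/2))^{2-n}$ and from the formula \eqref{eq: derivative of Iu}, and to convert the boundary term $\int_{b=r} u\|\nabla b\|$ into $I_u(r)$ via the sublevel-set integral of $uG$. The identity for almost every $r\in(0,m)$ then follows since \eqref{eq: derivative of Iu} was established at every regular value of $b$, and such values have full measure in $(0,m)$.
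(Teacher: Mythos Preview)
Your proof is correct and follows essentially the same approach as the paper's own proof: both apply the product rule, substitute the expression for $I_u'$ from \eqref{eq: derivative of Iu}, and then use the decomposition $\nabla(uG) = G\nabla u + u\nabla G$ together with \eqref{eq: nabla G vs nabla b} to match the terms. The only cosmetic difference is that the paper expands $I_u$ via the equivalent form \eqref{eq: Iu in terms of G and not b} (in terms of $\|\nabla G\|$) rather than the original definition \eqref{eq: definition of Iu}, but this amounts to the same bookkeeping.
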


\begin{proof}
At every regular value $r \in (0, m)$ of $b$, by the product rule and \eqref{eq: derivative of Iu}, we may directly compute
\allowdisplaybreaks
\begin{align*}
    \frac{d}{dr} \left( \left( 2 \snk \left( r/2 \right) \right)^{2 - n} I_u(r) \right) &= (2 - n) \left( 2 \snk \left( r/2 \right) \right)^{1 - n} \csk \left( r/2 \right) I_u(r) + \left( 2 \snk \left( r/2 \right) \right)^{2 - n} I_u'(r) \\
    &= \left( 2 \snk \left( r/2 \right) \right)^{1 - n} \csk \left( r/2 \right) \left( (2 - n) I_u(r) + \left( 2 \snk \left( r/2 \right) \right)^{2 - n} \int_{b = r} \left\langle \nabla u, \frac{\nabla b}{\|\nabla b\|} \right\rangle \right).
\end{align*}

Expanding $I_u$ on the right-hand side above using \eqref{eq: Iu in terms of G and not b}, we obtain that
\allowdisplaybreaks
\begin{align*}
    &\frac{d}{dr} \left( \left( 2 \snk \left( r/2 \right) \right)^{2 - n} I_u(r) \right) \\
    &= \left( 2 \snk \left( r/2 \right) \right)^{1 - n} \csk \left( r/2 \right) \left( - \int_{b = r} u \|\nabla G\| - \frac{n(n - 2)k}{4} \int_{b \leq r} uG + \int_{b = r} \left\langle G \nabla u, \frac{\nabla b}{\|\nabla b\|} \right\rangle \right) \\
    &= \left( 2 \snk \left( r/2 \right) \right)^{1 - n} \csk \left( r/2 \right) \left( \int_{b = r} \bigg\langle \nabla (uG), \frac{\nabla b}{\|\nabla b\|} \bigg\rangle - \frac{n(n - 2)k}{4} \int_{b \leq r} uG \right),
\end{align*}
where the last equality follows from the product rule and \eqref{eq: nabla G vs nabla b}.
\end{proof}

Now, consider the function $A : (0, m] \to \R$ defined as
\begin{equation} \label{eq: A functional in Iu form}
    A := I_{4 \left\| \nabla \! \snk (b/2) \right\|^2 + k \snk^2 (b/2)}.
\end{equation}
Explicitly, as in \eqref{intro-eq: A functional in positive ricci},
\begin{equation} \label{eq: A functional}
    \begin{split}
        A(r) &= \left( 2 \snk \left( r/2 \right) \right)^{1 - n} \csk \left( r/2 \right) \int_{b = r} \left( 4 \left\| \nabla \! \snk \left( b/2 \right) \right\|^2 + k \snk^2 \left( b/2 \right) \right) \|\nabla b\| \\
        &\quad + \frac{nk}{4} \int_{b \leq r} \left( 4 \left\| \nabla \! \snk \left( b/2 \right) \right\|^2 + k \snk^2 \left( b/2 \right) \right) G.
    \end{split}
\end{equation}
Whilst $b$ is not differentiable at the point $p$, by the asymptotics of $b$ near $p$ (cf.~Lemma \ref{lemma: asymptotics of b near the pole}), the function $u := 4 \left\| \nabla \! \snk (b/2) \right\|^2 + k \snk^2 (b/2)$ may be extended by continuity from $M \setminus \{ p \}$ to $M$ by setting $u(p) := 1$.
This is the positive curvature analogue of the function with the same name from {\cite{ColdingNewMonotonicityFormulas}}.
We will see in Section \ref{sec: one-parameter family of monotonicity formulae} that $A$ also sits in a one-parameter family of monotonicity formulae, just as in {\cite{ColdingMinicozziMonotonicityFormulas}}.
By the definition of the $I_u$ functional from \eqref{eq: definition of Iu}, Theorem \ref{th: sharp gradient estimate}, and \eqref{eq: I1 is constant}, we have 
\begin{equation} \label{eq: A is smaller than I1}
    A \leq I_1 \equiv \omega_{n - 1}.
\end{equation}
Moreover, by \eqref{eq: computation of Iu(0)} and Lemma \ref{lemma: asymptotics of b near the pole}, we also have
\begin{equation} \label{eq: value of A(0)}
    \lim_{r \downarrow 0} A(r) = \omega_{n - 1}.
\end{equation}

\begin{example}[$A$ function on the model space] \label{e.g.: A functional on model space}
On $\S^n_k$, by Theorem \ref{th: sharp gradient estimate}, we have $A = I_1 \equiv \omega_{n - 1}$.

It is helpful, however, to compute $A$ from the definition, as we will use this computation in the later sections of the paper.
By Lemma \ref{lemma: green function on model space}, we know that $b = \mathrm{dist}(p, \cdot)$, and using the warped product structure of $\S^n_k$, we may directly compute
\allowdisplaybreaks
\begin{align} \label{align: integral of G over sublevel set of b in model space}
    \frac{nk}{4} \int_{b \leq r} G &= \omega_{n - 1} \frac{nk}{4} \int_0^r \left( 2 \snk \left( t/2 \right) \right)^{2 - n} \snk^{n - 1}(t) \dd t \nonumber \\
    &= \omega_{n - 1} \frac{nk}{4} \int_0^r \left( 2 \snk \left( t/2 \right) \right) \csk^{n - 1} \left( t/2 \right) \dd t \nonumber \\
    &= \omega_{n - 1} \left( 1 - \csk^n \left( r/2 \right) \right),
\end{align}
and similarly,
\[
\left( 2 \snk \left( r/2 \right) \right)^{1 - n} \csk \left( r/2 \right) \int_{b = r} \|\nabla b\| = \left( 2 \snk \left( r/2 \right) \right)^{1 - n} \csk \left( r/2 \right) \cdot \omega_{n - 1} \snk^{n - 1}(r) = \omega_{n - 1} \csk^n \left( r/2 \right),
\]
hence $A(r) = \omega_{n - 1}$ for all $r \in (0, \pi/\sqrt{k}]$.
\end{example}

Let us now derive the first monotonicity formula.
This is the positive curvature analogue of {\cite[Theorem 2.8]{ColdingNewMonotonicityFormulas}}.
Using \eqref{eq: main theorem identity}, Lemma \ref{lemma: derivative of 2snk(r/2) times Iu} would almost imply that $(2 \snk(r/2))^{2 - n}A(r)$ is non-decreasing on $(0, m)$.
However, we cannot conclude this, as we cannot apply the divergence theorem to the function $uG$ for a continuous function $u : M \to \R$; it is also clear that $(2\snk(r/2))^{2 - n} A(r)$ is not increasing on the model space $\S^n_k$.
The monotonic quantity, instead, is given by $(2\snk(r/2))^{2 - n}(A(r) - \omega_{n - 1})$.

\begin{theorem} \label{th: monotonicity formula of 2snk(r/2) times A - I1}
For almost every $r \in (0, m)$, we have
\allowdisplaybreaks
\begin{align*}
    &\left( 2 \snk \left( r/2 \right) \right)^{n - 1} \csk^{-1} \left( r/2 \right) \frac{d}{dr} \left( \left( 2 \snk \left( r/2 \right) \right)^{2 - n} (A(r) - \omega_{n - 1}) \right) \nonumber \\
    &= 8 \int_{b \leq r} \left( 2 \snk \left( b/2 \right) \right)^{-n} \left( \left\| \tracelessHess \snk^2 \left( b/2 \right) \right\|^2 + \ric \left( \nabla \! \snk^2 \left( b/2 \right), \nabla \! \snk^2 \left( b/2 \right) \right) - k(n - 1) \left\| \nabla \! \snk^2 \left( b/2 \right) \right\|^2 \right).
\end{align*}
In particular, $(2 \snk(r/2))^{2 - n}(A(r) - \omega_{n - 1})$ is non-decreasing in $r \in (0, m)$.
\end{theorem}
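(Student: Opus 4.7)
The plan is to compute the derivative on the left via Lemma~\ref{lemma: derivative of 2snk(r/2) times Iu} and to rewrite the resulting boundary flux as a volume integral using the divergence theorem, so that Proposition~\ref{prop: mathcal L operator applied to main function} yields the stated non-negative integrand.

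First I would set $w := u - 1$ with $u := 4\|\nabla \! \snk(b/2)\|^2 + k \snk^2(b/2)$, extended by continuity so that $w(p) = 0$. Since $A = I_u$ by \eqref{eq: A functional in Iu form} and $I_1 \equiv \omega_{n - 1}$ by \eqref{eq: I1 is constant}, linearity of $I_{(\cdot)}$ gives $A - \omega_{n - 1} = I_w$. Applying Lemma~\ref{lemma: derivative of 2snk(r/2) times Iu} to $w$ (which is continuous on $M$ with $w(p) = 0$ and smooth on $M \setminus \{p\}$) then yields
\[
\frac{d}{dr}\bigl[(2 \snk(r/2))^{2 - n}(A(r) - \omega_{n - 1})\bigr] = (2 \snk(r/2))^{1 - n}\csk(r/2)\left[\int_{b = r}\!\left\langle \nabla(wG), \frac{\nabla b}{\|\nabla b\|} \right\rangle - \frac{n(n - 2)k}{4}\int_{b \leq r} wG\right].
\]

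Next, to convert this into the stated formula I would show that for a.e.\ $r$ the bracket on the right equals $\int_{b \leq r} G\mathcal{L}u$. Since $\nabla w = \nabla u$, $\mathcal{L}1 = 0$, and $\Delta G = n(n - 2)kG/4$ on $M \setminus \{p\}$, a direct computation yields $\Delta(wG) = G\mathcal{L}u + n(n - 2)k\,wG/4$ there. The idea is to apply the divergence theorem on the annulus $\{r_i \leq b \leq r\}$ and let $r_i \downarrow 0$ along a sequence making the inner boundary flux vanish. Splitting $\int_{b = r_i}\langle \nabla(wG), \nabla b/\|\nabla b\| \rangle = \int_{b = r_i} w\,\partial_\nu G + \int_{b = r_i} G\,\partial_\nu w$, the first piece tends to $(2 - n)\omega_{n - 1} w(p) = 0$ via the mean-value computation of Lemma~\ref{appendix-lemma: mean value of continuous function along level or sublevel sets of b}; since $G$ is constant on each level set of $b$, the second piece equals $(2 \snk(r_i/2))^{2 - n}\int_{b \leq r_i} \Delta u$ by Lemma~\ref{lemma: computing derivative of Iu using the divergence theorem}. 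To force this latter quantity to $0$, I would adapt the contradiction argument from the footnote in the proof of Lemma~\ref{lemma: computing derivative of Iu using the divergence theorem}: since $A$ extends continuously to $[0, m]$ with $A(0) = \omega_{n - 1}$ by \eqref{eq: value of A(0)} and is bounded, there is a sequence $r_i \downarrow 0$ of regular values of $b$ with $r_i A'(r_i) \to 0$, which, together with $A'(r) = (2 \snk(r/2))^{1 - n}\csk(r/2)\int_{b \leq r}\Delta u$ and $2 \snk(r/2) \sim r$ near $0$, delivers the required vanishing.

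Substituting the resulting identity into the derivative formula and multiplying by $(2 \snk(r/2))^{n - 1}\csk^{-1}(r/2)$ gives $(2 \snk(r/2))^{n - 1}\csk^{-1}(r/2)\frac{d}{dr}[(2 \snk(r/2))^{2 - n}(A - \omega_{n - 1})] = \int_{b \leq r} G\mathcal{L}u$. Proposition~\ref{prop: mathcal L operator applied to main function} expresses $G\mathcal{L}u$ as $8(2 \snk(b/2))^{-n}$ times $\|\tracelessHess \snk^2(b/2)\|^2 + \ric(\nabla \snk^2(b/2), \nabla \snk^2(b/2)) - (n - 1)k\|\nabla \snk^2(b/2)\|^2$, which is non-negative because the Hessian term is non-negative and the Ricci excess is non-negative by $\ric \geq (n - 1)kg$; this produces both the displayed formula and the monotonicity of $(2 \snk(r/2))^{2 - n}(A - \omega_{n - 1})$. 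The main obstacle is justifying the divergence-theorem passage: $wG$ is only smooth on $M \setminus \{p\}$ and generally fails to extend continuously across $p$, so the controlled disappearance of the inner boundary flux depends crucially on the asymptotics from Lemma~\ref{lemma: asymptotics of b near the pole} together with the contradiction-based choice of the sequence $r_i$.
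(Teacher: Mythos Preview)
Your proposal is correct and follows essentially the same route as the paper. The only cosmetic differences are that you pass to $w=u-1$ and invoke Proposition~\ref{prop: mathcal L operator applied to main function} for $G\mathcal{L}u$, whereas the paper works with $u$ directly, uses the equivalent identity \eqref{eq: main theorem identity} for $\bigl(\Delta-\tfrac{n(n-2)k}{4}\bigr)(uG)$, and extracts the constant $(2-n)\omega_{n-1}$ from the limit $(2-n)A(r_i)+2\tnk(r_i/2)A'(r_i)\to(2-n)\omega_{n-1}$ rather than by splitting the inner boundary flux; the sequence argument with $2\snk(r_i/2)A'(r_i)\to 0$ is identical in both.
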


\begin{proof}
Put $u := 4 \left\| \nabla \! \snk \left( b/2 \right) \right\|^2 + k \snk^2 \left( b/2 \right)$ and denote by $f(r) := \left( 2 \snk \left( r/2 \right) \right)^{2 - n} A(r)$.
For regular values $0 < r_1 < r_2 < m$ of $b$, by Lemma \ref{lemma: derivative of 2snk(r/2) times Iu} and the divergence theorem, it follows that
\allowdisplaybreaks
\begin{align*}
    \int_{r_1 \leq b \leq r_2} \left( \Delta - \frac{n(n - 2)k}{4} \right) (uG) &= \left( 2 \snk \left( r_2/2 \right) \right)^{n - 1} \csk^{-1} \left( r_2/2 \right) f'(r_2) \\
    &\quad - \left( 2 \snk \left( r_1/2 \right) \right)^{n - 1} \csk^{-1} \left( r_1/2 \right) f'(r_1),
\end{align*}
and by \eqref{eq: main theorem identity} from the proof of Theorem \ref{th: sharp gradient estimate}, we may rewrite the above as
\allowdisplaybreaks
\begin{align} \label{align: monotonicity of 2snk(r/2) times A}
    &\left( 2 \snk \left( r_2/2 \right) \right)^{n - 1} \csk^{-1} \left( r_2/2 \right) f'(r_2) - \left( 2 \snk \left( r_1/2 \right) \right)^{n - 1} \csk^{-1} \left( r_1/2 \right) f'(r_1) \nonumber \\
    &= 8 \int_{r_1 \leq b \leq r_2} \left( 2\snk \left( b/2 \right) \right)^{-n} \left( \left\| \tracelessHess{\snk^2 \left( b/2 \right)} \right\|^2 + \ric \left( \nabla \! \snk^2 \left( b/2 \right), \nabla \! \snk^2 \left( b/2 \right) \right) - (n - 1)k \left\| \nabla \! \snk^2 \left( b/2 \right) \right\|^2 \right).
\end{align}
Now, observe that, by the chain rule,
\[
\left( 2 \snk \left( r/2 \right) \right)^{n - 1} \csk^{-1} \left( r/2 \right) f'(r) = (2 - n)A(r) + 2 \tnk \left( r/2 \right) A'(r).
\]
Since $A$ is bounded on $[0, m]$, by the same reasoning as in the proof of Lemma \ref{lemma: computing derivative of Iu using the divergence theorem}, there is a decreasing sequence $(r_i)_{i \in \N}$ of regular values of $b$ with $r_i \to 0$ and $2\snk(r_i/2) A'(r_i) \to 0$ as $i \to \infty$.
Using this and \eqref{eq: value of A(0)}, we have
\[
(2 - n)A(r_i) + 2 \tnk \left( r_i/2 \right) A'(r_i) \to (2 - n) \omega_{n - 1}
\]
as $i \to \infty$.
Plugging this into \eqref{align: monotonicity of 2snk(r/2) times A}, it follows that for every regular value $r \in (0, m)$, we have
\allowdisplaybreaks
\begin{equation} \label{eq: almost monotonicity of 2snk(r/2) times A - I1}
    \begin{split}
        &\frac{1}{8} \left( (2 - n)(A(r) - \omega_{n - 1}) + 2 \tnk \left( r/2 \right) A'(r) \right) \\
        &= \int_{b \leq r} \left( 2\snk \left( b/2 \right) \right)^{-n} \left( \left\| \tracelessHess{\snk^2 \left( b/2 \right)} \right\|^2 + \ric \left( \nabla \! \snk^2 \left( b/2 \right), \nabla \! \snk^2 \left( b/2 \right) \right) - (n - 1)k \left\| \nabla \! \snk^2 \left( b/2 \right) \right\|^2 \right).
    \end{split}
\end{equation}
Using the product rule to compute the derivative of $(2\snk(r/2))^{2 - n}(A(r) - \omega_{n - 1})$, the desired conclusion follows from \eqref{eq: almost monotonicity of 2snk(r/2) times A - I1}.
\end{proof}

The next monotonicity formula is that of $A$: we show that $A$ is non-increasing on $(0, m)$ by explicitly computing its derivative.
For this, we will compute a different expression for the derivative of $I_u$ in terms of the operator $\mathcal{L}$ from \eqref{eq: mathcal L operator}-\eqref{eq: alternative form of mathcal L operator} in Section \ref{sec: sharp gradient estimate} for the class of functions which have controlled blow-up near the singularity $p$.

\begin{proposition} \label{prop: mathcal L subharmonic function u implies monotonicity of Iu}
If $u : M \to \R$ is continuous on $M$, smooth on $M \setminus \{ p \}$, and if there is some $C > 0$ so that $\|\nabla u\| \leq C \mathrm{dist}(p, \cdot)^{-1}$ near $p$, then 
\begin{equation} \label{eq: derivative of Iu in terms of mathcal L operator}
    \left( 2 \snk \left( r/2 \right) \right)^{3 - n} \csk^{-1} \left( r/2 \right) I_u'(r) = - \int_{b \geq r} G^2 \mathcal{L}u
\end{equation}
for almost every $r \in (0, m)$.
In particular, if $\mathcal{L} u \geq 0$ on $M \setminus \{ p \}$, then  $I_u$ is non-increasing on $(0, m)$.
\end{proposition}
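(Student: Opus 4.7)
The plan is to observe that $\mathcal{L} = G^{-2}\div(G^2 \nabla \cdot)$, so that $G^2 \mathcal{L}u = \div(G^2 \nabla u)$ pointwise on $M \setminus \{p\}$, and then to convert the integral over $\{b \geq r\}$ into a boundary integral over $\{b = r\}$ via the divergence theorem. Matching that boundary integral with the expression for $I_u'(r)$ from \eqref{eq: derivative of Iu} will give the identity, and the monotonicity of $I_u$ will be an immediate corollary once the sign of the prefactor is checked.

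I would fix a regular value $r \in (0, m)$ of $b$ (these have full measure by Sard's theorem). Since $r > 0$ and $b$ vanishes only at $p$, the super-level set $\Omega_r := \{b \geq r\}$ is a compact subset of $M \setminus \{p\}$; regularity of $r$ ensures that $\partial \Omega_r = \{b = r\}$ is a smooth embedded hypersurface with outward unit normal $\nu = -\nabla b/\|\nabla b\|$ (the sign arising because $\nabla b$ points into $\Omega_r$). Both $G$ and $u$ are smooth on $M \setminus \{p\} \supset \Omega_r$, so $G^2 \nabla u$ is a smooth vector field on $\Omega_r$, and the divergence theorem yields
\[
\int_{\Omega_r} G^2 \mathcal{L}u \;=\; \int_{\Omega_r} \div(G^2 \nabla u) \;=\; -\int_{b = r} G^2 \left\langle \nabla u, \frac{\nabla b}{\|\nabla b\|}\right\rangle.
\]
On the level set $\{b = r\}$ the function $G = (2\snk(b/2))^{2-n}$ is constant and equal to $(2\snk(r/2))^{2-n}$, so pulling $G^2 = (2\snk(r/2))^{4 - 2n}$ outside the integral and combining with \eqref{eq: derivative of Iu}, rewritten as
\[
\int_{b = r} \left\langle \nabla u, \frac{\nabla b}{\|\nabla b\|}\right\rangle \;=\; (2\snk(r/2))^{n - 1} \csk^{-1}(r/2)\, I_u'(r),
\]
gives exactly $-\int_{b \geq r} G^2 \mathcal{L}u = (2\snk(r/2))^{3 - n} \csk^{-1}(r/2)\, I_u'(r)$, which is the claimed identity.

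For the monotonicity, on $(0, m) \subseteq (0, \pi/\sqrt{k})$ both $\snk(r/2)$ and $\csk(r/2)$ are strictly positive, so the prefactor $(2\snk(r/2))^{3-n}\csk^{-1}(r/2)$ is positive. The hypothesis $\mathcal{L}u \geq 0$ then forces $I_u'(r) \leq 0$ for almost every $r \in (0, m)$, and local absolute continuity of $I_u$ (established earlier in this section) upgrades this pointwise a.e. inequality to the statement that $I_u$ is non-increasing on $(0, m)$.

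The only point requiring care is the justification of the divergence theorem near the interior maximum of $b$. When $b$ fails to be smooth at some interior point (which happens for $b = \dist(p, \cdot)$ in the rigid case $M \simeq \S^n_k$), the set $\Omega_r$ still has smooth boundary $\{b = r\}$ because regularity of $r$ is imposed only on that level set; no extra boundary contribution appears from any non-smooth locus of $b$ since it sits in the interior of $\Omega_r$ and the smooth vector field $G^2 \nabla u$ is integrated against the ambient volume form. The growth assumption $\|\nabla u\| \leq C \dist(p, \cdot)^{-1}$ near $p$ is not used in the proof of the stated identity, since $\Omega_r$ is uniformly away from $p$; it is presumably retained for later applications that require taking the $r \downarrow 0$ limit in \eqref{eq: derivative of Iu in terms of mathcal L operator}.
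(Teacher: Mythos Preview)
Your argument is correct and is in fact more direct than the paper's. The paper works on annular shells $\{r_1 \leq b \leq r_2\}$, obtaining
\[
\int_{r_1 \leq b \leq r_2} G^2 \mathcal{L}u = \left(2\snk(r_2/2)\right)^{3-n}\csk^{-1}(r_2/2)\,I_u'(r_2) - \left(2\snk(r_1/2)\right)^{3-n}\csk^{-1}(r_1/2)\,I_u'(r_1),
\]
then rewrites the $r_2$-term via Lemma~\ref{lemma: computing derivative of Iu using the divergence theorem} as $(2\snk(r_2/2))^{4-2n}\int_{b\leq r_2}\Delta u$ and sends $r_2 \uparrow m$. This leaves a residual $(2\snk(m/2))^{4-2n}\int_M \Delta u$, which the paper kills by an integration-by-parts near $p$ that invokes the growth bound $\|\nabla u\| \leq C\dist(p,\cdot)^{-1}$.

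By contrast, you apply the divergence theorem once on the compact region $\{b \geq r\}\subset M\setminus\{p\}$, which for regular $r$ has smooth boundary $\{b=r\}$ and on which $G^2\nabla u$ is smooth. No limiting procedure, and hence no appeal to the growth bound, is needed; your remark that this hypothesis is superfluous for the identity \eqref{eq: derivative of Iu in terms of mathcal L operator} is correct. The paper's route seems to be an artefact of mirroring the sublevel-set machinery developed earlier in the section rather than a necessity.
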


\begin{proof}
For regular values $0 < r_1 < r_2 < m$ of $b$, we may compute
\allowdisplaybreaks
\begin{align*}
    \int_{r_1 \leq b \leq r_2} G^2 \mathcal{L}u &= \int_{r_1 \leq b \leq r_2} \div (G^2 \nabla u) & \text{by \eqref{eq: mathcal L operator}} \\
    &= \int_{b = r_2} G^2 \left\langle \nabla u, \frac{\nabla b}{\|\nabla b\|} \right\rangle - \int_{b = r_1} G^2 \left\langle \nabla u, \frac{\nabla b}{\|\nabla b\|} \right\rangle.
\end{align*}

Using that $G = (2 \snk (b/2))^{2 - n}$ and the expression for $I_u'$ from \eqref{eq: derivative of Iu}, we may rewrite the above right-hand side as
\allowdisplaybreaks
\begin{align*}
    \int_{r_1 \leq b \leq r_2} G^2 \mathcal{L}u &= \left( 2 \snk \left( r_2/2 \right) \right)^{4 - 2n} \int_{b = r_2} \left\langle \nabla u, \frac{\nabla b}{\|\nabla b\|} \right\rangle - \left( 2 \snk \left( r_1/2 \right) \right)^{4 - 2n} \int_{b = r_1} \left\langle \nabla u, \frac{\nabla b}{\|\nabla b\|} \right\rangle \\
    &= \left( 2 \snk \left( r_2/2 \right) \right)^{3 - n} \csk^{-1} \left( r_2/2 \right) I_u'(r_2) - \left( 2 \snk \left( r_1/2 \right) \right)^{3 - n} \csk^{-1} \left( r_1/2 \right) I_u'(r_1).
\end{align*}
Lemma \ref{lemma: computing derivative of Iu using the divergence theorem} allows us to further recast the above right-hand side as
\begin{equation*}
    \int_{r_1 \leq b \leq r_2} G^2 \mathcal{L}u = \left( 2 \snk \left( r_2/2 \right) \right)^{4 - 2n} \int_{b \leq r_2} \Delta u - \left( 2 \snk \left( r_1/2 \right) \right)^{3 - n} \csk^{-1} \left( r_1/2 \right) I_u'(r_1).
\end{equation*}
Since the regular values of $b$ form an open and dense subset of $[0, m]$, and since both sides from above is continuous in $r_2$, letting $r_2 \uparrow m$, it follows that
\begin{equation} \label{eq: almost Iu is decreasing for L-subharmonic u part 2}
    \int_{b \geq r_1} G^2 \mathcal{L}u = \left( 2 \snk \left( m/2 \right) \right)^{4 - 2n} \int_M \Delta u - \left( 2 \snk \left( r_1/2 \right) \right)^{3 - n} \csk^{-1} \left( r_1/2 \right) I_u'(r_1).
\end{equation}
Now, since $u$ is smooth on $M \setminus \{ p \}$, we may compute
\begin{equation} \label{eq: integration by parts argument for function with bounded gradient near greens function singularity part 1}
    \int_M \Delta u = \lim_{\varepsilon \downarrow 0} \int_{M \setminus B(p, \varepsilon)} \Delta u = - \lim_{\varepsilon \downarrow 0} \int_{\partial B(p, \varepsilon)} \left\langle \nabla u, \nabla \mathrm{dist}(p, \cdot) \right\rangle,
\end{equation}
and the last term from above can be estimated using the assumption on $\|\nabla u\|$ as
\begin{equation} \label{eq: integration by parts argument for function with bounded gradient near greens function singularity part 2}
    \left| \int_{\partial B(p, \varepsilon)} \left\langle \nabla u, \nabla \mathrm{dist}(p, \cdot) \right\rangle \right| \leq \frac{C}{\varepsilon} \vol( \partial B(p, \varepsilon)).
\end{equation}
This right-hand side converges to zero as $\varepsilon \to 0$ since $n \geq 3$, and the conclusion follows from this and \eqref{eq: almost Iu is decreasing for L-subharmonic u part 2}.
\end{proof}

Using Propositions \ref{prop: mathcal L operator applied to main function} and \ref{prop: mathcal L subharmonic function u implies monotonicity of Iu}, we can now show that $A$ is non-increasing.

\begin{theorem} \label{th: A functional is decreasing}
For almost every $r \in (0, m)$, we have
\allowdisplaybreaks
\begin{align*}
    &\left( 2 \snk \left( r/2 \right) \right)^{3 - n} \csk^{-1} \left( r/2 \right) A'(r) \\
    &= - 8 \int_{b \geq r} \left( 2 \snk \left( b/2 \right) \right)^{2 - 2n} \left( \left\| \tracelessHess \snk^2 \left( b/2 \right) \right\|^2 + \ric \left( \nabla \! \snk^2 \left( b/2 \right), \nabla \! \snk^2 \left( b/2 \right) \right) - (n - 1)k \left\| \nabla \! \snk^2 \left( b/2 \right) \right\|^2 \right).
\end{align*}
In particular, $A$ is non-increasing on $(0, m)$.
\end{theorem}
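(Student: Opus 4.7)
The plan is to combine three ingredients already prepared: the identification $A = I_u$ from \eqref{eq: A functional in Iu form} for $u := 4\|\nabla \snk(b/2)\|^2 + k\snk^2(b/2)$, the $\mathcal{L}$-identity for this $u$ from Proposition \ref{prop: mathcal L operator applied to main function}, and the general derivative formula in Proposition \ref{prop: mathcal L subharmonic function u implies monotonicity of Iu} that expresses $I_u'(r)$ as an integral of $G^2 \mathcal{L} u$ over $\{b \geq r\}$. Once the hypotheses of the latter are checked for this specific $u$, the claimed identity follows by direct substitution, and the monotonicity is immediate from the Ricci lower bound.

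The only nontrivial step is verifying the hypotheses of Proposition \ref{prop: mathcal L subharmonic function u implies monotonicity of Iu}. Continuity of $u$ on $M$ (extended by $u(p) := 1$) follows from \eqref{eq: gradient of b asymptotics near its pole} in Lemma \ref{lemma: asymptotics of b near the pole}, and $u$ is smooth on $M \setminus \{p\}$. For the gradient bound $\|\nabla u\| \leq C \dist(p, \cdot)^{-1}$ near $p$, I would rewrite $u = \csk^2(b/2) \|\nabla b\|^2 + k \snk^2(b/2)$ and expand $\nabla u$ by the product rule: the terms $\nabla(\csk^2(b/2)) \|\nabla b\|^2 = -k \csk(b/2) \snk(b/2) \|\nabla b\|^2 \nabla b$ and $k \nabla \snk^2(b/2) = k \csk(b/2) \snk(b/2) \nabla b$ are bounded near $p$ since $\snk(b/2) \to 0$, while the remaining term $\csk^2(b/2) \nabla \|\nabla b\|^2 = 2 \csk^2(b/2) \|\nabla b\| \nabla \|\nabla b\|$ is controlled by $C \dist(p, \cdot)^{-1}$ thanks to \eqref{eq: hessian of b asymptotics near its pole}.

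With the hypotheses verified, Proposition \ref{prop: mathcal L subharmonic function u implies monotonicity of Iu} yields
\[
    \left( 2 \snk \left( r/2 \right) \right)^{3 - n} \csk^{-1} \left( r/2 \right) A'(r) = - \int_{b \geq r} G^2 \mathcal{L} u
\]
for a.e.\ $r \in (0, m)$. I would then substitute the formula for $\mathcal{L} u$ from Proposition \ref{prop: mathcal L operator applied to main function}, using $G = (2\snk(b/2))^{2 - n}$ to rewrite $G^2 (2\snk(b/2))^{-2} = (2\snk(b/2))^{2 - 2n}$, and observing that the factor of $8$ already matches; this produces the stated identity. The ``in particular'' assertion is then immediate: under $\ric \geq (n - 1) k g$, the integrand on the right-hand side is pointwise non-negative, so $A'(r) \leq 0$ for a.e.\ $r$, and the local absolute continuity of $A$ on $(0, m)$ upgrades this to the non-increasing property. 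Since all the hard analytic work already sits in Propositions \ref{prop: mathcal L operator applied to main function} and \ref{prop: mathcal L subharmonic function u implies monotonicity of Iu}, the only mild obstacle here is the verification of the gradient bound on $u$ near $p$.
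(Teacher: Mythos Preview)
Your proposal is correct and follows essentially the same route as the paper's proof: apply Proposition \ref{prop: mathcal L subharmonic function u implies monotonicity of Iu} to $u = 4\|\nabla \snk(b/2)\|^2 + k\snk^2(b/2)$, verify its hypotheses via Lemma \ref{lemma: asymptotics of b near the pole}, and then substitute Proposition \ref{prop: mathcal L operator applied to main function} together with $G^2 \cdot (2\snk(b/2))^{-2} = (2\snk(b/2))^{2-2n}$. Your explicit product-rule verification of the gradient bound is slightly more detailed than the paper's, which simply cites the asymptotics lemma, but the argument is the same.
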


\begin{proof}
We want to apply Proposition \ref{prop: mathcal L subharmonic function u implies monotonicity of Iu} to the function $u := 4\left\| \nabla \! \snk \left( b/2 \right) \right\|^2 + k \snk^2 \left( b/2 \right)$.
Thus, we need to show that $u$ satisfies the hypotheses of Proposition \ref{prop: mathcal L subharmonic function u implies monotonicity of Iu}.

As discussed before, $u$ is continuous on $M$ (if we define $u(p) := 1$) and smooth on $M \setminus \{ p \}$; the asymptotics of Green's function near its singularity (cf.~Lemma \ref{lemma: asymptotics of b near the pole}) imply that there is some $C > 0$ so that $\| \nabla u\| \leq C \mathrm{dist}(p, \cdot)^{-1}$ in a punctured neighbourhood of $p$.
Lastly, the condition that $\mathcal{L}u \geq 0$ on $M \setminus \{ p \}$ follows from \eqref{eq: mathcal L operator applied to main function} from Proposition \ref{prop: mathcal L operator applied to main function}.
Thus, the hypothesis of Proposition \ref{prop: mathcal L subharmonic function u implies monotonicity of Iu} is satisfied for the function $u$, and we obtain that $I_u = A$ is non-increasing on $(0, m)$, and the formula for its derivative is given by \eqref{eq: derivative of Iu in terms of mathcal L operator}, which can be explicitly computed using \eqref{eq: mathcal L operator applied to main function} and recalling that $G = (2 \snk (b/2))^{2 - n}$.
\end{proof}

The upcoming monotonicity formulae involve the function $V$ from \eqref{intro-eq: V functional in positive ricci}.
We will show that $V$ is non-increasing, and that $A - 2(n - 1)V$ is non-decreasing.
As for the definition of $A$, discussing firstly the functional $I_u$ (cf.~\eqref{eq: definition of Iu}) and then specializing to a concrete function $u$, we shall also consider a more general functional $J_u$, which we then use to define $V$.
For a continuous function $u : M \to \R$, define
\begin{equation} \label{eq: definition of Ju}
    J_u : (0, m] \to \R, \quad J_u(r) := \left( 2 \snk \left( r/2 \right) \right)^{-n} \int_{b \leq r} u \left( 4 \left\| \nabla \! \snk \left( b/2 \right) \right\|^2 - k \snk^2 \left( b/2 \right) \right) + \frac{k}{4} \int_{b \leq r} u G.
\end{equation}
By the same argument as for $I_u, J_u$ is well-defined, and if $u : M \to \R$ is continuous on $M$ and smooth on $M \setminus \{ p \}$, then $J_u$ is continuous on $(0, m]$ and locally absolutely continuous on $(0, m)$.
Note also that, by continuity of $u$ and the asymptotics of $b$ near its singularity (cf.~Lemma \ref{appendix-lemma: mean value of continuous function along level or sublevel sets of b} in Appendix \ref{appendix-sec: asymptotics of greens function near the singularity}), we have 
\begin{equation} \label{eq: how to compute Vu(0)}
    \lim_{r \downarrow 0} J_u(r) = \frac{\omega_{n - 1}}{n} u(p).
\end{equation}
Define $V := J_{4 \left\| \nabla \! \snk (b/2) \right\|^2 + k \snk^2 (b/2)}$.
Explicitly, as in \eqref{intro-eq: V functional in positive ricci},
\allowdisplaybreaks
\begin{align} \label{eq: V function for monotonicity formula}
    \begin{split}
        V(r) &= \left( 2 \snk \left( r/2 \right) \right)^{-n} \int_{b \leq r} \left( 4 \left\| \nabla \! \snk \left( b/2 \right) \right\|^2 + k \snk^2 \left( b/2 \right) \right) \left( 4 \left\| \nabla \! \snk \left( b/2 \right) \right\|^2 - k \snk^2 \left( b/2 \right) \right) \\
        &\quad + \frac{k}{4} \int_{b \leq r} \left( 4 \left\| \nabla \! \snk \left( b/2 \right) \right\|^2 + k \snk^2 \left( b/2 \right) \right) G
    \end{split}
\end{align}
for $r \in (0, m)$.
This is the positive curvature analogue of the function from {\cite{ColdingNewMonotonicityFormulas}} with the same name.

\begin{example}[$V$ function on model space]
For $(M, g) = \mathbb{S}^n_k$, we have $V = J_1$; we will see, in Corollary \ref{cllr: inequality for J functional}, that $J_1 \equiv \omega_{n - 1}/n$, which is the volume of the unit ball in the $n$-dimensional Euclidean space.
However, as for $A$ in Example \ref{e.g.: A functional on model space}, it is helpful to compute $V$ from its definition.

By Lemma \ref{lemma: green function on model space}, $G = \left( 2 \snk (\mathrm{dist}(p, \cdot)/2) \right)^{2 - n}$ and $b = \mathrm{dist}(p, \cdot)$.
It follows that $4 \left\| \nabla \! \snk \left( b/2 \right) \right\|^2 - k \snk^2 \left( b/2 \right) = \csk(b)$, and so, for $r \in (0, \pi/\sqrt{k}]$, we have
\allowdisplaybreaks
\begin{align*}
    V(r) &= \left( 2 \snk \left( r/2 \right) \right)^{-n} \int_{b \leq r} \csk(b) + \frac{k}{4} \int_{b \leq r} G \\
    &= \omega_{n - 1} \left( 2 \snk \left( r/2 \right) \right)^{-n} \int_0^r \csk(t) \snk^{n - 1}(t) \dd t + \frac{\omega_{n - 1}}{n} \left( 1 - \csk^{n} \left( r/2 \right) \right) & \text{by \eqref{align: integral of G over sublevel set of b in model space}} \\
    &= \frac{\omega_{n - 1}}{n}.
\end{align*}
\end{example}

The following proposition shows a differential relation between $I_u$ and $J_u$.
This will be needed in showing that $A - 2(n - 1)V$ is non-decreasing, and it also allows us to compute $J_1$.

\begin{proposition} \label{prop: derivative of V in terms of A and itself}
For almost every $r \in (0, m)$, we have
\[
J_u'(r) = \frac{1}{2\tnk \left( r/2 \right)} \left( I_u(r) - n J_u(r) \right).
\]
\end{proposition}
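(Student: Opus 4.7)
The plan is to differentiate $J_u(r)$ directly at regular values of $b$ using the coarea formula and product rule, express everything in terms of integrals over $\{b=r\}$ and $\{b\leq r\}$, and then identify the resulting combination with $\frac{1}{2\tnk(r/2)}(I_u(r) - nJ_u(r))$ via the definition \eqref{eq: definition of Iu} of $I_u$.

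Write $J_u = X + Y$ where
\[
X(r) := \left(2\snk(r/2)\right)^{-n}\int_{b\leq r}u\left(4\|\nabla\snk(b/2)\|^2 - k\snk^2(b/2)\right), \qquad Y(r) := \frac{k}{4}\int_{b\leq r}uG.
\]
For $Y$, the coarea formula yields $Y'(r) = \frac{k}{4}\int_{b=r}\frac{uG}{\|\nabla b\|}$. For $X$, the product rule together with the identity $\frac{d}{dr}(2\snk(r/2))^{-n} = -n(2\snk(r/2))^{-n-1}\csk(r/2)$ and the coarea formula gives
\[
X'(r) = -n\tfrac{\csk(r/2)}{2\snk(r/2)}X(r) + (2\snk(r/2))^{-n}\int_{b=r}\tfrac{u(4\|\nabla\snk(b/2)\|^2 - k\snk^2(b/2))}{\|\nabla b\|}.
\]

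The next step is to simplify the level-set integral using the pointwise identity $\nabla\snk(b/2)=\tfrac{1}{2}\csk(b/2)\nabla b$, so that on $\{b=r\}$ we have $4\|\nabla\snk(b/2)\|^2 = \csk^2(r/2)\|\nabla b\|^2$ and $k\snk^2(b/2) = k\snk^2(r/2)$. Combined with $G=(2\snk(r/2))^{2-n}$ on $\{b=r\}$, a short computation shows that
\[
(2\snk(r/2))^{-n}\,k\snk^2(r/2)\int_{b=r}\tfrac{u}{\|\nabla b\|} = \tfrac{k}{4}\int_{b=r}\tfrac{uG}{\|\nabla b\|},
\]
which cancels against $Y'(r)$. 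Meanwhile,
\[
(2\snk(r/2))^{-n}\csk^2(r/2)\int_{b=r}u\|\nabla b\| = \tfrac{\csk(r/2)}{2\snk(r/2)}\cdot(2\snk(r/2))^{1-n}\csk(r/2)\int_{b=r}u\|\nabla b\|,
\]
and by the definition of $I_u$ the expression $(2\snk(r/2))^{1-n}\csk(r/2)\int_{b=r}u\|\nabla b\|$ equals $I_u(r) - \tfrac{nk}{4}\int_{b\leq r}uG$.

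Putting the pieces together, and using $X(r) = J_u(r) - \tfrac{k}{4}\int_{b\leq r}uG$ so that $-nX(r) = -nJ_u(r) + \tfrac{nk}{4}\int_{b\leq r}uG$, the terms involving $\int_{b\leq r}uG$ cancel, leaving
\[
J_u'(r) = \tfrac{\csk(r/2)}{2\snk(r/2)}\bigl(I_u(r) - nJ_u(r)\bigr) = \tfrac{1}{2\tnk(r/2)}\bigl(I_u(r) - nJ_u(r)\bigr),
\]
which is the desired identity. Since $J_u$ is locally absolutely continuous on $(0,m)$ and the set of regular values of $b$ has full measure in $(0,m)$, this holds for almost every $r\in(0,m)$. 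The computation is routine in spirit; the only slight subtlety is noticing the clean cancellation between the $k\snk^2(r/2)$ contribution from $X'(r)$ and the entirety of $Y'(r)$, which is what makes the final expression depend only on $I_u$ and $J_u$ and not on any residual boundary integral.
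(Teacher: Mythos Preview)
Your proof is correct and follows essentially the same approach as the paper's: both differentiate $J_u$ directly via the coarea formula and product rule, use the pointwise identities $4\|\nabla\snk(b/2)\|^2=\csk^2(r/2)\|\nabla b\|^2$ and $G=(2\snk(r/2))^{2-n}$ on $\{b=r\}$ to see that the $-k\snk^2(b/2)$ contribution cancels the $\frac{k}{4}\int_{b=r}\frac{uG}{\|\nabla b\|}$ term, and then substitute the definitions of $I_u$ and $J_u$ to absorb the remaining $\int_{b\leq r}uG$ terms. The only difference is presentational---you split $J_u=X+Y$ explicitly, whereas the paper handles all terms together---but the computation is identical.
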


\begin{proof}
Using the coarea formula and the chain rule in the definition of the derivative of $J_u$, we may compute
\allowdisplaybreaks
\begin{align*}
    J_u'(r) &= (- n) \left( 2 \snk \left( r/2 \right) \right)^{- n - 1} \csk \left( r/2 \right) \int_{b \leq r} u \left( 4 \left\| \nabla \! \snk \left( b/2 \right) \right\|^2 - k \snk^2 \left( b/2 \right) \right) \\
    &\quad + \left( 2 \snk \left( r/2 \right) \right)^{-n} \int_{b = r} \frac{u}{\| \nabla b\|} \left( 4 \left\| \nabla \! \snk \left( b/2 \right) \right\|^2 - k \snk^2 \left( b/2 \right) \right) + \frac{k}{4} \int_{b = r} \frac{uG}{\|\nabla b\|}.
\end{align*}
Using that $G = (2 \snk(b/2))^{2 - n}$, this may be rewritten as
\allowdisplaybreaks
\begin{align*}
    J_u'(r) &= \left( - n/2 \right) \ctk \left( r/2 \right) \left( 2 \snk \left( r/2 \right) \right)^{-n} \int_{b \leq r} u \left( 4 \left\| \nabla \! \snk \left( b/2 \right) \right\|^2 - k \snk^2 \left( b/2 \right) \right) \\
    &\quad + \left( 2 \snk \left( r/2 \right) \right)^{-n} \csk^2 \left( r/2 \right) \int_{b = r} u \|\nabla b\|.
\end{align*}
Using the definition of $I_u$ from \eqref{eq: definition of Iu}, we may replace the second term from the above right-hand side as
\allowdisplaybreaks
\begin{align*}
    J_u'(r) &= \left( - n/2 \right) \ctk \left( r/2 \right) \left( 2 \snk \left( r/2 \right) \right)^{-n} \int_{b \leq r} u \left( 4 \left\| \nabla \! \snk \left( b/2 \right) \right\|^2 - k \snk^2 \left( b/2 \right) \right) \\
    &\quad + \frac{1}{2} \ctk \left( r/2 \right) \left( I_u(r) - \frac{nk}{4} \int_{b \leq r} u G \right) \\
    &= - \frac{n}{2} \ctk \left( r/2 \right) J_u(r) + \frac{1}{2} \ctk \left( r/2 \right) I_u(r),
\end{align*}
where the last equality follows by using the definition of $J_u$ from \eqref{eq: definition of Ju}.
\end{proof}

\begin{corollary} \label{cllr: inequality for J functional}
We have $J_1 \equiv \omega_{n - 1}/n$.
\end{corollary}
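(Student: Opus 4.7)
The plan is to specialize Proposition \ref{prop: derivative of V in terms of A and itself} to $u \equiv 1$. By \eqref{eq: I1 is constant}, $I_1 \equiv \omega_{n-1}$, so the proposition yields the first-order ODE
\[
J_1'(r) = \frac{\csk(r/2)}{2\snk(r/2)}\bigl(\omega_{n-1} - n J_1(r)\bigr)
\]
for almost every $r \in (0, m)$. The constant $\omega_{n-1}/n$ is an obvious stationary solution, so what remains is uniqueness.

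Rather than invoking an ODE uniqueness theorem (which would require care, since the coefficient blows up at $r = 0$), I would multiply through by the integrating factor $(2\snk(r/2))^n$. By the product rule and the ODE above, the terms involving $nJ_1$ cancel, leaving
\[
\frac{d}{dr}\Bigl[\bigl(2\snk(r/2)\bigr)^n J_1(r)\Bigr] = \omega_{n-1}\bigl(2\snk(r/2)\bigr)^{n-1}\csk(r/2) = \frac{d}{dr}\left[\frac{\omega_{n-1}\bigl(2\snk(r/2)\bigr)^n}{n}\right].
\]
Both $\bigl(2\snk(r/2)\bigr)^n J_1(r)$ and $\omega_{n-1}\bigl(2\snk(r/2)\bigr)^n/n$ tend to $0$ as $r \downarrow 0$; for the first of these, this uses \eqref{eq: how to compute Vu(0)}, which gives $J_1(r) \to \omega_{n-1}/n$. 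Integrating the displayed identity over $(0,r)$ and dividing by $\bigl(2\snk(r/2)\bigr)^n$ therefore yields $J_1(r) = \omega_{n-1}/n$ for every $r \in (0, m]$.

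No significant obstacle is expected: given Proposition \ref{prop: derivative of V in terms of A and itself} and the value of $I_1$ from \eqref{eq: I1 is constant}, the corollary is essentially a one-line integrating-factor computation, with the only subtlety being the behavior at the singular endpoint $r = 0$, which is handled by the asymptotic from \eqref{eq: how to compute Vu(0)}.
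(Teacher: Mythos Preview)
Your proposal is correct and follows essentially the same approach as the paper: both specialize Proposition \ref{prop: derivative of V in terms of A and itself} with $u\equiv 1$, use $I_1\equiv\omega_{n-1}$ from \eqref{eq: I1 is constant}, multiply by the integrating factor $\snk^n(r/2)$ (up to an irrelevant constant), and fix the integration constant via the asymptotic \eqref{eq: how to compute Vu(0)}. The only cosmetic difference is that the paper writes the general solution $J_1(r)=\omega_{n-1}/n + C\,\snk^{-n}(r/2)$ explicitly before pinning down $C=0$, whereas you keep the computation in antiderivative form.
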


\begin{proof}
By Proposition \ref{prop: derivative of V in terms of A and itself}, we know that $J_1$ satisfies the differential equation
\[
J_1'(r) = \frac{1}{2 \tnk \left( r/2 \right)} \left( I_1(r) - n J_1(r) \right).
\]
By \eqref{eq: I1 is constant}, $I_1$ is constant, equal to $\omega_{n - 1}$.
Since $J_1$ is locally absolutely continuous, we can explicitly solve the above singular differential equation (by multiplying with the integrating factor $\snk^n(r/2)$) to obtain the general solution
\[
J_1(r) = \frac{\omega_{n - 1}}{n} + \frac{C}{\snk^n (r/2)}
\]
for some $C \in \R$.
By \eqref{eq: how to compute Vu(0)}, we know that $J_1(r) \to \omega_{n - 1}/n$ as $r \downarrow 0$; this necessarily implies that $J_1$ must be constant, equal to $\omega_{n - 1}/n$.
\end{proof}

Thus, if $u : M \to \R$ is continuous on $M$, smooth on $M \setminus \{ p \}$, and satisfies $u \leq 1$, then the definition of $J_u$ from \eqref{eq: definition of Ju} tells us that $J_u \leq J_1 \equiv \omega_{n - 1}/n$.
By Theorem \ref{th: sharp gradient estimate}, it follows that $V \leq \omega_{n - 1}/n$.
Moreover, using the definitions of $A$ and $V$ from \eqref{eq: A functional} and \eqref{eq: V function for monotonicity formula}, Proposition \ref{prop: derivative of V in terms of A and itself} implies that for almost every $r \in (0, m)$, we have
\begin{equation} \label{eq: derivative of V in terms of A and V}
    V'(r) = \frac{1}{2 \tnk \left( r/2 \right)} (A(r) - n V(r))
\end{equation}
We now proceed with computing the derivative of $A - 2(n - 1)V$.
For this, we will need the following technical ingredient, which is an integrated form of Bochner's formula for $\snk^2(b/2)$ related to $A$ and $V$.

\begin{lemma} \label{lemma: integrated bochner formula for monotonicity of A - 2(n - 1)V}
For every $r \in (0, m)$, we have
\begin{equation} \label{eq: almost done with the relationship between A and V}
    \begin{split}
        &\frac{4 \csk \left( r/2 \right)}{\left( 2 \snk \left( r/2 \right) \right)^{n + 1}} \int_{b \leq r} \Delta \left\| \nabla \! \snk^2 \left( b/2 \right) \right\|^2 + \frac{nk}{4} \ctk \left( r/2 \right) \int_{b \leq r} \left( 4 \left\| \nabla \! \snk \left( b/2 \right) \right\|^2 + k \snk^2 \left( b/2 \right) \right) G \\
        &= \frac{8 \csk \left( r/2 \right)}{\left( 2 \snk \left( r/2 \right) \right)^{n + 1}} \int_{b \leq r} \left( \left\| \tracelessHess \snk^2 \left( b/2 \right) \right\|^2 + \ric \left( \nabla \! \snk^2 \left( b/2 \right), \nabla \! \snk^2 \left( b/2 \right) \right) - (n - 1)k \left\| \nabla \! \snk^2 \left( b/2 \right) \right\|^2 \right)  \\
        &\quad + n \ctk \left( r/2 \right) A(r) - n(n - 1) \ctk \left( r/2 \right) V(r) + 2n k^2 \ctk \left( r/2 \right) \left( 2 \snk \left( r/2 \right) \right)^{-n} \int_{b \leq r} \snk^4 \left( b/2 \right) \\
        &\quad - 2 (n + 2)k \ctk \left( r/2 \right) \left( 2 \snk \left( r/2 \right) \right)^{ - n} \int_{b \leq r} \left\| \nabla \! \snk^2 \left( b/2 \right) \right\|^2
    \end{split}
\end{equation}
\end{lemma}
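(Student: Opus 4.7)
The plan is to derive the stated identity by applying Bochner's formula to $f := \snk^2(b/2)$, integrating over the sublevel set $\{b \leq r\}$, and combining the result with an auxiliary integration-by-parts identity for $\snk^4(b/2)$ together with the constancy of the frequency functional $I_1$ from \eqref{eq: I1 is constant}. First I apply Bochner, expand $\|\hess f\|^2 = \|\tracelessHess f\|^2 + (\Delta f)^2/n$, and use Lemma \ref{lemma: laplace of snk squared (b/2)} to substitute $\Delta f = 2n\|\nabla\snk(b/2)\|^2 - (nk/2)\snk^2(b/2)$ and $\nabla\Delta f$ accordingly. After isolating the excess-Ricci integrand $\mathcal{E} := \|\tracelessHess f\|^2 + \ric(\nabla f, \nabla f) - (n-1)k\|\nabla f\|^2$ and using $\|\nabla f\|^2 = 4\snk^2(b/2)\|\nabla\snk(b/2)\|^2$ to consolidate the $k\|\nabla f\|^2$ terms, I obtain the pointwise identity
\[
\Delta \|\nabla \snk^2(b/2)\|^2 = 2\mathcal{E} + 8n\|\nabla\snk(b/2)\|^4 - 8k\snk^2(b/2)\|\nabla\snk(b/2)\|^2 + \tfrac{nk^2}{2}\snk^4(b/2) + 4n\langle \nabla \|\nabla\snk(b/2)\|^2, \nabla \snk^2(b/2)\rangle
\]
on $M \setminus \{p\}$.

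Next I integrate this identity over $\{b \leq r\}$ for a regular value $r \in (0, m)$. All terms are elementary except the last, which I handle via $\langle \nabla \|\nabla\snk(b/2)\|^2, \nabla \snk^2(b/2)\rangle = \div\bigl(\|\nabla \snk(b/2)\|^2 \nabla \snk^2(b/2)\bigr) - \|\nabla \snk(b/2)\|^2 \Delta \snk^2(b/2)$, applying the divergence theorem on the annular region $\{\varepsilon \leq b \leq r\}$, and sending $\varepsilon \downarrow 0$. By Lemma \ref{lemma: asymptotics of b near the pole} the inner boundary term is bounded by $C\varepsilon \cdot \vol(\{b=\varepsilon\}) = O(\varepsilon^n)$ and vanishes since $n \geq 3$. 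The outer boundary reduces to $\snk(r/2)\csk(r/2)\int_{b=r}\|\nabla\snk(b/2)\|^2\|\nabla b\|$ via $\nabla \snk^2(b/2) = \snk(b/2)\csk(b/2)\nabla b$, while the residual $\|\nabla\snk(b/2)\|^2 \Delta \snk^2(b/2)$ piece is reabsorbed into the bulk $\|\nabla\snk(b/2)\|^4$ and $\snk^2(b/2)\|\nabla\snk(b/2)\|^2$ integrals by Lemma \ref{lemma: laplace of snk squared (b/2)}.

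The resulting $\int \snk^2(b/2)\|\nabla\snk(b/2)\|^2 = \tfrac{1}{4}\int \|\nabla\snk^2(b/2)\|^2$ integral does not appear with the correct coefficient on the RHS of the claim, so I introduce an auxiliary identity. Directly from Lemma \ref{lemma: laplace of snk squared (b/2)} and the product rule, $\Delta \snk^4(b/2) = 4(n+2)\snk^2(b/2)\|\nabla\snk(b/2)\|^2 - nk\snk^4(b/2)$; integrating over $\{b \leq r\}$ and applying the divergence theorem (the $\varepsilon \downarrow 0$ limit is now $O(\varepsilon^{n+2}) = 0$) gives
\[
4(n+2)\int_{b\leq r}\snk^2(b/2)\|\nabla\snk(b/2)\|^2 - nk\int_{b\leq r}\snk^4(b/2) = 2\snk^3(r/2)\csk(r/2)\int_{b=r}\|\nabla b\|,
\]
and the outer boundary integral is rewritten using $(2\snk(r/2))^{1-n}\csk(r/2)\int_{b=r}\|\nabla b\| = \omega_{n-1} - (nk/4)\int_{b\leq r}G$ coming from $I_1 \equiv \omega_{n-1}$ in \eqref{eq: I1 is constant}.

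Finally I multiply the integrated Bochner identity by $4\csk(r/2)/(2\snk(r/2))^{n+1}$, add $(nk/4)\ctk(r/2)\int_{b\leq r}(4\|\nabla\snk(b/2)\|^2+k\snk^2(b/2))G$ to both sides, and use the auxiliary identity to trade the $\int \snk^2(b/2)\|\nabla\snk(b/2)\|^2$ term for $\int \snk^4(b/2)$, $\int_{b=r}\|\nabla b\|$, and (via $I_1$) $\int_{b\leq r} G$. The outer-boundary contribution and the surviving $\int_{b=r}\|\nabla b\|$ assemble, together with the added sublevel $uG$ term, into $n\ctk(r/2)A(r)$ by definition \eqref{eq: A functional} (the $4\|\nabla\snk(b/2)\|^2 + k\snk^2(b/2)$ piece of $u$ on $\{b=r\}$ separates cleanly since $\snk(b/2) \equiv \snk(r/2)$ on that level). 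The $\|\nabla\snk(b/2)\|^4$ and $\snk^4(b/2)$ bulk integrals combine with a portion of $\int uG$ into $-n(n-1)\ctk(r/2)V(r)$ using definition \eqref{eq: V function for monotonicity formula} and the factorization $(4\|\nabla\snk(b/2)\|^2+k\snk^2(b/2))(4\|\nabla\snk(b/2)\|^2-k\snk^2(b/2)) = 16\|\nabla\snk(b/2)\|^4 - k^2\snk^4(b/2)$; the two residual integrals of $\snk^4(b/2)$ and $\|\nabla\snk^2(b/2)\|^2$ produce the last two terms on the RHS. The main obstacle is the combinatorial bookkeeping of integer-polynomial-in-$n$ coefficients and powers of $2\snk(r/2)$ and $\csk(r/2)$ across these two nested divergence-theorem substitutions and the use of $I_1$ to close the identity; by contrast, the singularity of $b$ at $p$ is handled routinely by the asymptotics in Lemma \ref{lemma: asymptotics of b near the pole} and the dimension constraint $n \geq 3$.
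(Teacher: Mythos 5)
Your proposal is correct and follows essentially the same route as the paper's proof: Bochner applied to $\snk^2(b/2)$, integration over the sublevel set with the third-order term handled by the divergence theorem (and the inner boundary killed by the asymptotics of $b$), Lemma \ref{lemma: laplace of snk squared (b/2)} to eliminate Laplacians, and reassembly into $A$, $V$, and the residual integrals; I checked that your pointwise identity and your auxiliary identity $\Delta \snk^4(b/2) = 4(n+2)\snk^2(b/2)\|\nabla\snk(b/2)\|^2 - nk\snk^4(b/2)$ together supply exactly the missing $nk\snk^3(r/2)\csk(r/2)\int_{b=r}\|\nabla b\|$ piece of the $A(r)$ boundary term, and that all coefficients close (in particular the $\int_{b\leq r}uG$ contributions from $n\ctk(r/2)A(r)$ and $-n(n-1)\ctk(r/2)V(r)$ cancel against the $\tfrac{nk}{4}\ctk(r/2)\int uG$ term on the left). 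The only superfluous step is the appeal to $I_1\equiv\omega_{n-1}$: the boundary integral $\int_{b=r}\|\nabla b\|$ produced by the auxiliary identity cancels directly against the gap in the $A(r)$ boundary term, so no conversion to $\omega_{n-1}-\tfrac{nk}{4}\int_{b\leq r}G$ is needed (if performed, the $\omega_{n-1}$ and $\int_{b\leq r}G$ contributions must, and do, cancel back out).
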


\begin{proof}
The Bochner formula applied to $\snk^2(b/2)$ allows us to write
\[
\frac{1}{2} \Delta \left\| \nabla \! \snk^2 \left( b/2 \right) \right\|^2 = \left\| \hess{\snk^2 \left( b/2 \right)} \right\|^2 + \left\langle \nabla \Delta \! \snk^2 \left( b/2 \right), \nabla \! \snk^2 \left( b/2 \right) \right\rangle + \ric \left( \nabla \! \snk^2 \left( b/2 \right), \nabla \! \snk^2 \left( b/2 \right) \right).
\]
Adding $nk \| \nabla \! \snk^2(b/2) \|^2$ to both sides of the above equation, we obtain
\begin{equation} \label{eq: bochner formula with constant term for snk squared (b/2)}
    \begin{split}
        \frac{1}{2} (\Delta + 2nk) \left\| \nabla \! \snk^2 \left( b/2 \right) \right\|^2 &= \left\| \hess \snk^2 \left( b/2 \right) \right\|^2 + \ric \left( \nabla \! \snk^2 \left( b/2 \right), \nabla \! \snk^2 \left( b/2 \right) \right) \\
        &\quad + \left\langle \nabla (\Delta + nk) \snk^2 \left( b/2 \right), \nabla \! \snk^2 \left( b/2 \right) \right\rangle.
    \end{split}
\end{equation}
Now, integration by parts (more formally, an integration by parts argument as in \eqref{eq: integration by parts argument for function with bounded gradient near greens function singularity part 1} from the proof of Proposition \ref{prop: mathcal L subharmonic function u implies monotonicity of Iu}, using the asymptotics of $b$ near it singularity -- Lemma \ref{lemma: asymptotics of b near the pole}) tells us that
\allowdisplaybreaks
\begin{align*}
    \int_{b \leq r} \left\langle \nabla (\Delta + nk) \snk^2 \left( b/2 \right), \nabla \! \snk^2 \left( b/2 \right) \right\rangle &= \int_{b = r} (\Delta + nk) \snk^2 \left( b/2 \right) \cdot \left\langle \nabla \! \snk^2 \left( b/2 \right), \frac{\nabla b}{\|\nabla b\|} \right\rangle \\
    &\quad - \int_{b \leq r} (\Delta + nk) \snk^2 \left( b/2 \right) \cdot \Delta \! \snk^2 \left( b/2 \right),
\end{align*}
and using the chain rule on the first term from the above right-hand side, we obtain
\begin{equation} \label{eq: gradient term in bochner formula integrated by parts}
    \begin{split}
        \int_{b \leq r} \left\langle \nabla (\Delta + nk) \snk^2 \left( b/2 \right), \nabla \! \snk^2 \left( b/2 \right) \right\rangle &= \snk \left( r/2 \right) \csk \left( r/2 \right) \int_{b = r} (\Delta + nk) \snk^2 \left( b/2 \right) \cdot \|\nabla b\| \\
        &\quad - \int_{b \leq r} (\Delta + nk) \snk^2 \left( b/2 \right) \cdot \Delta \! \snk^2 \left( b/2 \right).
    \end{split}
\end{equation}
By Lemma \ref{lemma: laplace of snk squared (b/2)}, the right-hand side of \eqref{eq: gradient term in bochner formula integrated by parts} may be rewritten as
\allowdisplaybreaks
\begin{equation} \label{eq: gradient term in bochner formula written in terms of A and V}
    \begin{split}
        &\int_{b \leq r} \big\langle \nabla (\Delta + nk) \snk^2 \left( b/2 \right), \nabla \! \snk^2 \left( b/2 \right) \big\rangle\\
        &= \frac{n}{2} \snk \left( r/2 \right) \csk \left( r/2 \right) \int_{b = r} \left( 4 \left\| \nabla \! \snk \left( b/2 \right) \right\|^2 + k \snk^2 \left( b/2 \right) \right) \|\nabla b\| \\
        &\quad - \frac{n^2}{4} \int_{b \leq r} \left( 4 \left\| \nabla \! \snk \left( b/2 \right) \right\|^2 + k \snk^2 \left( b/2 \right) \right) \left( 4 \left\| \nabla \! \snk \left( b/2 \right) \right\|^2 - k \snk^2 \left( b/2 \right) \right).
    \end{split}
\end{equation}
Similarly, using Lemma \ref{lemma: laplace of snk squared (b/2)}, we may compute
\allowdisplaybreaks
\begin{align} \label{align: hessian term in bochner formula written in terms of A and V}
    \left\| \hess \snk^2 \left( b/2 \right) \right\|^2 &= \left\| \tracelessHess \snk^2 \left( b/2 \right) \right\|^2 + \frac{\left| \Delta \! \snk^2 \left( b/2 \right) \right|^2}{n} \nonumber \\
    &= \left\| \tracelessHess \snk^2 \left( b/2 \right) \right\|^2 + \frac{1}{n} \left( 2n \left\| \nabla \! \snk \left( b/2 \right) \right\|^2 - \frac{nk}{2} \snk^2 \left( b/2 \right) \right)^2 \nonumber \\
    &= \left\| \tracelessHess \snk^2 \left( b/2 \right) \right\|^2 + \frac{n}{4} \left( 4 \left\| \nabla \! \snk \left( b/2 \right) \right\|^2 + k \snk^2 \left( b/2 \right) \right) \left( 4 \left\| \nabla \! \snk \left( b/2 \right) \right\|^2 - k \snk^2 \left( b/2 \right) \right) \nonumber \\
    &\quad - \frac{nk}{2} \left\| \nabla \! \snk^2 \left( b/2 \right) \right\|^2 + \frac{n k^2}{2} \snk^4 \left( b/2 \right).
\end{align}
Combining \eqref{eq: bochner formula with constant term for snk squared (b/2)}, \eqref{eq: gradient term in bochner formula written in terms of A and V}, and \eqref{align: hessian term in bochner formula written in terms of A and V}, it follows that for every $r \in (0, m)$,
\allowdisplaybreaks
\begin{equation} \label{eq: integrated bochner formula with constant term for snk squared (b/2) written almost in terms of A and V}
    \begin{split}
        &\frac{1}{2} \int_{b \leq r} (\Delta + 2nk) \left\| \nabla \! \snk^2 \left( b/2 \right) \right\|^2 \\
        &= \int_{b \leq r} \left( \left\| \tracelessHess \snk^2 \left( b/2 \right) \right\|^2 + \ric \left( \nabla \! \snk^2 \left( b/2 \right), \nabla \! \snk^2 \left( b/2 \right) \right) \right) \\
        &\quad + \frac{n}{2} \snk \left( r/2 \right) \csk \left( r/2 \right) \int_{b = r} \left( 4 \left\| \nabla \! \snk \left( b/2 \right) \right\|^2 + k \snk^2 \left( b/2 \right) \right) \|\nabla b\| \\
        &\quad - \frac{n(n - 1)}{4} \int_{b \leq r} \left( 4 \left\| \nabla \! \snk \left( b/2 \right) \right\|^2 + k \snk^2 \left( b/2 \right) \right) \left( 4 \left\| \nabla \! \snk \left( b/2 \right) \right\|^2 - k \snk^2 \left( b/2 \right) \right) \\
        &\quad - \frac{nk}{2} \int_{b \leq r} \left\| \nabla \! \snk^2 \left( b/2 \right) \right\|^2 + \frac{n k^2}{2} \int_{b \leq r} \snk^4 \left( b/2 \right)
    \end{split}
\end{equation}

Multiplying \eqref{eq: integrated bochner formula with constant term for snk squared (b/2) written almost in terms of A and V} by $8 \left( 2 \snk (r/2) \right)^{-1 - n} \csk (r/2)$, we obtain that for every $r \in (0, m)$,
\allowdisplaybreaks
\begin{align} \label{align: integrated bochner formula with constant term written almost in terms of A and V and almost in A derivative form}
    &\frac{4 \csk \left( r/2 \right)}{\left( 2 \snk \left( r/2 \right) \right)^{n + 1}} \int_{b \leq r} (\Delta + 2nk) \left\| \nabla \! \snk^2 \left( b/2 \right) \right\|^2 \nonumber \\
    &= \frac{8 \csk \left( r/2 \right)}{\left( 2 \snk \left( r/2 \right) \right)^{n + 1}} \int_{b \leq r} \left( \left\| \tracelessHess \snk^2 \left( b/2 \right) \right\|^2 + \ric \left( \nabla \! \snk^2 \left( b/2 \right), \nabla \! \snk^2 \left( b/2 \right) \right) \right) \nonumber \\
    &\quad + n \ctk \left( r/2 \right) \left( 2 \snk \left( r/2 \right) \right)^{1 - n} \csk \left( r/2 \right)\int_{b = r} \left( 4 \left\| \nabla \! \snk \left( b/2 \right) \right\|^2 + k \snk^2 \left( b/2 \right) \right) \|\nabla b\| \nonumber \\
    &\quad - n(n - 1) \ctk \left( r/2 \right) \left( 2 \snk \left( r/2 \right) \right)^{- n} \int_{b \leq r} \left( 4 \left\| \nabla \! \snk \left( b/2 \right) \right\|^2 + k \snk^2 \left( b/2 \right) \right) \left( 4 \left\| \nabla \! \snk \left( b/2 \right) \right\|^2 - k \snk^2 \left( b/2 \right) \right) \nonumber \\
    &\quad - 4nk \left( 2 \snk \left( r/2 \right) \right)^{-1 - n} \csk \left( r/2 \right) \int_{b \leq r} \left\| \nabla \! \snk^2 \left( b/2 \right) \right\|^2 + 2n k^2 \ctk \left( r/2 \right) \left( 2 \snk \left( r/2 \right) \right)^{- n} \int_{b \leq r} \snk^4 \left( b/2 \right).
\end{align}
Observing that
\allowdisplaybreaks
\begin{equation} \label{eq: Ricci computation for the relation between A and V}
    \begin{split}
        &8 \ric \left( \nabla \! \snk^2 \left( b/2 \right), \nabla \! \snk^2 \left( b/2 \right) \right) - 4nk \left\| \nabla \! \snk^2 \left( b/2 \right) \right\|^2 \\
        &= 8 \ric \left( \nabla \! \snk^2 \left( b/2 \right), \nabla \! \snk^2 \left( b/2 \right) \right) - 8(n - 1)k \left\| \nabla \! \snk^2 \left( b/2 \right) \right\|^2 + 4k(n - 2) \left\| \nabla \! \snk^2 \left( b/2 \right) \right\|^2,
    \end{split}
\end{equation}
and that $2nk - k(n - 2) = (n + 2)k$, we can use \eqref{eq: Ricci computation for the relation between A and V} to group the first and second-to-last terms on the right-hand side of \eqref{align: integrated bochner formula with constant term written almost in terms of A and V and almost in A derivative form} and move the remainder to the left-hand side; we then obtain the equation
\allowdisplaybreaks
\begin{align} \label{align: basically the relation between A and V, almost done now}
    &\frac{4 \csk \left( r/2 \right)}{\left( 2 \snk \left( r/2 \right) \right)^{n + 1}} \int_{b \leq r} (\Delta + (n + 2)k) \left\| \nabla \! \snk^2 \left( b/2 \right) \right\|^2 \nonumber \\
    &= \frac{8 \csk \left( r/2 \right)}{\left( 2 \snk \left( r/2 \right) \right)^{n + 1}} \int_{b \leq r} \left( \left\| \tracelessHess \snk^2 \left( b/2 \right) \right\|^2 + \ric \left( \nabla \! \snk^2 \left( b/2 \right), \nabla \! \snk^2 \left( b/2 \right) \right) - (n - 1)k \left\| \nabla \! \snk^2 \left( b/2 \right) \right\|^2 \right) \nonumber \\
    &\quad + n \ctk \left( r/2 \right) \left( 2 \snk \left( r/2 \right) \right)^{1 - n} \csk \left( r/2 \right)\int_{b = r} \left( 4 \left\| \nabla \! \snk \left( b/2 \right) \right\|^2 + k \snk^2 \left( b/2 \right) \right) \|\nabla b\| \nonumber \\
    &\quad - n(n - 1) \ctk \left( r/2 \right) \left( 2 \snk \left( r/2 \right) \right)^{- n} \int_{b \leq r} \left( 4 \left\| \nabla \! \snk \left( b/2 \right) \right\|^2 + k \snk^2 \left( b/2 \right) \right) \left( 4 \left\| \nabla \! \snk \left( b/2 \right) \right\|^2 - k \snk^2 \left( b/2 \right) \right) \nonumber \\
    &\quad + 2n k^2 \ctk \left( r/2 \right) \left( 2 \snk \left( r/2 \right) \right)^{- n} \int_{b \leq r} \snk^4 \left( b/2 \right).
\end{align}
Recalling the definitions of $A$ and $V$ from \eqref{eq: A functional} and \eqref{eq: V function for monotonicity formula}, respectively, it follows that
\allowdisplaybreaks
\begin{equation} \label{eq: rewriting integrated Bochner formula in terms of A and V part 1}
    \begin{split}
        &n \ctk \left( r/2 \right) \left( 2 \snk \left( r/2 \right) \right)^{1 - n} \csk \left( r/2 \right)\int_{b = r} \left( 4 \left\| \nabla \! \snk \left( b/2 \right) \right\|^2 + k \snk^2 \left( b/2 \right) \right) \|\nabla b\| \\
        &= n \ctk \left( r/2 \right) A(r) - \frac{n^2 k}{4} \int_{b \leq r} \left( 4 \left\| \nabla \! \snk \left( b/2 \right) \right\|^2 + k \snk^2 \left( b/2 \right) \right) G,
    \end{split}
\end{equation}
and
\begin{align} \label{align: rewriting integrated Bochner formula in terms of A and V part 2}
    &- n(n - 1) \ctk \left( r/2 \right) \left( 2 \snk \left( r/2 \right) \right)^{-n} \int_{b \leq r} \left( 4 \left\| \nabla \! \snk \left( b/2 \right) \right\|^2 + k \snk^2 \left( b/2 \right) \right) \left( 4 \left\| \nabla \! \snk \left( b/2 \right) \right\|^2 - k \snk^2 \left( b/2 \right) \right) \nonumber \\
    &= - n(n - 1) \ctk \left( r/2 \right) V(r) + \frac{n(n - 1)k}{4} \ctk \left( r/2 \right) \int_{b \leq r} \left( 4 \left\| \nabla \! \snk \left( b/2 \right) \right\|^2 + k \snk^2 \left( b/2 \right) \right) G.
\end{align}
Using \eqref{eq: rewriting integrated Bochner formula in terms of A and V part 1} and \eqref{align: rewriting integrated Bochner formula in terms of A and V part 2}, we may rewrite the right-hand side of \eqref{align: basically the relation between A and V, almost done now} as
\pagebreak
\begin{align*}
    &\frac{4 \csk \left( r/2 \right)}{\left( 2 \snk \left( r/2 \right) \right)^{n + 1}} \int_{b \leq r} (\Delta + (n + 2)k) \left\| \nabla \! \snk^2 \left( b/2 \right) \right\|^2 \\
    &= \frac{8 \csk \left( r/2 \right)}{\left( 2 \snk \left( r/2 \right) \right)^{n + 1}} \int_{b \leq r} \left( \left\| \tracelessHess \snk^2 \left( b/2 \right) \right\|^2 + \ric \left( \nabla \! \snk^2 \left( b/2 \right), \nabla \! \snk^2 \left( b/2 \right) \right) - (n - 1)k \left\| \nabla \! \snk^2 \left( b/2 \right) \right\|^2 \right) \\
    &\quad + n \ctk \left( r/2 \right) A(r) - n(n - 1) \ctk \left( r/2 \right) V(r) - \frac{nk}{4} \ctk \left( r/2 \right) \int_{b \leq r} \left( 4 \left\| \nabla \! \snk \left( b/2 \right) \right\|^2 + k \snk^2 \left( b/2 \right) \right) G \\
    &\quad + 2n k^2 \ctk \left( r/2 \right) \left( 2 \snk \left( r/2 \right) \right)^{- n} \int_{b \leq r} \snk^4 \left( b/2 \right),
\end{align*}
which completes the proof of \eqref{eq: almost done with the relationship between A and V}.
\end{proof}

With the above identity in mind, we may now prove the monotonicity of $A - 2(n - 1)V$, as the positive curvature analogue of {\cite[Theorem 2.4]{ColdingNewMonotonicityFormulas}}.

\begin{theorem} \label{th: differential relation between A and V}
For almost every $r \in (0, m)$, we have 
\begin{align*}
    &A'(r) - 2(n - 1) V'(r) \\
    &= \frac{8 \csk \left( r/2 \right)}{\left( 2 \snk \left( r/2 \right) \right)^{n + 1}} \int_{b \leq r} \left( \left\| \tracelessHess{\snk^2 \left( b/2 \right)} \right\|^2 + \ric \left( \nabla \! \snk^2 \left( b/2 \right), \nabla \! \snk^2 \left( b/2 \right) \right) - (n - 1)k \left\| \nabla \! \snk^2 \left( b/2 \right) \right\|^2 \right).
\end{align*}
In particular, $A - 2(n - 1)V$ is non-decreasing on $(0, m)$.
\end{theorem}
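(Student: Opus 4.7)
The plan is to read the derivative formula off Lemma \ref{lemma: integrated bochner formula for monotonicity of A - 2(n - 1)V}, combining it with Proposition \ref{prop: derivative of V in terms of A and itself} and Lemma \ref{lemma: computing derivative of Iu using the divergence theorem}. The RHS of Lemma \ref{lemma: integrated bochner formula for monotonicity of A - 2(n - 1)V} already contains the target integral with the correct prefactor $8\csk(r/2)/(2\snk(r/2))^{n+1}$, so the task reduces to showing that its remaining terms rearrange into $A'(r) - 2(n-1)V'(r)$.

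First, apply Proposition \ref{prop: derivative of V in terms of A and itself} with $u := 4\|\nabla\snk(b/2)\|^2 + k\snk^2(b/2)$, so that $I_u = A$ and $J_u = V$; this gives $V'(r) = \ctk(r/2)(A(r) - nV(r))/2$, and rearrangement converts the combination $n\ctk(r/2)A(r) - n(n-1)\ctk(r/2)V(r)$ on the RHS of Lemma \ref{lemma: integrated bochner formula for monotonicity of A - 2(n - 1)V} into $\ctk(r/2)A(r) + 2(n-1)V'(r)$. Next, apply Lemma \ref{lemma: computing derivative of Iu using the divergence theorem} to the same $u$ (which is continuous on $M$ if one sets $u(p) := 1$ via Lemma \ref{lemma: asymptotics of b near the pole}, and smooth on $M \setminus \{p\}$) to obtain $A'(r) = (2\snk(r/2))^{1-n}\csk(r/2)\int_{b\leq r}\Delta u$.

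The main step is then to verify that the LHS of Lemma \ref{lemma: integrated bochner formula for monotonicity of A - 2(n - 1)V}, minus $\ctk(r/2)A(r)$ and minus the auxiliary terms $W_1 := 2nk^2\ctk(r/2)(2\snk(r/2))^{-n}\int\snk^4(b/2)$ and $W_2 := -2(n+2)k\ctk(r/2)(2\snk(r/2))^{-n}\int\|\nabla\snk^2(b/2)\|^2$ on its RHS, equals $A'(r)$. This is carried out by expanding $\int_{b\leq r}\Delta\|\nabla\snk^2(b/2)\|^2$ using the chain rule $\|\nabla\snk^2(b/2)\|^2 = 4\snk^2(b/2)\|\nabla\snk(b/2)\|^2$, the product rule for $\Delta$, Lemma \ref{lemma: laplace of snk squared (b/2)}, and an integration by parts on the cross term $\int_{b\leq r}\langle\nabla\snk^2(b/2),\nabla\|\nabla\snk(b/2)\|^2\rangle$, whose boundary contribution at the singularity $p$ vanishes by the asymptotics of Lemma \ref{lemma: asymptotics of b near the pole}. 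Assembling everything produces the desired formula, and the monotonicity of $A - 2(n-1)V$ follows at once: $\ric \geq (n-1)kg$ makes the integrand on the RHS non-negative, and local absolute continuity of $A - 2(n-1)V$ on $(0,m)$ (inherited from $I_u$ and $J_u$) turns this into the monotonicity statement.

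The main obstacle is the algebraic bookkeeping in the last step: tracking constants through the expansion of $\int_{b\leq r}\Delta\|\nabla\snk^2(b/2)\|^2$ and confirming the exact matching against $W_1$, $W_2$, and $\ctk(r/2)A(r)$. Conceptually, however, this is precisely the Bochner-type calculation that produced Lemma \ref{lemma: integrated bochner formula for monotonicity of A - 2(n - 1)V}, rearranged to isolate $A'(r) - 2(n-1)V'(r)$ rather than the level-set integrals; the identity is thus essentially built into the structure of Lemma \ref{lemma: integrated bochner formula for monotonicity of A - 2(n - 1)V}.
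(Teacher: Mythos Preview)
Your proposal is correct and follows essentially the same route as the paper: both arguments pivot on Lemma~\ref{lemma: integrated bochner formula for monotonicity of A - 2(n - 1)V}, rewrite the $n\ctk(r/2)A(r)-n(n-1)\ctk(r/2)V(r)$ combination via Proposition~\ref{prop: derivative of V in terms of A and itself} (the paper uses \eqref{eq: derivative of V in terms of A and V}), and then identify the remaining terms with $A'(r)+\ctk(r/2)A(r)$. The only organizational difference is that the paper obtains this last identification by directly differentiating $\snk^2(r/2)A(r)$ term-by-term (equations \eqref{align: derivative of snk squared (r/2) A(r) part 1}--\eqref{eq: normalized derivative of snk squared times A in full Laplace form}) and recognizing the result as LHS of Lemma~\ref{lemma: integrated bochner formula for monotonicity of A - 2(n - 1)V} plus $\frac{4\csk(r/2)}{(2\snk(r/2))^{n+1}}\int_{b\leq r}\Delta(k\snk^4(b/2))$, whose expansion via \eqref{align: RHS in almost final equation for derivative of A - 2(n - 2)V part 2} produces exactly $-W_1-W_2$; your route via Lemma~\ref{lemma: computing derivative of Iu using the divergence theorem} and the product-rule expansion of $\Delta\|\nabla\snk^2(b/2)\|^2$ verifies the same identity from the other end, and the algebraic content is identical.
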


\begin{proof}
Recall that, by \eqref{eq: A functional in Iu form} and \eqref{eq: Iu in terms of G and not b}, we have
\[
A(r) = \frac{1}{n - 2} \int_{b = r} \left( 4 \left\| \nabla \! \snk \left( b/2 \right) \right\|^2 + k \snk^2 \left( b/2 \right) \right) \|\nabla G\| + \frac{nk}{4} \int_{b \leq r} \left( 4 \left\| \nabla \! \snk \left( b/2 \right) \right\|^2 + k \snk^2 \left( b/2 \right) \right) G.
\]
It follows that
\begin{align*}
    \snk^2 \left( r/2 \right) A(r) &= \frac{1}{n - 2} \int_{b = r} \left\| \nabla \! \snk^2 \left( b/2 \right) \right\|^2 \|\nabla G\| + \frac{k}{n - 2} \snk^4 \left( r/2 \right) \int_{b = r} \|\nabla G\| \\
    &\quad + \frac{nk}{4} \snk^2 \left( r/2 \right) \int_{b \leq r} \left( 4 \left\| \nabla \! \snk \left( b/2 \right) \right\|^2 + k \snk^2 \left( b/2 \right) \right) G.
\end{align*}
At a regular value $r \in (0, m)$ of $b$, using the divergence theorem and \eqref{eq: nabla G vs nabla b}, we may compute (in a similar manner as for \eqref{eq: derivative of Iu})
\allowdisplaybreaks
\begin{align} \label{align: derivative of snk squared (r/2) A(r) part 1}
    \frac{d}{dr} \bigg( \frac{1}{n - 2} \int_{b = r} \left\| \nabla \! \snk^2 \left( b/2 \right) \right\|^2 \|\nabla G\| \bigg) &= \frac{1}{n - 2} \int_{b = r} \div \! \left( \left\| \nabla \! \snk^2 \left( b/2 \right) \right\|^2 \|\nabla G\| \frac{\nabla b}{\|\nabla b\|} \right) \frac{1}{\|\nabla b\|} \nonumber \\
    &= \frac{1}{n - 2} \int_{b = r} \div \! \left( - \left\| \nabla \! \snk^2 \left( b/2 \right) \right\|^2 \nabla G \right) \frac{1}{\|\nabla b\|} \nonumber \\
    &= \left( 2 \snk \left( r/2 \right) \right)^{1 - n} \csk \left( r/2 \right) \int_{b = r} \left\langle \nabla \left\| \nabla \! \snk^2 \left( b/2 \right) \right\|^2, \frac{\nabla b}{\| \nabla b \|} \right\rangle \nonumber \\
    &\quad - \frac{nk}{16} \left( 2 \snk \left( r/2 \right) \right)^{4 - n} \csk^2 \left( r/2 \right) \int_{b = r} \|\nabla b\|,
\end{align}
where the last equality follows by \eqref{eq: nabla G vs nabla b}, by the product rule, and because $\Delta G = n(n - 2)k G/4$ on $M \setminus \{ p \}$.
Similarly, at a regular value $r \in (0, m)$ of $b$, by the divergence theorem and as $G = (2 \snk (b/2))^{2 - n}$, we have
\allowdisplaybreaks
\begin{equation} \label{eq: derivative of snk squared (r/2) A(r) part 2}
    \begin{split}
        \frac{d}{dr} \left( \frac{k}{n - 2} \snk^4 \left( r/2 \right) \int_{b = r} \|\nabla G\| \right) &= \frac{k}{4} \left( 2 \snk \left( r/2 \right) \right)^{4 - n} \csk^2 \left( r/2 \right) \int_{b = r} \|\nabla b\| \\
        &\quad - \frac{nk^2}{64} \left( 2 \snk \left( r/2 \right) \right)^{6 - n} \int_{b = r} \frac{1}{\|\nabla b\|},
    \end{split}
\end{equation}
and by the product rule and the coarea formula,
\allowdisplaybreaks
\begin{equation} \label{eq: derivative of snk squared (r/2) A(r) part 3}
    \begin{split}
        &\frac{d}{dr} \left( \frac{nk}{4} \snk^2 \left( r/2 \right) \int_{b \leq r} \left( 4 \left\| \nabla \! \snk \left( b/2 \right) \right\|^2 + k \snk^2 \left( b/2 \right) \right) G \right) \\
        &= \frac{nk}{8} \left( 2 \snk \left( r/2 \right) \right) \csk \left( r/2 \right) \int_{b \leq r} \left( 4 \left\| \nabla \! \snk \left( b/2 \right) \right\|^2 + k \snk^2 \left( b/2 \right) \right) G \\
        &\quad + \frac{nk}{16} \left(2 \snk \left( r/2 \right) \right)^{4 - n} \csk^2 \left( r/2 \right) \int_{b = r} \|\nabla b\| + \frac{nk^2}{64} \left( 2 \snk \left( r/2 \right) \right)^{6 - n} \int_{b = r} \frac{1}{\|\nabla b\|}.
    \end{split}
\end{equation}
Combining \eqref{align: derivative of snk squared (r/2) A(r) part 1}, \eqref{eq: derivative of snk squared (r/2) A(r) part 2}, and \eqref{eq: derivative of snk squared (r/2) A(r) part 3}, it follows that for almost every $r \in (0, m)$, we have
\allowdisplaybreaks
\begin{equation} \label{eq: almost normalized derivative of snk squared times A}
    \begin{split}
        \snk^{-2} \left( r/2 \right) \frac{d}{dr} \left( \snk^2 \left( r/2 \right) A(r) \right) &= \left( 2 \snk \left( r/2 \right) \right)^{1 - n} \csk \left( r/2 \right) \int_{b = r} \left\langle \nabla \left\| \nabla \! \snk^2 \left( b/2 \right) \right\|^2, \frac{\nabla b}{\| \nabla b \|} \right\rangle \\
        &\quad + k \left( 2 \snk \left( r/2 \right) \right)^{2 - n} \csk^2 \left( r/2 \right) \int_{b = r} \|\nabla b\| \\
        &\quad + \frac{nk}{4} \ctk \left( r/2 \right) \int_{b \leq r} \left( 4 \left\| \nabla \! \snk \left( b/2 \right) \right\|^2 + k \snk^2 \left( b/2 \right) \right) G.
    \end{split}
\end{equation}
Using an argument similar to the one from the proof of Lemma \ref{lemma: computing derivative of Iu using the divergence theorem}, we may rewrite the first term on the right-hand side of \eqref{eq: almost normalized derivative of snk squared times A} as
\[
\left( 2 \snk \left( r/2 \right) \right)^{1 - n} \csk \left( r/2 \right) \int_{b = r} \left\langle \nabla \left\| \nabla \! \snk^2 \left( b/2 \right) \right\|^2, \frac{\nabla b}{\| \nabla b \|} \right\rangle = \left( 2 \snk \left( r/2 \right) \right)^{1 - n} \csk \left( r/2 \right) \int_{b \leq r} \Delta \left\| \nabla \! \snk^2 \left( b/2 \right) \right\|^2, 
\]
and thus
\allowdisplaybreaks
\begin{equation} \label{eq: normalized derivative of snk squared times A}
    \begin{split}
        \snk^{-2} \left( r/2 \right) \frac{d}{dr} \left( \snk^2 \left( r/2 \right) A(r) \right) &= 4 \left( 2 \snk \left( r/2 \right) \right)^{-1 - n} \csk \left( r/2 \right) \int_{b \leq r} \Delta \left\| \nabla \! \snk^2  \left( b/2 \right) \right\|^2 \\
        &\quad + k \left( 2 \snk \left( r/2 \right) \right)^{2 - n} \csk^2 \left( r/2 \right) \int_{b = r} \|\nabla b\| \\
        &\quad + \frac{nk}{4} \ctk \left( r/2 \right) \int_{b \leq r} \left( 4 \left\| \nabla \! \snk \left( b/2 \right) \right\|^2 + k \snk^2 \left( b/2 \right) \right) G.
    \end{split}
\end{equation}
The asymptotics of $b$ near $p$ (cf.~Lemma \ref{lemma: asymptotics of b near the pole}) combined with a similar divergence theorem argument as \eqref{eq: integration by parts argument for function with bounded gradient near greens function singularity part 1}-\eqref{eq: integration by parts argument for function with bounded gradient near greens function singularity part 2} allows us to write
\[
k \int_{b \leq r} \Delta \! \snk^4 \left( b/2 \right) = k \int_{b = r} \left\langle \nabla \! \snk^4 \left( b/2 \right), \frac{\nabla b}{\|\nabla b\|} \right\rangle = \frac{k}{4} \left( 2 \snk \left( r/2 \right) \right)^3 \csk \left( r/2 \right) \int_{b = r} \|\nabla b\|
\]
for almost every $r \in (0, m)$.
Multiplying the above by $4 \left( 2 \snk (r/2) \right)^{- 1 - n} \cdot \csk \left( r/2 \right)$, we obtain that
\[
4 \left( 2 \snk \left( r/2 \right) \right)^{-1 - n} \csk \left( r/2 \right) \int_{b \leq r} \Delta \left( k \snk^4 \left( b/2 \right) \right) = k \left( 2 \snk \left( r/2 \right) \right)^{2 - n} \csk^2 \left( r/2 \right) \int_{b = r} \|\nabla b\|,
\]
and so we may rewrite \eqref{eq: normalized derivative of snk squared times A} as
\begin{equation} \label{eq: normalized derivative of snk squared times A in full Laplace form}
    \begin{split}
        \snk^{-2} \left( r/2 \right) \frac{d}{dr} \left( \snk^2 \left( r/2 \right) A(r) \right) &= \frac{4 \csk \left( r/2 \right)}{\left( 2 \snk \left( r/2 \right) \right)^{n + 1}} \int_{b \leq r} \Delta \left( \left\| \nabla \! \snk^2 \left( b/2 \right) \right\|^2 + k \snk^4 \left( b/2 \right) \right) \\ 
        &\quad + \frac{nk}{4} \ctk \left( r/2 \right) \int_{b \leq r} \left( 4 \left\| \nabla \! \snk \left( b/2 \right) \right\|^2 + k \snk^2 \left( b/2 \right) \right) G.
    \end{split}
\end{equation}
On the other hand, by Lemma \ref{lemma: integrated bochner formula for monotonicity of A - 2(n - 1)V}, we know that
\begin{equation} \label{eq: almost done with the relationship between A and V part 2}
    \begin{split}
        &\frac{4 \csk \left( r/2 \right)}{\left( 2 \snk \left( r/2 \right) \right)^{n + 1}} \int_{b \leq r} \Delta \left\| \nabla \! \snk^2 \left( b/2 \right) \right\|^2 + \frac{nk}{4} \ctk \left( r/2 \right) \int_{b \leq r} \left( 4 \left\| \nabla \! \snk \left( b/2 \right) \right\|^2 + k \snk^2 \left( b/2 \right) \right) G \\
        &= \frac{8 \csk \left( r/2 \right)}{\left( 2 \snk \left( r/2 \right) \right)^{n + 1}} \int_{b \leq r} \left( \left\| \tracelessHess \snk^2 \left( b/2 \right) \right\|^2 + \ric \left( \nabla \! \snk^2 \left( b/2 \right), \nabla \! \snk^2 \left( b/2 \right) \right) - (n - 1)k \left\| \nabla \! \snk^2 \left( b/2 \right) \right\|^2 \right) \\
        &\quad + n \ctk \left( r/2 \right) A(r) - n(n - 1) \ctk \left( r/2 \right) V(r) + 2n k^2 \ctk \left( r/2 \right) \left( 2 \snk \left( r/2 \right) \right)^{- n} \int_{b \leq r} \snk^4 \left( b/2 \right) \\
        &\quad - 2 (n + 2)k \ctk \left( r/2 \right) \left( 2 \snk \left( r/2 \right) \right)^{ - n} \int_{b \leq r} \left\| \nabla \! \snk^2 \left( b/2 \right) \right\|^2.
    \end{split}
\end{equation}
We now add the term
\[
\frac{4 \csk \left( r/2 \right)}{\left( 2 \snk \left( r/2 \right) \right)^{n + 1}} \int_{b \leq r} \Delta \left( k \snk^4 \left( b/2 \right) \right) = 2k \ctk \left( r/2 \right) \left( 2 \snk \left( r/2 \right) \right)^{-n} \int_{b \leq r} \Delta \! \snk^4 \left( b/2 \right)
\]
to both sides of \eqref{eq: almost done with the relationship between A and V part 2}, and we denote by LHS and RHS the resulting left-hand side and right-hand side, respectively.
Concretely,
\allowdisplaybreaks
\begin{align} \label{eq: LHS in almost final equation for derivative of A - 2(n - 2)V}
    \mathrm{LHS} &= \frac{4 \csk \left( r/2 \right)}{\left( 2 \snk \left( r/2 \right) \right)^{n + 1}} \int_{b \leq r} \Delta \left( \left\| \nabla \! \snk^2 \left( b/2 \right) \right\|^2 + k \snk^4 \left( b/2 \right) \right) \nonumber \\
    &\quad + \frac{nk}{4} \ctk \left( r/2 \right) \int_{b \leq r} \left( 4 \left\| \nabla \! \snk \left( b/2 \right) \right\|^2 + k \snk^2 \left( b/2 \right) \right) G \nonumber \\
    &= \snk^{-2}(r/2) \frac{d}{dr} \left( \snk^2 (r/2) A(r) \right) \nonumber \\
    &= A'(r) + \ctk (r/2) A(r),
\end{align}
where the second equality above follows by \eqref{eq: normalized derivative of snk squared times A in full Laplace form}, and the last from the product rule.
The right-hand side becomes
\allowdisplaybreaks
\begin{align} \label{align: RHS in almost final equation for derivative of A - 2(n - 2)V part 1}
    \mathrm{RHS} &= \frac{8 \csk \left( r/2 \right)}{\left( 2 \snk \left( r/2 \right) \right)^{n + 1}} \int_{b \leq r} \left( \left\| \tracelessHess \snk^2 \left( b/2 \right) \right\|^2 + \ric \left( \nabla \! \snk^2 \left( b/2 \right), \nabla \! \snk^2 \left( b/2 \right) \right) - (n - 1)k \left\| \nabla \! \snk^2 \left( b/2 \right) \right\|^2 \right) \nonumber \nonumber \\
    &\quad + n \ctk \left( r/2 \right) A(r) - n(n - 1) \ctk \left( r/2 \right) V(r) \nonumber \\
    &\quad + \ctk \left( r/2 \right) \left( 2 \snk \left( r/2 \right) \right)^{-n} \int_{b \leq r} \left( 2nk^2 \snk^4 \left( b/2 \right) - 2(n + 2)k \left\| \nabla \! \snk^2 \left( b/2 \right) \right\|^2 + 2k \Delta \! \snk^4 \left( b/2 \right) \right).
\end{align}
Using the product rule and Lemma \ref{lemma: laplace of snk squared (b/2)}, we may compute
\allowdisplaybreaks
\begin{align} \label{align: RHS in almost final equation for derivative of A - 2(n - 2)V part 2}
    2k \Delta \! \snk^4 \left( b/2 \right) &= 4k \snk^2 \left( b/2 \right) \Delta \! \snk^2 \left( b/2 \right) + 4k \left\| \nabla \! \snk^2 \left( b/2 \right) \right\|^2 \nonumber \\
    &= 4k \snk^2 \left( b/2 \right) \left( 2n \left\| \nabla \! \snk \left( b/2 \right) \right\|^2 - \frac{nk}{2} \snk^2 \left( b/2 \right) \right) + 4k \left\| \nabla \! \snk^2 \left( b/2 \right) \right\|^2 \nonumber \\
    &= - 2nk^2 \snk^4 \left( b/2 \right) + 2 (n + 2) k \left\| \nabla \! \snk^2 \left( b/2 \right) \right\|^2.
\end{align}
Combining \eqref{align: RHS in almost final equation for derivative of A - 2(n - 2)V part 1} with \eqref{align: RHS in almost final equation for derivative of A - 2(n - 2)V part 2}, we obtain that
\allowdisplaybreaks
\begin{align} \label{align: RHS in almost final equation for derivative of A - 2(n - 2)V part 3}
    \mathrm{RHS} &= \frac{8 \csk \left( r/2 \right)}{\left( 2 \snk \left( r/2 \right) \right)^{n + 1}} \int_{b \leq r} \left( \left\| \tracelessHess \snk^2 \left( b/2 \right) \right\|^2 + \ric \left( \nabla \! \snk^2 \left( b/2 \right), \nabla \! \snk^2 \left( b/2 \right) \right) - (n - 1)k \left\| \nabla \! \snk^2 \left( b/2 \right) \right\|^2 \right) \nonumber \\
    &\quad + n \ctk \left( r/2 \right) A(r) - n(n - 1) \ctk \left( r/2 \right) V(r),
\end{align}
Equating \eqref{eq: LHS in almost final equation for derivative of A - 2(n - 2)V} with \eqref{align: RHS in almost final equation for derivative of A - 2(n - 2)V part 3}, we thus have 
\allowdisplaybreaks
\begin{align*}
    &A'(r) + \ctk \left( r/2 \right) A(r) \nonumber \\
    &= n \ctk \left( r/2 \right) \left( A(r) - n V(r) \right) + n \ctk \left( r/2 \right) V(r) \nonumber \\
    &\quad + \frac{8 \csk \left( r/2 \right)}{\left( 2 \snk \left( r/2 \right) \right)^{n + 1}} \int_{b \leq r} \left( \left\| \tracelessHess \snk^2 \left( b/2 \right) \right\|^2 + \ric \left( \nabla \! \snk^2 \left( b/2 \right), \nabla \! \snk^2 \left( b/2 \right) \right) - (n - 1)k \left\| \nabla \! \snk^2 \left( b/2 \right) \right\|^2 \right),
\end{align*}
which can be rewritten as
\allowdisplaybreaks
\begin{align*} \label{align: done with the relation between A and V}
    &A'(r) - (n - 1) \ctk \left( r/2 \right) \left( A(r) - n V(r) \right) \nonumber \\
    &= \frac{8 \csk \left( r/2 \right)}{\left( 2 \snk \left( r/2 \right) \right)^{n + 1}} \int_{b \leq r} \bigg( \left\| \tracelessHess \snk^2 \left( b/2 \right) \right\|^2 + \ric \left( \nabla \! \snk^2 \left( b/2 \right), \nabla \! \snk^2 \left( b/2 \right) \right) - (n - 1)k \left\| \nabla \! \snk^2 \left( b/2 \right) \right\|^2 \bigg).
\end{align*}
The desired conclusion now follows from this equation and \eqref{eq: derivative of V in terms of A and V}.
\end{proof}

The monotonicity of $V$ follows immediately from Theorems \ref{th: A functional is decreasing} and \ref{th: differential relation between A and V}.
We also obtain the inequality $A \leq n V$.

\begin{corollary} \label{cllr: V functional is decreasing}
$V$ is non-increasing on $(0, m)$, and $A \leq n V$.
\end{corollary}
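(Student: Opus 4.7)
The plan is to deduce both claims from the two monotonicity results just proven (Theorems \ref{th: A functional is decreasing} and \ref{th: differential relation between A and V}) combined with the first-order differential relation \eqref{eq: derivative of V in terms of A and V}. The key observation is that these three facts form a small ``closed system'' from which the desired inequality $A \leq nV$ and the monotonicity of $V$ both fall out, with essentially no new computation required.

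First I would show that $V' \leq 0$ almost everywhere on $(0, m)$. By Theorem \ref{th: A functional is decreasing}, $A'(r) \leq 0$ for a.e.\ $r \in (0, m)$, and by Theorem \ref{th: differential relation between A and V}, $A'(r) - 2(n - 1) V'(r) \geq 0$ for a.e.\ such $r$. Chaining these gives
\[
2(n - 1) V'(r) \leq A'(r) \leq 0,
\]
so $V'(r) \leq 0$ almost everywhere. Since $V$ is locally absolutely continuous on $(0, m)$ (this was noted right after the definition of $J_u$ in \eqref{eq: definition of Ju}), this pointwise bound on $V'$ upgrades to the statement that $V$ is non-increasing on $(0, m)$.

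Next I would read off $A \leq nV$ from \eqref{eq: derivative of V in terms of A and V}, which states
\[
V'(r) = \frac{1}{2 \tnk(r/2)} \bigl( A(r) - n V(r) \bigr).
\]
For $r \in (0, m)$ we have $r/2 \in (0, \pi/(2\sqrt{k}))$ (by Cheng's maximal diameter theorem, $m \leq \pi/\sqrt{k}$, with equality only in the model case), so $\snk(r/2) > 0$ and $\csk(r/2) > 0$, hence $\tnk(r/2) > 0$. Combined with $V'(r) \leq 0$ established above, this forces $A(r) - n V(r) \leq 0$ for almost every $r \in (0, m)$, and by continuity of both $A$ and $V$ on $(0, m)$, the inequality $A \leq nV$ holds on the entire open interval.

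There is no real obstacle: the whole argument is a short logical chain from the two theorems and the ODE-type identity for $V'$. The only mild subtlety worth noting is the positivity of $\tnk(r/2)$, which is where the positive curvature hypothesis (via the diameter bound $m \leq \pi/\sqrt{k}$) implicitly enters the final step.
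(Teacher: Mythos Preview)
Your proof is correct and follows essentially the same route as the paper's own argument: combine $A' \leq 0$ (Theorem \ref{th: A functional is decreasing}) with $A' - 2(n-1)V' \geq 0$ (Theorem \ref{th: differential relation between A and V}) to get $V' \leq 0$, and then read off $A \leq nV$ from \eqref{eq: derivative of V in terms of A and V} using the positivity of $\tnk(r/2)$. The only additions are your explicit justifications of the sign of $\tnk(r/2)$ and the passage from a.e.\ to everywhere via continuity, which the paper leaves implicit.
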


\begin{proof}
As $\ric \geq (n - 1)k g$, Theorem \ref{th: differential relation between A and V} gives us that $A' - 2(n - 1)V' \geq 0$ almost everywhere on $(0, m)$.
Theorem \ref{th: A functional is decreasing} also tells us that $A' \leq 0$, so we deduce that $V' \leq 0$ almost everywhere on $(0, m)$.
This proves that $V$ is non-increasing.
The conclusion that $A \leq nV$ follows from this and \eqref{eq: derivative of V in terms of A and V}.
\end{proof}

As for Theorem \ref{th: sharp gradient estimate}, Theorem \ref{th: differential relation between A and V} is accompanied by a global rigidity statement, similar to the one in the limit case $k \downarrow 0$ from {\cite[Corollary 2.5]{ColdingNewMonotonicityFormulas}}.

\begin{theorem} \label{th: rigidity for the derivative of A - V}
If there is some $r > 0$ so that $A'(r) = 2(n - 1)V'(r)$, then $(M, g)$ is isometric $\S^n_k$.
\end{theorem}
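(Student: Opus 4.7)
The plan is to reduce the hypothesis to the equality case of Theorem \ref{th: sharp gradient estimate} and invoke its rigidity clause. By Theorem \ref{th: differential relation between A and V}, the condition $A'(r) = 2(n-1)V'(r)$ forces the $\{b \leq r\}$-integral of the non-negative quantity
\[
\left\| \tracelessHess \snk^2(b/2) \right\|^2 + \ric\!\left( \nabla \snk^2(b/2), \nabla \snk^2(b/2) \right) - (n-1)k \left\| \nabla \snk^2(b/2) \right\|^2
\]
to vanish (non-negativity uses $\ric \geq (n-1)kg$). As this integrand is continuous, it must vanish pointwise on $\{b \leq r\}$, and in particular $\tracelessHess \snk^2(b/2) \equiv 0$ there.

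Set $f := \snk^2(b/2)$, so that $\|\nabla f\|^2 = 4 f \|\nabla \snk(b/2)\|^2$ throughout. Combining the vanishing of the traceless Hessian with Lemma \ref{lemma: laplace of snk squared (b/2)}, on $\{0 < b \leq r\}$ one obtains
\[
\hess f = \mu \, g, \qquad \mu = \frac{1}{n} \Delta f = \frac{\|\nabla f\|^2}{2f} - \frac{kf}{2}.
\]
Contracting with $\nabla f$ yields $\tfrac{1}{2} \nabla \|\nabla f\|^2 = \mu \nabla f$, so $\|\nabla f\|^2$ is constant on the level sets of $f$; near $p$ those level sets are connected (close to small geodesic spheres, by Lemma \ref{lemma: asymptotics of b near the pole}), hence $\|\nabla f\|^2 = \varphi(f)$ for a scalar function $\varphi$ on some interval $(0, \varepsilon)$. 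Along any integral curve of $\nabla f$, reparameterizing by $t = f$, the function $\varphi$ satisfies the first-order linear ODE
\[
\varphi'(t) - \frac{\varphi(t)}{t} = -kt,
\]
whose general solution is $\varphi(t) = Ct - kt^2$.

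To pin down the constant $C$, I use the asymptotics of Lemma \ref{lemma: asymptotics of b near the pole}: near $p$ one has $b \sim \dist(p, \cdot)$ and $\|\nabla b\| \to 1$, so $f \sim b^2/4$ and $\|\nabla f\|^2 \sim b^2/4$, which forces $\varphi(t)/t \to 1$ as $t \downarrow 0$. Hence $C = 1$ and $\|\nabla f\|^2 = f(1 - kf)$ on a punctured neighborhood of $p$, which rearranges to
\[
4 \left\| \nabla \snk(b/2) \right\|^2 + k \snk^2(b/2) = 1
\]
at every point of that neighborhood. Invoking the rigidity clause of Theorem \ref{th: sharp gradient estimate} at any such point then gives $(M, g) \simeq \S^n_k$.

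The step I expect to need the most care is the ODE analysis itself: verifying that $\|\nabla f\|^2$ is genuinely a function of $f$ alone on a punctured neighborhood of $p$ (which follows from $\nabla \|\nabla f\|^2 \parallel \nabla f$ together with connectedness of small level sets, provided by Lemma \ref{lemma: asymptotics of b near the pole}), and extracting the asymptotic ratio $\varphi(t)/t \to 1$ uniformly enough to force $C = 1$ rather than allow a larger value on some integral curve. Since Lemma \ref{lemma: asymptotics of b near the pole} supplies the required uniform control at the singularity, both points should be routine to execute.
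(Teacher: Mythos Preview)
Your proposal is correct and follows essentially the same strategy as the paper: deduce $\tracelessHess\snk^2(b/2)=0$ on $\{b\le r\}$ from Theorem~\ref{th: differential relation between A and V}, then show the gradient-estimate quantity equals $1$ somewhere and invoke the rigidity of Theorem~\ref{th: sharp gradient estimate}.

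The one difference is packaging. From $\hess f=\mu g$ with $\mu=\|\nabla f\|^2/(2f)-kf/2$, contracting with $\nabla f$ gives the pointwise one-form identity $\tfrac12\,d\|\nabla f\|^2=\mu\,df$. You then pass to level sets, write $\|\nabla f\|^2=\varphi(f)$, and solve the ODE $\varphi'-\varphi/t=-kt$. The paper instead observes directly that this one-form identity rearranges to $d\big(\|\nabla f\|^2/f+kf\big)=0$, i.e.\ $\nabla\big(4\|\nabla\snk(b/2)\|^2+k\snk^2(b/2)\big)=0$ pointwise on $\{b<r\}\setminus\{p\}$ (this is precisely the content of the identity from Lemma~\ref{lemma: nabla |nabla snk(b/2)|^2 in terms of hessian and gradient}). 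That shortcut makes your ODE and the level-set connectedness discussion unnecessary: your own equation already says the quantity is locally constant. The paper then needs connectedness of $\{b<r\}\setminus\{p\}$, which it proves via a short maximum-principle argument; your localization to a punctured neighbourhood of $p$ sidesteps that at the cost of the ODE detour. Either route is fine, and the asymptotic determination of the constant via Lemma~\ref{lemma: asymptotics of b near the pole} is the same in both.
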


\begin{proof}
Since the integrand from the right-hand side of the expression of $A'(r) - 2(n - 1)V'(r)$ given by Theorem \ref{th: differential relation between A and V} is non-negative, the equality $A'(r) = 2(n - 1)V'(r)$ implies that, on $\{b \leq r\} \setminus \{ p \}$, we have
\[
\tracelessHess{\snk^2 \left( b/2 \right)} = 0.
\]

We claim that
\begin{equation} \label{eq: claim that sharp gradient estimate function is constant if derivative of A - 2(n - 1)V has a zero}
    \nabla \left( 4 \left\| \nabla \! \snk \left( b/2 \right) \right\|^2 + k \snk^2 \left( b/2 \right) \right) = 0
\end{equation}
on $\{b < r\} \setminus \{ p \}$.
Let us prove this claim.
A computation using the chain rule and Lemma \ref{lemma: laplace of snk squared (b/2)} shows that
\[
d \left\| \nabla \! \snk \left( b/2 \right) \right\|^2 = \frac{1}{\snk \left( b/2 \right)} \tracelessHess{\snk^2 \left( b/2 \right)} \left( \nabla \! \snk \left( b/2 \right), \cdot \right) - \frac{k}{2} \snk \left( b/2 \right) d \snk \left( b/2 \right)
\]
(see \eqref{eq: gradient of norm nabla snk (b/2) squared} in Lemma \ref{lemma: nabla |nabla snk(b/2)|^2 in terms of hessian and gradient} from Section \ref{sec: one-parameter family of monotonicity formulae} for the complete proof of this identity).
By assumption, on $\{b \leq r\} \setminus \{ p \}$, we have $\tracelessHess{\snk^2 (b/2)} = 0$, and so
\[
4 \nabla \left\| \nabla \! \snk \left( b/2 \right) \right\|^2 = -2k \snk \left( b/2 \right) \nabla \! \snk \left( b/2 \right),
\]
as the set of regular values of $b$ is open and dense in $(0, m)$.
We may combine this with the identity
\[
k \nabla \! \snk^2 \left( b/2 \right) = 2k \snk \left( b/2 \right) \nabla \! \snk \left( b/2 \right),
\]
which finishes the proof of the claim.

Now, we proceed as follows.
Note that $\{b < r\}$ is a connected subset of $M$.
Indeed, if, for the sake of contradiction, $U \subset \{b < r\} = \{G > (2 \snk(r/2))^{2 - n}\}$ is a connected component of $\{b < r\}$ not containing the singularity $p$, then $G$ is smooth and bounded on $U$.
Moreover, since $\partial U \subset \partial \{G > (2 \snk(r/2))^{2 - n}\}$, we necessarily have $G = (2 \snk(r/2))^{2 - n}$ on $\partial U$.
However, as $G$ is positive and satisfies $\Delta G = n(n - 2)kG/4$ on $U$, the maximum principle implies that $G|_U$ must attain its maximum on $\partial U$, which is a contradiction as $G$ cannot be constant on $U$.

By the above paragraph, as $\dim(M) = n \geq 3$, the set $\{b < r\} \setminus \{p\}$ is also connected.
It follows that $4 \left\| \nabla \! \snk (b/2) \right\|^2 + k \snk^2 (b/2)$ is constant on $\{b < r\} \setminus \{ p \}$.
By the asymptotics of $b$ near its singularity $p$ (cf.~Lemma \ref{lemma: asymptotics of b near the pole}), we know that $4 \left\| \nabla \! \snk (b/2) \right\|^2 + k \snk^2 (b/2) \to 1$ as $b \to 0$, and so the claim from \eqref{eq: claim that sharp gradient estimate function is constant if derivative of A - 2(n - 1)V has a zero} implies that $4 \left\| \nabla \! \snk \left( b/2 \right) \right\|^2 + k \snk^2 \left( b/2 \right) \equiv 1$ on $\{b < r\} \setminus \{ p \}$.
The rigidity property of the sharp gradient estimate for $b$ (cf.~Theorem \ref{th: sharp gradient estimate}) tells us that $(M, g)$ must be isometric to $\S^n_k$.
\end{proof}

Now, let us define a final functional for our last monotonicity formula.
For a continuous function $u : M \to \R$, define $J_u^\infty : (0, m] \to \R$ as
\begin{equation} \label{eq: J infinity functional}
    J_u^\infty(r) := \int_{b \geq r} \left( 2 \snk \left( b/2 \right) \right)^{-n} \csk^2 \left( b/2 \right) (1 - u) \|\nabla b\|^2 + \frac{nk}{4} \int_{b \leq r} G (u - 1) \log \bigg( \frac{\snk \left( r/2 \right)}{\snk \left( b/2 \right)} \bigg),
\end{equation}
Since $n \geq 3$, $G\log(\snk(b/2))$ is integrable on $M$ (by the asymptotics of $G$ and $b$ near their singularity, cf.~\eqref{eq: asymptotics of G near its pole} and Lemma \ref{lemma: asymptotics of b near the pole}), and so $J_u^\infty$ is well-defined.
Moreover, just as for $I_u$ and $J_u$, we have that $J_u^\infty$ is locally absolutely continuous on $(0, m)$, and differentiable at all regular values of $b$.
Finally, define
\begin{equation} \label{eq: definition of V infinity function}
    V_\infty := J_{4 \left\| \nabla \! \snk (b/2) \right\|^2 + k \snk^2 (b/2)}^\infty.
\end{equation}
This is a positive curvature analogue of the $V_\infty$ function from {\cite[Equation (2.14)]{ColdingNewMonotonicityFormulas}}; however, note that there is a difference in our definition compared to that from {\cite{ColdingNewMonotonicityFormulas}}, because a lower bound on $m$ (the maximal value of $b$) cannot exist without additional assumptions on $(M, g)$, as $m \leq \mathrm{diam}(M)$.

\begin{example}[$V_\infty$ function on model space] \label{e.g.: V infinity on model space}
When $(M, g) = \S^n_k$, since $4 \left\| \nabla \! \snk (b/2) \right\|^2 + k \snk^2 (b/2) \equiv 1$, we have $V_\infty \equiv 0$.
\end{example}

For our final monotonicity formulae, we need the following differential relation between $J_u^\infty$ and $I_u$.

\begin{lemma} \label{lemma: derivative of V infinity}
If $u : M \to \R$ is continuous on $M$, smooth on $M \setminus \{ p \}$, and $u \leq 1$, then for almost every $r \in (0, m)$, we have
\[
(J_u^\infty)'(r) = \frac{I_u(r) - \omega_{n - 1}}{2 \tnk \left( r/2 \right)}.
\]
\end{lemma}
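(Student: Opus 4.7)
The plan is to differentiate the two terms in the definition \eqref{eq: J infinity functional} of $J_u^\infty$ separately at a regular value $r \in (0, m)$ of $b$, and then combine the results using the identity \eqref{eq: I1 is constant}.

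For the first term, write $F_1(r) := \int_{b \geq r} (2\snk(b/2))^{-n} \csk^2(b/2)(1 - u) \|\nabla b\|^2$. The coarea formula lets me rewrite this as $\int_r^m \int_{b=t} (2\snk(t/2))^{-n} \csk^2(t/2) (1-u) \|\nabla b\| \, dt$, so at a regular value of $b$,
\[
F_1'(r) = -(2\snk(r/2))^{-n} \csk^2(r/2) \int_{b=r} (1-u)\|\nabla b\|.
\]
Factoring out $1/(2\tnk(r/2))$, this becomes $\tfrac{1}{2\tnk(r/2)} \cdot (2\snk(r/2))^{1-n} \csk(r/2) \int_{b=r} (u-1)\|\nabla b\|$.

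For the second term, write $F_2(r) := \frac{nk}{4} \int_{b \leq r} G(u-1) \log(\snk(r/2)/\snk(b/2))$. I will split the logarithm as $\log\snk(r/2) - \log\snk(b/2)$. Since the integrand vanishes on the boundary $\{b = r\}$, the boundary contribution in the Leibniz rule is zero, so only the derivative in the first variable contributes. Using $\frac{d}{dr}\log \snk(r/2) = \tfrac{1}{2}\ctk(r/2) = \tfrac{1}{2\tnk(r/2)}$, I obtain
\[
F_2'(r) = \frac{1}{2 \tnk(r/2)} \cdot \frac{nk}{4} \int_{b \leq r} G(u-1).
\]

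Adding the two pieces,
\[
(J_u^\infty)'(r) = \frac{1}{2\tnk(r/2)}\left[(2\snk(r/2))^{1-n}\csk(r/2)\int_{b=r}(u-1)\|\nabla b\| + \frac{nk}{4}\int_{b \leq r}(u-1) G\right].
\]
The bracket equals $I_u(r) - I_1(r)$ by the linearity of the functional $I_{\bullet}$ in its subscript, and by the identity $I_1 \equiv \omega_{n-1}$ from \eqref{eq: I1 is constant} this is precisely $I_u(r) - \omega_{n-1}$, giving the claim.

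The only mild technical point will be justifying absolute continuity (so that the pointwise derivative identity at regular values of $b$ gives the formula a.e.), which follows from the same Sard-type argument and Proposition \ref{appendix-prop: local absolute continuity of integral along level sets of b} used for $I_u$ and $J_u$ throughout this section; the hypothesis $u \leq 1$ plays no role in the derivative computation itself, but the integrability of $G \log \snk(b/2)$ near $p$ (which holds since $n \geq 3$, by Lemma \ref{lemma: asymptotics of b near the pole} and \eqref{eq: asymptotics of G near its pole}) is what makes $J_u^\infty$ well-defined to begin with.
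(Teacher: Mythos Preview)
Your proof is correct and follows the same overall strategy as the paper: differentiate the two pieces of $J_u^\infty$ separately, then recognize the sum as $(I_u - I_1)/(2\tnk(r/2))$ and invoke \eqref{eq: I1 is constant}.

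The only small difference is in how the second term is handled. The paper rewrites $\log(\snk(r/2)/\snk(b/2)) = \int_b^r \frac{dt}{2\tnk(t/2)}$ and then applies Fubini--Tonelli (using $u \leq 1$ so that the integrand has a sign) to get $\frac{nk}{4}\int_0^r \frac{1}{2\tnk(t/2)}\big(\int_{b\leq t} G(u-1)\big)\,dt$, from which the derivative is immediate. You instead apply the Leibniz rule directly, observing that the boundary contribution vanishes since the logarithm is zero on $\{b=r\}$. Your route is slightly more direct and, as you note, does not actually require $u \leq 1$; the paper's Fubini argument is where that hypothesis enters, but one could equally justify the interchange by dominated convergence since $G(u-1)$ is integrable and $1/(2\tnk(r/2))$ is bounded on compact subintervals of $(0,m)$. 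Either way the computation is the same.
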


\begin{proof}
Let $r \in (0, m)$ be a regular value of $b$.
By the coarea formula, we have
\allowdisplaybreaks
\begin{align} \label{align: derivative of J infinity functional part 1}
    \frac{d}{dr} \int_{b \geq r} \left( 2 \snk \left( b/2 \right) \right)^{-n} \csk^2 \left( b/2 \right) (1 - u) \|\nabla b\|^2 &= \int_{b = r} \left( 2 \snk \left( b/2 \right) \right)^{-n} \csk^2 \left( b/2 \right) (u - 1) \|\nabla b\| \nonumber \\
    &= \frac{1}{2 \tnk \left( r/2 \right)} \left( 2 \snk \left( r/2 \right) \right)^{1 - n} \csk \left( r/2 \right) \int_{b = r} (u - 1) \|\nabla b\|.
\end{align}
For the second term in the definition of $J_u^\infty$, note that
\[
\int_b^r \frac{1}{2 \tnk \left( t/2 \right)} \dd t = \log \left( \frac{\snk \left( r/2 \right)}{\snk \left( b/2 \right)} \right).
\]
Hence, we can rewrite
\allowdisplaybreaks
\begin{align} \label{align: derivative of J infinity functional part 2}
    \int_{b \leq r} G (u - 1) \log \left( \frac{\snk \left( r/2 \right)}{\snk \left( b/2 \right)} \right) &= \int_{b \leq r} G (u - 1) \left( \int_b^r \frac{1}{2\tnk \left( t/2 \right)} \dd t \right) \nonumber \\
    &= \int_0^r \frac{1}{2 \tnk \left( t/2 \right)} \left( \int_{b \leq t} G(u - 1) \right) \dd t,
\end{align}
where the last equality follows by the Fubini-Tonelli theorem (as the integrands are non-positive since $u \leq 1$), and so
\[
\frac{d}{dr} \bigg( \frac{nk}{4} \int_{b \leq r} G (u - 1) \log \bigg( \frac{\snk \left( r/2 \right)}{\snk \left( b/2 \right)} \bigg) \bigg) = \frac{nk}{4} \cdot \frac{1}{2 \tnk \left( r/2 \right)} \bigg( \int_{b \leq r} G(u - 1) \bigg).
\]
Combining \eqref{align: derivative of J infinity functional part 1} with \eqref{align: derivative of J infinity functional part 2}, it follows that for almost every $r \in (0, m)$, we have
\[
(J_u^\infty)'(r) = \frac{1}{2\tnk \left( r/2 \right)} \bigg( \left( 2 \snk \left( r/2 \right) \right)^{1 - n} \csk \left( r/2 \right) \int_{b = r} (u - 1) \|\nabla b\| + \frac{nk}{4} \left( \int_{b \leq r} G(u - 1) \right) \bigg).
\]
Now, recalling the definition of $I_u$ from \eqref{eq: definition of Iu} and equation \eqref{eq: I1 is constant}, the desired conclusion follows.
\end{proof}

Theorem \ref{th: sharp gradient estimate}, Lemma \ref{lemma: derivative of V infinity}, and the definition of $V_\infty$ from \eqref{eq: definition of V infinity function} tell us that
\begin{equation} \label{eq: derivative of V infinity in terms of A and itself}
    V_\infty'(r) = \frac{A(r) - \omega_{n - 1}}{2 \tnk \left( r/2 \right)}
\end{equation}
for almost every $r \in (0, m)$.

Combining \eqref{eq: derivative of V infinity in terms of A and itself} with Theorem \ref{th: monotonicity formula of 2snk(r/2) times A - I1} gives us the final ``unparametrized'' monotonicity formula, which is also accompanied by a rigidity statement; this is the positive curvature analogue of {\cite[Theorem 2.9]{ColdingNewMonotonicityFormulas}}.

\begin{theorem}
For almost every $r \in (0, m)$, we have
\allowdisplaybreaks
\begin{align*}
    &\tnk \left( r/2 \right) (A'(r) - (n - 2)V_\infty'(r)) \\
    &= 4 \int_{b \leq r} \left( 2 \snk \left( b/2 \right) \right)^{-n} \left( \left\| \tracelessHess \snk^2 \left( b/2 \right) \right\|^2 + \ric \left( \nabla \! \snk^2 \left( b/2 \right), \nabla \! \snk^2 \left( b/2 \right) \right) - k(n - 1) \left\| \nabla \! \snk^2 \left( b/2 \right) \right\|^2 \right).
\end{align*}
Moreover, if $A'(r) = (n - 2)V_\infty'(r)$ for some $r > 0$, then $(M, g)$ is isometric to $\S^n_k$.
\end{theorem}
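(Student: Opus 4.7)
The plan is to deduce the identity directly from Theorem~\ref{th: monotonicity formula of 2snk(r/2) times A - I1} combined with the differential relation \eqref{eq: derivative of V infinity in terms of A and itself} between $V_\infty'$ and $A$. Theorem~\ref{th: monotonicity formula of 2snk(r/2) times A - I1} already expresses the integral appearing on the right-hand side of the claim (up to a factor of $2$) as
\[
(2\snk(r/2))^{n-1}\csk^{-1}(r/2)\,\frac{d}{dr}\!\left((2\snk(r/2))^{2-n}(A(r)-\omega_{n-1})\right),
\]
so the task reduces to rewriting this quantity as $2\tnk(r/2)(A'(r) - (n-2)V_\infty'(r))$.

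To do so, I would expand the derivative by the product rule and multiply through by $(2\snk(r/2))^{n-1}\csk^{-1}(r/2)$, which collapses the factors to produce the expression $(2-n)(A(r)-\omega_{n-1}) + 2\tnk(r/2)A'(r)$. The final step is to substitute $A(r) - \omega_{n-1} = 2\tnk(r/2)V_\infty'(r)$, which is a direct rearrangement of \eqref{eq: derivative of V infinity in terms of A and itself}. This turns the expression into $2\tnk(r/2)(A'(r) + (2-n)V_\infty'(r)) = 2\tnk(r/2)(A'(r) - (n-2)V_\infty'(r))$, and dividing the resulting identity by $2$ gives the formula stated in the theorem.

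For the rigidity statement, an equality $A'(r) = (n-2)V_\infty'(r)$ at some $r > 0$ forces the integral on the right-hand side to vanish. Since $\ric \geq (n-1)kg$, the integrand is pointwise non-negative, and so on $\{b \leq r\} \setminus \{p\}$ one has $\tracelessHess \snk^2(b/2) \equiv 0$ (and the excess Ricci term vanishes as well). From this point I would reproduce verbatim the argument appearing in the proof of Theorem~\ref{th: rigidity for the derivative of A - V}: the chain-rule identity for $\nabla\|\nabla \snk(b/2)\|^2$ in terms of $\tracelessHess \snk^2(b/2)$ shows that $4\|\nabla \snk(b/2)\|^2 + k\snk^2(b/2)$ is locally constant on the connected open set $\{b < r\}\setminus\{p\}$, the boundary asymptotics from Lemma~\ref{lemma: asymptotics of b near the pole} fix this constant to be $1$, and the rigidity clause of Theorem~\ref{th: sharp gradient estimate} then implies $(M,g)$ is isometric to $\S^n_k$.

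The main obstacle is essentially bookkeeping: one must verify that the product-rule expansion yields exactly the right coefficients for the substitution from \eqref{eq: derivative of V infinity in terms of A and itself} to collapse cleanly, and that the connectedness and maximum-principle considerations from the proof of Theorem~\ref{th: rigidity for the derivative of A - V} apply identically to the sublevel set $\{b \leq r\}$ appearing here. No new geometric input is needed beyond what is already packaged in Theorems~\ref{th: monotonicity formula of 2snk(r/2) times A - I1}, \ref{th: rigidity for the derivative of A - V}, and \ref{th: sharp gradient estimate}, so once the arithmetic is checked the identity and its rigidity both follow.
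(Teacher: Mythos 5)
Your proposal is correct and follows essentially the same route as the paper: the paper likewise combines the identity $(2-n)(A(r)-\omega_{n-1}) + 2\tnk(r/2)A'(r) = 8\int_{b\le r}(\cdots)$ from the proof of Theorem \ref{th: monotonicity formula of 2snk(r/2) times A - I1} with the substitution $A(r)-\omega_{n-1} = 2\tnk(r/2)V_\infty'(r)$ from \eqref{eq: derivative of V infinity in terms of A and itself}, and handles rigidity by vanishing of the non-negative integrand followed by the argument of Theorem \ref{th: rigidity for the derivative of A - V}. The arithmetic you outline collapses exactly as claimed, so nothing further is needed.
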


\begin{proof}
By \eqref{eq: almost monotonicity of 2snk(r/2) times A - I1} from the proof of Theorem \ref{th: monotonicity formula of 2snk(r/2) times A - I1}, for almost every $r \in (0, m)$, we have
\allowdisplaybreaks
\begin{align*}
    &(2 - n)(A(r) - \omega_{n - 1}) + 2 \tnk \left( r/2 \right) A'(r) \nonumber \\
    &= 8 \int_{b \leq r} \left( 2\snk \left( b/2 \right) \right)^{-n} \left( \left\| \tracelessHess{\snk^2 \left( b/2 \right)} \right\|^2 + \ric \left( \nabla \! \snk^2 \left( b/2 \right), \nabla \! \snk^2 \left( b/2 \right) \right) - (n - 1)k \left\| \nabla \! \snk^2 \left( b/2 \right) \right\|^2 \right),
\end{align*}
Using \eqref{eq: derivative of V infinity in terms of A and itself}, this can be rewritten as
\allowdisplaybreaks
\begin{align*}
    &2 \tnk \left( r/2 \right) \left( (2 - n) V_\infty'(r) + A'(r) \right) \\
    &= 8 \int_{b \leq r} \left( 2\snk \left( b/2 \right) \right)^{-n} \left( \left\| \tracelessHess{\snk^2 \left( b/2 \right)} \right\|^2 + \ric \left( \nabla \! \snk^2 \left( b/2 \right), \nabla \! \snk^2 \left( b/2 \right) \right) - (n - 1)k \left\| \nabla \! \snk^2 \left( b/2 \right) \right\|^2 \right).
\end{align*}
This proves the first part of the theorem.

For the second part, if $A'(r) = (n - 2)V_\infty'(r)$ for some $r \in (0, m)$, then the derivative formula from above implies that $\tracelessHess{\snk^2 \left( b/2 \right)} = 0$ on $\{b \leq r\} \setminus \{ p \}$.
The same argument as in the proof of Theorem \ref{th: rigidity for the derivative of A - V} implies that $(M, g)$ is isometric to $\S^n_k$.
\end{proof}
\section{The one-parameter family of monotonicity formulae} \label{sec: one-parameter family of monotonicity formulae}

In this section, we provide a one-parameter family of monotonicity formulae, extending those from Section \ref{sec: unparametrized monotonicity formulae}.
These will be positive curvature analogues of the formulae from {\cite{ColdingMinicozziMonotonicityFormulas}}.

As before, let $(M^n, g)$ be a closed, connected Riemannian manifold of dimension $n \geq 3$, with $\ric \geq (n - 1)k$ for some $k > 0$.
Fix a point $p \in M$ and let $G : M \setminus \{p\} \to (0, \infty)$ be Green's function for the operator $-\Delta + n(n - 2)k/4$ with singularity at $p \in M$.
Put $b := 2 \asnk \left( G^{1/(2 - n)}/2 \right)$, or equivalently, $G = (2 \snk (b/2))^{2 - n}$.
Recall also that $m \in (0, \pi/\sqrt{k}]$ was defined as the maximal value of $b$ on $M$.
For $\beta > 0$, define
\[
A_\beta := I_{\left( 4 \left\| \nabla \! \snk(b/2) \right\|^2 + k \snk^2(b/2) \right)^{\beta/2}},
\]
where the right-hand side is defined in \eqref{eq: definition of Iu}; explicitly, for $r \in (0, m]$, 
\begin{equation} \label{eq: definition of A beta}
    \begin{split}
        A_\beta(r) &= \left( 2 \snk \left( r/2 \right) \right)^{1 - n} \csk \left( r/2 \right) \int_{b = r} \left( 4 \left\| \nabla \! \snk \left( b/2 \right) \right\|^2 + k \snk^2 \left( b/2 \right) \right)^{\frac{\beta}{2}} \|\nabla b\| \\
        &\quad + \frac{nk}{4} \int_{b \leq r} \left( 4 \left\| \nabla \! \snk \left( b/2 \right) \right\|^2 + k \snk^2 \left( b/2 \right) \right)^{\frac{\beta}{2}} G.
    \end{split}
\end{equation}

Note that $A_2 = A$ from \eqref{eq: A functional}.
By the same reasoning as in Section \ref{sec: unparametrized monotonicity formulae}, $A_\beta$ is locally absolutely continuous on $(0, m)$, with derivative given for almost every $r \in (0, m)$ by \eqref{eq: derivative of Iu} as
\begin{equation} \label{align: derivative of A beta}
    A_\beta'(r) = \left( 2 \snk \left( r/2 \right) \right)^{1 - n} \csk \left( r/2 \right) \int_{b = r} \left\langle \nabla \left( 4 \left\| \nabla \! \snk \left( b/2 \right) \right\|^2 + k \snk^2 \left( b/2 \right) \right)^{\frac{\beta}{2}}, \frac{\nabla b}{\|\nabla b\|} \right\rangle
\end{equation}
By Theorem \ref{th: sharp gradient estimate} and \eqref{eq: I1 is constant}, we have $A_\beta(r) \leq I_1(r) = \omega_{n - 1}$ for all $r > 0$, and by \eqref{eq: computation of Iu(0)} and the asymptotics of $b$ near its singularity $p$ (cf.~Lemma \ref{lemma: asymptotics of b near the pole}), $A_\beta(r) \to \omega_{n - 1}$ as $r \downarrow 0$.

The goal of this section is to extend the results for $A$ from Section \ref{sec: unparametrized monotonicity formulae} to $A_\beta$.
We will show that $A_\beta$ is non-increasing for $\beta \geq (n - 2)/(n - 1)$, and that it fits into several monotonicity formulae analogous to those from Theorems \ref{th: monotonicity formula of 2snk(r/2) times A - I1} and \ref{th: differential relation between A and V}.
For the former, we will proceed using a similar idea as for Proposition \ref{prop: mathcal L subharmonic function u implies monotonicity of Iu}.
For the latter, we will define a new function $V_\beta$ later in this section (see \eqref{eq: definition of Wu functional} and \eqref{eq: definition of V beta}).
Even though $A_2 = A$, we will also give a new expression for the derivative of $A_2$ in this section.

To obtain a formula for $A_\beta'$ as in Theorem \ref{th: A functional is decreasing}, we will rewrite the right-hand side from \eqref{align: derivative of A beta} using Lemma \ref{lemma: computing derivative of Iu using the divergence theorem}, the Bochner formula, and the geometry of the level sets of $b$.
To accomplish this, we will need to compute $\Delta v^\beta$, where
\begin{equation} \label{eq: definition of the v function}
    v := \left( 4 \left\| \nabla \! \snk \left( b/2 \right) \right\|^2 + k \snk^2 \left( b/2 \right) \right)^{\frac{1}{2}}
\end{equation}
We will proceed as in {\cite{ColdingMinicozziMonotonicityFormulas}}: we do these computations directly, on the regular level sets of $b$, and we will rewrite $\nabla v$ and $\Delta v$ using the Hessian of $\snk^2(b/2)$.

Since $M$ is closed and $b$ is continuous, the level sets of $b$ are compact subsets of $M$.
By a slight abuse of notation, we denote by $\two$ the tensor which, on each regular level set $\Sigma_r := \{b = r\}$, is equal to its scalar second fundamental form.
The outward pointing unit normal to $\Sigma_r$ is given by $\nu := \nabla b/\|\nabla b\|$, which can be rewritten using the chain rule as
\begin{equation} \label{eq: outward normal to level set of b}
    \nu = \frac{\nabla \! \snk \left( b/2 \right)}{\left\| \nabla \! \snk \left( b/2 \right) \right\|}.
\end{equation}
We employ the convention that $\two(X, Y) = - \langle \nabla_X Y, \nu \rangle $ for tangent vector fields $X$ and $Y$ to $\Sigma_r$, where $\nabla$ is the Levi-Civita connection of $M$.
Since $\Sigma_r$ is also equal to the regular level set $\{\snk^2(b/2) = \snk^2(r/2)\}$ of $\snk^2(b/2)$, it follows that, on each $\Sigma_r$,
\begin{equation} \label{eq: second fundamental form of level set of b}
    \two = \left. \frac{1}{\left\| \nabla \! \snk^2 \left( b/2 \right) \right\|} \hess \snk^2 \left( b/2 \right) \right|_{\Sigma_r}.
\end{equation}
For the monotonicity formulae from this section, we will need to rewrite $\hess{\snk^2(b/2)}$ in terms of $\hess{\snk(b/2)}$; we record here this identity, which follows immediately from the chain rule:
\begin{equation} \label{eq: hessian of squared snk (b/2)}
    \hess \snk^2 \left( b/2 \right) = 2 \snk \left( b/2 \right) \hess \snk \left( b/2 \right) + 2 d \snk \left( b/2 \right) \otimes d \snk \left( b/2 \right).
\end{equation}

To simplify the notation in the rest of this section, denote by
\begin{equation} \label{eq: definition of B and B0}
    B := \tracelessHess{\snk^2 \left( b/2 \right)}, \quad B_0 := \left. \tracelessHess{\snk^2 \left( b/2 \right)} \right|_{\Sigma_r}, \quad g_0 := g|_{\Sigma_r}.
\end{equation}

We start by computing the mean curvature $\tr_{g_0}(\two)$ and the traceless second fundamental form $\mathring{\two}$ of $\Sigma_r$ (see {\cite[Lemma 2.8]{ColdingMinicozziMonotonicityFormulas}} for the case of non-negative Ricci curvature).

\begin{lemma} \label{lemma: computations for the traceless second fundamental form of level set of b}
On each $\Sigma_r$, we have
\begin{equation} \label{eq: traceless second fundamental form of level set}
    \left\| \nabla \! \snk^2 \left( b/2 \right) \right\| \mathring{\two} = B_0 + \frac{B (\nu, \nu)}{n - 1} g_0.
\end{equation}
\end{lemma}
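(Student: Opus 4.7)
The plan is to decompose $\hess \snk^2(b/2)$ into its traceless part plus a trace part, restrict to $\Sigma_r$, and then subtract off the trace of $\two$ appropriately using the fact that $B$ is globally traceless on $M$.

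First, I would write
\[
\hess \snk^2 \left( b/2 \right) = B + \frac{\Delta \! \snk^2 \left( b/2 \right)}{n} g,
\]
which, together with \eqref{eq: second fundamental form of level set of b} from the excerpt, gives on each $\Sigma_r$ the identity
\[
\left\| \nabla \! \snk^2 \left( b/2 \right) \right\| \two = B_0 + \frac{\Delta \! \snk^2 \left( b/2 \right)}{n} g_0.
\]
Next I would take the $g_0$-trace of this equation. Since $B$ is traceless with respect to $g$ and $\nu$ is a unit normal to $\Sigma_r$, we have $\tr_{g_0}(B_0) = \tr_g(B) - B(\nu,\nu) = -B(\nu,\nu)$, so
\[
\left\| \nabla \! \snk^2 \left( b/2 \right) \right\| \tr_{g_0}(\two) = -B(\nu,\nu) + \frac{(n-1)\Delta \! \snk^2 \left( b/2 \right)}{n}.
\]

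Finally I would compute $\mathring{\two} = \two - \frac{\tr_{g_0}(\two)}{n-1} g_0$: multiplying through by $\left\| \nabla \! \snk^2(b/2) \right\|$ and substituting the two displayed equations above, the $\Delta \! \snk^2(b/2)/n$ terms cancel, leaving exactly
\[
\left\| \nabla \! \snk^2 \left( b/2 \right) \right\| \mathring{\two} = B_0 + \frac{B(\nu,\nu)}{n-1} g_0,
\]
which is the desired identity.

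There is no real obstacle here; the entire argument is linear algebra on the trace/traceless decomposition, and the only substantive ingredient is the relation $\tr_{g_0}(B_0) = -B(\nu,\nu)$ coming from the global tracelessness of $B$. The curvature-adapted $\snk$ does not enter the argument at all beyond providing the function whose Hessian we decompose, so the proof is essentially identical to its Euclidean analogue in \cite{ColdingMinicozziMonotonicityFormulas}.
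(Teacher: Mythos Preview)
Your proof is correct and takes essentially the same approach as the paper: both decompose $\hess\snk^2(b/2)$ into its traceless part $B$ plus $\frac{\Delta\snk^2(b/2)}{n}g$, compute the mean curvature via $\tr_{g_0}(B_0)=-B(\nu,\nu)$, and subtract to obtain $\mathring{\two}$. Your presentation is slightly more streamlined in that you invoke the trace identity $\tr_{g_0}(B_0)=-B(\nu,\nu)$ directly rather than passing through $\Delta\snk^2(b/2)-\hess\snk^2(b/2)(\nu,\nu)$, but the content is identical.
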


\begin{proof}
Fix a local orthonormal frame $(E_1, \cdots, E_{n - 1})$ for $\Sigma_r$; then, $(E_1, \cdots, E_{n - 1}, \nu)$ is a local orthonormal frame for $M$, and we may compute
\allowdisplaybreaks
\begin{align} \label{eq: mean curvature of level set}
    \tr_{g_0}(\two) &\equiv \sum_{i = 1}^{n - 1} \two (E_i, E_i) \nonumber \\
    &= \frac{1}{\left\| \nabla \! \snk^2 \left( b/2 \right) \right\|} \sum_{i = 1}^{n - 1} \hess \snk^2 \left( b/2 \right) (E_i, E_i) & \text{by \eqref{eq: second fundamental form of level set of b}} \nonumber \\
    &= \frac{1}{\left\| \nabla \! \snk^2 \left( b/2 \right) \right\|} \left( \Delta \! \snk^2 \left( b/2 \right) - \hess \snk^2 \left( b/2 \right) (\nu, \nu) \right) \nonumber \\
    &= \frac{1}{\left\| \nabla \! \snk^2 \left( b/2 \right) \right\|} \left( \frac{n - 1}{n} \Delta \! \snk^2 \left( b/2 \right) - B(\nu, \nu) \right).
\end{align}
It then follows that
\allowdisplaybreaks
\begin{align*}
    \mathring{\two} &\equiv \two - \frac{\tr_{g_0}(\two)}{n - 1} g_0 \\
    &= \frac{\left. \hess \snk^2 \left( b/2 \right) \right|_{\Sigma_r}}{\left\| \nabla \! \snk^2 \left( b/2 \right) \right\|} - \frac{1}{n - 1} \left( \frac{1}{\left\| \nabla \! \snk^2 \left( b/2 \right) \right\|} \left( \frac{n - 1}{n} \Delta \! \snk^2 \left( b/2 \right) - B(\nu, \nu) \right) \right) g_0 \\
    &= \frac{\left. \hess \snk^2 \left( b/2 \right) \right|_{\Sigma_r}}{\left\| \nabla \! \snk^2 \left( b/2 \right) \right\|} - \frac{\Delta \! \snk^2 \left( b/2 \right)}{n \left\| \nabla \! \snk^2 \left( b/2 \right) \right\|} g_0 + \frac{B (\nu, \nu)}{(n - 1) \left\| \nabla \! \snk^2 \left( b/2 \right) \right\|} g_0,
\end{align*}
and using the definition of $B_0$ from \eqref{eq: definition of B and B0} in the right-hand side above, the desired conclusion follows.
\end{proof}

Taking norms in \eqref{eq: traceless second fundamental form of level set} from Lemma \ref{lemma: computations for the traceless second fundamental form of level set of b} along each regular level set of $b$, it follows that
\[
\left\| \nabla \! \snk^2 \left( b/2 \right) \right\|^2 \| \mathring{\two} \|^2 = \| B_0 \|^2 + \frac{B(\nu, \nu)^2}{n - 1} + \frac{2 B(\nu, \nu)}{n - 1} \tr_{g_0}(B_0),
\]
However, since $\tr_{g_0}(B_0) = \tr_g(B) - B(\nu, \nu) = - B(\nu, \nu)$, as $B$ is traceless, the above equation simplifies to
\begin{equation} \label{eq: norm squared of traceless second fundamental form of level set}
    \left\| \nabla \! \snk^2 \left( b/2 \right) \right\|^2 \| \mathring{\two} \|^2 = \| B_0 \|^2 - \frac{B(\nu, \nu)^2}{n - 1}.
\end{equation}
The term $\|\mathring{\two}\|$ from the above left-hand side is understood as the norm of the traceless second fundamental form of $\Sigma_r$.

For a vector field $X$ on $M$ and a covariant $2$-tensor $T$ on $M$, we denote by $T(X)$ the vector field on $M$ for which $T(X, \cdot) = \langle T(X), \cdot \rangle$.
We can further rewrite \eqref{eq: norm squared of traceless second fundamental form of level set} using the vector field $B(\nu)$ (cf.~{\cite[Lemma 2.9]{ColdingMinicozziMonotonicityFormulas}} for the non-negatively curved case).

\begin{lemma} \label{lemma: final form for the norm of traceless second fundamental form of level set}
On each $\Sigma_r$, we have
\begin{equation} \label{eq: norm squared of traceless second fundamental form of level set via tangential part of B}
    \left\| \nabla \! \snk^2 \left( b/2 \right) \right\|^2 \| \mathring{\two} \|^2 = \|B\|^2 - \frac{n}{n - 1}\left\| B(\nu) \right\|^2 - \frac{n - 2}{n - 1} \left\| B(\nu)^\top \right\|^2.
\end{equation}
\end{lemma}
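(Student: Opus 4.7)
The plan is to start from equation \eqref{eq: norm squared of traceless second fundamental form of level set}, which already gives
\[
    \left\| \nabla \! \snk^2 \left( b/2 \right) \right\|^2 \| \mathring{\two} \|^2 = \| B_0 \|^2 - \frac{B(\nu, \nu)^2}{n - 1},
\]
and then rewrite $\|B_0\|^2$ by decomposing $B$ according to the orthogonal splitting $TM|_{\Sigma_r} = T\Sigma_r \oplus \R\nu$. Fixing a local orthonormal frame $(E_1, \dots, E_{n-1})$ of $\Sigma_r$ and completing it with $\nu$, the symmetry of $B$ gives the block decomposition
\[
    \|B\|^2 = \sum_{i,j = 1}^{n-1} B(E_i, E_j)^2 + 2 \sum_{i = 1}^{n-1} B(E_i, \nu)^2 + B(\nu, \nu)^2 = \|B_0\|^2 + 2 \|B(\nu)^\top\|^2 + B(\nu, \nu)^2,
\]
since $\sum_i B(E_i, \nu) E_i = B(\nu)^\top$ by definition of the tangential projection. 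I would record this identity as the first step.

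Next I would observe that the normal piece of $B(\nu)$ is $B(\nu, \nu)\nu$, so the Pythagorean decomposition of $B(\nu)$ along $T\Sigma_r \oplus \R\nu$ gives $\|B(\nu)\|^2 = \|B(\nu)^\top\|^2 + B(\nu, \nu)^2$. Substituting the expression for $\|B_0\|^2$ from the previous step into \eqref{eq: norm squared of traceless second fundamental form of level set} yields
\[
    \left\| \nabla \! \snk^2 \left( b/2 \right) \right\|^2 \| \mathring{\two} \|^2 = \|B\|^2 - 2 \|B(\nu)^\top\|^2 - \frac{n}{n-1} B(\nu, \nu)^2,
\]
after combining the two $B(\nu, \nu)^2$ terms via $1 + \tfrac{1}{n-1} = \tfrac{n}{n-1}$.

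Finally, I would express $B(\nu, \nu)^2 = \|B(\nu)\|^2 - \|B(\nu)^\top\|^2$ and rearrange: the coefficient of $\|B(\nu)\|^2$ becomes $-\tfrac{n}{n-1}$, and the coefficient of $\|B(\nu)^\top\|^2$ becomes $-2 + \tfrac{n}{n-1} = -\tfrac{n-2}{n-1}$, matching \eqref{eq: norm squared of traceless second fundamental form of level set via tangential part of B} exactly. This step is routine arithmetic; no real obstacle arises in the proof since it is essentially a linear-algebraic bookkeeping argument on the symmetric $2$-tensor $B$ relative to the hypersurface $\Sigma_r$. The only care needed is the factor of $2$ in the cross terms from the symmetry of $B$ and the consistent use of the tangential/normal decomposition of both $B$ and $B(\nu)$.
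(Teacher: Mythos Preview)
Your proof is correct and follows essentially the same approach as the paper: both arguments use the orthonormal frame decomposition to write $\|B\|^2 = \|B_0\|^2 + 2\|B(\nu)^\top\|^2 + B(\nu,\nu)^2$, invoke the Pythagorean identity $\|B(\nu)\|^2 = \|B(\nu)^\top\|^2 + B(\nu,\nu)^2$, and substitute into \eqref{eq: norm squared of traceless second fundamental form of level set}. The only difference is the order in which the substitutions are carried out, which is immaterial.
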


\begin{proof}
If $(E_1, \cdots, E_{n - 1})$ is a local orthonormal frame for $\Sigma_r$, then, denoting by $E_n := \nu$, $(E_1, \cdots, E_n)$ is a local orthonormal frame for $M$, and we may compute
\allowdisplaybreaks
\begin{align*}
    \|B\|^2 &= \sum_{i, j = 1}^n B(E_i, E_j)^2 \\
    &= \sum_{i, j = 1}^{n - 1} B(E_i, E_j)^2 + 2 \sum_{i = 1}^{n - 1} B(\nu, E_i)^2 + B(\nu, \nu)^2 \\
    &= \|B_0\|^2 + 2 \left\| B(\nu)^\top \right\|^2 + B(\nu, \nu)^2,
\end{align*}
and since $\|B(\nu)\|^2 = \left\| B(\nu)^\top \right\|^2 + B(\nu, \nu)^2$, the desired conclusion follows by combining the above with \eqref{eq: norm squared of traceless second fundamental form of level set}.
\end{proof}

We now continue by computing $\nabla \left\|\nabla \! \snk(b/2) \right\|^2$ in terms of $B$ and $\nabla \! \snk(b/2)$.
This is the positive-curvature analogue of {\cite[Lemma 2.2]{ColdingMinicozziMonotonicityFormulas}}.

\begin{lemma} \label{lemma: nabla |nabla snk(b/2)|^2 in terms of hessian and gradient}
On each $\Sigma_r$, we have
\begin{equation} \label{eq: gradient of norm nabla snk (b/2) squared}
    \nabla \left\| \nabla \! \snk \left( b/2 \right) \right\|^2 = \frac{\left\| \nabla \! \snk \left( b/2 \right) \right\|}{\snk \left( b/2 \right)} B \left( \nu \right) - \frac{k}{2} \snk \left( b/2 \right) \nabla \! \snk \left( b/2 \right).
\end{equation}
\end{lemma}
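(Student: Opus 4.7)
The plan is to derive the identity by combining the standard identity $\nabla \|\nabla f\|^2 = 2\,\hess f(\nabla f)$ (applied to $f = \snk(b/2)$) with the chain-rule relation \eqref{eq: hessian of squared snk (b/2)} and the expression for $\Delta \snk^2(b/2)$ from Lemma \ref{lemma: laplace of snk squared (b/2)}. The only place where the level set $\Sigma_r$ actually enters is at the very end, where I use $\nabla \! \snk(b/2) = \|\nabla \! \snk(b/2)\|\,\nu$ (by \eqref{eq: outward normal to level set of b}); everything preceding that step is an ambient identity on $M \setminus \{p\}$.

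First I would apply $\nabla \|\nabla f\|^2 = 2\,\hess f(\nabla f)$ with $f = \snk(b/2)$ to obtain
\[
\nabla \left\| \nabla \! \snk(b/2) \right\|^2 = 2 \,\hess \snk(b/2)\bigl( \nabla \! \snk(b/2) \bigr).
\]
Next I would solve \eqref{eq: hessian of squared snk (b/2)} for $\hess \snk(b/2)$, namely
\[
\hess \snk(b/2) = \frac{1}{2\snk(b/2)} \Bigl( \hess \snk^2(b/2) - 2\, d\snk(b/2) \otimes d\snk(b/2) \Bigr),
\]
and then decompose $\hess \snk^2(b/2) = B + \tfrac{1}{n}\Delta \snk^2(b/2)\,g$, where $B = \tracelessHess \snk^2(b/2)$.

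Plugging the decomposition into the previous equation and evaluating at $\nabla \! \snk(b/2)$ gives
\[
\hess \snk^2(b/2)\bigl(\nabla \! \snk(b/2)\bigr) = B\bigl(\nabla \! \snk(b/2)\bigr) + \frac{1}{n}\Delta \snk^2(b/2)\, \nabla \! \snk(b/2).
\]
By Lemma \ref{lemma: laplace of snk squared (b/2)}, $\tfrac{1}{n}\Delta \snk^2(b/2) = 2\|\nabla \! \snk(b/2)\|^2 - \tfrac{k}{2}\snk^2(b/2)$. Substituting back, the two $2\|\nabla \! \snk(b/2)\|^2\,\nabla \! \snk(b/2)$ contributions cancel (one from the decomposition of $\hess \snk^2(b/2)$, one from the $-2\,d\snk(b/2)\otimes d\snk(b/2)$ correction), leaving
\[
\nabla \left\| \nabla \! \snk(b/2) \right\|^2 = \frac{1}{\snk(b/2)} B\bigl(\nabla \! \snk(b/2)\bigr) - \frac{k}{2}\snk(b/2)\, \nabla \! \snk(b/2).
\]

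Finally, on $\Sigma_r$ I would use \eqref{eq: outward normal to level set of b} to write $\nabla \! \snk(b/2) = \|\nabla \! \snk(b/2)\|\,\nu$, so that $B(\nabla \! \snk(b/2)) = \|\nabla \! \snk(b/2)\|\,B(\nu)$, which yields \eqref{eq: gradient of norm nabla snk (b/2) squared} exactly. There is no real obstacle here: the identity is essentially a bookkeeping exercise once one applies \eqref{eq: hessian of squared snk (b/2)} and Lemma \ref{lemma: laplace of snk squared (b/2)}. The only subtle point is to notice the convenient cancellation of the $\|\nabla \! \snk(b/2)\|^2$ terms, which is what makes the final expression clean.
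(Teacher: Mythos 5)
Your proposal is correct and follows essentially the same route as the paper: both start from $\nabla\|\nabla f\|^2 = 2\,\hess f(\nabla f)$ applied to $f = \snk(b/2)$, convert $\hess\snk(b/2)$ to $\hess\snk^2(b/2)$ via \eqref{eq: hessian of squared snk (b/2)}, split off the traceless part, and invoke Lemma \ref{lemma: laplace of snk squared (b/2)} to produce the cancellation of the $\|\nabla\!\snk(b/2)\|^2$ terms. The only (immaterial) difference is that the paper evaluates the Hessian at $\nu$ from the outset, whereas you evaluate at $\nabla\!\snk(b/2)$ and normalize at the end.
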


\begin{proof}
Recall that for a function $u$, we have
\begin{equation} \label{eq: gradient of norm of gradient of a function}
    \nabla \| \nabla u\| = \hess u \left( \frac{\nabla u}{\|\nabla u\|}, \cdot \right)
\end{equation}
Applying to this to $u = \snk (b/2)$, recalling \eqref{eq: outward normal to level set of b}, and using that
\begin{equation} \label{eq: nabla | nabla snk (b/2)| squared in terms of its non-square}
    \nabla \left\| \nabla \! \snk \left( b/2 \right) \right\|^2 = 2 \left\| \nabla \! \snk \left( b/2 \right) \right\| \nabla \left\| \nabla \! \snk \left( b/2 \right) \right\|
\end{equation}
it follows that $\nabla \left\| \nabla \! \snk \left( b/2 \right) \right\|^2 = 2 \left\| \nabla \! \snk \left( b/2 \right) \right\| \hess \snk \left( b/2 \right) \left( \nu \right)$.
By \eqref{eq: hessian of squared snk (b/2)}, we can further recast this as
\allowdisplaybreaks
\begin{align*}
    \nabla \left\| \nabla \! \snk \left( b/2 \right) \right\|^2 &= \frac{\left\| \nabla \! \snk \left( b/2 \right) \right\|}{ \snk \left( b/2 \right)} \left( \hess \snk^2 \left( b/2 \right)(\nu) - 2 \langle \nabla \! \snk \left( b/2 \right), \nu \rangle \nabla \! \snk \left( b/2 \right) \right) \\
    &= \frac{\left\| \nabla \! \snk \left( b/2 \right) \right\|}{ \snk \left( b/2 \right)} \left( \hess \snk^2 \left( b/2 \right)(\nu) - 2 \left\| \nabla \! \snk \left( b/2 \right) \right\| \nabla \! \snk \left( b/2 \right) \right) \\
    &= \frac{\left\| \nabla \! \snk \left( b/2 \right) \right\|}{ \snk \left( b/2 \right)} \left( B(\nu) + \frac{\Delta \! \snk^2 \left( b/2 \right)}{n} \frac{\nabla \! \snk \left( b/2 \right)}{\left\| \nabla \! \snk \left( b/2 \right) \right\|} - 2 \left\| \nabla \! \snk \left( b/2 \right) \right\| \nabla \! \snk \left( b/2 \right) \right),
\end{align*}
and the desired result follows from this and Lemma \ref{lemma: laplace of snk squared (b/2)}.
\end{proof}

Combining \eqref{eq: gradient of norm nabla snk (b/2) squared} with \eqref{eq: nabla | nabla snk (b/2)| squared in terms of its non-square}, we obtain that
\[
2 \snk \left( b/2 \right) \nabla \left\| \nabla \! \snk \left( b/2 \right) \right\| = B(\nu) - \frac{k}{2} \frac{\snk^2 \left( b/2 \right)}{\left\| \nabla \! \snk \left( b/2 \right) \right\|} \nabla \! \snk \left( b/2 \right),
\]
and taking norms, it follows that
\allowdisplaybreaks
\begin{align} \label{align: norm squared of nabla | nabla snk (b/2)| in terms of traceless hessian}
    4 \snk^2 \left( b/2 \right) \left\| \nabla \left\| \nabla \! \snk \left( b/2 \right) \right\| \right\|^2 &= \|B(\nu)\|^2 + \frac{k^2}{4} \snk^4 \left( b/2 \right) - k \snk^2 \left( b/2 \right) B(\nu, \nu).
\end{align}

We may now proceed with the computation of $\Delta v^\beta$ (cf.~\eqref{eq: definition of the v function}) in terms of $B$ and $\two$ .
Since, by the product rule, we have
\begin{equation} \label{align: Delta v to power beta part 1}
    \Delta v^\beta = \beta v^{\beta - 1} \Delta v + \beta(\beta - 1) v^{\beta - 2} \| \nabla v \|^2.
\end{equation}
we only need to compute $\Delta v$ and $\|\nabla v\|^2$.
We start with the latter.

\begin{lemma} \label{lemma: |nabla v| squared}
On each $\Sigma_r$, we have
\allowdisplaybreaks
\begin{equation} \label{eq: nabla v squared final}
    \|\nabla v\|^2 = \left( \frac{1}{\snk^2 \left( b/2 \right)} - \frac{k}{v^2} \right) \left\| B(\nu) \right\|^2.
\end{equation}
\end{lemma}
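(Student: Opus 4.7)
The plan is to differentiate $v^2$ directly and use the formula for $\nabla\|\nabla\snk(b/2)\|^2$ from Lemma \ref{lemma: nabla |nabla snk(b/2)|^2 in terms of hessian and gradient}. By definition, $v^2 = 4\|\nabla\snk(b/2)\|^2 + k\snk^2(b/2)$, so the product rule gives
\[
2v\,\nabla v = 4\,\nabla\|\nabla\snk(b/2)\|^2 + 2k\snk(b/2)\,\nabla\snk(b/2).
\]

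Next, I would substitute the identity from Lemma \ref{lemma: nabla |nabla snk(b/2)|^2 in terms of hessian and gradient}, namely
\[
\nabla\|\nabla\snk(b/2)\|^2 = \frac{\|\nabla\snk(b/2)\|}{\snk(b/2)}\,B(\nu) - \frac{k}{2}\snk(b/2)\,\nabla\snk(b/2),
\]
into the previous equation. The $-\tfrac{k}{2}\snk(b/2)\,\nabla\snk(b/2)$ term is multiplied by $4$ and will cancel exactly against the $+2k\snk(b/2)\,\nabla\snk(b/2)$ coming from $\nabla(k\snk^2(b/2))$. This cancellation is the key observation and the reason the statement takes such a clean form; after it, we are left with
\[
v\,\nabla v = \frac{2\|\nabla\snk(b/2)\|}{\snk(b/2)}\,B(\nu).
\]

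Finally, I would take norms on both sides and divide by $v^2$ to obtain $\|\nabla v\|^2 = \frac{4\|\nabla\snk(b/2)\|^2}{v^2\snk^2(b/2)}\|B(\nu)\|^2$, and then rewrite the numerator using $4\|\nabla\snk(b/2)\|^2 = v^2 - k\snk^2(b/2)$ to factor out the form claimed in \eqref{eq: nabla v squared final}. No real obstacle is expected here: the computation is a direct application of Lemma \ref{lemma: nabla |nabla snk(b/2)|^2 in terms of hessian and gradient} together with the definition of $v$, with the only conceptual point being the telescoping of the two $k$-linear terms. The restriction to $\Sigma_r$ is not essential for the algebraic identity, but is the context in which we will use it later via the decomposition of $B(\nu)$ into tangential and normal parts coming from Lemma \ref{lemma: final form for the norm of traceless second fundamental form of level set}.
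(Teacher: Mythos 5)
Your proof is correct, and it rests on the same key input as the paper's argument, namely the identity for $\nabla \left\| \nabla \! \snk (b/2) \right\|^2$ from Lemma \ref{lemma: nabla |nabla snk(b/2)|^2 in terms of hessian and gradient}; the difference is in how the computation is organized, and your organization is genuinely more efficient. The paper squares first: it writes $\|\nabla v\|^2 = \tfrac{1}{4v^2}\|\nabla v^2\|^2$, expands the resulting three terms (the square of $\nabla\|\nabla\snk(b/2)\|^2$, the $k^2$-term, and the cross term), and needs the auxiliary identities \eqref{align: norm squared of nabla | nabla snk (b/2)| in terms of traceless hessian} and \eqref{eq: towards nabla v squared part 4} before the cancellations become visible. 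You instead work at the level of the vector identity $2v\nabla v = 4\nabla\left\|\nabla\snk(b/2)\right\|^2 + k\nabla\snk^2(b/2)$, where the two terms proportional to $\snk(b/2)\nabla\snk(b/2)$ telescope immediately, leaving $v\nabla v = \tfrac{2\left\|\nabla\snk(b/2)\right\|}{\snk(b/2)}B(\nu)$; taking norms and substituting $4\left\|\nabla\snk(b/2)\right\|^2 = v^2 - k\snk^2(b/2)$ yields \eqref{eq: nabla v squared final}. This is the same calculation in substance, but squaring last rather than first reduces a page of bookkeeping to three lines and makes the mechanism behind the clean final form transparent. You are also right that the identity is pointwise on $M \setminus \{p\}$ and the restriction to $\Sigma_r$ only matters for the later use of the tangential/normal decomposition of $B(\nu)$.
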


\begin{proof}
Firstly, note that $\nabla v^2 = 2v \nabla v$, so, by the definition of $v$ from \eqref{eq: definition of the v function}, we have
\begin{equation} \label{eq: nabla v squared first computation}
    \|\nabla v\|^2 = \frac{1}{v^2} \left( 4 \| \nabla \| \nabla \! \snk \left( b/2 \right) \|^2 \|^2 + \frac{k^2}{4} \left\| \nabla \! \snk^2 \left( b/2 \right) \right\|^2 + 2k \left\langle \nabla \! \snk^2 \left( b/2 \right), \nabla \left\| \nabla \! \snk \left( b/2 \right) \right\|^2 \right\rangle \right).
\end{equation}
Now, since $\| \nabla \| \nabla \! \snk \left( b/2 \right) \|^2 \|^2 = 4 \left\| \nabla \! \snk \left( b/2 \right) \right\|^2 \left\| \nabla \left\| \nabla \! \snk \left( b/2 \right) \right\| \right\|^2$, it follows that
\allowdisplaybreaks
\begin{align*}
    \| \nabla \| \nabla \! \snk \left( b/2 \right) \|^2 \|^2 &= \left( 4 \left\| \nabla \! \snk \left( b/2 \right) \right\|^2 + k \snk^2 \left( b/2 \right) \right) \left\| \nabla \left\| \nabla \! \snk \left( b/2 \right) \right\| \right\|^2 \nonumber \\
    &\quad - k \snk^2 \left( b/2 \right) \left\| \nabla \left\| \nabla \! \snk \left( b/2 \right) \right\| \right\|^2 \nonumber \\
    &= v^2 \left( \frac{1}{4 \snk^2 \left( b/2 \right)} \|B(\nu)\|^2 + \frac{k^2}{16} \snk^2 \left( b/2 \right) - \frac{k}{4} B(\nu, \nu) \right) \nonumber \\
    &\quad - \left( \frac{k}{4} \left\| B(\nu) \right\|^2 + \frac{k^3}{16} \snk^4 \left( b/2 \right) - \frac{k^2}{4} \snk^2 \left( b/2 \right) B(\nu, \nu) \right),
\end{align*}
where the last equality follows by \eqref{align: norm squared of nabla | nabla snk (b/2)| in terms of traceless hessian}, and so
\begin{equation} \label{eq: towards nabla v squared part 2}
    \begin{split}
        \frac{4}{v^2} \| \nabla \| \nabla \! \snk \left( b/2 \right) \|^2 \|^2 &= \frac{1}{\snk^2 \left( b/2 \right)} \left\| B(\nu) \right\|^2 + \frac{k^2}{4} \snk^2 \left( b/2 \right) - k B(\nu, \nu) \\
        &\quad - \frac{4k}{v^2} \left( \frac{1}{4} \left\| B(\nu) \right\|^2 + \frac{k^2}{16} \snk^4 \left( b/2 \right) - \frac{k}{4} \snk^2 \left( b/2 \right) B(\nu, \nu) \right).
    \end{split}
\end{equation}
Plugging \eqref{eq: towards nabla v squared part 2} into \eqref{eq: nabla v squared first computation}, we obtain the identity
\allowdisplaybreaks
\begin{align} \label{align: towards nabla v squared part 3}
    \|\nabla v\|^2 &= \frac{1}{\snk^2 \left( b/2 \right)} \left\| B(\nu) \right\|^2 + \frac{k^2}{4} \snk^2 \left( b/2 \right) - k B(\nu, \nu) - \frac{4k}{v^2} \left( \frac{1}{4} \left\| B(\nu) \right\|^2 + \frac{k^2}{16} \snk^4 \left( b/2 \right) - \frac{k}{4} \snk^2 \left( b/2 \right) B(\nu, \nu) \right) \nonumber \\
    &\quad + \frac{k^2}{4 v^2} \left\| \nabla \! \snk^2 \left( b/2 \right) \right\|^2 + \frac{2k}{v^2} \left\langle \nabla \! \snk^2 \left( b/2 \right), \nabla \left\| \nabla \! \snk \left( b/2 \right) \right\|^2 \right\rangle.
\end{align}
Using the product rule and \eqref{eq: gradient of norm of gradient of a function}, we may also compute
\allowdisplaybreaks
\begin{align*} 
    \left\langle \nabla \! \snk^2 \left( b/2 \right), \nabla \left\| \nabla \! \snk \left( b/2 \right) \right\|^2 \right\rangle &= 4 \snk \left( b/2 \right) \left\| \nabla \! \snk \left( b/2 \right) \right\| \left\langle \nabla \left\| \nabla \! \snk \left( b/2 \right) \right\|, \nabla \! \snk \left( b/2 \right) \right\rangle \\
    &= 4 \snk \left( b/2 \right) \left\| \nabla \! \snk \left( b/2 \right) \right\| \hess{\snk \left( b/2 \right)} \left( \nu, \nabla \! \snk \left( b/2 \right) \right) \\
    &= 4 \snk \left( b/2 \right) \left\| \nabla \! \snk \left( b/2 \right) \right\|^2 \hess{\snk \left( b/2 \right)} (\nu, \nu).
\end{align*}
By \eqref{eq: hessian of squared snk (b/2)}, the above right-hand side may be rewritten as
\begin{align*}
    \left\langle \nabla \! \snk^2 \left( b/2 \right), \nabla \left\| \nabla \! \snk \left( b/2 \right) \right\|^2 \right\rangle &= 2 \left\| \nabla \! \snk \left( b/2 \right) \right\|^2 \left( \hess{\snk^2 \left( b/2 \right)} (\nu, \nu) - 2 \left\| \nabla \! \snk \left( b/2 \right) \right\|^2 \right) \\
    &= 2 \left\| \nabla \! \snk \left( b/2 \right) \right\|^2 \left( B(\nu, \nu) + \frac{\Delta \! \snk^2 \left( b/2 \right)}{n} - 2 \left\| \nabla \! \snk \left( b/2 \right) \right\|^2 \right)
\end{align*}
and by Lemma \ref{lemma: laplace of snk squared (b/2)}, we thus have
\begin{equation} \label{eq: towards nabla v squared part 4}
    \left\langle \nabla \! \snk^2 \left( b/2 \right), \nabla \left\| \nabla \! \snk \left( b/2 \right) \right\|^2 \right\rangle = 2 \left\| \nabla \! \snk \left( b/2 \right) \right\|^2 B(\nu, \nu) - \frac{k}{4} \left\| \nabla \! \snk^2 \left( b/2 \right) \right\|^2,
\end{equation}
We can use \eqref{eq: towards nabla v squared part 4} to rewrite the last two terms on the right-hand side of \eqref{align: towards nabla v squared part 3} as
\[
\frac{k^2}{4 v^2} \left\| \nabla \! \snk^2 \left( b/2 \right) \right\|^2 + \frac{2k}{v^2} \left\langle \nabla \! \snk^2 \left( b/2 \right), \nabla \left\| \nabla \! \snk \left( b/2 \right) \right\|^2 \right\rangle = \frac{4k \left\| \nabla \! \snk \left( b/2 \right) \right\|^2 B(\nu, \nu)}{v^2} - \frac{k^2 \left\| \nabla \! \snk^2 \left( b/2 \right) \right\|^2}{4 v^2},
\]
and using the definition of $v$ from \eqref{eq: definition of the v function}, the above can be recast as
\allowdisplaybreaks
\begin{equation} \label{align: towards nabla v squared part 5}
    \begin{split}
        \frac{k^2}{4 v^2} \left\| \nabla \! \snk^2 \left( b/2 \right) \right\|^2 + \frac{2k}{v^2} \left\langle \nabla \! \snk^2 \left( b/2 \right), \nabla \left\| \nabla \! \snk \left( b/2 \right) \right\|^2 \right\rangle &= k B(\nu, \nu) - \frac{k^2 \snk^2 \left( b/2 \right)}{v^2} B(\nu, \nu) \\
        &\quad - \frac{k^2 \left\| \nabla \! \snk^2 \left( b/2 \right) \right\|^2}{4 v^2}.
    \end{split}
\end{equation}
Combining \eqref{align: towards nabla v squared part 3} with \eqref{align: towards nabla v squared part 5}, it follows that
\allowdisplaybreaks
\begin{equation} \label{eq: towards nabla v squared part 6}
    \begin{split}
        \|\nabla v\|^2 &= \frac{1}{\snk^2 \left( b/2 \right)} \left\| B(\nu) \right\|^2 + \frac{k^2}{4} \snk^2 \left( b/2 \right) \\
        &\quad - \frac{k}{v^2} \left\| B(\nu) \right\|^2 - \frac{k^3}{4v^2} \snk^4 \left( b/2 \right) - \frac{k^2}{4v^2} \left\| \nabla \! \snk^2 \left( b/2 \right) \right\|^2.
    \end{split}
\end{equation}
Next, observe that we can rewrite
\allowdisplaybreaks
\begin{align} \label{align: towards nabla v squared part 7}
    \frac{k^2}{4v^2} \left\| \nabla \! \snk^2 \left( b/2 \right) \right\|^2 &= \frac{k^2}{4v^2} \snk^2 \left( b/2 \right) \cdot 4 \left\| \nabla \! \snk \left( b/2 \right) \right\|^2 \nonumber \\
    &= \frac{k^2}{4v^2} \snk^2 \left( b/2 \right) \cdot \left( 4 \left\| \nabla \! \snk \left( b/2 \right) \right\|^2 + k \snk^2 \left( b/2 \right) \right) - \frac{k^3}{4v^2} \snk^4 \left( b/2 \right) \nonumber \\
    &= \frac{k^2}{4} \snk^2 \left( b/2 \right) - \frac{k^3}{4v^2} \snk^4 \left( b/2 \right),
\end{align}
Plugging \eqref{align: towards nabla v squared part 7} into \eqref{eq: towards nabla v squared part 6} gives the desired conclusion.
\end{proof}

We now turn to the computation of $\Delta v$ (cf.~{\cite[Propositions 2.10 and 2.12]{ColdingMinicozziMonotonicityFormulas}} for the case of non-negative Ricci curvature).

\begin{lemma} \label{lemma: laplace of v}
On each $\Sigma_r$, we have
\begin{equation} \label{eq: laplace of v final}
    \begin{split}
        \Delta v &= \frac{4 \left\| \nabla \! \snk \left( b/2 \right) \right\|^2}{v} \left( \| \mathring{\two} \|^2 + \ric \left( \nu, \nu \right) - (n - 1)k \right) + \frac{n - 2}{\snk^2 \left( b/2 \right)} \left\langle \nabla \! \snk^2 \left( b/2 \right), \nabla v \right\rangle + \frac{k}{v^3} \left\| B(\nu) \right\|^2 \\ 
        &\quad + \frac{1}{(n - 1) v \snk^2 \left( b/2 \right)} \left( \left\| B(\nu) \right\|^2 + (n - 2) \left\| B(\nu)^\top \right\|^2 \right).
    \end{split}
\end{equation}
\end{lemma}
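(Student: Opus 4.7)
The plan is to derive the formula for $\Delta v$ by reducing to the already-computed quantities $\mathcal{L}(v^2)$ from Proposition \ref{prop: mathcal L operator applied to main function} and $\|\nabla v\|^2$ from Lemma \ref{lemma: |nabla v| squared}, and then rewriting the resulting Hessian-norm term in terms of $\|\mathring{\two}\|^2$ via Lemma \ref{lemma: final form for the norm of traceless second fundamental form of level set}. The key algebraic identity is
\[
\Delta v \;=\; \frac{\Delta v^2}{2v} - \frac{\|\nabla v\|^2}{v},
\]
which follows from the product rule applied to $\nabla v^2 = 2 v \nabla v$.

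First I would compute $\Delta v^2$ by noting that, from the definition of $\mathcal{L}$ in \eqref{eq: alternative form of mathcal L operator},
\[
\Delta v^2 \;=\; \mathcal{L}(v^2) \;-\; \frac{2-n}{\snk^2(b/2)} \left\langle \nabla \! \snk^2(b/2), \nabla v^2 \right\rangle,
\]
and then substitute the identity for $\mathcal{L}(v^2)$ from \eqref{eq: mathcal L operator applied to main function}. Since $\nabla \! \snk^2(b/2) = 2 \snk(b/2)\, \|\nabla \! \snk(b/2)\| \,\nu$, we have $\|\nabla \! \snk^2(b/2)\|^2 = 4\snk^2(b/2)\|\nabla\!\snk(b/2)\|^2$ and $\ric(\nabla\!\snk^2(b/2),\nabla\!\snk^2(b/2)) = 4\snk^2(b/2)\|\nabla\!\snk(b/2)\|^2 \ric(\nu,\nu)$; these allow the prefactor $(2\snk(b/2))^{-2}$ to cancel cleanly. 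Using $\nabla v^2 = 2v \nabla v$ in the drift term then produces the clean identity
\[
\frac{\Delta v^2}{2v} \;=\; \frac{\|B\|^2}{v \snk^2(b/2)} + \frac{4 \|\nabla\!\snk(b/2)\|^2}{v}\bigl(\ric(\nu,\nu) - (n-1)k\bigr) + \frac{n-2}{\snk^2(b/2)} \left\langle \nabla\!\snk^2(b/2), \nabla v\right\rangle.
\]

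Next I would plug in Lemma \ref{lemma: |nabla v| squared}, which gives $\|\nabla v\|^2/v = \|B(\nu)\|^2/(v\snk^2(b/2)) - k\|B(\nu)\|^2/v^3$. Subtracting this from the previous display produces, in the $1/(v\snk^2(b/2))$ slot, the combination $\|B\|^2 - \|B(\nu)\|^2$, together with the cross-cutting term $k\|B(\nu)\|^2/v^3$ which already matches one of the desired summands. To finish, I would invoke Lemma \ref{lemma: final form for the norm of traceless second fundamental form of level set} rewritten as
\[
\|B\|^2 - \|B(\nu)\|^2 \;=\; \|\nabla\!\snk^2(b/2)\|^2 \|\mathring{\two}\|^2 + \frac{1}{n-1} \|B(\nu)\|^2 + \frac{n-2}{n-1} \|B(\nu)^\top\|^2,
\]
where I used $\|B\|^2 - \|B(\nu)\|^2 = \bigl(\|B\|^2 - \tfrac{n}{n-1}\|B(\nu)\|^2 - \tfrac{n-2}{n-1}\|B(\nu)^\top\|^2\bigr) + \tfrac{1}{n-1}\|B(\nu)\|^2 + \tfrac{n-2}{n-1}\|B(\nu)^\top\|^2$. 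The first piece, divided by $v \snk^2(b/2)$, becomes $4\|\nabla\!\snk(b/2)\|^2 \|\mathring{\two}\|^2/v$, which combines with the Ricci and $-(n-1)k$ terms into the claimed expression. The remaining two pieces are exactly the last summand in \eqref{eq: laplace of v final}.

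The plan contains no genuine obstacle: essentially every ingredient has been prepared in the preceding lemmas, and the argument is a careful accounting exercise. The only place to be mildly careful is managing the two distinct conformal factors $\snk^2(b/2)$ and $v^2$, since the identity $v^2 - 4\|\nabla\!\snk(b/2)\|^2 = k\snk^2(b/2)$ from the definition \eqref{eq: definition of the v function} of $v$ has to be used to convert between them at several points. Once one tracks indices carefully, the formula \eqref{eq: laplace of v final} drops out with no further geometric input.
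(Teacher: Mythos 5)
Your proposal is correct and follows essentially the same route as the paper: start from $v\Delta v = \tfrac12\Delta v^2 - \|\nabla v\|^2$, extract $\Delta v^2$ from Proposition \ref{prop: mathcal L operator applied to main function} via \eqref{eq: alternative form of mathcal L operator}, subtract Lemma \ref{lemma: |nabla v| squared}, and convert $\|B\|^2 - \|B(\nu)\|^2$ using Lemma \ref{lemma: final form for the norm of traceless second fundamental form of level set}. All the intermediate identities you record match the paper's \eqref{align: towards Delta v part 2}--\eqref{eq: towards Delta v part 4}.
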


\begin{proof}
By the product rule,
\begin{equation} \label{align: towards Delta v part 1}
    v \Delta v = \frac{1}{2} \Delta v^2 - \|\nabla v\|^2.
\end{equation}
By Proposition \ref{prop: mathcal L operator applied to main function} (recall also \eqref{eq: alternative form of mathcal L operator}), we know that
\allowdisplaybreaks
\begin{align*}
    \frac{1}{2} \Delta v^2 &= \frac{1}{\snk^2 \left( b/2 \right)} \left( \| B\|^2 + \ric \left( \nabla \! \snk^2 \left( b/2 \right), \nabla \! \snk^2 \left( b/2 \right) \right) - (n - 1)k \left\| \nabla \! \snk^2 \left( b/2 \right) \right\|^2 \right)\\
    &\quad + \frac{n - 2}{2 \snk^2 \left( b/2 \right)} \left\langle \nabla \! \snk^2 \left( b/2 \right), \nabla v^2 \right\rangle,
\end{align*}
and applying the product rule to the last term on the right-hand side, we have
\begin{align} \label{align: towards Delta v part 2}
    \begin{split}
        \frac{1}{2} \Delta v^2 &= \frac{1}{\snk^2 \left( b/2 \right)} \left( \| B\|^2 + \ric \left( \nabla \! \snk^2 \left( b/2 \right), \nabla \! \snk^2 \left( b/2 \right) \right) - (n - 1)k \left\| \nabla \! \snk^2 \left( b/2 \right) \right\|^2 \right) \\
        &\quad + \frac{(n - 2) v}{\snk^2 \left( b/2 \right)} \left\langle \nabla \! \snk^2 \left( b/2 \right), \nabla v \right\rangle.
    \end{split}
\end{align}
Plugging \eqref{eq: nabla v squared final} from Lemma \ref{lemma: |nabla v| squared} and \eqref{align: towards Delta v part 2} into \eqref{align: towards Delta v part 1}, it follows that
\allowdisplaybreaks
\begin{equation} \label{eq: towards Delta v part 3}
    \begin{split}
        \Delta v &= \frac{1}{v \snk^2 \left( b/2 \right)} \left( \| B \|^2 - \left\| B(\nu) \right\|^2 + \ric \left( \nabla \! \snk^2 \left( b/2 \right), \nabla \! \snk^2 \left( b/2 \right) \right) - (n - 1)k \left\| \nabla \! \snk^2 \left( b/2 \right) \right\|^2 \right) \\
        &\quad + \frac{n - 2}{\snk^2 \left( b/2 \right)} \left\langle \nabla \! \snk^2 \left( b/2 \right), \nabla v \right\rangle + \frac{k}{v^3} \left\| B(\nu) \right\|^2.
    \end{split}
\end{equation}

Now, recall that by \eqref{eq: norm squared of traceless second fundamental form of level set via tangential part of B}, we have
\[
\left\| \nabla \! \snk^2 \left( b/2 \right) \right\|^2 \| \mathring{\two} \|^2 = \|B\|^2 - \frac{n}{n - 1} \left\| B(\nu) \right\|^2 - \frac{n - 2}{n - 1} \left\| B(\nu)^\top \right\|^2,
\]
from which we get 
\begin{equation} \label{eq: towards Delta v part 4}
    \|B\|^2 - \left\| B(\nu) \right\|^2 = \left\| \nabla \! \snk^2 \left( b/2 \right) \right\|^2 \| \mathring{\two} \|^2 + \frac{1}{n - 1} \left( \left\| B(\nu) \right\|^2 + (n - 2) \left\| B(\nu)^\top \right\|^2 \right).
\end{equation}
Inserting \eqref{eq: towards Delta v part 4} into \eqref{eq: towards Delta v part 3}, it follows that
\allowdisplaybreaks
\begin{align*}
    \Delta v &= \frac{\left\| \nabla \! \snk^2 \left( b/2 \right) \right\|^2}{v \snk^2 \left( b/2 \right)} \| \mathring{\two} \|^2 + \frac{1}{v \snk^2 \left( b/2 \right)} \left( \ric \left( \nabla \! \snk^2 \left( b/2 \right), \nabla \! \snk^2 \left( b/2 \right) \right) - (n - 1)k \left\| \nabla \! \snk^2 \left( b/2 \right) \right\|^2 \right) \\
    &\quad + \frac{1}{(n - 1) v \snk^2 \left( b/2 \right)} \left( \left\| B(\nu) \right\|^2 + (n - 2) \left\| B(\nu)^\top \right\|^2 \right) + \frac{n - 2}{\snk^2 \left( b/2 \right)} \left\langle \nabla \! \snk^2 \left( b/2 \right), \nabla v \right\rangle + \frac{k}{v^3} \left\| B(\nu) \right\|^2.
\end{align*}
Rewriting the first two terms from the above right-hand side using only $\nabla \! \snk (b/2)$, we obtain that
\allowdisplaybreaks
\begin{align*}
    \Delta v &= \frac{4 \left\| \nabla \! \snk \left( b/2 \right) \right\|^2}{v} \| \mathring{\two} \|^2 + \frac{4}{v} \left( \ric \left( \nabla \! \snk \left( b/2 \right), \nabla \! \snk \left( b/2 \right) \right) - (n - 1)k \left\| \nabla \! \snk \left( b/2 \right) \right\|^2 \right) \\
    &\quad + \frac{1}{(n - 1) v \snk^2 \left( b/2 \right)} \left( \left\| B(\nu) \right\|^2 + (n - 2) \left\| B(\nu)^\top \right\|^2 \right) + \frac{n - 2}{\snk^2 \left( b/2 \right)} \left\langle \nabla \! \snk^2 \left( b/2 \right), \nabla v \right\rangle + \frac{k}{v^3} \left\| B(\nu) \right\|^2,
\end{align*}
and grouping these terms, we finally get that
\allowdisplaybreaks
\begin{align*}
    \Delta v &= \frac{4 \left\| \nabla \! \snk \left( b/2 \right) \right\|^2}{v} \left( \| \mathring{\two} \|^2 + \ric \left( \nu, \nu \right) - (n - 1)k \right) + \frac{n - 2}{\snk^2 \left( b/2 \right)} \left\langle \nabla \! \snk^2 \left( b/2 \right), \nabla v \right\rangle + \frac{k}{v^3} \left\| B(\nu) \right\|^2 \\ 
    &\quad + \frac{1}{(n - 1) v \snk^2 \left( b/2 \right)} \left( \left\| B(\nu) \right\|^2 + (n - 2) \left\| B(\nu)^\top \right\|^2 \right).
\end{align*}
\end{proof}

With the above and \eqref{align: Delta v to power beta part 1}, we can now compute $\Delta v^\beta$ for all $\beta > 0$ (cf.~{\cite[Corollary 2.13]{ColdingMinicozziMonotonicityFormulas}} for the non-negatively curved case).
\begin{corollary} \label{cllr: laplace of v to the power beta}
On each $\Sigma_r$, we have
\allowdisplaybreaks
\begin{equation} \label{eq: laplace of v to power beta final}
    \begin{split}
        \Delta v^\beta &= 4 \beta v^{\beta - 2} \left\| \nabla \! \snk \left( b/2 \right) \right\|^2 \left( \| \mathring{\two} \|^2 + \ric \left( \nu, \nu \right) - (n - 1)k \right) + \frac{n - 2}{\snk^2 \left( b/2 \right)} \left\langle \nabla \! \snk^2 \left( b/2 \right), \nabla v^\beta \right\rangle \\ 
        &\quad + \frac{\beta v^{\beta - 2}}{(n - 1) \snk^2 \left( b/2 \right)} \left( \widetilde{\beta} \left\| B(\nu) \right\|^2 + (n - 2) \left\| B(\nu)^\top \right\|^2 \right).
    \end{split}
\end{equation}
where $\widetilde{\beta} : M \setminus \{ p \} \to \R$ is the function defined as
\[
\widetilde{\beta} := 1 + (\beta - 1)(n - 1) + (n - 1) (2 - \beta) \frac{k \snk^2 \left( b/2 \right)}{v^2}.
\]
\end{corollary}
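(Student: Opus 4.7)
The plan is to combine Lemmas \ref{lemma: |nabla v| squared} and \ref{lemma: laplace of v} via the product rule for the Laplacian of $v^\beta$ recorded in \eqref{align: Delta v to power beta part 1}, and then carefully regroup the $\|B(\nu)\|^2$ terms to extract the factor $\widetilde{\beta}$.

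First, I would substitute the expression from Lemma \ref{lemma: laplace of v} into $\beta v^{\beta-1} \Delta v$. This immediately produces the first three terms of the target identity \eqref{eq: laplace of v to power beta final}: the traceless-Hessian and Ricci combination $4\beta v^{\beta-2}\|\nabla\! \snk(b/2)\|^2(\|\mathring{\two}\|^2 + \ric(\nu,\nu) - (n-1)k)$, and, using $\beta v^{\beta-1}\nabla v = \nabla v^\beta$, the drift term $\frac{n-2}{\snk^2(b/2)}\langle \nabla\! \snk^2(b/2), \nabla v^\beta\rangle$. It also yields two $\|B(\nu)\|^2$ contributions (one of weight $\beta k v^{\beta-4}$, one of weight $\beta v^{\beta-2}/((n-1)\snk^2(b/2))$) together with a $(n-2)\|B(\nu)^\top\|^2$ contribution which already matches the target.

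Next, I would plug in the formula from Lemma \ref{lemma: |nabla v| squared} into $\beta(\beta-1)v^{\beta-2}\|\nabla v\|^2$, obtaining $\beta(\beta-1)v^{\beta-2}\bigl(\snk^{-2}(b/2) - k v^{-2}\bigr)\|B(\nu)\|^2$. Combining the $v^{\beta-2}/\snk^2(b/2)$ pieces gives a coefficient of $\frac{\beta}{n-1}\bigl(1 + (\beta-1)(n-1)\bigr)$ on $\|B(\nu)\|^2$, while combining the $kv^{\beta-4}$ pieces yields a coefficient of $\beta k(1-(\beta-1)) = \beta k (2-\beta)$. Factoring the common prefactor $\beta v^{\beta-2}/((n-1)\snk^2(b/2))$ out of both, the latter becomes $(n-1)(2-\beta) k \snk^2(b/2)/v^2$, and the total coefficient of $\|B(\nu)\|^2$ is exactly $\widetilde{\beta}$.

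No genuine difficulty is expected: the argument is bookkeeping that assembles the two previous lemmas into the claimed formula. The only point to watch is the precise cancellation in the $\|B(\nu)\|^2$ coefficients, which must produce the $(2-\beta)$ factor in $\widetilde{\beta}$ — this is the source of the $2-\beta$ rather than $1-\beta$ that one might naively expect, and it arises from the $\beta - \beta(\beta-1) = \beta(2-\beta)$ combination when the $kv^{\beta-4}$ terms from $\beta v^{\beta-1}\Delta v$ and $\beta(\beta-1)v^{\beta-2}\|\nabla v\|^2$ are collected.
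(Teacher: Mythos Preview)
Your proposal is correct and follows essentially the same route as the paper: plug Lemmas \ref{lemma: |nabla v| squared} and \ref{lemma: laplace of v} into the product-rule identity \eqref{align: Delta v to power beta part 1}, use $\beta v^{\beta-1}\nabla v = \nabla v^\beta$ for the drift term, and group the $\|B(\nu)\|^2$ contributions to produce $\widetilde{\beta}$. The bookkeeping you outline for the $\|B(\nu)\|^2$ coefficients, including the $\beta - \beta(\beta-1) = \beta(2-\beta)$ cancellation, matches the paper exactly.
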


\begin{proof}
Plugging \eqref{eq: laplace of v final} from Lemma \ref{lemma: laplace of v} and \eqref{eq: nabla v squared final} from Lemma \ref{lemma: |nabla v| squared} into \eqref{align: Delta v to power beta part 1}, we obtain
\allowdisplaybreaks
\begin{align*}
    \Delta v^\beta &= 4 \beta v^{\beta - 2} \left\| \nabla \! \snk \left( b/2 \right) \right\|^2 \left( \| \mathring{\two} \|^2 + \ric \left( \nu, \nu \right) - (n - 1)k \right) + \frac{(n - 2)\beta v^{\beta - 1}}{\snk^2 \left( b/2 \right)} \left\langle \nabla \! \snk^2 \left( b/2 \right), \nabla v \right\rangle \\ 
    &\quad + \frac{\beta v^{\beta - 2}}{(n - 1) \snk^2 \left( \frac{b}{2} \right)} \left( \left\| B(\nu) \right\|^2 + (n - 2) \left\| B(\nu)^\top \right\|^2 \right) + k \beta v^{\beta - 4} \left\| B(\nu) \right\|^2 \\
    &\quad + \frac{\beta(\beta - 1) v^{\beta - 2}}{\snk^2 \left( b/2 \right)} \left\| B(\nu) \right\|^2 - k \beta(\beta - 1) v^{\beta - 4} \left\| B(\nu) \right\|^2,
\end{align*}
and grouping the terms involving $\|B(\nu)\|^2$, we finally get 
\allowdisplaybreaks
\begin{align*}
    \Delta v^\beta &= 4 \beta v^{\beta - 2} \left\| \nabla \! \snk \left( b/2 \right) \right\|^2 \left( \| \mathring{\two} \|^2 + \ric \left( \nu, \nu \right) - (n - 1)k \right) + \frac{n - 2}{\snk^2 \left( b/2 \right)} \left\langle \nabla \! \snk^2 \left( b/2 \right), \nabla v^\beta \right\rangle \\ 
    &\quad + \frac{\beta v^{\beta - 2}}{(n - 1) \snk^2 \left( b/2 \right)} \left( \left[ 1 + (\beta - 1)(n - 1) + (n - 1) (2 - \beta) \frac{k \snk^2 \left( b/2 \right)}{v^2} \right] \left\| B(\nu) \right\|^2 + (n - 2) \left\| B(\nu)^\top \right\|^2 \right).
\end{align*}
\end{proof}

The sign of the function $\widetilde{\beta}$ from the statement of Corollary \ref{cllr: laplace of v to the power beta} plays a key role in the monotonicity formulae in this section.
The condition $\widetilde{\beta} \geq 0$ on $M \setminus \{ p \}$ depends, a priori, on $b$ and the geometry of $(M, g)$, but it is, in fact, equivalent to $\beta \geq (n - 2)/(n - 1)$.
To see this, by the definition of $v$ (cf.~\eqref{eq: definition of the v function}), we may compute
\allowdisplaybreaks
\begin{align} \label{align: sign of the tilde beta function}
    \widetilde{\beta} &\equiv 1 + (\beta - 1)(n - 1) + (n - 1) (2 - \beta) \frac{k \snk^2 \left( b/2 \right)}{4 \left\| \nabla \! \snk (b/2) \right\|^2 + k \snk^2 (b/2)} \nonumber \\
    &= \frac{4 \left\| \nabla \! \snk (b/2) \right\|^2 (1 + (\beta - 1)(n - 1)) + nk \snk^2 (b/2)}{4 \left\| \nabla \! \snk (b/2) \right\|^2 + k \snk^2 (b/2)}.
\end{align}
The right-hand side from above is non-negative if $\beta \geq (n - 2)/(n - 1)$.
Conversely, if $\widetilde{\beta} \geq 0$ on $M \setminus \{ p \}$, by the asymptotics of $b$ near $p$ (cf.~Lemma \ref{lemma: asymptotics of b near the pole}), we see that $\widetilde{\beta}(x) \to 1 + (\beta - 1)(n - 1)$ as $x \to p$, so we necessarily must have $\beta \geq (n - 2)/(n - 1)$.

With the above computations, we are now ready to prove the monotonicity formulae involving $A_\beta$.
We start with the extension of Theorem \ref{th: monotonicity formula of 2snk(r/2) times A - I1} to $A_\beta$; this is also the positive curvature analogue of {\cite[Theorem 3.4]{ColdingMinicozziMonotonicityFormulas}}.

\begin{theorem} \label{th: derivative of 2snk(r/2) times A beta}
For almost every $r \in (0, m)$, we have
\allowdisplaybreaks
\begin{align*}
    &\left( 2 \snk \left( r/2 \right) \right)^{n - 1} \csk^{-1} \left( r/2 \right) \frac{d}{dr} \left( \left( 2 \snk \left( r/2 \right) \right)^{2 - n} \left( A_\beta(r) - \omega_{n - 1} \right) \right) \\
    &= 4 \beta \int_{b \leq r} \left( 2 \snk \left( b/2 \right) \right)^{2 - n} v^{\beta - 2} \left\| \nabla \! \snk \left( b/2 \right) \right\|^2 \left( \| \mathring{\two} \|^2 + \ric \left( \nu, \nu \right) - (n - 1)k \right) \\
    &\quad + \frac{4 \beta}{n - 1} \int_{b \leq r} \left( 2 \snk \left( b/2 \right) \right)^{- n} v^{\beta - 2} \left( \widetilde{\beta} \left\| B(\nu) \right\|^2 + (n - 2) \left\| B(\nu)^\top \right\|^2 \right).
\end{align*}
\end{theorem}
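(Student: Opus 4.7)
The strategy is to mimic the proof of Theorem \ref{th: monotonicity formula of 2snk(r/2) times A - I1} (the $\beta = 2$ case), now using Corollary \ref{cllr: laplace of v to the power beta} in place of the identity \eqref{eq: main theorem identity}. Recognize that $A_\beta = I_{v^\beta}$ with $v := \left(4\|\nabla\!\snk(b/2)\|^2 + k\snk^2(b/2)\right)^{1/2}$, which is smooth and strictly positive on $M \setminus \{p\}$ (indeed $v = 0$ forces $b = 0$). First I would apply Lemma \ref{lemma: derivative of 2snk(r/2) times Iu} with $u = v^\beta$, giving at each regular value of $b$
\[
(2\snk(r/2))^{n-1}\csk^{-1}(r/2)\,\frac{d}{dr}\!\left[(2\snk(r/2))^{2-n}A_\beta(r)\right] = \int_{b=r}\bigg\langle \nabla(v^\beta G), \frac{\nabla b}{\|\nabla b\|}\bigg\rangle - \frac{n(n-2)k}{4}\int_{b\le r} v^\beta G.
\]
Integrating this between regular values $0 < r_1 < r_2 < m$ and invoking the divergence theorem recasts the right-hand side as $\int_{r_1 \le b \le r_2}\left(\Delta - n(n-2)k/4\right)(v^\beta G)$.

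The core computation is then a pointwise expansion of $(\Delta - n(n-2)k/4)(v^\beta G)$ on $M \setminus \{p\}$. By the product rule and $\Delta G = n(n-2)kG/4$, this equals $G\,\Delta v^\beta + 2\langle \nabla G, \nabla v^\beta\rangle$. Differentiating $G = (2\snk(b/2))^{2-n}$ gives $\nabla G = \tfrac{(2-n)G}{2\snk^2(b/2)}\nabla\!\snk^2(b/2)$, and so the cross term $2\langle \nabla G, \nabla v^\beta\rangle$ exactly cancels the first-order term $\tfrac{n-2}{\snk^2(b/2)}\langle \nabla\!\snk^2(b/2), \nabla v^\beta\rangle$ appearing in Corollary \ref{cllr: laplace of v to the power beta}. (Equivalently, this is just the observation that $G\mathcal{L}(v^\beta) = G\,\Delta v^\beta + 2\langle\nabla G, \nabla v^\beta\rangle$, where $\mathcal{L}$ is the operator from \eqref{eq: mathcal L operator}.) After the cancellation, rewriting $G = (2\snk(b/2))^{2-n}$ and using $G/\snk^2(b/2) = 4(2\snk(b/2))^{-n}$ produces precisely the integrand in the theorem statement.

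It remains to pass to the limit $r_1 \downarrow 0$. Setting $h(r) := (2\snk(r/2))^{2-n}A_\beta(r)$, the product rule yields
\[
(2\snk(r/2))^{n-1}\csk^{-1}(r/2)\,h'(r) = (2-n)A_\beta(r) + 2\tnk(r/2)\,A_\beta'(r).
\]
By Theorem \ref{th: sharp gradient estimate} and \eqref{eq: I1 is constant}, $A_\beta$ is bounded by $\omega_{n-1}$ on $(0,m]$; by \eqref{eq: computation of Iu(0)} and Lemma \ref{lemma: asymptotics of b near the pole}, $A_\beta(r) \to \omega_{n-1}$ as $r \downarrow 0$. The standard subsequence argument (as in Lemma \ref{lemma: computing derivative of Iu using the divergence theorem} and the proof of Theorem \ref{th: monotonicity formula of 2snk(r/2) times A - I1}) therefore produces a sequence of regular values $r_i \downarrow 0$ with $\tnk(r_i/2)A_\beta'(r_i) \to 0$. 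Taking this limit yields
\[
(2-n)(A_\beta(r) - \omega_{n-1}) + 2\tnk(r/2)\,A_\beta'(r) = \int_{b\le r}[\text{integrand above}],
\]
and one final application of the product rule recognizes the left-hand side as $(2\snk(r/2))^{n-1}\csk^{-1}(r/2)\tfrac{d}{dr}\!\left[(2\snk(r/2))^{2-n}(A_\beta(r) - \omega_{n-1})\right]$.

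The heart of the argument -- the computation of $\Delta v^\beta$ on the level sets of $b$ -- has already been carried out in Corollary \ref{cllr: laplace of v to the power beta}, so no real obstacle remains. The only point requiring care is the clean cross-term cancellation described in the second paragraph, together with the bookkeeping of the boundary contribution as $r_1 \downarrow 0$; both are modelled exactly on the $\beta = 2$ case.
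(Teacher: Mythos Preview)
Your proposal is correct and follows essentially the same route as the paper's proof: apply Lemma \ref{lemma: derivative of 2snk(r/2) times Iu} to $u=v^\beta$, use the divergence theorem between regular values, expand $(\Delta - n(n-2)k/4)(v^\beta G)$ via the product rule so that the cross term $2\langle\nabla G,\nabla v^\beta\rangle$ cancels the drift term in Corollary \ref{cllr: laplace of v to the power beta}, and then pass to the limit $r_1\downarrow 0$ along a subsequence with $\tnk(r_i/2)A_\beta'(r_i)\to 0$ exactly as in Theorem \ref{th: monotonicity formula of 2snk(r/2) times A - I1}. The paper carries out precisely these steps, in the same order.
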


\begin{proof}
Consider the function
\[
f(r) := \left( 2 \snk \left( r/2 \right) \right)^{n - 1} \csk^{-1} \left( r/2 \right) \frac{d}{dr} \left( \left(2 \snk \left( r/2 \right) \right)^{2 - n} A_\beta(r) \right)
\]
defined for almost every $r \in (0, m)$ (at the regular values of $b$, where we know $A_\beta$ is differentiable by the discussion from the beginning of Section \ref{sec: unparametrized monotonicity formulae}).
On the one hand, the product rule tells us that
\begin{equation} \label{eq: derivative of 2snk(r/2) times A beta part 1}
    f(r) = (2 - n) A_\beta(r) + 2 \tnk \left( r/2 \right) A_\beta'(r).
\end{equation}
On the other hand, by Lemma \ref{lemma: derivative of 2snk(r/2) times Iu}, we have
\begin{equation} \label{eq: derivative of 2snk(r/2) times A beta part 2}
    f(r) = \int_{b = r} \bigg\langle \nabla \left( v^\beta G \right), \frac{\nabla b}{\|\nabla b\|} \bigg\rangle - \frac{n(n - 2)k}{4} \int_{b \leq r} v^\beta G.
\end{equation}
For regular values $0 < r_1 < r_2 < m$ of $b$, using the divergence theorem and \eqref{eq: derivative of 2snk(r/2) times A beta part 2}, we may write
\[
f(r_2) - f(r_1) = \int_{r_1 \leq b \leq r_2} \left( \Delta - \frac{n(n - 2)k}{4} \right) \left( v^\beta G \right),
\]
and using the product rule and that $\Delta G = n(n - 2)k G/4$ on $M \setminus \{ p \}$, we can rewrite
\allowdisplaybreaks
\begin{align} \label{eq: derivative of 2snk(r/2) times A beta part 3}
    f(r_2) - f(r_1) &= \int_{r_1 \leq b \leq r_2} \left( G \Delta v^\beta + 2 \left\langle \nabla v^\beta, \nabla G \right\rangle \right) \nonumber \\
    &= \int_{r_1 \leq b \leq r_2} \left( 2 \snk \left( b/2 \right) \right)^{2 - n} \bigg( \Delta v^\beta + \frac{2 - n}{\snk^2 \left( b/2 \right)} \left\langle \nabla v^\beta, \nabla \! \snk^2 \left( b/2 \right) \right\rangle \bigg),
\end{align}
where the last equality follows because $G = (2 \snk (b/2))^{2 - n}$.
Plugging \eqref{eq: laplace of v to power beta final} from Corollary \ref{cllr: laplace of v to the power beta} into \eqref{eq: derivative of 2snk(r/2) times A beta part 3}, we obtain that
\allowdisplaybreaks
\begin{equation} \label{eq: derivative of 2snk(r/2) times A beta part 4}
    \begin{split}
        f(r_2) - f(r_1) &= 4 \beta \int_{r_1 \leq b \leq r_2} \left( 2 \snk \left( b/2 \right) \right)^{2 - n} v^{\beta - 2} \left\| \nabla \! \snk \left( b/2 \right) \right\|^2 \left( \| \mathring{\two} \|^2 + \ric \left( \nu, \nu \right) - (n - 1)k \right) \\
        &\quad + \frac{4 \beta}{n - 1} \int_{r_1 \leq b \leq r_2} \left( 2 \snk \left( b/2 \right) \right)^{- n} v^{\beta - 2} \left( \widetilde{\beta} \left\| B(\nu) \right\|^2 + (n - 2) \left\| B(\nu)^\top \right\|^2 \right).
    \end{split}
\end{equation}
By a similar argument as in the end of the proof of Theorem \ref{th: monotonicity formula of 2snk(r/2) times A - I1}, there exists a decreasing sequence $(r_i)_{i \in \N}$ of regular values of $b$ with $r_i \to 0$ and $f(r_i) \to (2 - n) \omega_{n - 1}$ as $i \to \infty$.
Using this and \eqref{eq: derivative of 2snk(r/2) times A beta part 4}, we get that for almost every $r \in (0, m)$, we have
\allowdisplaybreaks
\begin{align*}
    f(r) - (2 - n)\omega_{n - 1} &= 4 \beta \int_{b \leq r} \left( 2 \snk \left( b/2 \right) \right)^{2 - n} v^{\beta - 2} \left\| \nabla \! \snk \left( b/2 \right) \right\|^2 \left( \| \mathring{\two} \|^2 + \ric \left( \nu, \nu \right) - (n - 1)k \right) \nonumber \\
    &\quad + \frac{4 \beta}{n - 1} \int_{b \leq r} \left( 2 \snk \left( b/2 \right) \right)^{- n} v^{\beta - 2} \left( \widetilde{\beta} \left\| B(\nu) \right\|^2 + (n - 2) \left\| B(\nu)^\top \right\|^2 \right).
\end{align*}
The desired conclusion now follows by combining this equation with \eqref{eq: derivative of 2snk(r/2) times A beta part 1}.
\end{proof}

The next monotonicity formula is the extension of Theorem \ref{th: A functional is decreasing}: we show $A_\beta$ is non-increasing if $\widetilde{\beta} \geq 0$, that is, if $\beta \geq (n - 2)/(n - 1)$ (by \eqref{align: sign of the tilde beta function}).
This is also the positive curvature analogue of {\cite[Theorem 1.1]{ColdingMinicozziMonotonicityFormulas}}.
\pagebreak
\begin{theorem} \label{th: derivative of A beta}
For almost every $r \in (0, m)$, we have
\allowdisplaybreaks
\begin{align*}
    &\left( 2 \snk \left( r/2 \right) \right)^{3 - n} \csk^{-1} \left( r/2 \right) A_\beta'(r) \\
    &= -4 \beta \int_{b \leq r} \left( 2 \snk \left( b/2 \right) \right)^{4 - 2n} v^{\beta - 2} \left\| \nabla \! \snk \left( b/2 \right) \right\|^2 \left( \| \mathring{\two} \|^2 + \ric \left( \nu, \nu \right) - (n - 1)k \right) \nonumber \\
    &\quad - \frac{4 \beta}{n - 1} \int_{b \leq r} \left( 2 \snk \left( b/2 \right) \right)^{2 - 2n} v^{\beta - 2} \left( \widetilde{\beta} \left\| B(\nu) \right\|^2 + (n - 2) \left\| B(\nu)^\top \right\|^2 \right).
\end{align*}
\end{theorem}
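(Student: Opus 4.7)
The plan is to derive this formula as a direct generalization of Theorem \ref{th: A functional is decreasing}, exploiting the identity $A_\beta = I_{v^\beta}$ for $v := \left( 4\left\| \nabla \! \snk(b/2) \right\|^2 + k\snk^2(b/2) \right)^{1/2}$, and applying Proposition \ref{prop: mathcal L subharmonic function u implies monotonicity of Iu} to the function $u = v^\beta$. Two ingredients need to be assembled: the verification that $v^\beta$ satisfies the hypotheses of that proposition, and the explicit computation of $\mathcal{L}v^\beta$ in a form that matches the right-hand side of the claim.

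For the hypotheses, I would first note that $v$ extends continuously to $p$ by setting $v(p) := 1$, thanks to the asymptotics of $b$ from Lemma \ref{lemma: asymptotics of b near the pole}, and that it is smooth on $M \setminus \{p\}$. To obtain the required bound $\|\nabla v^\beta\| \leq C\,\mathrm{dist}(p,\cdot)^{-1}$ near $p$, I would apply the chain rule and use that $v$ is bounded away from zero in a neighborhood of $p$; it then suffices to show $\|\nabla v\| = O(\mathrm{dist}(p,\cdot)^{-1})$, which follows from rewriting $v^2 = \csk^2(b/2)\|\nabla b\|^2 + k\snk^2(b/2)$ and invoking the estimate $\|\nabla \|\nabla b\|\| \leq C\,\mathrm{dist}(p,\cdot)^{-1}$ of Lemma \ref{lemma: asymptotics of b near the pole}.

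Next, the computation of $\mathcal{L}v^\beta$ combines Corollary \ref{cllr: laplace of v to the power beta} with the formulation \eqref{eq: alternative form of mathcal L operator} of $\mathcal{L}$. The key observation is that the transport term $\tfrac{n-2}{\snk^2(b/2)}\langle \nabla \! \snk^2(b/2),\nabla v^\beta \rangle$ appearing in $\Delta v^\beta$ is exactly cancelled by the drift term of $\mathcal{L}$, leaving
\begin{align*}
\mathcal{L}v^\beta &= 4\beta\,v^{\beta-2}\left\|\nabla \! \snk(b/2)\right\|^2\left(\|\mathring{\two}\|^2 + \ric(\nu,\nu) - (n-1)k\right)\\
&\quad + \frac{\beta\,v^{\beta-2}}{(n-1)\snk^2(b/2)}\left(\widetilde{\beta}\,\|B(\nu)\|^2 + (n-2)\,\|B(\nu)^\top\|^2\right).
\end{align*}

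Finally, I would substitute $G^2 = (2\snk(b/2))^{4-2n}$ and use the identity $G^2/\snk^2(b/2) = 4\,(2\snk(b/2))^{2-2n}$ to regroup the two pieces of $G^2\,\mathcal{L}v^\beta$ into precisely the two integrands appearing in the statement; Proposition \ref{prop: mathcal L subharmonic function u implies monotonicity of Iu} then yields the claimed formula for $A_\beta'(r)$. The main subtlety is the verification of the gradient asymptotics of $v^\beta$ near $p$, since all the heavy pointwise identities have already been packaged into Lemmas \ref{lemma: nabla |nabla snk(b/2)|^2 in terms of hessian and gradient}--\ref{lemma: laplace of v} and Corollary \ref{cllr: laplace of v to the power beta}.
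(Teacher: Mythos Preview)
Your proposal is correct and is essentially the paper's own argument: both compute $G^2\mathcal{L}v^\beta = \div(G^2\nabla v^\beta)$ by observing that the drift term of $\mathcal{L}$ in \eqref{eq: alternative form of mathcal L operator} exactly cancels the $\tfrac{n-2}{\snk^2(b/2)}\langle\nabla\snk^2(b/2),\nabla v^\beta\rangle$ term from Corollary \ref{cllr: laplace of v to the power beta}, and then integrate via the divergence theorem. The only difference is presentational---you invoke Proposition \ref{prop: mathcal L subharmonic function u implies monotonicity of Iu} as a packaged result after verifying its hypotheses for $u=v^\beta$, whereas the paper redoes that divergence-theorem argument inline; note that both routes yield integrals over $\{b\geq r\}$ (consistent with Theorem \ref{th: A functional is decreasing}), so the $\{b\leq r\}$ in the displayed statement appears to be a typo.
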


\begin{proof}
Define the function
\allowdisplaybreaks
\begin{align} \label{align: definition of f in monotonicity of A beta}
    f(r) &:= \left( 2 \snk \left( r/2 \right) \right)^{3 - n} \csk^{-1} \left( r/2 \right) A_\beta'(r) \nonumber \\
    &\,\, = \int_{b = r} \left\langle \left( 2 \snk \left( b/2 \right) \right)^{4 - 2n} \nabla v^\beta, \nu \right\rangle & \text{by \eqref{align: derivative of A beta} and \eqref{eq: outward normal to level set of b}}.
\end{align}

We would like to apply the divergence theorem in \eqref{align: definition of f in monotonicity of A beta}.
For this, we may first compute
\allowdisplaybreaks
\begin{align} \label{align: towards monotonicity of A beta part 1}
    \div \! \left( \left( 2 \snk \left( b/2 \right) \right)^{4 - 2n} \nabla v^\beta \right) &= \left( 2 \snk \left( b/2 \right) \right)^{4 - 2n} \Delta v^\beta + (2 - n) \left( 2 \snk \left( r/2 \right) \right)^{4 - 2n} \left\langle \nabla v^\beta, \frac{\nabla \! \snk^2 \left( b/2 \right)}{\snk^2 \left( b/2 \right)} \right\rangle \nonumber \\
    &= 4 \beta \left( 2 \snk \left( b/2 \right) \right)^{4 - 2n} v^{\beta - 2} \left\| \nabla \! \snk \left( b/2 \right) \right\|^2 \left( \| \mathring{\two} \|^2 + \ric \left( \nu, \nu \right) - (n - 1)k \right) \nonumber \\
    &\quad + \frac{4 \beta}{n - 1} \left( 2 \snk \left( b/2 \right) \right)^{2 - 2n} v^{\beta - 2} \left( \widetilde{\beta} \left\| B(\nu) \right\|^2 + (n - 2) \left\| B(\nu)^\top \right\|^2 \right),
\end{align}
where the last equality follows from \eqref{eq: laplace of v to power beta final} from Corollary \ref{cllr: laplace of v to the power beta}.

Then, for regular values $0 < r_1 < r_2 < m$ of $b$, the divergence theorem and \eqref{align: towards monotonicity of A beta part 1} allow us to write
\allowdisplaybreaks
\begin{align} \label{align: towards monotonicity of A beta part 2}
    f(r_2) - f(r_1) &= \int_{r_1 \leq b \leq r_2} \div \! \left( \left( 2 \snk \left( b/2 \right) \right)^{4 - 2n} \nabla v^\beta \right) \nonumber \\
    &= 4 \beta \int_{r_1 \leq b \leq r_2} \left( 2 \snk \left( b/2 \right) \right)^{4 - 2n} v^{\beta - 2} \left\| \nabla \! \snk \left( b/2 \right) \right\|^2 \left( \| \mathring{\two} \|^2 + \ric \left( \nu, \nu \right) - (n - 1)k \right) \nonumber \\
    &\quad + \frac{4 \beta}{n - 1} \int_{r_1 \leq b \leq r_2} \left( 2 \snk \left( b/2 \right) \right)^{2 - 2n} v^{\beta - 2} \left( \widetilde{\beta} \left\| B(\nu) \right\|^2 + (n - 2) \left\| B(\nu)^\top \right\|^2 \right)
\end{align}
At the same time, by the definition of $f$ from \eqref{align: definition of f in monotonicity of A beta}, we may rewrite the above left-hand side as
\begin{equation} \label{eq: towards monotonicity of A beta part 3}
    f(r_2) - f(r_1) = \left( 2 \snk \left( r_2/2 \right) \right)^{4 - 2n} \int_{b = r_2} \langle \nabla v^\beta, \nu \rangle - \left( 2 \snk \left( r_1/2 \right) \right)^{3 - n} \csk^{-1} \left( r_1/2 \right) A_\beta'(r_1).
\end{equation}
Letting $r_2 \uparrow m$, an integration by parts argument analogous to that in \eqref{eq: almost Iu is decreasing for L-subharmonic u part 2}-\eqref{eq: integration by parts argument for function with bounded gradient near greens function singularity part 2} from the proof of Proposition \ref{prop: mathcal L subharmonic function u implies monotonicity of Iu} shows that 
\[
\lim_{r_2 \uparrow m} \left( \left( 2 \snk \left( r_2/2 \right) \right)^{4 - 2n} \int_{b = r_2} \langle \nabla v^\beta, \nu \rangle \right) = \left( 2 \snk \left( m/2 \right) \right)^{4 - 2n} \int_M \Delta v^\beta = 0.
\]
Combining this with \eqref{align: towards monotonicity of A beta part 2} and \eqref{eq: towards monotonicity of A beta part 3}, the desired conclusion follows.
\end{proof}

We end this section with the extension of the monotonicity formula from Theorem \ref{th: differential relation between A and V}.
We proceed as in Section \ref{sec: unparametrized monotonicity formulae}, defining a general functional and then particularizing to the function $v^\beta$.

For a continuous function $u : M \to \R$, we define $W_u : (0, m] \to \R$ as
\begin{equation} \label{eq: definition of Wu functional}
    W_u(r) := \left( 2 \snk \left( r/2 \right) \right)^{2 - n} \int_{b \leq r} \frac{u}{\left( 2 \snk \left( b/2 \right) \right)^2} \left( 4 \left\| \nabla \! \snk \left( b/2 \right) \right\|^2 - \frac{nk}{n - 2} \snk^2 \left( b/2 \right) \right) + \frac{nk}{4(n - 2)} \int_{b \leq r} uG.
\end{equation}
By a similar reasoning as for the $J_u$ function from Section \ref{sec: unparametrized monotonicity formulae}, using the asymptotics of $b$ near its singularity (cf.~Lemma \ref{lemma: asymptotics of b near the pole}) if $u : M \to \R$ is continuous on $M$ and smooth on $M \setminus \{ p \}$, $W_u$ is well-defined and locally absolutely continuous on $(0, m)$.
The function $W_u$ is related to $I_u$ via a differential equation, similar to the one from Proposition \ref{prop: derivative of V in terms of A and itself}.
We omit the proof of this, as it is analogous to that of Proposition \ref{prop: derivative of V in terms of A and itself}.

\begin{lemma} \label{lemma: differential equation for W functional}
For almost every $r \in (0, m)$, we have
\[
W_u'(r) = \frac{1}{2 \tnk \left( r/2 \right)} \left( I_u(r) - (n - 2) W_u(r) \right).
\]
\end{lemma}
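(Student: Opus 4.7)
The plan is to prove this in close analogy with Proposition \ref{prop: derivative of V in terms of A and itself}, computing $W_u'(r)$ directly via the product rule and coarea formula and then simplifying the boundary terms using the geometry of regular level sets of $b$. First, I would split $W_u(r)$ into its two pieces and differentiate. For the first piece, the product rule produces a derivative-of-prefactor term $(2-n)(2\snk(r/2))^{1-n}\csk(r/2) F(r)$, where $F(r)$ is the sublevel-set integral appearing in $W_u$, plus a coarea boundary term $(2\snk(r/2))^{2-n}\int_{b=r} h/\|\nabla b\|$, with $h$ the integrand. For the second piece, the coarea formula gives simply $\frac{nk}{4(n-2)}\int_{b=r} uG/\|\nabla b\|$.

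Next, on the level set $\{b=r\}$ I would exploit that $\snk(b/2)=\snk(r/2)$ is constant and that by the chain rule $4\|\nabla\snk(b/2)\|^2 = \csk^2(r/2)\|\nabla b\|^2$, which reduces the boundary integrand to two explicit pieces. The $\csk^2$-piece yields $(2\snk(r/2))^{-n}\csk^2(r/2)\int_{b=r} u\|\nabla b\|$, and pulling out a factor of $\csk(r/2)/(2\snk(r/2)) = 1/(2\tnk(r/2))$ lets me recognise
\[
(2\snk(r/2))^{1-n}\csk(r/2)\int_{b=r}u\|\nabla b\| = I_u(r) - \tfrac{nk}{4}\int_{b\leq r}uG
\]
by the definition of $I_u$. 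The $\snk^2$-piece yields a multiple of $\int_{b=r}u/\|\nabla b\|$; using $G=(2\snk(b/2))^{2-n}$ on the level set to write $\int_{b=r}u/\|\nabla b\| = (2\snk(r/2))^{n-2}\int_{b=r}uG/\|\nabla b\|$, the coefficient works out so that this piece \emph{exactly cancels} the coarea contribution $\frac{nk}{4(n-2)}\int_{b=r}uG/\|\nabla b\|$ coming from the second term of $W_u$.

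Finally, I would absorb the derivative-of-prefactor term. Using $W_u(r) = (2\snk(r/2))^{2-n}F(r) + \frac{nk}{4(n-2)}\int_{b\leq r}uG$, one has
\[
(2-n)(2\snk(r/2))^{1-n}\csk(r/2)F(r) = -\frac{n-2}{2\tnk(r/2)}W_u(r) + \frac{nk}{8\tnk(r/2)}\int_{b\leq r}uG.
\]
Collecting everything, the two stray $\int_{b\leq r}uG$ contributions (from this step and from the earlier identification of $I_u(r) - \frac{nk}{4}\int_{b\leq r}uG$) cancel, leaving exactly $\frac{1}{2\tnk(r/2)}(I_u(r) - (n-2)W_u(r))$, as desired.

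The main obstacle is purely bookkeeping: three separate occurrences of $\int_{b\leq r}uG$ and $\int_{b=r}u/\|\nabla b\|$ appear with different prefactors, and the cancellations only become visible after using $G=(2\snk(b/2))^{2-n}$ to convert boundary integrals of $u/\|\nabla b\|$ into boundary integrals of $uG/\|\nabla b\|$, and after extracting the common factor $1/(2\tnk(r/2))$. Following the template of Proposition \ref{prop: derivative of V in terms of A and itself} step by step keeps the algebra transparent; the precise numerical constant $nk/(n-2)$ in the definition of $W_u$ is what makes the two stray $uG$ integrals cancel, so one should verify that coefficient carefully before collecting terms.
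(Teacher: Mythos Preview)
Your proposal is correct and follows exactly the approach the paper has in mind: the paper explicitly omits the proof, stating it is analogous to that of Proposition~\ref{prop: derivative of V in terms of A and itself}, and your argument is a faithful execution of that template with the cancellations tracked correctly.
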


By the same argument as for the proof of Corollary \ref{cllr: inequality for J functional}, we also have $W_1 \equiv \omega_{n - 1}/(n - 2)$.
For $\beta > 0$, define
\begin{equation} \label{eq: definition of V beta}
    V_\beta := W_{\left(4 \left\| \nabla \! \snk (b/2) \right\|^2 + k \snk^2 (b/2) \right)^{\beta/2}}.
\end{equation}
Unlike $A_2$ (which is equal to $A$ from Section \ref{sec: unparametrized monotonicity formulae}), note that $V_2 \neq V$ from \eqref{eq: V function for monotonicity formula}; $V_\beta$ is the positive curvature analogue of the quantity with this same name from {\cite{ColdingMinicozziMonotonicityFormulas}}.
The definition of the $W_u$ functional (cf.~\eqref{eq: definition of Wu functional}) then implies that $V_\beta \leq W_1 \omega_{n - 1}/(n - 2)$; Lemma \ref{lemma: differential equation for W functional} and the definition of $A_\beta$ from \eqref{eq: definition of A beta} tell us that
\begin{equation} \label{eq: differential relation between A beta and V beta}
    V_\beta'(r) = \frac{1}{2 \tnk \left( r/2 \right)} (A_\beta(r) - (n - 2) V_\beta(r)).
\end{equation}
for almost every $r \in (0, m)$.

With these definitions, we are ready to provide the last monotonicity formula involving $A_\beta$ and $V_\beta$; this is the extension of Theorem \ref{th: differential relation between A and V}, and it is also the positive curvature analogue of {\cite[Theorem 3.2]{ColdingMinicozziMonotonicityFormulas}}.

\begin{theorem} \label{th: derivative of A beta - 2 (n - 2) V beta}
For almost every $r \in (0, m)$, we have
\allowdisplaybreaks
\begin{align*}
    A_\beta'(r) - 2(n - 2) V_\beta'(r) &= \frac{4 \beta \csk \left( r/2 \right)}{\left( 2 \snk \left( r/2 \right) \right)^{n - 1}} \int_{b \leq r} v^{\beta - 2} \left\| \nabla \! \snk \left( b/2 \right) \right\|^2 \left( \| \mathring{\two} \|^2 + \ric \left( \nu, \nu \right) - (n - 1)k \right) \\ 
    &\quad + \frac{\beta \csk \left( r/2 \right)}{(n - 1) \left( 2 \snk \left( r/2 \right) \right)^{n - 1}} \int_{b \leq r} \frac{v^{\beta - 2}}{\snk^2 \left( b/2 \right)} \left( \widetilde{\beta} \left\| B(\nu) \right\|^2 + (n - 2) \| B(\nu)^\top \|^2 \right).
\end{align*}
\end{theorem}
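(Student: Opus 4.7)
The plan is to recognize the right-hand side as $\tfrac{\csk(r/2)}{(2\snk(r/2))^{n-1}}\int_{b\le r}\mathcal{L}v^\beta$, where $\mathcal{L}$ is the drift operator from \eqref{eq: mathcal L operator}, and then match this against $A_\beta'(r) - 2(n-2)V_\beta'(r)$ by a divergence-theorem decomposition, in parallel to Proposition~\ref{prop: mathcal L subharmonic function u implies monotonicity of Iu} and Theorem~\ref{th: A functional is decreasing}. Since $G = (2\snk(b/2))^{2-n}$, one has $\tfrac{n-2}{\snk^2(b/2)}\nabla\!\snk^2(b/2) = -2\nabla\log G$, so the middle (drift) term in Corollary~\ref{cllr: laplace of v to the power beta} combines with $\Delta v^\beta$ into $\mathcal{L}v^\beta$, while the remaining two terms are exactly the integrands on the right-hand side of the theorem. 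The claim therefore reduces to proving
\[
A_\beta'(r) - 2(n-2)V_\beta'(r) = \frac{\csk(r/2)}{(2\snk(r/2))^{n-1}}\int_{b\le r}\mathcal{L}v^\beta.
\]

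I would then split $\mathcal{L}v^\beta = \Delta v^\beta + 2\langle\nabla\log G,\nabla v^\beta\rangle$ and handle each piece separately. For the Laplace piece, Lemma~\ref{lemma: asymptotics of b near the pole} supplies $\|\nabla v^\beta\| = O(\dist(p,\cdot)^{-1})$ near $p$, so the flux through $\partial B(p,\varepsilon)$ is $O(\varepsilon^{n-2})\to 0$ (exactly as in \eqref{eq: integration by parts argument for function with bounded gradient near greens function singularity part 1}--\eqref{eq: integration by parts argument for function with bounded gradient near greens function singularity part 2}); the divergence theorem then gives $\int_{b\le r}\Delta v^\beta = \int_{b=r}\langle\nabla v^\beta,\nu\rangle = (2\snk(r/2))^{n-1}\csk^{-1}(r/2)\,A_\beta'(r)$ by \eqref{align: derivative of A beta}, which contributes exactly $A_\beta'(r)$ after multiplication by the prefactor.

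For the drift piece the aim is
\[
-(n-2)V_\beta'(r) = \frac{\csk(r/2)}{(2\snk(r/2))^{n-1}}\int_{b\le r}\langle\nabla\log G,\nabla v^\beta\rangle.
\]
Integration by parts (again with vanishing flux at $p$) gives $\int_{b\le r}\langle\nabla\log G,\nabla v^\beta\rangle = \int_{b=r}v^\beta\langle\nabla\log G,\nu\rangle - \int_{b\le r}v^\beta\Delta\log G$. The PDE $\Delta G = \tfrac{n(n-2)k}{4}G$ combined with $\Delta\log G = \Delta G/G - \|\nabla\log G\|^2$ yields $\Delta\log G = \tfrac{n(n-2)k}{4} - (n-2)^2\|\nabla\!\snk(b/2)\|^2/\snk^2(b/2)$, and $\nabla\log G = -(n-2)\nabla\log\snk(b/2)$ evaluates the boundary integrand to $-\tfrac{(n-2)\csk(r/2)}{2\snk(r/2)}v^\beta\|\nabla b\|$. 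Rewriting $\int_{b=r}v^\beta\|\nabla b\|$ via \eqref{eq: definition of A beta} in terms of $A_\beta(r)$ and $\int_{b\le r}v^\beta G$, then using the ODE \eqref{eq: differential relation between A beta and V beta} to expand $V_\beta'$, the claim reduces (after multiplying through by $2\tnk(r/2)/(n-2)$) to the algebraic identity
\[
(n-2)V_\beta(r) = (n-2)(2\snk(r/2))^{2-n}\!\int_{b\le r}\!v^\beta\tfrac{\|\nabla\!\snk(b/2)\|^2}{\snk^2(b/2)} - \tfrac{nk}{4}(2\snk(r/2))^{2-n}\!\int_{b\le r}\!v^\beta + \tfrac{nk}{4}\!\int_{b\le r}\!v^\beta G,
\]
which is the explicit form of $V_\beta = W_{v^\beta}$ obtained by substituting the definitions \eqref{eq: definition of Wu functional}, \eqref{eq: definition of V beta} and simplifying.

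The main obstacle is the algebraic bookkeeping in this last step: the factors of $\snk(r/2),\csk(r/2),\tnk(r/2)$ must cancel precisely for the boundary and bulk contributions of the integration by parts to reassemble into the defining formula for $V_\beta$. All other ingredients---the local Bochner formula (Corollary~\ref{cllr: laplace of v to the power beta}), the near-$p$ asymptotics (Lemma~\ref{lemma: asymptotics of b near the pole}), and the differential relation \eqref{eq: differential relation between A beta and V beta}---are already in hand.
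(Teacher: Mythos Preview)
Your proposal is correct and follows essentially the same route as the paper. The paper also writes $A_\beta'(r)$ as $(2\snk(r/2))^{1-n}\csk(r/2)\int_{b\le r}\Delta v^\beta$ via Lemma~\ref{lemma: computing derivative of Iu using the divergence theorem}, expands $\Delta v^\beta$ using Corollary~\ref{cllr: laplace of v to the power beta}, and then shows that the drift term $\int_{b\le r}\langle\nabla\!\snk^2(b/2)/\snk^2(b/2),\nabla v^\beta\rangle$ equals $2(2\snk(r/2))^{n-1}\csk^{-1}(r/2)\,V_\beta'(r)$ by an integration by parts and comparison with an explicit formula for $V_\beta'$ derived from \eqref{eq: differential relation between A beta and V beta}. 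Your organizing the argument around $\mathcal{L}v^\beta$ and reducing the drift identity to the defining formula \eqref{eq: definition of Wu functional} for $V_\beta$ is a slightly cleaner packaging of the same computation, but the ingredients and the algebraic verification are identical.
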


\begin{proof}
Expanding the definitions of $A_\beta$ and $V_\beta$ (cf.~\eqref{eq: definition of A beta} and \eqref{eq: definition of V beta}) in \eqref{eq: differential relation between A beta and V beta}, it follows that for almost every $r \in (0, m)$, we have
\allowdisplaybreaks
\begin{align} \label{align: expanding the expression for derivative of V functional}
    V_\beta'(r) &= \frac{\csk^2 \left( r/2 \right)}{\left( 2 \snk \left( r/2 \right) \right)^n} \int_{b = r} v^\beta \|\nabla b\| - \frac{(n - 2) \csk \left( r/2 \right)}{4 \left( 2 \snk \left( r/2 \right) \right)^{n - 1}} \int_{b \leq r} \frac{v^\beta}{\snk^2 \left( b/2 \right)} \left( v^2 - \left( \frac{nk}{n - 2} + k \right) \snk^2 \left( b/2 \right) \right) \nonumber \\
    &= \frac{\csk^2 \left( r/2 \right)}{\left( 2 \snk \left( r/2 \right) \right)^n} \int_{b = r} v^\beta \|\nabla b\| - \frac{\csk \left( r/2 \right)}{\left( 2 \snk \left( r/2 \right) \right)^{n - 1}} \int_{b \leq r} \left( \frac{(n - 2) v^{\beta + 2}}{4 \snk^2 \left( b/2 \right)} - \frac{(n - 1)k}{2} v^\beta \right),
\end{align}
so
\begin{equation} \label{eq: explicit derivative of V functional}
    \left( 2 \snk \left( r/2 \right) \right)^{n - 1} \csk^{-1} \left( r/2 \right) V_\beta'(r) = \frac{1}{2 \tnk \left( r/2 \right)} \int_{b = r} v^{\beta} \|\nabla b\| - \frac{1}{2} \int_{b \leq r} \bigg( \frac{n - 2}{2} \frac{v^{\beta + 2}}{\snk^2 \left( b/2 \right)} - (n - 1)k v^{\beta} \bigg).
\end{equation}

To compute $A_\beta' - 2(n - 2)V_\beta'$, we proceed as follows.
On $M \setminus \{ p \}$, we have
\[
\div \! \left( v^\beta \frac{\nabla \! \snk^2 \left( b/2 \right)}{\snk^2 \left( b/2 \right)} \right) = \left\langle \frac{\nabla \! \snk^2 \left( b/2 \right)}{\snk^2 \left( b/2 \right)}, \nabla v^\beta \right\rangle + v^\beta \frac{\Delta \! \snk^2 \left( b/2 \right)}{\snk^2 \left( b/2 \right)} - \frac{2 v^\beta}{\snk^3 \left( b/2 \right)} \left\langle \nabla \! \snk^2 \left( b/2 \right), \nabla \! \snk \left( b/2 \right) \right\rangle.
\]
Rewriting the second term from the above right-hand side using Lemma \ref{lemma: laplace of snk squared (b/2)} and the last term using the chain rule, we obtain that
\[
\div \! \left( v^\beta \frac{\nabla \! \snk^2 \left( b/2 \right)}{\snk^2 \left( b/2 \right)} \right) = \left\langle \frac{\nabla \! \snk^2 \left( b/2 \right)}{\snk^2 \left( b/2 \right)}, \nabla v^\beta \right\rangle + \frac{v^\beta}{\snk^2 \left( b/2 \right)} \left( 2n \left\| \nabla \! \snk \left( b/2 \right) \right\|^2 - \frac{nk}{2} \snk^2 \left( b/2 \right) - 4 \left\| \nabla \! \snk \left( b/2 \right) \right\|^2 \right).
\]
Using the definition of $v$ (cf.~\eqref{eq: definition of the v function}) in the second term from the right-hand side and moving it to the other side, this equation can be further recast as
\begin{equation} \label{eq: towards relation between A prime and W prime part 1}
    \left\langle \frac{\nabla \! \snk^2 \left( b/2 \right)}{\snk^2 \left( b/2 \right)}, \nabla v^\beta \right\rangle = \div \! \left( v^\beta \frac{\nabla \! \snk^2 \left( b/2 \right)}{\snk^2 \left( b/2 \right)} \right) - \left( \frac{n - 2}{2} \frac{v^{\beta + 2}}{\snk^2 \left( b/2 \right)} - (n - 1)k v^\beta \right).
\end{equation}
At the same time, by the chain rule,
\begin{equation} \label{eq: towards relation between A prime and W prime part 2}
    \left\langle v^\beta \frac{\nabla \! \snk^2 \left( b/2 \right)}{\snk^2 \left( b/2 \right)}, \nabla \dist(p, \cdot) \right\rangle = \frac{1}{\tnk \left( b/2 \right)} v^\beta \langle \nabla b, \nabla \dist(p, \cdot) \rangle.
\end{equation}
The asymptotics of $b$ near its singularity (cf.~Lemma \ref{lemma: asymptotics of b near the pole}) imply that the right-hand side of \eqref{eq: towards relation between A prime and W prime part 2} is, up to a constant, bounded above by $\mathrm{dist}(p, \cdot)^{-1}$ near $p$; so, as in \eqref{eq: integration by parts argument for function with bounded gradient near greens function singularity part 2} from the proof of Proposition \ref{prop: mathcal L subharmonic function u implies monotonicity of Iu},
\begin{equation} \label{eq: towards relation between A prime and W prime part 3}
    \lim_{\varepsilon \downarrow 0} \int_{\partial B(p, \varepsilon)} \left\langle v^\beta \frac{\nabla \! \snk^2 \left( b/2 \right)}{\snk^2 \left( b/2 \right)}, \nabla \dist (p, \cdot) \right\rangle = 0.
\end{equation}
Integrating the equality from \eqref{eq: towards relation between A prime and W prime part 1} on $\{b \leq r\} \setminus B(p, \varepsilon)$, applying the divergence theorem for the first term of its right-hand side, letting $\varepsilon \downarrow 0$, and using \eqref{eq: towards relation between A prime and W prime part 2} and \eqref{eq: towards relation between A prime and W prime part 3}, it follows that
\allowdisplaybreaks
\begin{align*}
    \int_{b \leq r} \left\langle \frac{\nabla \! \snk^2 \left( b/2 \right)}{\snk^2 \left( b/2 \right)}, \nabla v^\beta \right\rangle &= \int_{b = r} \left\langle v^\beta \frac{\nabla \! \snk^2 \left( b/2 \right)}{\snk^2 \left( b/2 \right)}, \frac{\nabla b}{\|\nabla b\|} \right\rangle - \int_{b \leq r} \left( \frac{n - 2}{2} \frac{v^{\beta + 2}}{\snk^2 \left( b/2 \right)} - (n - 1)k v^\beta \right) \\
    &= \frac{1}{\tnk \left( r/2 \right)} \int_{b = r} v^\beta \|\nabla b\| - \int_{b \leq r} \left( \frac{n - 2}{2} \frac{v^{\beta + 2}}{\snk^2 \left( b/2 \right)} - (n - 1)k v^\beta \right).
\end{align*}
Finally, using \eqref{eq: explicit derivative of V functional}, the above equation may be written as
\begin{equation} \label{eq: towards relation between A prime and W prime part 4}
    \int_{b \leq r} \left\langle \frac{\nabla \! \snk^2 \left( b/2 \right)}{\snk^2 \left( b/2 \right)}, \nabla v^\beta \right\rangle = 2 \left( 2 \snk \left( r/2 \right) \right)^{n - 1} \csk^{-1} \left( r/2 \right) V_\beta'(r),
\end{equation}

Now, by Lemma \ref{lemma: computing derivative of Iu using the divergence theorem}, we also have that for almost every $r \in (0, m)$,
\[
\left( 2 \snk \left( r/2 \right) \right)^{n - 1} \csk^{-1} \left( r/2 \right) A'_\beta(r) = \int_{b \leq r} \Delta v^\beta.
\]
Using Corollary \ref{cllr: laplace of v to the power beta} to rewrite the right-hand side from above, it follows that
\allowdisplaybreaks
\begin{align*}
    \left( 2 \snk \left( r/2 \right) \right)^{n - 1} \csk^{-1} \left( r/2 \right) A'_\beta(r) &= 4 \beta \int_{b \leq r} v^{\beta - 2} \left\| \nabla \! \snk \left( b/2 \right) \right\|^2 \left( \| \mathring{\two} \|^2 + \ric \left( \nu, \nu \right) - (n - 1)k \right) \nonumber \\ 
    &\quad + \frac{\beta}{n - 1} \int_{b \leq r} \frac{v^{\beta - 2}}{\snk^2 \left( b/2 \right)} \left( \widetilde{\beta} \left\| B(\nu) \right\|^2 + (n - 2) \left\| B(\nu)^\top \right\|^2 \right) \nonumber \\
    &\quad + (n - 2) \int_{b \leq r} \left\langle \frac{\nabla \! \snk^2 \left( b/2 \right)}{\snk^2 \left( b/2 \right)}, \nabla v^\beta \right\rangle.
\end{align*}
Combining this with \eqref{eq: towards relation between A prime and W prime part 4}, we obtain that
\allowdisplaybreaks
\begin{align*}
    \left( 2 \snk \left( r/2 \right) \right)^{n - 1} \csk^{-1} \left( r/2 \right) A'_\beta(r) &= 4\beta \int_{b \leq r} v^{\beta - 2} \left\| \nabla \! \snk \left( b/2 \right) \right\|^2 \left( \| \mathring{\two} \|^2 + \ric \left( \nu, \nu \right) - (n - 1)k \right) \nonumber \\ 
    &\quad + \frac{\beta}{n - 1} \int_{b \leq r} \frac{v^{\beta - 2}}{\snk^2 \left( b/2 \right)} \left( \widetilde{\beta} \left\| B(\nu) \right\|^2 + (n - 2) \left\| B(\nu)^\top \right\|^2 \right) \\
    &\quad + 2(n - 2) \left( 2 \snk \left( r/2 \right) \right)^{n - 1} \csk^{-1} \left( r/2 \right) V_\beta'(r),
\end{align*}
from which the desired conclusion follows.
\end{proof}

Theorem \ref{th: derivative of A beta - 2 (n - 2) V beta} is accompanied by a rigidity statement when $\widetilde{\beta} > 0$, which extends Theorem \ref{th: rigidity for the derivative of A - V}.

\begin{corollary}
Suppose that $\widetilde{\beta} > 0$ on $M \setminus \{ p \}$.
If there is some $r > 0$ so that $A_\beta'(r) = 2(n - 2) V_\beta'(r)$, then $(M, g)$ is isometric to $\S^n_k$.
\end{corollary}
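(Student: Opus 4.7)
The plan is to mimic the structure of the proof of Theorem \ref{th: rigidity for the derivative of A - V}, but using the integrand identity furnished by Theorem \ref{th: derivative of A beta - 2 (n - 2) V beta} together with the hypothesis $\widetilde{\beta} > 0$ to extract the crucial pointwise vanishing.

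First, I would observe that each of the four summands in the integrand of the derivative formula in Theorem \ref{th: derivative of A beta - 2 (n - 2) V beta} is non-negative: $\|\mathring{\two}\|^2 \geq 0$; $\ric(\nu,\nu) - (n-1)k \geq 0$ by the Ricci curvature hypothesis; $(n-2) \|B(\nu)^\top\|^2 \geq 0$; and $\widetilde{\beta} \|B(\nu)\|^2 \geq 0$ because of our standing assumption $\widetilde{\beta} > 0$. The equality $A_\beta'(r) = 2(n-2) V_\beta'(r)$ therefore forces each of these terms to vanish on $\{b \leq r\} \setminus \{p\}$. In particular, since $\widetilde{\beta} > 0$ on $M \setminus \{p\}$, the vanishing of $\widetilde{\beta} \|B(\nu)\|^2$ implies $B(\nu) = 0$ on the open set $\{b < r\} \setminus \{p\}$.

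Next, I would feed $B(\nu) = 0$ into Lemma \ref{lemma: nabla |nabla snk(b/2)|^2 in terms of hessian and gradient}, which (at regular points of $b$) yields
\[
\nabla \left\| \nabla \! \snk \left( b/2 \right) \right\|^2 = -\frac{k}{2} \snk(b/2) \nabla \! \snk(b/2) = -\frac{k}{4} \nabla \! \snk^2(b/2).
\]
Combining this with $k \nabla \! \snk^2(b/2) = 2k \snk(b/2) \nabla \! \snk(b/2)$ gives
\[
\nabla \left( 4 \left\| \nabla \! \snk(b/2) \right\|^2 + k \snk^2(b/2) \right) = 0
\]
on the open dense subset of regular points of $b$ in $\{b < r\} \setminus \{p\}$, hence everywhere on this open set by continuity.

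Finally, I would invoke the connectedness argument from the proof of Theorem \ref{th: rigidity for the derivative of A - V}: the maximum principle applied to $G$ rules out connected components of $\{b < r\}$ missing $p$, so $\{b < r\}$ is connected, and since $n \geq 3$ the set $\{b < r\} \setminus \{p\}$ is connected as well. Therefore $4 \|\nabla \! \snk(b/2)\|^2 + k \snk^2(b/2)$ is constant on this connected set, and the asymptotics in Lemma \ref{lemma: asymptotics of b near the pole} force this constant to equal $1$. The rigidity half of Theorem \ref{th: sharp gradient estimate} then concludes that $(M, g)$ is isometric to $\S^n_k$. The only subtle point, which is where the hypothesis $\widetilde{\beta} > 0$ is essential, is step two: without strict positivity of $\widetilde{\beta}$ one can only deduce $\widetilde{\beta} \|B(\nu)\|^2 = 0$, which would not suffice to conclude $B(\nu) \equiv 0$ and hence would not trigger the rigidity in the sharp gradient estimate.
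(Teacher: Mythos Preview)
Your proof is correct and follows essentially the same route as the paper. The only difference is a minor economy: the paper first uses both $B(\nu)=0$ and $\mathring{\two}=0$ together with Lemma~\ref{lemma: computations for the traceless second fundamental form of level set of b} to assemble the full vanishing $\tracelessHess\snk^2(b/2)=0$ on $\{b\le r\}\setminus\{p\}$, and only then invokes the proof of Theorem~\ref{th: rigidity for the derivative of A - V}; you instead feed $B(\nu)=0$ directly into Lemma~\ref{lemma: nabla |nabla snk(b/2)|^2 in terms of hessian and gradient} to obtain the gradient vanishing of $v^2$, bypassing the reconstruction of the full traceless Hessian. Since the proof of Theorem~\ref{th: rigidity for the derivative of A - V} itself only uses $\tracelessHess\snk^2(b/2)(\nabla\snk(b/2),\cdot)=0$ (which is exactly $B(\nu)=0$ at regular points), your shortcut is legitimate and slightly more direct.
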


\begin{proof}
Since the integrands from the right-hand side of the expression for $A_\beta'(r) - 2(n - 2) V_\beta'(r)$ from  Theorem \ref{th: derivative of A beta - 2 (n - 2) V beta} are non-negative, our assumption on $\widetilde{\beta}$ implies that $B(\nu) = 0$ on $\{ b \leq r \} \setminus \{ p \}$ and $\mathring{\two} = 0$ on every regular level set of $b$ inside $\{b \leq r\} \setminus \{ p \}$.
However, by \eqref{eq: traceless second fundamental form of level set} from Lemma \ref{lemma: computations for the traceless second fundamental form of level set of b}, we know that
\[
\left\| \nabla \! \snk^2 \left( b/2 \right) \right\| \mathring{\two} = B_0 - \frac{B (\nu, \nu)}{n - 1} g_0.
\]
Together with the condition that $B(\nu) = 0$ and the continuity of $\tracelessHess{\snk^2 (b/2)}$ on $\{b \leq r\} \setminus \{ p \}$, the above implies that $B_0 = 0$ as well.
Thus, $B_0 = 0$ and $B(\nu) = 0$; so, recalling \eqref{eq: definition of B and B0}, we necessarily have $\tracelessHess{\snk^2 \left( b/2 \right)} = 0$ on $\{b \leq r\} \setminus \{ p \}$.
The desired rigidity now follows from the proof of Theorem \ref{th: rigidity for the derivative of A - V}.
\end{proof}

Finally, we have the extension of Corollary \ref{cllr: V functional is decreasing} to $V_\beta$.

\begin{corollary} \label{cllr: V beta is decreasing}
If $\widetilde{\beta} \geq 0$, that is, $\beta \geq (n - 2)/(n - 1)$, then $V_\beta$ is non-increasing on $(0, m)$, and $A_\beta \leq (n - 2)V_\beta$.
\end{corollary}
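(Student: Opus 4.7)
The plan is to mirror the argument used for Corollary \ref{cllr: V functional is decreasing}, combining the two monotonicity formulae for $A_\beta$ and $A_\beta - 2(n-2)V_\beta$ from Theorems \ref{th: derivative of A beta} and \ref{th: derivative of A beta - 2 (n - 2) V beta} with the differential relation \eqref{eq: differential relation between A beta and V beta}.

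First, I would observe that when $\widetilde{\beta} \geq 0$ (equivalently $\beta \geq (n-2)/(n-1)$, by \eqref{align: sign of the tilde beta function}), the right-hand side of the derivative formula for $A_\beta$ in Theorem \ref{th: derivative of A beta} is a sum of non-positive integrals (the Ricci curvature assumption makes $\ric(\nu,\nu) - (n-1)k \geq 0$, and the terms $\widetilde{\beta}\|B(\nu)\|^2$ and $(n-2)\|B(\nu)^\top\|^2$ are non-negative). Hence $A_\beta' \leq 0$ almost everywhere on $(0,m)$. Analogously, the right-hand side of the derivative formula in Theorem \ref{th: derivative of A beta - 2 (n - 2) V beta} has the same non-negative integrands, so $A_\beta' - 2(n-2)V_\beta' \geq 0$ almost everywhere on $(0,m)$.

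Combining these two inequalities yields $2(n-2)V_\beta' \leq A_\beta' \leq 0$ almost everywhere on $(0,m)$. Since $V_\beta$ is locally absolutely continuous on $(0,m)$, this proves that $V_\beta$ is non-increasing. Finally, to obtain $A_\beta \leq (n-2)V_\beta$, I invoke the differential relation \eqref{eq: differential relation between A beta and V beta}, namely
\[
V_\beta'(r) = \frac{1}{2\tnk(r/2)}\bigl(A_\beta(r) - (n-2)V_\beta(r)\bigr).
\]
Since $\tnk(r/2) > 0$ on $(0,m) \subset (0,\pi/\sqrt{k})$ and $V_\beta' \leq 0$ a.e., we conclude $A_\beta(r) - (n-2)V_\beta(r) \leq 0$ for almost every $r \in (0,m)$. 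Continuity of both $A_\beta$ and $V_\beta$ on $(0,m]$ then promotes this to a pointwise inequality on the whole interval, completing the proof.

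There is no substantial obstacle here: both main inputs (the explicit derivative formulae of Theorems \ref{th: derivative of A beta} and \ref{th: derivative of A beta - 2 (n - 2) V beta}, and the differential relation \eqref{eq: differential relation between A beta and V beta}) have already been established, and the sign condition on $\widetilde{\beta}$ has been translated into the parameter range $\beta \geq (n-2)/(n-1)$. The argument is essentially a formal manipulation of these three ingredients, exactly parallel to the unparametrized case $\beta = 2$ treated in Corollary \ref{cllr: V functional is decreasing}.
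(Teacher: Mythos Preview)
Your proposal is correct and follows essentially the same approach as the paper's proof: combine the non-increasing property of $A_\beta$ from Theorem \ref{th: derivative of A beta} with the non-negativity of $A_\beta' - 2(n-2)V_\beta'$ from Theorem \ref{th: derivative of A beta - 2 (n - 2) V beta} to obtain $V_\beta' \leq 0$, and then read off $A_\beta \leq (n-2)V_\beta$ from the differential relation \eqref{eq: differential relation between A beta and V beta}. Your write-up is slightly more explicit about the sign analysis of the integrands and about using continuity to pass from the almost-everywhere inequality to a pointwise one, but these are cosmetic differences.
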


\begin{proof}
If $\widetilde{\beta} \geq 0$, then $A_\beta$ is non-increasing on $(0, m)$ by Theorem \ref{th: derivative of A beta}.
In this case, by Theorem \ref{th: derivative of A beta - 2 (n - 2) V beta}, we also have that $A_\beta' - 2 (n - 2) V_\beta' \geq 0$ almost everywhere on $(0, m)$, and so $V_\beta' \leq 0$, \ie$ V_\beta$ is non-increasing on $(0, m)$.
The last part, that $A_\beta \leq (n - 2) V_\beta$, follows from the monotonicity of $V_\beta$ and \eqref{eq: differential relation between A beta and V beta}.
\end{proof}
\section{Applications} \label{sec: geometric applications}

In this last section, we provide a few applications of the sharp gradient estimate and monotonicity formulae from Sections \ref{sec: sharp gradient estimate} and \ref{sec: unparametrized monotonicity formulae}.
As in the previous sections, let $(M^n, g)$ be a closed, connected Riemannian manifold of dimension $n \geq 3$, with $\ric \geq (n - 1)kg$ for some $k > 0$, and let $G$ be Green's function with singularity at $p \in M$ for the operator $-\Delta + n(n - 2)k/4$.
Put $b := 2 \asnk \left( G^{1/(2 - n)}/2 \right)$ and denote by $m$ the maximal value of $b$.
Recall that if $(M, g)$ is not isometric to $\S^n_k$, then $m < \pi/\sqrt{k}$.

The first application we present is the computation of the final value of the $A$ function from \eqref{eq: A functional} in terms of $G$.
Since $m$ is the maximal value of $b$, if $m < \pi/\sqrt{k}$, then $b$ is smooth and $M \setminus \{ p \}$, and as $b$ vanishes only at $p$, the definition of $A$ implies that
\begin{equation} \label{eq: final value of A functional}
    A(m) = \frac{nk}{4} \int_M \left( 4 \left\| \nabla \! \snk \left( b/2 \right) \right\|^2 + k \snk^2 \left( b/2 \right) \right) G,
\end{equation}
as $\nabla b = 0$ on $\{b = m\}$; \eqref{eq: final value of A functional} holds in the model space $\S^n_k$ as well (\ie when $m = \pi/\sqrt{k}$), as in that case, $\{b = m\}$ consists of a single point and $\snk(b/2)$ is smooth away from $p$.
By \eqref{eq: A is smaller than I1}, we know that $A \leq \omega_{n - 1}$, and so $A(m) \leq \omega_{n - 1}$.
In the model space $\S^n_k$, $A \equiv \omega_{n - 1}$, and the rigidity part of Theorem \ref{th: sharp gradient estimate} implies that $A(m) = \omega_{n - 1}$ if and only if $(M, g)$ is isometric to $\S^n_k$.

For open manifolds with non-negative Ricci curvature, which corresponds to the limit case $k \downarrow 0$, {\cite[Theorem 2.12]{ColdingNewMonotonicityFormulas}} shows that $\widetilde{A}(r)$ (from \eqref{intro-eq: definitions of A and V functional for nonnegative ricci}) approaches a limit value for $r \to \infty$, which is a power of the asymptotic volume ratio of the manifold (which is well-defined by the Bishop-Gromov relative volume comparison theorem).

We now compute $A(m)$ in terms of an integral of a power of $G$; we will see that a result similar to the one mentioned in the previous paragraph holds in dimension four for closed manifolds with positive Ricci curvature.

\begin{proposition} \label{prop: final value of A in terms of an integral of a power of G}
We have
\begin{equation} \label{eq: final value of A as an integral of a power of G}
    A(m) = \frac{n(n + 2)k^2}{32} \int_M G^{\frac{4 - n}{2 - n}}.
\end{equation}
\end{proposition}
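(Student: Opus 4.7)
The plan is to start from equation \eqref{eq: final value of A functional} and collapse the integrand $4\|\nabla\snk(b/2)\|^2 + k \snk^2(b/2)$ to a constant multiple of $\snk^2(b/2)$ by an integration by parts against $G$. The key observation is that, by the definition of $b$, we have $\snk^2(b/2) \cdot G = \tfrac{1}{4} G^{(4-n)/(2-n)}$, so this directly produces the right-hand side of \eqref{eq: final value of A as an integral of a power of G} and reduces the theorem to a single explicit constant.

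Concretely, Lemma \ref{lemma: laplace of snk squared (b/2)} can be rewritten as
$$4 \|\nabla \snk(b/2)\|^2 + k \snk^2(b/2) = \frac{2}{n} \Delta \snk^2(b/2) + 2k \snk^2(b/2).$$
Multiplying by $G$ and integrating, the claim reduces to the identity
$$\int_M G \, \Delta \snk^2(b/2) = \frac{n(n-2)k}{4} \int_M \snk^2(b/2) \, G,$$
which is the distributional statement expected from $\Delta G = \tfrac{n(n-2)k}{4} G - (n-2)\omega_{n-1} \delta_p$ paired against $\snk^2(b/2)$, using that $\snk^2(b/2)$ vanishes at $p$. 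Rigorously I would integrate by parts on $M \setminus B(p, \varepsilon)$ to obtain
$$\int_{M \setminus B(p, \varepsilon)} \! G \, \Delta \snk^2(b/2) = \int_{M \setminus B(p, \varepsilon)} \!\! \snk^2(b/2) \, \Delta G \;-\; \int_{\partial B(p, \varepsilon)} \!\! \bigl( G \, \partial_r \snk^2(b/2) - \snk^2(b/2) \, \partial_r G \bigr),$$
where $\partial_r := \langle \nabla \dist(p, \cdot), \nabla \cdot \rangle$, and then use $\Delta G = \tfrac{n(n-2)k}{4} G$ on $M \setminus \{p\}$ and pass $\varepsilon \downarrow 0$.

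The main technical step is showing that both boundary integrals at $p$ vanish in the limit. By the sharp asymptotics from \eqref{eq: asymptotics of G near its pole} and Lemma \ref{lemma: asymptotics of b near the pole} (together with $\nabla \snk^2(b/2) = \tfrac{1}{2(2-n)} G^{n/(2-n)} \nabla G$ from the chain rule), one has $G = O(\dist(p, \cdot)^{2-n})$, $\|\nabla G\| = O(\dist(p, \cdot)^{1-n})$, $\snk^2(b/2) = O(\dist(p, \cdot)^{2})$, and $\|\nabla \snk^2(b/2)\| = O(\dist(p, \cdot))$. Both $G \, \partial_r \snk^2(b/2)$ and $\snk^2(b/2) \, \partial_r G$ are therefore $O(\dist(p, \cdot)^{3-n})$, while $\vol(\partial B(p, \varepsilon)) = O(\varepsilon^{n-1})$, so each boundary integral is $O(\varepsilon^2) \to 0$.

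Once this is established, everything else is algebra: combining the two identities above gives
$$\int_M \bigl(4\|\nabla \snk(b/2)\|^2 + k \snk^2(b/2)\bigr) G = \frac{(n+2)k}{2} \int_M \snk^2(b/2) \, G = \frac{(n+2)k}{8} \int_M G^{(4-n)/(2-n)},$$
and multiplying by $nk/4$ per \eqref{eq: final value of A functional} yields the claimed formula $A(m) = \tfrac{n(n+2)k^2}{32} \int_M G^{(4-n)/(2-n)}$.
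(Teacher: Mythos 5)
Your argument is correct, and it is a genuinely (if mildly) different route from the paper's. The paper first rewrites the integrand of \eqref{eq: final value of A functional} purely in terms of $G$ via \eqref{eq: function to integral for final value of A}, then expresses it as $c_1\,\Delta G^{\frac{4-n}{2-n}} + c_2\, G^{\frac{4-n}{2-n}}$ (with $\Delta \log G$ replacing the first term when $n=4$) and kills the exact Laplacian term by the boundary analysis at $p$; this forces a case split between $n = 4$ and $n \neq 4$ because the power $\frac{4-n}{2-n}$ degenerates. You instead use Lemma \ref{lemma: laplace of snk squared (b/2)} to write the integrand as $\frac{2}{n}\Delta\snk^2(b/2) + 2k\snk^2(b/2)$ and transfer the Laplacian onto $G$ by the symmetry of the Dirichlet pairing, invoking $\Delta G = \frac{n(n-2)k}{4}G$ away from $p$ and the vanishing of $\snk^2(b/2)$ at $p$ to discard the distributional contribution. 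Since $\snk^2(b/2)\,G = \frac14 G^{\frac{4-n}{2-n}}$, this lands on the same answer uniformly in $n$, which is a small gain in cleanliness. The essential analytic content --- the $O(\varepsilon^{3-n})$ decay of the boundary integrands against the $O(\varepsilon^{n-1})$ area of $\partial B(p,\varepsilon)$, which you justify correctly from \eqref{eq: asymptotics of G near its pole} and Lemma \ref{lemma: asymptotics of b near the pole} --- is the same in both proofs. One minor point worth making explicit: your integration by parts takes place on $M\setminus B(p,\varepsilon)$ where both $G$ and $\snk^2(b/2) = \frac14 G^{2/(2-n)}$ are smooth (even in the rigid case where $b$ itself fails to be smooth at the point antipodal to $p$), so the manipulation is legitimate.
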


\begin{proof}
To prove the desired conclusion, we rewrite the integrand from the right-hand side of \eqref{eq: final value of A functional} in terms only of $G$.
Recall that $G = (2 \snk(b/2))^{2 - n}$, and so
\begin{equation} \label{eq: function to integral for final value of A}
    \left( 4 \left\| \nabla \! \snk \left( b/2 \right) \right\|^2 + k \snk^2 \left( b/2 \right) \right) G = \frac{1}{(n - 2)^2} G^{\frac{n}{2 - n}} \|\nabla G\|^2 + \frac{k}{4} G^{\frac{4 - n}{2 - n}}.
\end{equation}

Denote by $w$ the above right-hand side.
We distinguish between the cases $n = 4$ and $n \neq 4$.
Firstly, suppose that $n = 4$; we can then rewrite \eqref{eq: function to integral for final value of A} as
\begin{equation} \label{eq: final value of A functional in dimension 4}
    w = \frac{1}{4} \frac{\|\nabla G\|^2}{G^2} + \frac{k}{4}.
\end{equation}
On $M \setminus \{ p \}$, we may then compute
\[
\Delta \log G = \div \! \left( \nabla \log G \right) = \frac{\Delta G}{G} - \frac{\|\nabla G\|^2}{G^2} = - \frac{\|\nabla G\|^2}{G^2} + 2k,
\]
where the last equality follows because $\Delta G = 2k G$ on $M \setminus \{ p \}$.
Combining this with \eqref{eq: final value of A functional in dimension 4}, it follows that $w = (- \Delta \log G + 3k)/4$, and so, by \eqref{eq: final value of A functional},
\[
A(m) = k \int_M w = \frac{3k^2}{4} \vol(M) - \frac{k}{4} \int_M \Delta \log G.
\]
An integration by parts argument allows us to compute
\begin{equation} \label{eq: integral of laplace of log of G}
    \int_M \Delta \log G = \lim_{\varepsilon \downarrow 0} \int_{M \setminus B(p, \varepsilon)} \Delta \log G = - \lim_{\varepsilon \downarrow 0} \int_{\partial B(p, \varepsilon)} \left\langle \nabla \log G, \nabla \mathrm{dist}(p, \cdot) \right\rangle = 0,
\end{equation}
where the last equality is implied by the asymptotics of $G$ near its singularity (cf.~\eqref{eq: asymptotics of G near its pole} and Lemma \ref{lemma: asymptotics of b near the pole}).
We thus obtain that $A(m) = 3k^2 \vol(M)/4$, and this agrees with \eqref{eq: final value of A as an integral of a power of G} for $n = 4$.

Secondly, suppose that $n \neq 4$.
On $M \setminus \{ p \}$, we may then compute
\[
\Delta G^{\frac{4 - n}{2 - n}} \equiv \div \! \left( \nabla G^{\frac{4 - n}{2 - n}} \right) = - \frac{(4 - n)nk}{4} G^{\frac{4 - n}{2 - n}} + \frac{2(4 - n)}{(n - 2)^2} G^{\frac{n}{2 - n}} \|\nabla G\|^2.
\]
Combining this equation with \eqref{eq: function to integral for final value of A}, it follows that
\[
w = \frac{1}{2(4 - n)} \Delta G^{\frac{4 - n}{2 - n}} + \frac{(n + 2)k}{8} G^{\frac{4 - n}{2 - n}}.
\]
With this, we thus obtain
\[
A(m) = \frac{nk}{4} \int_M w = \frac{nk}{8(4 - n)} \int_M \Delta G^{\frac{4 - n}{2 - n}} + \frac{n(n + 2)k^2}{32} \int_M G^{\frac{4 - n}{2 - n}}.
\]
We may now proceed as in \eqref{eq: integral of laplace of log of G} to get that
\[
\int_M \Delta G^{\frac{4 - n}{2 - n}} = \lim_{\varepsilon \downarrow 0} \left( - \frac{4 - n}{2 - n} \int_{\partial B(p, \varepsilon)} G^{\frac{2}{2 - n}} \langle \nabla G, \nabla \dist(p, \cdot) \rangle \right),
\]
and, by \eqref{eq: asymptotics of G near its pole} and Lemma \ref{lemma: asymptotics of b near the pole}, the integrand from the above right-hand side is, up to a constant, bounded above by $\dist(p, \cdot)^{3 - n}$, so the limit is zero.
This then implies the desired conclusion.
\end{proof}

Now, since $\ric \geq (n - 1)kg$, Bishop's volume comparison theorem tells us that $\vol(M) \leq \vol(\S^n_k)$, with equality holding if and only if $(M, g)$ is isometric to $\S^n_k$.
However, we can use Proposition \ref{eq: final value of A as an integral of a power of G} to deduce Bishop's theorem for four-manifolds.

\begin{corollary} \label{prop: final value of A in dimension 4}
If $n = 4$, then $A(m) = \omega_3 \vol(M)/\vol(\S^4_k)$.
\end{corollary}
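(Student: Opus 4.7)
The plan is to combine Proposition \ref{prop: final value of A in terms of an integral of a power of G}, specialized to $n = 4$, with an elementary computation of $\vol(\S^4_k)$. First I would substitute $n = 4$ into the formula $A(m) = \frac{n(n+2)k^2}{32}\int_M G^{(4-n)/(2-n)}$. Since the exponent $(4-n)/(2-n)$ vanishes, the integrand reduces to $1$, and the prefactor simplifies to $\tfrac{4\cdot 6\cdot k^2}{32} = \tfrac{3k^2}{4}$, yielding
\[
A(m) = \frac{3k^2}{4}\,\vol(M).
\]
This is, of course, also what the $n=4$ branch of the proof of Proposition \ref{prop: final value of A in terms of an integral of a power of G} produced directly, so the step is essentially a consistency check.

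Next, I would compute $\vol(\S^4_k)$ and express the claimed right-hand side in the same form. Since $\S^4_k$ is a round $4$-sphere of radius $1/\sqrt{k}$, one has $\vol(\S^4_k) = \omega_4/k^2 = 8\pi^2/(3k^2)$ from $\omega_4 = 8\pi^2/3$. Combined with $\omega_3 = 2\pi^2$, this gives
\[
\frac{\omega_3\,\vol(M)}{\vol(\S^4_k)} \;=\; \frac{\omega_3 k^2}{\omega_4}\,\vol(M) \;=\; \frac{3k^2}{4}\,\vol(M),
\]
which matches the previous display and closes the argument.

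There is no genuine obstacle: the only substantive point is the dimensional coincidence that $(4-n)/(2-n) = 0$ precisely at $n = 4$, which eliminates all dependence on $G$ in Proposition \ref{prop: final value of A in terms of an integral of a power of G} and reduces the statement to the arithmetic identity $\omega_3/\omega_4 = 3/4$. As a coda justifying the preamble to the corollary, combining it with the bound $A(m) \leq \lim_{r \downarrow 0} A(r) = \omega_3$ (valid by monotonicity of $A$, cf.~Theorem \ref{th: A functional is decreasing}, together with \eqref{eq: value of A(0)}) immediately yields Bishop's volume comparison $\vol(M) \leq \vol(\S^4_k)$ in dimension four, while the rigidity part of Theorem \ref{th: sharp gradient estimate} furnishes the equality case.
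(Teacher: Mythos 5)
Your proof is correct and follows essentially the same route as the paper: specialize Proposition \ref{prop: final value of A in terms of an integral of a power of G} to $n=4$ to get $A(m) = \tfrac{3k^2}{4}\vol(M)$, then identify the constant with $\omega_3/\vol(\S^4_k)$. The only (immaterial) difference is that you evaluate $\vol(\S^4_k)$ via $\omega_4 = 8\pi^2/3$ and scaling, whereas the paper computes it as $\omega_3 \int_0^{\pi/\sqrt{k}} \snk^3(r)\dd r = \tfrac{4}{3k^2}\omega_3$.
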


\begin{proof}
Since $n = 4$, by Proposition \ref{prop: final value of A in terms of an integral of a power of G}, we have $A(m) = 3k^2 \vol(M)/4$.
The desired conclusion now follows from this and the identity
\[
\vol(\S^4_k) = \omega_3 \int_0^{\frac{\pi}{\sqrt{k}}} \snk^3(r) \dd r = \frac{4}{3k^2} \omega_3.
\]
\end{proof}

Thus, if $n = 4$, since $A \leq \omega_3$ (cf.~\eqref{eq: A is smaller than I1}) and $A(m) = \omega_3 \vol(M, g)/\vol(\S^4_k)$ by the above corollary, it follows, in particular, that $\vol(M, g) \leq \vol(\S^4_k)$, and equality holds if and only if we are in the the rigidity case of our sharp gradient estimate for $b$ (cf.~Theorem \ref{th: sharp gradient estimate}), that is, if and only if $(M, g)$ is isometric to $\S^4_k$.
Thus, Theorem \ref{th: sharp gradient estimate} and Proposition \ref{prop: final value of A in dimension 4} give a new proof of Bishop's volume comparison theorem for closed four-manifolds with positive Ricci curvature.
A natural question is whether a similar identity holds for $A(m)$ in all dimensions.

The rest of the applications in this section are related to volume inequalities for the (sub)level sets of $b$; we will see that we can also obtain a volume lower bound for $M$ in terms of $G$ (cf.~\eqref{eq: volume lower bound under ricci curvature lower bound}).
Let us first note the following remark, which follows by testing the equation $(-\Delta + n(n - 2)k/4)G = (n - 2) \omega_{n - 1} \delta_p$ against a non-zero constant function:
\begin{equation} \label{eq: integral of G over all of M}
    \frac{nk}{4} \int_M G = \omega_{n - 1}.
\end{equation}
This is in contrast to the limit case $k \downarrow 0$, where Green's function for the Laplace operator on a non-parabolic, open manifold with non-negative Ricci curvature is not integrable.
We will use \eqref{eq: integral of G over all of M} in the rest of this section.

The following lower bounds for the area and volume of the level and sublevel sets of $b$, respectively, are an immediate consequence of Theorem \ref{th: sharp gradient estimate}; in the limit $k \downarrow 0$, we recover the identities from {\cite[Corollary 3.4]{ColdingNewMonotonicityFormulas}}.

\begin{proposition} \label{prop: area and volume lower bounds for level and sublevel sets of b}
For every $r \in (0, m]$, we have
\begin{equation} \label{eq: area lower bound for level sets of b}
    \vol(b = r) \geq \left( 2 \snk \left( r/2 \right) \right)^{n - 1} \csk^{-1} \left( r/2 \right) \left( \omega_{n - 1} - \frac{nk}{4} \int_{b \leq r} G \right),
\end{equation}
and
\begin{equation} \label{eq: volume lower bound for sublevel sets of b}
    \vol(b \leq r) \geq \int_0^r \left( \left( 2 \snk \left( t/2 \right) \right)^{n - 1} \csk^{-1} \left( t/2 \right) \left( \omega_{n - 1} - \frac{nk}{4} \int_{b \leq t} G \right) \right) \dd t
\end{equation}
\end{proposition}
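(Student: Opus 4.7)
The plan is to derive both inequalities as fairly direct consequences of the sharp gradient estimate $\|\nabla b\| \leq 1$ from Corollary \ref{cllr: sharp gradient estimate}, combined with the identity \eqref{eq: I1 is constant} established by showing $I_1 \equiv \omega_{n-1}$ in Section \ref{sec: unparametrized monotonicity formulae}. Rearranging \eqref{eq: I1 is constant} gives the exact identity
\[
\int_{b = r} \|\nabla b\| = \left( 2 \snk \left( r/2 \right) \right)^{n - 1} \csk^{-1} \left( r/2 \right) \left( \omega_{n - 1} - \frac{nk}{4} \int_{b \leq r} G \right),
\]
valid for every $r \in (0, m]$ (with the level set term vanishing at $r = m$ in the model case since $\{b = m\}$ has measure zero there).

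First, I would prove \eqref{eq: area lower bound for level sets of b}. Since $\|\nabla b\| \leq 1$ a.e. on $M$, integration over the regular level set $\{b = r\}$ gives
\[
\vol(b = r) \geq \int_{b = r} \|\nabla b\|,
\]
and substituting the identity above yields \eqref{eq: area lower bound for level sets of b} for every regular value $r$. For non-regular values $r$, the inequality follows by approximating with regular values from above or below, using continuity of $r \mapsto \int_{b \leq r} G$ and lower semicontinuity of the level-set area (or, equivalently, by noting that $\{b = r\}$ has zero measure for non-regular values outside of a measure-zero subset, so the claim is trivial there).

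Next, for \eqref{eq: volume lower bound for sublevel sets of b}, I would apply the coarea formula to write
\[
\vol(b \leq r) = \int_0^r \int_{b = t} \frac{1}{\|\nabla b\|} \dd t.
\]
Since $\|\nabla b\| \leq 1$ almost everywhere (away from the critical points, which form a measure-zero set by Sard's theorem), we have $\|\nabla b\|^{-1} \geq \|\nabla b\|$ pointwise a.e., hence
\[
\vol(b \leq r) \geq \int_0^r \int_{b = t} \|\nabla b\| \dd t,
\]
and substituting the exact formula for $\int_{b = t} \|\nabla b\|$ from \eqref{eq: I1 is constant} produces \eqref{eq: volume lower bound for sublevel sets of b}.

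The only mild technical point — rather than a genuine obstacle — is the justification that the coarea formula and the identity $I_1 \equiv \omega_{n-1}$ are valid up to the endpoint $r = m$, including the case $(M, g) = \S^n_k$ where $b = \mathrm{dist}(p, \cdot)$ fails to be smooth at the antipode of $p$. This is handled by the observation that $\snk(b/2) = G^{1/(2 - n)}/2$ is smooth on $M \setminus \{p\}$ regardless, so both sides of the relevant identities are continuous in $r$ on $(0, m]$, and the desired inequalities extend by continuity from regular values to all $r \in (0, m]$.
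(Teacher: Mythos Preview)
Your proposal is correct and follows essentially the same approach as the paper: both use the identity $I_1 \equiv \omega_{n-1}$ from \eqref{eq: I1 is constant} together with the gradient bound $\|\nabla b\| \leq 1$ from Corollary \ref{cllr: sharp gradient estimate}, and then the coarea formula for the volume bound. The only cosmetic difference is that for \eqref{eq: volume lower bound for sublevel sets of b} the paper uses $\|\nabla b\|^{-1} \geq 1$ to get $\vol(b \leq r) \geq \int_0^r \vol(b = t)\,\dd t$ and then invokes the already-proven area bound, whereas you use $\|\nabla b\|^{-1} \geq \|\nabla b\|$ and substitute directly; also, your regular/non-regular value discussion is unnecessary since \eqref{eq: I1 is constant} and the pointwise inequality $\|\nabla b\| \leq 1$ already hold for all $r \in (0, m]$.
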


In the statement of Proposition \ref{prop: area and volume lower bounds for level and sublevel sets of b}, the left-hand side of the inequality \eqref{eq: area lower bound for level sets of b} denotes the $(n - 1)$-dimensional Hausdorff measure of the level set $\{b = r\}$ (which, when $r$ is a regular value of $b$, is equal to the Riemannian volume in the induced Riemannian metric), and the left-hand side of \eqref{eq: volume lower bound for sublevel sets of b} denotes the $n$-dimensional Hausdorff measure (\ie Riemannian volume) of the sublevel set $\{b \leq r\}$.
We will continue to use these notations in the rest of this section.

\begin{proof}[Proof of Proposition \ref{prop: area and volume lower bounds for level and sublevel sets of b}]
Recall that, by \eqref{eq: I1 is constant}, for every $r \in (0, m]$, we have
\[
\left( 2 \snk \left( r/2 \right) \right)^{1 - n} \csk \left( r/2 \right) \int_{b = r} \|\nabla b\| + \frac{nk}{4} \int_{b \leq r} G = \omega_{n - 1}.
\]
By Corollary \ref{cllr: sharp gradient estimate}, we know that $\|\nabla b\| \leq 1$ at the points of $M$ where $b$ is smooth; it follows that for all $r \in (0, m]$,
\[
\left( 2 \snk \left( r/2 \right) \right)^{1 - n} \csk \left( r/2 \right) \vol(b = r) + \frac{nk}{4} \int_{b \leq r} G \geq \omega_{n - 1}.
\]
This immediately implies \eqref{eq: area lower bound for level sets of b}.
Inequality \eqref{eq: volume lower bound for sublevel sets of b} follows directly from \eqref{eq: area lower bound for level sets of b}, as the co-area formula and Corollary \ref{cllr: sharp gradient estimate} tell us that for every $r \in (0, m]$, we have $\vol(b \leq r) = \int_0^r \left( \int_{b = t} \|\nabla b\|^{-1} \right) \dd t \geq \int_0^r \vol(b = t) \dd t$.
\end{proof}

Note that, by \eqref{eq: integral of G over all of M} and because $G$ cannot be constant in an annulus of the form $\{r_1 \leq b \leq r_2\}$ for $r_1, r_2 \in (0, m]$, the right-hand side of \eqref{eq: volume lower bound for sublevel sets of b} is positive for all $r \in (0, m]$.
A direct implication of the previous proposition is the following volume lower bound for $M$, obtained by using \eqref{eq: volume lower bound for sublevel sets of b} for $r = m$:
\begin{equation} \label{eq: volume lower bound under ricci curvature lower bound}
    \vol(M) \geq \int_0^m \left( \left( 2 \snk \left( r/2 \right) \right)^{n - 1} \csk^{-1} \left( r/2 \right) \left( \omega_{n - 1} - \frac{nk}{4} \int_{b \leq r} G \right) \right) \dd t. 
\end{equation}
This is in contrast to the case of open manifolds with non-negative Ricci curvature from {\cite[Section 5]{LiYauParabolicKernel}}, where Green's function itself is bounded in terms of the volumes of balls of the manifold.

\begin{remark}[No ``spherical'' area lower bound for the level sets of $b$]
Unlike in the case of non-negative Ricci curvature from {\cite[Corollary 3.4]{ColdingNewMonotonicityFormulas}}, where the right-hand sides of the analogues of \eqref{eq: area lower bound for level sets of b} and \eqref{eq: volume lower bound for sublevel sets of b} are just the volumes of the corresponding Euclidean sphere and ball, a ``spherical'' area estimate of the form
\begin{equation} \label{eq: desired area estimate cannot hold for all level sets}
    \vol(b = r) \geq \omega_{n - 1} \snk^{n - 1}(r)
\end{equation}
cannot hold in our case for almost every $r \in (0, m)$ unless $(M, g)$ is isometric to $\S^n_k$.
Indeed, since $m > 0$ is the maximal value of $b$ and $b$ is smooth on $M \setminus \{ p \}$ (if $(M, g)$ is not isometric to $\S^n_k$), with $b(p) = 0$, we necessarily have $\nabla b = 0$ on $\{b = m\}$.
By Proposition \ref{appendix-prop: integration over the regular part of the level sets of b} from Appendix \ref{appendix-sec: integration over the level sets of b}, the function $r \mapsto \vol( \{b = r\} \setminus \{\nabla b = 0\})$ is continuous.
Since the regular values of $b$ form an open and dense subset of $(0, m]$ (cf.~Lemma \ref{appendix-lemma: regular values of b}), there is an increasing sequence $(r_i)_{i \in \N}$ of regular values of $b$ with $r_i \uparrow m$ as $i \to \infty$.
Along this sequence, we have 
\[
\vol(\{ b = r_i \}) = \vol(\{ b = r_i\} \setminus \{ \nabla b = 0\}) \to \vol(\{ b = r \} \setminus \{ \nabla b = 0 \}) \leq \vol(b = r)
\]
If \eqref{eq: desired area estimate cannot hold for all level sets} were to hold for almost every $r$ arbitrarily close to $m$, then we would get that $\vol(b = m)$ is bounded below by the positive number $\omega_{n - 1} \snk^{n - 1}(m)$.
This is a contradiction to {\cite[Theorem 1.7]{HardtSimonNodalSets}}, which tells us that the set $\{b = m\} \cap \{\nabla b = 0\} = \{ b = m \}$ must have Hausdorff dimension at most $n - 2$.
\end{remark}

We also have the following lower bound for the integral of $G$ over the sublevel sets of $b$, which appears to inherently be a consequence of the compactness of $M$; it is also a comparison to the model space (see \eqref{align: integral of G over sublevel set of b in model space} from Example \ref{e.g.: A functional on model space}).
It does not seem to have a direct analogue in the limit $k \downarrow 0$, where one could a priori only prove that the integral of $G$ over $\{b \leq r\}$ grows at least quadratically in $r$ (by {\cite[Corollary 3.4]{ColdingMinicozziMonotonicityFormulas}}).

\begin{proposition} \label{prop: inequality for the integral of G over sublevel sets of b}
For every $r \in [0, m]$, we have
\[
\frac{nk}{4} \int_{b \leq r} G \geq \omega_{n - 1} \left(1 - \csk^n \left( r/2 \right) \right)
\]
\end{proposition}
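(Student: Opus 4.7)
The plan is to reduce the claim to an area-type inequality for the level sets of $b$, and then establish the latter via a Bishop--Gromov-style monotonicity argument. Setting $S(r) := \int_{b = r}\|\nabla b\|$, one observes from \eqref{eq: I1 is constant} together with $\snk^{n-1}(r) = (2\snk(r/2))^{n-1}\csk^{n-1}(r/2)$ that the desired inequality is equivalent to
\[
S(r) \leq \omega_{n-1}\snk^{n-1}(r) \quad \text{for every } r \in [0, m].
\]
To prove this area-type bound, I will show that $\mathcal{S}(r) := S(r)/\snk^{n-1}(r)$ is non-increasing on $(0, m)$ with $\lim_{r \downarrow 0}\mathcal{S}(r) = \omega_{n-1}$.

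The key ingredient is an explicit formula for $\Delta b$, obtained by differentiating $G = (2\snk(b/2))^{2-n}$ and using $\Delta G = n(n-2)kG/4$ on $M \setminus \{p\}$; a direct computation yields
\[
\Delta b = (n-1)\ctk(b)\|\nabla b\|^2 - \frac{nk}{2}\tnk(b/2)\bigl(1 - \|\nabla b\|^2\bigr),
\]
which, in passing, recovers the Laplace comparison $\Delta b \leq (n-1)\ctk(b)$ via Corollary \ref{cllr: sharp gradient estimate}. At any regular value $r$ of $b$, the divergence theorem applied on $\{\varepsilon \leq \dist(p, \cdot)\} \cap \{b \leq r\}$, combined with Lemma \ref{lemma: asymptotics of b near the pole} to send $\varepsilon \downarrow 0$, gives $S(r) = \int_{b \leq r}\Delta b$. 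Then the coarea formula yields $S'(r) = \int_{b = r}\Delta b/\|\nabla b\|$; since $\ctk(b)$ and $\tnk(b/2)$ are constant on $\{b = r\}$, substituting the formula for $\Delta b$ gives, after a short manipulation using $(n-1)\ctk(r) = \tfrac{(n-1)\csk(r/2)}{2\snk(r/2)} - \tfrac{(n-1)k}{2}\tnk(r/2)$,
\[
\mathcal{S}'(r) = \frac{S'(r) - (n-1)\ctk(r)S(r)}{\snk^{n-1}(r)} = -\frac{nk\,\tnk(r/2)}{2\snk^{n-1}(r)}\int_{b = r}\frac{1 - \|\nabla b\|^2}{\|\nabla b\|}.
\]
By Corollary \ref{cllr: sharp gradient estimate}, $\|\nabla b\| \leq 1$, and $\tnk(r/2) \geq 0$ on $(0, \pi/\sqrt{k}) \supseteq (0, m)$, so $\mathcal{S}'(r) \leq 0$.

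Finally, to evaluate $\lim_{r \downarrow 0}\mathcal{S}(r)$, note that by \eqref{eq: I1 is constant}, $(2\snk(r/2))^{1-n}\csk(r/2)S(r) = \omega_{n-1} - \tfrac{nk}{4}\int_{b \leq r}G \to \omega_{n-1}$ as $r \downarrow 0$, since $G$ is integrable on $M$. Dividing by $\csk^n(r/2)\to 1$ identifies this ratio with $\mathcal{S}(r)$, so $\mathcal{S}(r) \to \omega_{n-1}$ at $0$. Combined with monotonicity, this gives $\mathcal{S}(r) \leq \omega_{n-1}$, i.e. the area bound $S(r) \leq \omega_{n-1}\snk^{n-1}(r)$, and the proposition follows from \eqref{eq: I1 is constant}. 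The main technical point is to justify the divergence-theorem and coarea manipulations at the pole: this is handled by integrability of $\Delta b$ near $p$ (it behaves like $(n-1)/\dist(p,\cdot)$, integrable for $n \geq 3$) and the asymptotics from Lemma \ref{lemma: asymptotics of b near the pole}, which ensure the boundary contribution on $\partial B(p, \varepsilon)$ vanishes as $\varepsilon \downarrow 0$.
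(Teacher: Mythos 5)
Your proof is correct, but it takes a genuinely different route from the paper. The paper starts from the identity $I_1 \equiv \omega_{n-1}$ (\eqref{eq: I1 is constant}), uses $\|\nabla b\| \leq 1$ to replace $\int_{b=r}\|\nabla b\|$ by $\int_{b=r}\|\nabla b\|^{-1}$, and then closes the resulting integral inequality for $J(r) = \tfrac{nk}{4}\int_{b\leq r}G$ with a Gr\"onwall argument (integrating factor $\csk^{-n}(r/2)$). You instead observe that, via the same identity \eqref{eq: I1 is constant}, the proposition is \emph{equivalent} to the area bound $\int_{b=r}\|\nabla b\| \leq \omega_{n-1}\snk^{n-1}(r)$ --- which in the paper is the final corollary of Section \ref{sec: geometric applications}, deduced \emph{from} the proposition --- and you prove that bound directly by showing $r \mapsto \snk^{1-n}(r)\int_{b=r}\|\nabla b\|$ is non-increasing. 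Your formula for $\Delta b$ checks out (it agrees with $-\tfrac{nk}{2}\tnk(b/2) + \tfrac{n-1}{2}\ctk(b/2)\|\nabla b\|^2 + \tfrac{k}{2}\tnk(b/2)\|\nabla b\|^2$ after the half-angle identity), and the computation of $\mathcal{S}'$ is right. What your approach buys is a strictly stronger intermediate statement --- a Bishop--Gromov-type monotonicity for the normalized weighted areas of the level sets of $b$, with the deficit controlled by $\int_{b=r}(1-\|\nabla b\|^2)/\|\nabla b\|$ --- and it reverses the logical order of the proposition and its corollary; what the paper's Gr\"onwall route buys is that it never needs $\Delta b$ or the divergence-theorem bookkeeping at the pole. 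Two minor points: to pass from $\mathcal{S}' \leq 0$ at regular values to monotonicity you should cite the local absolute continuity of $r \mapsto \int_{b=r}\|\nabla b\|$ (Proposition \ref{appendix-prop: local absolute continuity of integral along level sets of b}), and your parenthetical claim that the formula ``recovers'' $\Delta b \leq (n-1)\ctk(b)$ from $\|\nabla b\| \leq 1$ is only valid where $\ctk(b) \geq 0$, i.e.\ for $b \leq \pi/(2\sqrt{k})$ --- but that aside plays no role in the argument.
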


\begin{proof}
Using the coarea formula in \eqref{eq: I1 is constant}, we get that for every $r \in (0, m]$,
\[
\left( 2 \snk \left( r/2 \right) \right)^{1 - n} \csk \left( r/2 \right) \int_{b = r} \|\nabla b\| + \frac{nk}{4} \int_0^r \left( 2 \snk \left( t/2 \right) \right)^{2 - n} \left( \int_{b = t} \frac{1}{\|\nabla b\|} \right) \dd t = \omega_{n - 1}.
\]
By Corollary \ref{cllr: sharp gradient estimate}, we know that $\|\nabla b\| \leq 1$, so the above implies that
\begin{equation} \label{eq: geometric application for integral of G over sublevel sets of b}
    \left( 2 \snk \left( r/2 \right) \right)^{1 - n} \csk \left( r/2 \right) \int_{b = r} \frac{1}{\|\nabla b\|} + \int_0^r \frac{nk}{4} \left( 2 \snk \left( t/2 \right) \right)^{2 - n} \left( \int_{b = t} \frac{1}{\|\nabla b\|} \right) \dd t \geq \omega_{n - 1}.
\end{equation}
for almost every $r \in (0, m)$ (at the regular values of $b$, which, as discussed before, form an open and full measure subset of $(0, m]$).

We now continue with the same idea as in the proof of Gr\"{o}nwall's inequality.
For $r \in (0, m]$, the function $f(r) := \int_{b = r} \|\nabla b\|^{-1}$ is measurable and integrable, since, by the coarea formula, $\int_0^m f(t) \dd t = \vol(M)$.
To simplify the notation, define, for $r \in (0, m]$, $\alpha(r) := \left( 2 \snk \left( r/2 \right) \right)^{n - 1} \csk^{-1} \left( r/2 \right)$, $\beta(r) := nk \left( 2 \snk \left( r/2 \right) \right)^{2 - n}/4$, and $\gamma(r) := \omega_{n - 1} \left( 2 \snk \left( r/2 \right) \right)^{n - 1} \csk^{-1} \left( r/2 \right)$.
Note that $\alpha, \beta, \gamma$ are smooth and positive functions.
By \eqref{eq: geometric application for integral of G over sublevel sets of b}, it follows that for almost every $r \in [0, m]$, we have
\begin{equation} \label{eq: geometric application for integral of G over sublevel sets of b part 2}
    f(r) + \alpha(r) \int_0^r f(t) \beta(t) \dd t \geq \gamma(t)
\end{equation}
Define $J : [0, m] \to [0, \infty)$, $J(r) := \int_0^r f(t) \beta(t) \dd t$.
By the previously stated properties of $f$, $J$ is absolutely continuous on $[0, m]$, with $J'(r) = f(r) \beta(r)$ for almost every $r \in [0, m]$; moreover, $J(0) = 0$.
In terms of $J$, \eqref{eq: geometric application for integral of G over sublevel sets of b part 2} tells us that for almost every $r \in [0, m]$, we have
\begin{equation} \label{eq: geometric application for integral of G over sublevel sets of b part 3}
    J'(r) + \alpha(r) \beta(r) J(r) \geq \gamma(r) \beta(r).
\end{equation}
Observe that $\alpha(r) \beta(r) = nk \tnk \left( r/2 \right) / 2$ and $\gamma(r) \beta(r) = nk \omega_{n - 1} \tnk \left( r/2 \right)/2$.
So, if we multiply \eqref{eq: geometric application for integral of G over sublevel sets of b part 3} by the integrating factor $\csk^{-n}(r/2)$, the inequality may be written as
\[
\frac{d}{dr} \left( \csk^{-n} \left( r/2 \right) J(r) \right) \geq \frac{nk}{2} \omega_{n - 1} \snk \left( r/2 \right) \csk^{-n - 1} \left( r/2 \right)
\]
for almost every $r \in [0, m]$.
Integrating the above over a sub-interval $[0, r] \subset [0, m)$ and recalling that $J(0) = 0$, it follows that $\csk^{-n} \left( r/2 \right) J(r) \geq \omega_{n - 1} \left( \csk^{-n} \left( r/2 \right) - 1 \right)$, that is,
\begin{equation} \label{eq: geometric application for integral of G over sublevel sets of b part 4}
    J(r) \geq \omega_{n - 1} \left(1 - \csk^n \left( r/2 \right) \right)
\end{equation}
for every $r \in [0, m]$.
However, recalling the definition of $J$, we have
\[
J(r) = \frac{nk}{4} \int_0^r \left( 2 \snk \left( t/2 \right) \right)^{2 - n} \left( \int_{b = t} \frac{1}{\|\nabla b\|} \right) \dd t = \frac{nk}{4} \int_{b \leq r} G,
\]
where the last equality follows from the coarea formula, as $G = (2 \snk (b/2))^{2 - n}$; this finishes the proof.
\end{proof}

Lastly, Proposition \ref{prop: inequality for the integral of G over sublevel sets of b} gives us a ``spherical'' volume upper bound for the level sets of $b$ when the measure is weighted by the gradient of $b$.

\begin{corollary}
For every $r \in (0, m]$, we have
\[
\int_{b = r} \|\nabla b\| \leq \omega_{n - 1} \snk^{n - 1}(r).
\]
\end{corollary}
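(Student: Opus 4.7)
The plan is to combine the identity \eqref{eq: I1 is constant} with the lower bound from Proposition \ref{prop: inequality for the integral of G over sublevel sets of b}. The identity rearranges to
\[
\left( 2 \snk \left( r/2 \right) \right)^{1 - n} \csk \left( r/2 \right) \int_{b = r} \|\nabla b\| = \omega_{n - 1} - \frac{nk}{4} \int_{b \leq r} G.
\]
Applying the lower bound $\frac{nk}{4} \int_{b \leq r} G \geq \omega_{n - 1}(1 - \csk^n(r/2))$, the right-hand side above is at most $\omega_{n - 1} \csk^n(r/2)$.

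Next, I would solve for $\int_{b = r} \|\nabla b\|$ by multiplying through by $\left( 2 \snk(r/2) \right)^{n - 1} \csk^{-1}(r/2)$, which yields the bound
\[
\int_{b = r} \|\nabla b\| \leq \omega_{n - 1} \left( 2 \snk(r/2) \right)^{n - 1} \csk^{n - 1}(r/2).
\]
Finally, I would invoke the double-angle identity $\sin(r \sqrt{k}) = 2 \sin(r\sqrt{k}/2) \cos(r\sqrt{k}/2)$, which translates to $2 \snk(r/2) \csk(r/2) = \snk(r)$, so that $\left( 2 \snk(r/2) \right)^{n - 1} \csk^{n - 1}(r/2) = \snk^{n - 1}(r)$, giving exactly the desired conclusion.

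There is no real obstacle here: the statement is an immediate algebraic consequence of the preceding proposition and the constancy of $I_1$, once one recognizes the half-angle simplification. The only thing to keep track of is that the identity \eqref{eq: I1 is constant} is stated for every $r \in (0, m]$, and Proposition \ref{prop: inequality for the integral of G over sublevel sets of b} holds for every $r \in [0, m]$, so the combined inequality is valid on the full range $(0, m]$.
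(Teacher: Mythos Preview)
Your proposal is correct and matches the paper's proof exactly: the paper simply states that the corollary ``follows immediately by combining \eqref{eq: I1 is constant} with Proposition \ref{prop: inequality for the integral of G over sublevel sets of b},'' and you have spelled out precisely that combination, including the half-angle identity $2\snk(r/2)\csk(r/2) = \snk(r)$ that makes the final simplification work.
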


\begin{proof}
Follows immediately by combining \eqref{eq: I1 is constant} with Proposition \ref{prop: inequality for the integral of G over sublevel sets of b}.
\end{proof}

\appendix

\section{Asymptotics of Green's function near its singularity} \label{appendix-sec: asymptotics of greens function near the singularity}

In this appendix, we will describe the proofs of the asymptotics of Green's function near its singularity; these are used throughout the entire paper.
Let $(M, g)$ be a closed, connected Riemannian manifold of dimension $n \geq 3$.
For completeness and generality, we start by considering an operator of the form $L := - \Delta + h$, where $\Delta$ is the non-positive Laplacian on $M$ and $h \in C^\infty(M)$ is a smooth, positive function on $M$.
Since $h$ is positive, $L$ is positive-definite, hence for every $p \in M$, there exists a unique function $G(p, \cdot) \in C^\infty(M \setminus \{ p \})$ which satisfies $L_q G(p, q) = (n - 2)\omega_{n - 1}\delta_p(q)$.
The following theorem describes the sharp asymptotics of $G(p, \cdot)$ near $p$.

\begin{theorem} \label{appendix-th: sharp asymptotics of greens function near the singularity}
As $q \to p$, we have
\begin{equation} \label{appendix-eq: c0 asymptotics of G}
    G(p, q) = \dist(p, q)^{2 - n} + o(\dist(p, q)^{2 - n}), 
\end{equation}
and
\begin{equation} \label{appendix-eq: c1 asymptotics of G}
    \nabla_q G(p, q) = (2 - n) \dist(p, q)^{1 - n} \nabla_q \dist(p, q) + o(\dist(p, q)^{1 - n}).
\end{equation}
Moreover, for every $p \in M$, there is a constant $C > 0$ and a neighbourhood $U \subset M$ of $p$ so that, on $U \setminus \{ p \}$,
\begin{equation} \label{appendix-eq: hessian bounds for greens function}
    \| \hess_q G(p, q) \| \leq C \dist(p, q)^{-n},
\end{equation}
where the left-hand side denotes the Hessian of $G(p, \cdot)$.
\end{theorem}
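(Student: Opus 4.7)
The plan is to establish these asymptotics via the classical parametrix construction for fundamental solutions of elliptic operators. Since $L = -\Delta + h$ is elliptic with smooth coefficients on a closed manifold and $h > 0$ makes it invertible, it suffices to build an approximate fundamental solution near $p$ that captures the leading singularity and then invoke elliptic regularity to control the smooth remainder.

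Write $r := \dist(p,\cdot)$. A direct computation in normal coordinates, using the expansion $\Delta_g r = (n-1)/r + O(r)$, yields
\begin{equation*}
    L\, r^{2-n} = (n-2)\omega_{n-1}\, \delta_p + f_0 \quad \text{on a geodesic ball } B(p, \rho),
\end{equation*}
where $f_0$ is smooth on $B(p, \rho) \setminus \{ p \}$ and satisfies $|f_0| \leq C r^{2-n}$. To improve the error, I would inductively construct smooth correctors $\phi_1, \dots, \phi_N$ on $B(p, \rho)$ by solving first-order radial transport equations along geodesics emanating from $p$, so that the parametrix
\begin{equation*}
    E := \chi \cdot \bigg( r^{2-n} + \sum_{j=1}^{N} \phi_j\, r^{2-n+2j} \bigg),
\end{equation*}
with $\chi$ a cutoff equal to one near $p$, satisfies $L E = (n-2)\omega_{n-1}\, \delta_p + g_N$ on $M$ with $g_N \in C^{k,\alpha}(M)$ for any prescribed $k$, provided $N$ is chosen large enough.

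The remainder $u := G(p,\cdot) - E$ then satisfies $L u = -g_N$ globally on $M$, and standard Schauder theory, together with the invertibility of $L$ on $C^{k+2,\alpha}(M)$, yields $u \in C^{k+2,\alpha}(M)$. Choosing $k \geq 2$ makes $u$, $\nabla u$ and $\hess u$ all bounded near $p$. Combining this with the explicit leading-order expansions $E = r^{2-n} + O(r^{4-n})$, $\nabla E = (2-n) r^{1-n} \nabla r + O(r^{3-n})$, and $\|\hess E\| = O(r^{-n})$, the claimed asymptotics \eqref{appendix-eq: c0 asymptotics of G} through \eqref{appendix-eq: hessian bounds for greens function} follow immediately.

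The technical heart of the argument is the parametrix construction: at each step $j$, the corrector $\phi_j$ must be produced from a radial transport ODE of the form $r\, \partial_r \phi_j + 2j(n - 2 - 2j)\, \phi_j = \psi_j$ along geodesics from $p$, where $\psi_j$ is determined by the previously constructed correctors. These are uniquely and smoothly solvable, except when the indicial coefficient $2j(n-2-2j)$ vanishes --- which in even dimensions occurs at $j = (n-2)/2$ and requires a logarithmic correction term. For our purposes this causes no essential difficulty, as we only need finitely many correctors to make the final remainder regular enough for elliptic regularity to close the argument.
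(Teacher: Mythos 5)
Your argument is correct, and it reaches all three asymptotics, but it takes a genuinely different route from the paper. You build a Hadamard-type parametrix $E=\chi\,(r^{2-n}+\sum_j \phi_j r^{2-n+2j})$ by solving radial transport equations for the correctors $\phi_j$, then absorb the remainder by Schauder theory and the invertibility of $L$; this is essentially the Lee--Parker / Parker--Rosenberg construction, which the paper mentions only as an alternative reference. The paper instead follows Aubin's iterated-kernel scheme: it starts from the single crude parametrix $H(p,q)=\dist(p,q)^{2-n}\chi(\dist(p,q))$, sets $\Gamma_1=-L_qH$, forms the convolution iterates $\Gamma_{i+1}(p,q)=\int_M\Gamma_i(p,r)\Gamma_1(r,q)\,\mathrm{d}\!\vol(r)$, and uses that after $k>n/2$ convolutions the error kernel $\Gamma_{k+1}$ is $C^1$, so the correction $F$ solving $L_qF=\Gamma_{k+1}$ is $C^2$ by elliptic regularity. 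The trade-off: Aubin's route needs only the bound $|\Gamma_1|\leq C r^{2-n}$ and the smoothing effect of convolution, so it sidesteps both the transport ODEs and the logarithmic resonance at $j=(n-2)/2$ in even dimensions that you must address; your route, in exchange, produces a full polyhomogeneous expansion of $G$ near $p$ with explicit subleading terms, which is more information than the theorem requires. Your handling of the resonance is adequate for the stated conclusions, since a term of the form $\phi\log r$ and its first two derivatives are still $o(r^{2-n})$, $o(r^{1-n})$, and $O(r^{-2})\leq Cr^{-n}$ respectively for $n\geq 3$.
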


\begin{proof}
The proof of these identities follows from the explicit construction of the fundamental solution for $L$.
For completeness, we sketch it below; our argument is a simple adaptation of that from {\cite[Chapter 4]{AubinNonlinearProblemsRG}}.
Lee and Parker (cf.~{\cite[Section 6]{LeeParkerYamabeProblem}}) and Parker and Rosenberg (cf.~{\cite[Section 2]{ParkerRosenberg}}) have also provided similar explicit constructions of Green's functions for the conformal Laplacian, which includes our operator.

Denote by $\mathrm{inj}(M)$ the injectivity radius of $M$.
Let $\chi : [0, \infty) \to [0, 1]$ be a smooth function with $\chi(r) = 0$ if $r \geq \mathrm{inj}(M)/2$ and $\chi(r) = 1$ if $r \leq \mathrm{inj}(M)/4$.
For $p \neq q$, define $H(p, q) := \dist(p, q)^{2 - n} \chi(\dist(p, q))$, which is a smooth function away from the diagonal of $M \times M$.
By {\cite[Paragraph 4.10]{AubinNonlinearProblemsRG}}, via an integration by parts argument, one directly checks that for every $\varphi \in C^\infty(M)$, we have
\begin{equation} \label{appendix-eq: explicit construction of greens function}
    \int_M H(p, q) \cdot L\varphi(q) \dd \! \vol(q) = \int_M L_q H(p, q) \cdot \varphi(q) \dd \! \vol(q) + (n - 2)\omega_{n - 1} \varphi(p),
\end{equation}
where we integrate with respect to the Riemannian volume measure of $M$.
Now, for $p \neq q$, define $\Gamma_1(p, q) = -L_q H(p, q)$, and for $i \geq 1$, we define inductively the following functions:
\[
\Gamma_{i + 1}(p, q) := \int_M \Gamma_i(p, r) \Gamma_1(r, q) \dd \! \vol(r).
\]
For an integer $k > n/2$, put
\begin{equation} \label{appendix-eq: almost greens function definition}
    \hat{G}(p, q) := H(p, q) + \sum_{i = 1}^k \int_M \Gamma_i(p, r) H(r, q) \dd \! \vol(r).
\end{equation}
By \eqref{appendix-eq: explicit construction of greens function}, one computes that for every $p \in M$, the equation $L_q \hat{G}(p, q) = (n - 2) \omega_{n - 1} \delta_p(q) - \Gamma_{k + 1}(p, q)$ holds in the sense of distributions on $M$.
Moreover, as in {\cite[Proof of Theorem 4.13]{AubinNonlinearProblemsRG}}, one can show that $\Gamma_{k + 1}$ is $C^1$ on $M \times M$.
Let $F(p, q)$ be the solution to the equation $L_q F(p, q) = \Gamma_{k + 1}(p, q)$; by elliptic regularity, $F(p, \cdot)$ is $C^2$ on $M$.
It follows that $G := \hat{G} + F$ is Green's function for $L$.
The desired conclusions now follow from the definition of $H$ and \eqref{appendix-eq: almost greens function definition}; as $q \to p$, the leading order term of $G(p, q)$ is $\dist(p, q)^{2 - n}$.
\end{proof}

Now, let us assume that $(M, g)$ has $\ric \geq (n - 1)kg$ for some $k > 0$.
Consider the operator $L := - \Delta + n(n - 2)k/4$; fix a point $p \in M$ and let $G$ be Green's function for $L$ with singularity at $p$.
By Proposition \ref{prop: comparison between greens functions} (which uses only \eqref{appendix-eq: c0 asymptotics of G} from Theorem \ref{appendix-th: sharp asymptotics of greens function near the singularity}), we have $G \geq (2/\sqrt{k})^{2 - n}$, so we may define $b := 2 \asnk \left( G^{1/(2 - n)}/2 \right)$.
Lemma \ref{lemma: asymptotics of b near the pole} from Section \ref{sec: greens function on the model space and in general} follows immediately from Theorem \ref{appendix-th: sharp asymptotics of greens function near the singularity} and the chain rule.
For completeness, we repeat its statement here and sketch its proof.

\begin{lemma} \label{appendix-lemma: asymptotics of b near the pole}
We have
\begin{equation} \label{appendix-eq: asymptotics of b near its pole}
    \lim_{r \downarrow 0} \sup_{\partial B(p, r)} \left| \frac{b}{r} - 1 \right| = 0,
\end{equation}
\begin{equation} \label{appedix-eq: gradient of b asymptotics near its pole}
    \lim_{r \downarrow 0} \sup_{\partial B(p, r)} \left\| \nabla b - \nabla \dist(p, \cdot) \right\| = 0,
\end{equation}
and there is some $C > 0$ so that, in a punctured neighbourhood of $p$,
\begin{equation} \label{appendix-eq: hessian of b asymptotics near its pole}
    \|\nabla \|\nabla b\| \| \leq \frac{C}{\dist(p, \cdot)}
\end{equation}
\end{lemma}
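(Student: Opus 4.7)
The plan is to derive all three identities as direct consequences of the asymptotics of Green's function in Theorem \ref{appendix-th: sharp asymptotics of greens function near the singularity}, applied through the chain rule to $b = 2\asnk(G^{1/(2-n)}/2)$. Throughout, I would set $r := \dist(p, \cdot)$, and use the one-variable Taylor expansion $\asnk(y) = y + O(y^3)$ as $y \downarrow 0$ together with $\asnk'(y) = (1 - ky^2)^{-1/2}$; both follow immediately from the definition $\asnk(y) = \arcsin(y\sqrt{k})/\sqrt{k}$.

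For \eqref{appendix-eq: asymptotics of b near its pole}, I would first write $G^{1/(2-n)} = r(1 + o(1))$ uniformly on $\partial B(p, r)$, which follows directly from \eqref{appendix-eq: c0 asymptotics of G} (since $2 - n < 0$, taking $(2-n)$th roots preserves the $(1 + o(1))$ factor). The Taylor expansion then gives $b = G^{1/(2-n)} + O(r^3) = r + o(r)$, uniformly on $\partial B(p, r)$, from which $b/r \to 1$ uniformly.

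For \eqref{appedix-eq: gradient of b asymptotics near its pole}, the chain rule yields
\[
\nabla b = \frac{G^{(n-1)/(2-n)}}{(2-n)\sqrt{1 - kG^{2/(2-n)}/4}}\, \nabla G.
\]
The denominator tends to $1$ since $G^{2/(2-n)} = O(r^2)$, and $G^{(n-1)/(2-n)} = r^{n-1}(1 + o(1))$ by \eqref{appendix-eq: c0 asymptotics of G}. Combined with \eqref{appendix-eq: c1 asymptotics of G}, these give $G^{(n-1)/(2-n)} \nabla G = (2-n)\nabla r + o(1)$ uniformly on $\partial B(p, r)$, and thus $\nabla b = \nabla r + o(1)$.

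For \eqref{appendix-eq: hessian of b asymptotics near its pole}, I would observe that $\nabla \|\nabla b\| = \hess(b)(\nabla b)/\|\nabla b\|$ whenever $\nabla b \neq 0$; since $\|\nabla b\| \to 1$ by the previous step, it suffices to prove $\|\hess b\| \leq C/r$ in a punctured neighbourhood of $p$. Writing $\nabla b = \phi(G) \nabla G$ for the scalar factor $\phi$ above, the product rule gives $\hess b = \phi(G) \hess G + \phi'(G)\, dG \otimes dG$. The bound $|\phi(G)| \leq C r^{n-1}$ combined with \eqref{appendix-eq: hessian bounds for greens function} yields $\|\phi(G) \hess G\| \leq C/r$, while a direct computation shows $\phi'(G) = O(r^{2n-3})$ (the leading contribution coming from differentiating $G^{(n-1)/(2-n)}$), which combined with $\|\nabla G\|^2 = O(r^{2-2n})$ from \eqref{appendix-eq: c1 asymptotics of G} gives $\|\phi'(G)\, dG \otimes dG\| \leq C/r$ as well. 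The main obstacle, such as it is, amounts only to careful bookkeeping of the competing exponents in these two terms, which happen to balance exactly at the order $r^{-1}$; there is no substantive difficulty beyond invoking Theorem \ref{appendix-th: sharp asymptotics of greens function near the singularity}.
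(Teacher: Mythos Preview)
Your proposal is correct and follows essentially the same approach as the paper: both derive the three assertions by applying the chain rule to $b = 2\asnk(G^{1/(2-n)}/2)$ and invoking the $C^0$, $C^1$, and $C^2$ asymptotics of $G$ from Theorem \ref{appendix-th: sharp asymptotics of greens function near the singularity}. Your treatment of \eqref{appendix-eq: hessian of b asymptotics near its pole} is in fact more explicit than the paper's, which simply states that ``one employs a similar approach as before, computing $\hess b$ in terms of $G$,'' whereas you carry out the exponent bookkeeping for the two terms $\phi(G)\hess G$ and $\phi'(G)\,dG\otimes dG$ and verify they both land at order $r^{-1}$.
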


\begin{proof}
Denote by $r := \dist(p, \cdot)$.
By \eqref{appendix-eq: c0 asymptotics of G} from Theorem \ref{appendix-th: sharp asymptotics of greens function near the singularity}, as $x \to p$, we have $G(x) = r(x)^{2 - n} + o(r(x)^{2 - n})$, so by the Taylor series of $\asnk$ near $0$, we get that $b(x) = r(x) + o(r(x))$; this proves \eqref{appendix-eq: asymptotics of b near its pole}.
To show \eqref{appedix-eq: gradient of b asymptotics near its pole}, note that, by definition, $(2 - n) \nabla b = (1 - k G^{2/(2 - n)}/4)^{-1/2} \cdot G^{(n - 1)/(2 - n)} \nabla G$. 
As $x \to p$, using \eqref{appendix-eq: c0 asymptotics of G} and \eqref{appendix-eq: c1 asymptotics of G} from Theorem \ref{appendix-th: sharp asymptotics of greens function near the singularity}, we may then compute $(1 - k G^{2/(2 - n)}/4)^{-1/2} = 1 + o(r(x))$ and $G^{(n - 1)/(2 - n)} \nabla G = (2 - n) \nabla r(x) + o(1)$, so
\eqref{appedix-eq: gradient of b asymptotics near its pole} follows by combining these two identities.
To prove \eqref{appendix-eq: hessian of b asymptotics near its pole}, one employs a similar approach as before, computing $\hess{b}$ in terms of $G$, and using the $C^2$-asymptotics of $G$ from Theorem \ref{appendix-th: sharp asymptotics of greens function near the singularity}.
\end{proof}

In Sections \ref{sec: unparametrized monotonicity formulae} and \ref{sec: one-parameter family of monotonicity formulae}, we utilize the asymptotics of $b$ near the singularity $p$ to compute limits of integrals over the (sub)level sets of $b$ near the singularity $p$.
For completeness, we include proof sketches of these results here.
We start with the following property, which is the crux of all such identities.

\begin{proposition} \label{appendix-prop: asymptotics of volumes of level sets}
We have
\[
\lim_{s \downarrow 0} \frac{\vol(b = s)}{\vol(\partial B(p, s))} = 1.
\]
\end{proposition}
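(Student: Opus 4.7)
The plan is to show that both $\vol(b=s)$ and $\vol(\partial B(p,s))$ are asymptotic to $\omega_{n-1}s^{n-1}$ as $s\downarrow 0$; the ratio will then tend to $1$. The spherical asymptotic is classical, arising from the expansion of the metric in polar normal coordinates at $p$, so the main task will be the analogous asymptotic for $\{b = s\}$.

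First I would parametrize $\{b = s\}$ as a starshaped graph over $S^{n-1}$. By \eqref{appedix-eq: gradient of b asymptotics near its pole}, $\langle \nabla b,\nabla \dist(p,\cdot)\rangle = 1 + o(1)$ uniformly near $p$, so for each unit $v\in T_pM$ the map $t \mapsto b(\exp_p(tv))$ is smooth and strictly increasing for small $t$, uniformly in $v$. The implicit function theorem then produces a smooth function $\rho(s,v)$ on $(0,s_0)\times S^{n-1}$ (for some $s_0>0$) with $b(\exp_p(\rho(s,v)v)) = s$, and $\{b=s\}$ becomes the image of $v\mapsto\exp_p(\rho(s,v)v)$. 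From \eqref{appendix-eq: asymptotics of b near its pole} one gets $\rho(s,v)/s \to 1$ uniformly. Differentiating the defining identity in a direction $e\in T_vS^{n-1}$ and invoking \eqref{appedix-eq: gradient of b asymptotics near its pole} once more will yield $d_v\rho(s,v)(e) = o(s)$ uniformly, since the component of $\nabla b$ tangent to the geodesic sphere is $o(1)$ while the tangential pushforward of $e$ has norm $\asymp \rho$.

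Next, in polar coordinates around $p$ the metric reads $dr^2 + h(r,v)$ with $h(r,v)/r^2 \to g_{S^{n-1}}$ uniformly as $r\downarrow 0$, so the induced metric on $\{b=s\}$ under the above parametrization is $h(\rho(s,v),v) + d_v\rho\otimes d_v\rho$. I would apply the matrix determinant lemma to obtain
\[
    \sqrt{\det\bigl(h(\rho,v) + d_v\rho\otimes d_v\rho\bigr)} = \sqrt{\det h(\rho,v)}\,\sqrt{1 + h(\rho,v)^{-1}(d_v\rho,d_v\rho)}.
\]
The uniform estimates $\rho/s\to 1$ and $|d_v\rho| = o(s)$, together with $h(\rho,v)^{-1}$ being comparable to $\rho^{-2} g_{S^{n-1}}^{-1}$, will give $h(\rho,v)^{-1}(d_v\rho,d_v\rho) = o(1)$ and $\sqrt{\det h(\rho(s,v),v)} = s^{n-1}\sqrt{\det g_{S^{n-1}}(v)}(1+o(1))$, both uniformly in $v$. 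Integrating over $S^{n-1}$ yields $\vol(b=s) = \omega_{n-1}s^{n-1}(1+o(1))$, and dividing by the analogous expansion $\vol(\partial B(p,s)) = \omega_{n-1}s^{n-1}(1+o(1))$ completes the argument.

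The main obstacle will be the tangential derivative estimate $d_v\rho = o(s)$. It relies on $\nabla b$ converging to $\nabla\dist(p,\cdot)$ as a vector (not merely in norm), so that the component of $\nabla b$ tangent to geodesic spheres is controlled by $o(r)$ rather than only $O(1)$. This is exactly the content of \eqref{appedix-eq: gradient of b asymptotics near its pole}, so Lemma \ref{appendix-lemma: asymptotics of b near the pole} provides precisely the input needed.
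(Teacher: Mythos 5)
Your proof is correct and follows essentially the same route as the paper's: both hinge on parametrizing $\{b = s\}$ radially via the implicit function theorem and using the $C^1$ asymptotics of $b$ from Lemma \ref{appendix-lemma: asymptotics of b near the pole} to control the tangential derivative of the graph function. The only cosmetic difference is that you compute $\vol(b = s)$ and $\vol(\partial B(p, s))$ separately against $\omega_{n - 1} s^{n - 1}$ in polar normal coordinates, whereas the paper compares the two level sets directly via the flow of $\nabla \dist(p, \cdot)$ and shows that the Jacobian of the resulting diffeomorphism tends to one uniformly.
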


In the statement of Proposition \eqref{appendix-prop: asymptotics of volumes of level sets}, the volumes of the level set of $b$ and of the metric sphere around $p$ are taken with respect to the $(n - 1)$-dimensional Hausdorff measure.

\begin{proof}[Proof of Proposition \ref{appendix-prop: asymptotics of volumes of level sets}]
Denote by $r := \dist(p, \cdot)$ and let $s > 0$; note that $\partial B(p, s) = \{r = s\}$.
Let $\phi$ be the flow of $\nabla r$ in a punctured neighbourhood of $p$; this is described by the radial geodesics from $p$.
By Proposition \ref{prop: comparison between greens functions}, we have $b \leq r$ on $M$, and by \eqref{appendix-lemma: asymptotics of b near the pole} from Lemma \ref{appendix-lemma: asymptotics of b near the pole}, there is a constant $C > 0$ and neighbourhood $U \subset M$ of $p$ so that $Cr \leq b$ on $U$.
This implies that if $s > 0$ is small enough so that $\{b = s\}, \{r = s\} \subset U$, for every $q \in \{r = s\}$, there is a unique $t_s(q) \in \R$ so that $\phi(q, t_s(q)) \in \{b = s\}$.

The function $t_s : q \mapsto t_s(q)$ is characterized by the equation $b(\phi(q, t_s(q))) = s$.
By the implicit function theorem and the $C^1$-asymptotics of $b$ near $p$ (Lemma \ref{appendix-lemma: asymptotics of b near the pole}), one may show that $t_s$ is $C^1$ on $\{r = s\}$ and $t_s \to 0$ in the $C^1$ topology as $s \downarrow 0$.
Hence, for $s > 0$ small, the map $F_s(q) := \phi(q, t_s(q))$ is a $C^1$ injection from $\{r = s\}$ to $\{b = s\}$, which is also a surjection since we can flow back from $\{b = s\}$ to $\{r = s\}$ along the radial geodesics from $p$.
The convergence of $t_s$ to zero from before directly implies that $F_s$ is a $C^1$ diffeomorphism, and the Jacobian of $F_s$ converges to one uniformly as $s \downarrow 0$.
The area formula implies the desired conclusion.
\end{proof}

This result implies the same asymptotic property for the (Riemannian) volume of the sublevel sets of b.

\begin{corollary} \label{appendix-corollary: asymptotics of volume of sublevel sets of b}
We have
\[
\lim_{r \downarrow 0} \frac{\vol(b \leq r)}{\vol(B(p, r))} = 1.
\]
\end{corollary}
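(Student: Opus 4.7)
The plan is to reduce the corollary to Proposition \ref{appendix-prop: asymptotics of volumes of level sets} via the coarea formula and an elementary real-analysis fact about ratios of integrals. First, I would express both volumes as integrals over level sets: by the coarea formula applied to $b$,
\[
\vol(b \leq r) = \int_0^r \bigg( \int_{b = s} \frac{1}{\|\nabla b\|} \bigg) \dd s,
\]
which is valid for $r$ sufficiently small since, by \eqref{appedix-eq: gradient of b asymptotics near its pole} from Lemma \ref{appendix-lemma: asymptotics of b near the pole}, $\|\nabla b\|$ is bounded away from zero near $p$; similarly, because $\|\nabla \dist(p, \cdot)\| \equiv 1$ at smooth points of $\dist(p, \cdot)$,
\[
\vol(B(p, r)) = \int_0^r \vol(\partial B(p, s)) \dd s.
\]

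Next I would show that the integrands have ratio tending to $1$. Combining Proposition \ref{appendix-prop: asymptotics of volumes of level sets} with the observation that, by \eqref{appedix-eq: gradient of b asymptotics near its pole} and \eqref{appendix-eq: asymptotics of b near its pole} from Lemma \ref{appendix-lemma: asymptotics of b near the pole}, $\|\nabla b\|$ converges uniformly to $1$ on $\{b = s\}$ as $s \downarrow 0$ (since every $x \in \{b = s\}$ lies in $\partial B(p, s')$ with $s' = \dist(p, x)$ satisfying $s'/s \to 1$ uniformly, where \eqref{appedix-eq: gradient of b asymptotics near its pole} applies), we obtain
\[
\lim_{s \downarrow 0} \frac{1}{\vol(\partial B(p, s))} \int_{b = s} \frac{1}{\|\nabla b\|} = 1.
\]

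Finally, I would conclude using the following elementary fact: if $\varphi, \psi : (0, \varepsilon) \to (0, \infty)$ are measurable with $\varphi(s)/\psi(s) \to 1$ as $s \downarrow 0$ and $\psi$ is locally integrable, then $\int_0^r \varphi \big/ \int_0^r \psi \to 1$ as $r \downarrow 0$; indeed, for any $\eta > 0$, there is $s_0 > 0$ so that $(1 - \eta)\psi(s) \leq \varphi(s) \leq (1 + \eta)\psi(s)$ on $(0, s_0)$, and the uniform bound on the ratio of integrals follows by integrating and letting $r \downarrow 0$ (the contribution from an initial fixed interval becomes negligible relative to $\int_0^r \psi$, which goes to zero). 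Applying this with $\varphi(s) = \int_{b = s} \|\nabla b\|^{-1}$ and $\psi(s) = \vol(\partial B(p, s))$ yields the conclusion.

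I do not expect any serious obstacle: the main content is already packaged in Proposition \ref{appendix-prop: asymptotics of volumes of level sets} and Lemma \ref{appendix-lemma: asymptotics of b near the pole}. The only subtlety is to transfer the uniform closeness of $\|\nabla b\|$ to $1$ from metric spheres $\partial B(p, s)$ to level sets $\{b = s\}$, which follows from \eqref{appendix-eq: asymptotics of b near its pole} ensuring that the two families of hypersurfaces remain close as $s \downarrow 0$.
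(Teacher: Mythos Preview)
Your proposal is correct and takes essentially the same approach as the paper: both reduce the question to the ratio of the level-set integrands via the coarea formula, then invoke Proposition \ref{appendix-prop: asymptotics of volumes of level sets} together with the gradient asymptotics from Lemma \ref{appendix-lemma: asymptotics of b near the pole}. The only cosmetic difference is that the paper passes to the ratio of derivatives by citing L'H\^{o}pital's rule, whereas you prove the relevant special case directly via the squeeze $(1-\eta)\psi \leq \varphi \leq (1+\eta)\psi$ on $(0,s_0)$; note that your parenthetical about ``the contribution from an initial fixed interval'' is unnecessary here, since for $r < s_0$ the bound already holds on all of $(0,r)$.
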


\begin{proof}
By the co-area formula and L'H\^{o}pital's rule, we have
\[
\lim_{r \downarrow 0} \frac{\vol(b \leq r)}{\vol(B(p, r))} = \lim_{r \downarrow 0} \frac{\int_{b = r} \|\nabla b\|^{-1}}{\vol(\partial B(p, r))} = \lim_{r \downarrow 0} \left( \frac{\vol(b = r)}{\vol(\partial B(p, r))} \cdot \frac{1}{\vol(b = r)} \int_{b = r} \frac{1}{\|\nabla b\|} \right) = 1,
\]
where the last equality follows from Proposition \ref{appendix-prop: asymptotics of volumes of level sets} and \eqref{appedix-eq: gradient of b asymptotics near its pole} from Lemma \ref{appendix-lemma: asymptotics of b near the pole}.
\end{proof}

As a consequence of the above two identities, we obtain the following mean value limit along the (sub)level sets of $b$ for continuous functions.

\begin{lemma} \label{appendix-lemma: mean value of continuous function along level or sublevel sets of b}
If $f : M \to \R$ is continuous at $p$, then
\[
\lim_{r \downarrow 0} \left( \frac{1}{\omega_{n - 1} r^{n - 1}} \int_{b = r} f \right) = \lim_{r \downarrow 0} \left( \frac{1}{n^{-1} \omega_{n - 1} r^n} \int_{b \leq r} f \right) = f(p).
\]
\end{lemma}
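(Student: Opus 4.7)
The plan is to show both limits by reducing the integral of $f$ to an integral of the constant $f(p)$ plus an error term that is controlled by the continuity of $f$, and then applying the volume asymptotics already established in this appendix together with the standard Riemannian volume expansion $\vol(\partial B(p, r)) = \omega_{n - 1} r^{n - 1} + o(r^{n - 1})$ and $\vol(B(p, r)) = n^{-1} \omega_{n - 1} r^n + o(r^n)$ as $r \downarrow 0$.

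Concretely, I would first treat the level set integral. By Lemma \ref{appendix-lemma: asymptotics of b near the pole}, for all sufficiently small $r > 0$ the sublevel set $\{ b \leq r \}$ is contained in an arbitrarily small metric ball around $p$. Given $\varepsilon > 0$, the continuity of $f$ at $p$ allows one to fix $r_0 > 0$ so that $|f - f(p)| < \varepsilon$ on $\{b \leq r_0\}$. Then for every $r \in (0, r_0)$,
\[
\left| \int_{b = r} f - f(p) \vol(b = r) \right| \leq \varepsilon \vol(b = r).
\]
Dividing by $\omega_{n - 1} r^{n - 1}$ and using Proposition \ref{appendix-prop: asymptotics of volumes of level sets} together with the standard Riemannian asymptotic $\vol(\partial B(p, r))/(\omega_{n - 1} r^{n - 1}) \to 1$ as $r \downarrow 0$, I obtain
\[
\limsup_{r \downarrow 0} \left| \frac{1}{\omega_{n - 1} r^{n - 1}} \int_{b = r} f - f(p) \right| \leq \varepsilon.
\]
Letting $\varepsilon \downarrow 0$ yields the first claim.

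The argument for the sublevel set integral is identical in structure, replacing Proposition \ref{appendix-prop: asymptotics of volumes of level sets} by Corollary \ref{appendix-corollary: asymptotics of volume of sublevel sets of b} and the sphere expansion by the ball expansion $\vol(B(p, r))/(n^{-1} \omega_{n - 1} r^n) \to 1$. There is no genuine obstacle here: the main substantive ingredients, namely the comparison of the level and sublevel sets of $b$ with metric spheres and balls, are already packaged in the previous two results, and the rest is a standard ``$f$ equals $f(p)$ up to $\varepsilon$'' estimate. The only point requiring mild care is to ensure the sublevel sets $\{b \leq r\}$ really do shrink to $p$ as $r \downarrow 0$, which follows from the fact that $b$ is continuous on $M$ and vanishes only at $p$, so $\{b \leq r\} \subset B(p, \delta(r))$ with $\delta(r) \downarrow 0$ (quantified precisely by \eqref{appendix-eq: asymptotics of b near its pole}).
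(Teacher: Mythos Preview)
Your proof is correct and follows essentially the same approach as the paper: both reduce to the mean value over the (sub)level sets of $b$ via an $\varepsilon$-argument using continuity of $f$ at $p$, then invoke Proposition \ref{appendix-prop: asymptotics of volumes of level sets} and Corollary \ref{appendix-corollary: asymptotics of volume of sublevel sets of b} together with the standard Euclidean asymptotics of $\vol(\partial B(p,r))$ and $\vol(B(p,r))$. Your write-up is in fact slightly more explicit than the paper's, spelling out the $\varepsilon$-estimate and the reason the sublevel sets shrink to $p$.
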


\begin{proof}
Since $f$ is continuous at $p$, it follows that
\[
\lim_{r \downarrow 0} \left( \frac{1}{\vol(b = r)} \int_{b = r} f \right) = \lim_{r \downarrow 0} \left( \frac{1}{\vol(b \leq r)} \int_{b \leq r} f \right) = f(p).
\]
By Proposition \ref{appendix-prop: asymptotics of volumes of level sets} and Corollary \ref{appendix-corollary: asymptotics of volume of sublevel sets of b}, these identities also hold if we replace $\vol(b = r)$ and $\vol(b \leq r)$ above with $\vol(\partial B(p, r))$ and $\vol(B(p, r))$, respectively, and these latter quantities have the same asymptotics as the volumes of their Euclidean counterparts.
\end{proof}
\section{Integration over the level sets of Green's function.} \label{appendix-sec: integration over the level sets of b}

As in the other sections of this paper, let $(M^n, g)$ be a closed, connected Riemannian manifold of dimension $n \geq 3$, with $\ric \geq (n - 1)kg$ for some $k > 0$.
Fix a point $p \in M$ and let $G$ be Green's function for $L := - \Delta + n(n - 2)k/4$ with singularity at $p$; put $b := 2 \asnk (G^{1/(2 - n)}/2)$ and let $m \in (0, \mathrm{diam}(M)]$ be the maximal value of $b$.

In this section, we prove the properties used in Sections \ref{sec: unparametrized monotonicity formulae} and \ref{sec: one-parameter family of monotonicity formulae} regarding the regular values and integration over the (sub)level sets of $b$.
The proofs are inspired by those from {\cite[Section 2]{ChodoshLiRegularity}}; however, as we are working with a Schr\"{o}dinger operator instead of the Laplacian, for completeness, we include the required results here.

We start with a discussion about the openness and density of the set of regular values of $b$.

\begin{lemma} \label{appendix-lemma: regular values of b}
The regular values of $b$ form an open and full measure (in particular, dense) subset of $(0, m]$.
\end{lemma}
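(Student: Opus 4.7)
The plan is to establish the two properties separately: first the topological statement (openness of the regular values) and then the measure-theoretic statement (full measure of the regular values).

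For \emph{openness}, the key observation is that $\{\nabla b = 0\}$, considered as a subset of $M \setminus \{p\}$, is in fact a compact subset of $M$. This is where we use the asymptotics of $\nabla b$ near $p$ established in Lemma \ref{appendix-lemma: asymptotics of b near the pole}: equation \eqref{appedix-eq: gradient of b asymptotics near its pole} gives $\|\nabla b\| \to 1$ uniformly as the base-point approaches $p$, so there exists some $\varepsilon > 0$ for which $\|\nabla b\| \geq 1/2$ on $B(p, \varepsilon) \setminus \{p\}$. Consequently $\{\nabla b = 0\} \subseteq M \setminus B(p, \varepsilon)$, which is compact, and since $\nabla b$ is continuous on $M \setminus \{p\}$, the set $\{\nabla b = 0\}$ is closed in $M \setminus B(p, \varepsilon)$, hence compact. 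Continuity of $b$ on $M$ then implies that the image $b(\{\nabla b = 0\})$ is a compact subset of $[0, m]$, so the set of regular values, being its complement in $(0, m]$, is open.

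For the \emph{full measure} statement, we simply apply Sard's theorem to the smooth function $b \colon M \setminus \{p\} \to \R$. Since $M \setminus \{p\}$ is a smooth $n$-manifold and $b$ is smooth on it, the set of critical values of $b|_{M \setminus \{p\}}$ has Lebesgue measure zero in $\R$. Because $b(p) = 0$ lies outside $(0, m]$ (recall $m > 0$), the critical values of $b$ lying in $(0, m]$ coincide with the critical values of $b|_{M \setminus \{p\}}$ that lie there, so the regular values form a full-measure subset of $(0, m]$.

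Neither step presents a real obstacle: the only subtlety is ensuring that the critical set is compact despite $b$ failing to be smooth at $p$, and this is resolved entirely by the near-singularity asymptotics already recorded in Lemma \ref{appendix-lemma: asymptotics of b near the pole}. The density part of the statement follows from either property (an open, full-measure subset of $(0, m]$ is in particular dense).
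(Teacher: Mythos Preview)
Your proof is correct and follows essentially the same approach as the paper: a compactness argument for openness and Sard's theorem for full measure. The paper argues even more directly for openness, simply noting that the level sets $\{b = r\}$ for $r \in (0,m]$ are compact (they are closed in the closed manifold $M$ and avoid $p$ since $b(p)=0$), which already forces the critical values to be closed; your detour through the asymptotics of $\nabla b$ near $p$ is valid but unnecessary.
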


\begin{proof}
Since all level sets of $b$ are compact, it is clear that the set of regular values of $b$ is open in $(0, m]$.
Sard's theorem implies that they form a full measure subset.
\end{proof}

The following proposition describes the continuous dependence of the integral along the regular part of the level sets of $b$.

\begin{proposition} \label{appendix-prop: integration over the regular part of the level sets of b}
If $f : M \to \R$ is continuous, then the function
\[
F : (0, m] \to \R, \quad F(r) := \int_{\{b = r\} \setminus \{\nabla b = 0\}} f
\]
is continuous.
\end{proposition}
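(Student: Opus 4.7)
I will prove continuity of $F$ at each point $r_0 \in (0, m]$ by a truncation-and-flow argument. Fix $\epsilon > 0$ and let $A_\epsilon := \{\|\nabla b\| \geq \epsilon\}$, a closed and hence compact subset of $M$. Define
\[
F_\epsilon(r) := \int_{\{b = r\} \cap A_\epsilon} f \, d\mathcal{H}^{n-1}.
\]
The plan is first to show that each $F_\epsilon$ is continuous at $r_0$, and then to show that $F_\epsilon(r) \to F(r)$ uniformly in $r$ in a neighbourhood of $r_0$ as $\epsilon \downarrow 0$. Together, these give continuity of $F$ at $r_0$.

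For the continuity of $F_\epsilon$, observe that on $A_\epsilon$ the vector field $X := \nabla b / \|\nabla b\|^2$ is smooth and satisfies $X(b) \equiv 1$. Its flow $\phi^X_t$ is defined on $A_\epsilon$ for all $|t|$ sufficiently small (pushing into the interior of $A_\epsilon$), and $b(\phi^X_t(x)) = b(x) + t$. Hence $\phi^X_{r - r_0}$ is a diffeomorphism from an open subset of $\{b = r_0\} \cap A_\epsilon$ onto an open subset of $\{b = r\} \cap A_\epsilon$. Using a finite open cover of the compact set $\{b = r_0\} \cap A_\epsilon$ and a subordinate partition of unity, the change-of-variables formula yields
\[
F_\epsilon(r) = \int_{\{b = r_0\} \cap A_\epsilon} (f \circ \phi^X_{r - r_0}) \, J_{r - r_0} \, d\mathcal{H}^{n-1},
\]
where $J_{r - r_0}$ is the Jacobian of $\phi^X_{r - r_0}$ restricted to the level set; the integrand depends continuously on $r$, so $F_\epsilon$ is continuous at $r_0$.

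For the uniform error control, we have
\[
|F(r) - F_\epsilon(r)| \leq \|f\|_\infty \cdot \mathcal{H}^{n-1}\bigl(\{b = r\} \cap \{0 < \|\nabla b\| < \epsilon\}\bigr).
\]
Since $G$ satisfies the linear elliptic equation $-\Delta G + \tfrac{n(n-2)k}{4} G = 0$ on $M \setminus \{p\}$ and $b = 2\asnk(G^{1/(2-n)}/2)$, the structure theorems of Hardt and Simon for nodal sets of elliptic solutions (cf.~\cite{HardtSimonNodalSets}) provide a uniform bound $\mathcal{H}^{n-1}(\{b = r\} \cap K) \leq C(K)$ for any compact $K \Subset M \setminus \{p\}$, together with the fact that $\{b = r\} \cap \{\nabla b = 0\}$ has Hausdorff dimension at most $n - 2$, and therefore vanishing $\mathcal{H}^{n-1}$-measure. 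A covering argument around each critical point in $\{b = r_0\}$, combined with compactness of $b^{-1}([r_0 - \delta, r_0 + \delta])$ bounded away from $p$, then yields $\mathcal{H}^{n-1}(\{b = r\} \cap \{0 < \|\nabla b\| < \epsilon\}) \to 0$ as $\epsilon \downarrow 0$, uniformly in $r$ near $r_0$.

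The main obstacle is precisely the last uniform bound. Pointwise convergence of the error to zero for fixed $r$ is immediate (Hausdorff measure of a decreasing family of open sets exhausting an $\mathcal{H}^{n - 1}$-null set), but uniformity across a whole neighbourhood of a critical value $r_0$ requires delicate control over how critical points of $b$ deform as $r$ varies. When $r_0$ is a regular value, this issue dissolves: by compactness of $b^{-1}([r_0 - \delta, r_0 + \delta])$ and Lemma \ref{appendix-lemma: regular values of b}, $\|\nabla b\|$ is bounded below on an annular neighbourhood of $\{b = r_0\}$, so $F_\epsilon \equiv F$ near $r_0$ for $\epsilon$ small, and continuity is trivial. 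The critical case is what genuinely requires the Hardt-Simon framework.
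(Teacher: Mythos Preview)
Your overall strategy---flow on the regular part, Hardt--Simon control near the critical set---is the same as the one the paper invokes via \cite{ChodoshLiRegularity}, and you have correctly isolated the genuine difficulty as the uniform (in $r$) smallness of $\mathcal{H}^{n-1}\bigl(\{b=r\}\cap\{0<\|\nabla b\|<\epsilon\}\bigr)$. Two points, however.

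First, your change-of-variables formula for $F_\epsilon(r)$ is not correct as written. The flow $\phi^X_{r-r_0}$ of $X=\nabla b/\|\nabla b\|^2$ need not carry $\{b=r_0\}\cap A_\epsilon$ onto $\{b=r\}\cap A_\epsilon$: trajectories can enter or leave $A_\epsilon$, so the domain and target of the integral do not match. What the flow does give you is a diffeomorphism from $\{b=r_0\}\cap K$ onto a subset of $\{b=r\}$ for any fixed compact $K\subset\{\|\nabla b\|>0\}$ and $|r-r_0|$ small; the discrepancy between that image and $\{b=r\}\setminus\{\nabla b=0\}$ is then another piece of the ``small-gradient'' error you already need to control. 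The cleaner way to organize this (and the one closer to \cite{ChodoshLiRegularity}) is to cover the compact set $\{b=r_0\}\cap\{\nabla b=0\}$ by finitely many small balls $B_i$ with $\sum r_i^{n-1}<\eta$, run the flow on the fixed compact complement, and bound the contribution inside each $B_i$ for \emph{all} nearby $r$ by the uniform Hardt--Simon estimate $\mathcal{H}^{n-1}(\{b=r\}\cap B_i)\le C\,r_i^{n-1}$ from \cite{HardtSimonNodalSets}.

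Second, you acknowledge the uniform bound as an obstacle but do not actually prove it; the ``covering argument'' you gesture at is precisely the content of the Hardt--Simon local area bound, which applies here because $b$ is (away from $p$) a smooth function of a solution $G$ to a second-order linear elliptic equation. So the sketch is structurally sound but incomplete exactly where you say it is; the missing ingredient is the uniform local $\mathcal{H}^{n-1}$ bound on level sets from \cite{HardtSimonNodalSets}, and once that is in hand the ball-covering version of your argument goes through cleanly.
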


\begin{proof}
The same proof from {\cite[Lemma 2.3]{ChodoshLiRegularity}} holds in this case as well, as it only uses the definition of the Hausdorff measure and {\cite[Theorem 1.7]{HardtSimonNodalSets}}.
\end{proof}

The main property used in Sections \ref{sec: unparametrized monotonicity formulae} and \ref{sec: one-parameter family of monotonicity formulae} is the local absolute continuity of the integral over the level sets of $b$ when the measure is weighted by the gradient of $b$.

\begin{proposition} \label{appendix-prop: local absolute continuity of integral along level sets of b}
If $f : M \to \R$ is continuous on $M$ and smooth on $M \setminus \{ p \}$, then the function
\[
F : (0, m] \to \R, \quad F(r) := \int_{b = r} f \|\nabla b\| \in \R
\]
is continuous on $(0, m]$ and locally absolutely continuous on $(0, m)$, with
\[
F'(r) = \int_{b = r} \frac{\div (f \nabla b)}{\|\nabla b\|}
\]
at every regular value $r \in (0, m)$ of $b$.
\end{proposition}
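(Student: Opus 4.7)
The plan is to apply the divergence theorem on the compact region $\{r_1 \leq b \leq r_2\}$ for regular values $r_1 < r_2$, translate the result into an integral over $r$ via the coarea formula, and then upgrade from regular values to all values using density and continuity. First, continuity of $F$ on $(0, m]$ would follow by applying Proposition \ref{appendix-prop: integration over the regular part of the level sets of b} to the function $f \|\nabla b\|$: by Lemma \ref{appendix-lemma: asymptotics of b near the pole}, $\|\nabla b\| \to 1$ near $p$, so $f \|\nabla b\|$ extends continuously to all of $M$ with value $f(p)$ at $p$, and since it vanishes on $\{\nabla b = 0\}$, we have $F(r) = \int_{\{b = r\} \setminus \{\nabla b = 0\}} f \|\nabla b\|$, to which the cited proposition applies.

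For local absolute continuity, I would fix regular values $r_1 < r_2$ in $(0, m)$ and set $\Omega := \{r_1 \leq b \leq r_2\}$, which is a compact smooth manifold-with-boundary contained in $M \setminus \{ p \}$ on which $f$ is smooth. Since the outward unit normal to $\partial \Omega$ is $\nabla b/\|\nabla b\|$ on $\{b = r_2\}$ and $-\nabla b/\|\nabla b\|$ on $\{b = r_1\}$, the divergence theorem yields
\[
F(r_2) - F(r_1) = \int_\Omega \div(f \nabla b).
\]
Applying the coarea formula to $b$ on $\Omega$ rewrites the right-hand side as $\int_{r_1}^{r_2} h(t)\,dt$, where $h(t) := \int_{b = t} \div(f \nabla b)/\|\nabla b\|$. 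This $h$ is defined for almost every $t \in (0, m)$ and locally integrable there: for any $0 < a < c < m$, the set $\{a \leq b \leq c\}$ is compact in $M \setminus \{p\}$ where $|\div(f \nabla b)|$ is bounded, and coarea gives $\int_a^c |h(t)|\,dt \leq \int_{\{a \leq b \leq c\}} |\div(f \nabla b)| < \infty$.

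Since regular values are open and dense in $(0, m]$ by Lemma \ref{appendix-lemma: regular values of b} and $F$ is continuous, I would next approximate arbitrary $r_1 < r_2$ in $(0, m)$ by sequences of regular values $r_1^{(k)} \to r_1$, $r_2^{(k)} \to r_2$ and pass to the limit in the displayed identity above, using continuity of $F$ on the left and dominated convergence on the right (with the local $L^1$ bound for $h$ established above). This gives $F(r_2) - F(r_1) = \int_{r_1}^{r_2} h(t)\,dt$ for all $r_1 < r_2$ in $(0, m)$, establishing local absolute continuity of $F$ and yielding $F'(r) = h(r)$ for almost every $r$.

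Finally, to upgrade the derivative identity to hold at \emph{every} regular value $r_0 \in (0, m)$, I would use that regular values form an open set to pick $\delta > 0$ so that every $t \in (r_0 - \delta, r_0 + \delta)$ is regular; flowing $\{b = r_0\}$ along the smooth vector field $\nabla b/\|\nabla b\|^2$ then produces a $C^\infty$ family of diffeomorphisms onto $\{b = t\}$ for $|t - r_0| < \delta$. Re-expressing $h(t)$ as a single integral over the fixed compact manifold $\{b = r_0\}$ via this parametrization, the integrand depends continuously on $t$, so dominated convergence gives continuity of $h$ at $r_0$ and hence $F'(r_0) = h(r_0)$. The main technical step to verify carefully is this flow parametrization, as it is what upgrades the almost-everywhere derivative identity to the pointwise statement at all regular values; the rest is a bookkeeping exercise combining the divergence theorem, the coarea formula, and the density of regular values.
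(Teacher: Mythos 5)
Your proposal is correct and follows essentially the same route as the paper: continuity via the regular-part integration lemma, then the divergence theorem on $\{r_1 \leq b \leq r_2\}$ combined with the coarea formula and the density of regular values. The only cosmetic differences are that the paper extends the identity $F(r)-F(r_1)=\int_{r_1\leq b\leq r}\div(f\nabla b)$ from regular to arbitrary $r$ by noting the right-hand side is continuous in $r$ (level sets having zero $n$-dimensional measure) rather than by approximating both endpoints, and it treats the pointwise derivative at regular values more tersely than your flow-parametrization argument, which is a valid way to make that step precise.
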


\begin{proof}
By Proposition \ref{appendix-prop: integration over the regular part of the level sets of b}, $F$ is continuous on $(0, m]$.
Moreover, at every regular value $r$ of $b$, we may rewrite
\begin{equation} \label{appendix-eq: rewriting the integral over level sets of b}
    F(r) = \int_{b = r} \left\langle f \nabla b, \frac{\nabla b}{\|\nabla b\|} \right\rangle
\end{equation}
Fix a regular value $r_1 \in (0, m)$ and let $r_2 \in (r_1, m)$ be arbitrary.
We show that $F$ is absolutely continuous on $[r_1, r_2]$; together with Lemma \ref{appendix-lemma: regular values of b}, this implies the desired conclusion. 
For every regular value $r \in (r_1, r_2]$ of $b$, by \eqref{appendix-eq: rewriting the integral over level sets of b} and the divergence theorem, we have
\begin{equation} \label{appendix-eq: local absolute continuity of integration along the level sets of b}
    F(r) - F(r_1) = \int_{r_1 \leq b \leq r} \div (f \nabla b).
\end{equation}
Since $f$ and $b$ are smooth on $b^{-1}([r_1, r])$ and as the level sets of $b$ have zero $n$-dimensional Hausdorff measure (by {\cite[Theorem 1.7]{HardtSimonNodalSets}}), the above right-hand side is also continuous in $r$.
Thus, equation \eqref{appendix-eq: local absolute continuity of integration along the level sets of b} must hold for all $r \in (r_1, r_2]$.
By the coarea formula, we may rewrite \eqref{appendix-eq: local absolute continuity of integration along the level sets of b} as
\[
F(r) - F(r_1) = \int_{r_1}^r \left( \int_{b = t} \frac{\div(f \nabla b)}{\|\nabla b\|} \right) \dd t,
\]
which allows us to conclude that $F$ is absolutely continuous on $[r_1, r_2]$, and for almost every $r \in [r_1, r_2]$, we have
\begin{equation} \label{appendix-eq: derivative of integral along the level sets of b}
    F'(r) = \int_{b = r} \frac{\div(f \nabla b)}{\|\nabla b\|}
\end{equation}
In fact, by the same divergence theorem argument from above (together with the openness of the set of regular values of $b$ and the fundamental theorem of calculus), \eqref{appendix-eq: derivative of integral along the level sets of b} holds at every regular value $r \in (r_1, r_2]$ of $b$.
\end{proof}

\bibliographystyle{alpha}
\bibliography{bibliography}

\end{document}